\DeclareMathOperator{\Clo}{Clo}
\DeclareMathOperator{\Sg}{Sg}
\DeclareMathOperator{\Var}{Var}
\begin{document}

\makeatletter
\newtheorem*{rep@theorem}{\rep@title}
\newcommand{\newreptheorem}[2]{%
\newenvironment{rep#1}[1]{%
 \def\rep@title{#2 \ref{##1}}%
 \begin{rep@theorem}}%
 {\end{rep@theorem}}}
\makeatother

\newtheorem{thm}{Theorem}
\newreptheorem{thm}{Theorem}
\newtheorem{prop}{Proposition}
\newreptheorem{prop}{Proposition}
\newtheorem{cor}{Corollary}
\newtheorem{lem}{Lemma}
\newreptheorem{lem}{Lemma}

\theoremstyle{definition}
\newtheorem{defn}{Definition}
\newtheorem{conj}{Conjecture}

\theoremstyle{remark}
\newtheorem{rem}{Remark}

\newcommand{\Rho}{\mathrm{P}}
\newcommand{\cS}{\mathcal{S}}
\newcommand{\cM}{\mathcal{M}}
\newcommand{\cC}{\mathcal{C}}
\newcommand{\cF}{\mathcal{F}}
\newcommand{\cG}{\mathcal{G}}
\newcommand{\cP}{\mathcal{P}}
\newcommand{\cV}{\mathcal{V}}
\newcommand{\cD}{\mathcal{D}}
\newcommand{\RR}{\mathbb{R}}
\newcommand{\ZZ}{\mathbb{Z}}
\newcommand{\NN}{\mathbb{N}}
\newcommand{\bA}{\mathbb{A}}
\newcommand{\bB}{\mathbb{B}}
\newcommand{\bC}{\mathbb{C}}
\newcommand{\bD}{\mathbb{D}}
\newcommand{\bE}{\mathbb{E}}
\newcommand{\bF}{\mathbb{F}}
\newcommand{\bI}{\mathbb{I}}
\newcommand{\bS}{\mathbb{S}}
\newcommand{\fA}{\mathbf{A}}
\newcommand{\fB}{\mathbf{B}}
\newcommand{\gk}{\kappa}
\newcommand{\gS}{\Sigma}
\newcommand{\gl}{\lambda}
\newcommand{\gt}{\theta}

\newcommand{\dotcup}{\ensuremath{\mathaccent\cdot\cup}}

\title{Coarse classification of binary minimal clones}
\date{}
\author{Zarathustra Brady}

\maketitle

\begin{abstract} We classify binary minimal clones into seven categories: affine algebras, rectangular bands, $p$-cyclic groupoids, spirals, non-Taylor partial semilattices, melds, and dispersive algebras. Each category has nice enough properties to guarantee that any clone of one of these types contains a minimal clone of the same type.
\end{abstract}

\tableofcontents

\section{Introduction}

The classification of minimal clones on a finite set is a very old problem, studied by many authors. For a survey of previous results, the reader is directed to the papers of Quackenbush \cite{minimal-clones-survey} and Cs\'ak\'any \cite{minimal-clones-minicourse}. First we review a few of the basic definitions and results. All algebras which occur throughout this paper will be assumed to be finite unless explicitly stated otherwise.

\begin{defn} A clone $\cC$ is \emph{minimal} if $f \in \cC$ nontrivial implies $\cC = \Clo(f)$. An algebra $\bA$ will be called \emph{clone-minimal} (equivalently: $\Clo(\bA)$ is a \emph{minimal clone}) if $\bA$ has no nontrivial proper reduct.
\end{defn}

\begin{prop} If $\Clo(f)$ is minimal and $g \in \Clo(f)$ nontrivial, then $f \in \Clo(g)$.
\end{prop}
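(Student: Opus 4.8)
The plan is simply to unwind the definition of minimality, since the proposition is a direct consequence of it. First I would observe that the hypothesis $g \in \Clo(f)$ can be upgraded to a clone inclusion: $\Clo(f)$ is a clone containing $g$, and $\Clo(g)$ is by definition the least clone containing $g$ (equivalently, the set of all term operations built from $g$ and the projections by composition), so $\Clo(g) \subseteq \Clo(f)$. This is the only step that deserves to be written out explicitly for the reader.

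Next, I would apply the defining property of a minimal clone to $\cC = \Clo(f)$, instantiating its bound variable at the operation $g$: since $g \in \Clo(f)$ is nontrivial, minimality yields $\Clo(f) = \Clo(g)$. Finally, $f \in \Clo(f) = \Clo(g)$, which is exactly the claimed conclusion $f \in \Clo(g)$.

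I do not expect any real obstacle here — the statement is essentially a reformulation of the definition. The one point worth flagging is that the nontriviality of $g$ is indispensable: a projection generates only the trivial clone of projections, which does not contain the nontrivial operation $f$, so the conclusion would fail for trivial $g$. It may also be worth remarking that the argument shows more, namely that every nontrivial $g \in \Clo(f)$ satisfies $\Clo(g) = \Clo(f)$, so that any such $g$ is itself a generator of the minimal clone.
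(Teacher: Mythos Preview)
Your proposal is correct and matches the paper's treatment: the paper states this proposition without proof, as it follows immediately from the definition of a minimal clone in exactly the way you describe. Your write-up is fine; the observation that $\Clo(g) \subseteq \Clo(f)$ followed by the defining property of minimality is all that is needed.
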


\begin{defn}\label{set-defn} $\bA$ is called a \emph{projection algebra} if all of its operations are projections. Otherwise, we say that $\bA$ is \emph{nontrivial} (note that this is a nonstandard definition of what it means for an algebra to be nontrivial).
\end{defn}

\begin{prop} If $\Clo(\bA)$ is minimal and $\bB \in \Var(\bA)$ nontrivial, then $\Clo(\bB)$ is minimal.
\end{prop}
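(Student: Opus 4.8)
The plan is to build a surjective clone homomorphism $\varphi\colon \Clo(\bA)\twoheadrightarrow\Clo(\bB)$ and transport the minimality of $\Clo(\bA)$ across it. The starting point is Birkhoff's theorem: $\Var(\bA)=\mathbf{HSP}(\bA)$ is axiomatized by $\mathrm{Id}(\bA)$, the set of identities true in $\bA$, so every member of $\Var(\bA)$ --- in particular $\bB$ --- satisfies all the identities of $\bA$. Consequently, for any two terms $s,t$ in the common signature, $s^{\bA}=t^{\bA}$ forces $\bA\models s\approx t$, hence $\bB\models s\approx t$, i.e. $s^{\bB}=t^{\bB}$. So the rule $\varphi(s^{\bA})=s^{\bB}$ is well defined, and it is immediate that $\varphi$ is a clone homomorphism (it fixes the projections and respects composition of term operations) and that it is onto (every operation of $\Clo(\bB)$ is $s^{\bB}$ for some term $s$).

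The one point I would be careful about is that $\varphi$ \emph{reflects} nontriviality. If $s^{\bA}$ happens to be the projection $\pi^n_i$, then $\bA\models s\approx x_i$, hence $\bB\models s\approx x_i$, so $\varphi(s^{\bA})=s^{\bB}=\pi^n_i$ is again a projection; contrapositively, a non-projection operation of $\bB$ can only arise as $\varphi$ of a non-projection operation of $\bA$. Thus, given any nontrivial $g\in\Clo(\bB)$, choose (by surjectivity) $f\in\Clo(\bA)$ with $\varphi(f)=g$; then $f$ is nontrivial.

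Now minimality of $\Clo(\bA)$ gives $\Clo(f)=\Clo(\bA)$. Since $\varphi$ is a surjective clone homomorphism, it sends the subclone generated by a set to the subclone generated by the image; applied to the singleton $\{f\}$ this yields $\Clo(g)=\Clo(\varphi(f))=\varphi(\Clo(f))=\varphi(\Clo(\bA))=\Clo(\bB)$. As $g$ was an arbitrary nontrivial operation of $\bB$, we conclude that every nontrivial operation of $\bB$ generates $\Clo(\bB)$; and since $\bB$ is assumed nontrivial, $\Clo(\bB)$ does contain nontrivial operations, so $\Clo(\bB)$ is minimal.

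The argument is essentially formal; the only genuinely load-bearing ingredient is Birkhoff's correspondence between $\Var(\bA)$ and the identities of $\bA$, which is exactly what makes $\varphi$ well defined and what lets nontriviality be pulled back from $\bB$ to $\bA$. Everything else --- that clone homomorphisms push generated subclones to generated subclones, that $\varphi$ is onto --- is routine bookkeeping, so I do not anticipate a real obstacle.
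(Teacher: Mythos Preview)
Your argument is correct. The paper states this proposition without proof, treating it as a standard background fact, so there is no in-paper proof to compare against; your approach via the canonical surjective clone homomorphism $\Clo(\bA)\twoheadrightarrow\Clo(\bB)$ induced by $\bB\in\Var(\bA)$ is exactly the standard justification.
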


\begin{thm}[Rosenberg \cite{five-types}]\label{rosenberg-types} Suppose that $\bA = (A,f)$ is a finite clone-minimal algebra, and $f$ has minimal arity among nontrivial elements of $\Clo(\bA)$. Then one of the following is true:
\begin{enumerate}
\item $f$ is a unary operation which is either a permutation of prime order or satisfies $f(f(x)) \approx f(x)$,
\item $f$ is ternary, and $\bA$ is the idempotent reduct of a vector space over $\bF_2$,
\item $f$ is a ternary majority operation,
\item $f$ is a semiprojection of arity at least $3$,
\item $f$ is an idempotent binary operation.
\end{enumerate}
\end{thm}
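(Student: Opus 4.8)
The plan is to organize the argument by the arity $n$ of $f$. The observation used throughout is that, since $\bA$ is clone-minimal and $f$ has minimal arity among the nontrivial elements of $\Clo(\bA)$, every operation in $\Clo(\bA)$ of arity less than $n$ is a projection; in particular, identifying any two variables of $f$ yields a projection.

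Suppose first $n = 1$, so $f$ is a nontrivial unary map and, $A$ being finite, the iterates $f, f^2, f^3, \dots$ are eventually periodic. If $f$ is a permutation and $p$ is a prime dividing its order $m$, then $f^{m/p}$ has order $p$ and is nontrivial, so minimality gives $f \in \Clo(f^{m/p})$, which together with $f^{m/p}$ having prime order forces $m = p$. If $f$ is not a permutation, I would take $t$ to be a multiple of the eventual period that is at least the preperiod, so that $f^t$ is idempotent; since $f$ is not a permutation, $f^t$ is not the identity and hence nontrivial, so $f \in \Clo(f^t)$, whose only unary members are $\mathrm{id}$ and $f^t$, forcing $f = f^t$ and hence $f(f(x)) \approx f(x)$. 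This is case (1).

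Suppose next $n = 2$. Then $\Clo(\bA)$ has no nontrivial unary operation, so $f(x,x)\approx x$, and we are already in case (5). For $n \geq 3$ the same reasoning shows $f$ is idempotent, and by the opening observation every identification of two of its variables is a projection; when $n \geq 4$ this is exactly the hypothesis of the classical lemma of \'Swierczkowski, whose conclusion is that $f$ is a semiprojection, i.e.\ case (4).

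The remaining case $n = 3$ is where I expect the real difficulty. Each of $f(x,x,y)$, $f(x,y,x)$, $f(y,x,x)$ is a binary projection, so returns either its repeated or its lone argument, and I would split on this pattern. If all three return the repeated argument, $f$ is a majority operation (case (3)). If exactly one returns the lone argument, a direct check shows that $f$ coincides with some fixed projection whenever its arguments are not all distinct, i.e.\ $f$ is a semiprojection (case (4)). If all three return the lone argument, then $f$ is an idempotent Mal'cev minority operation, and I would fix $0 \in A$, set $x+y := f(x,0,y)$, and try to verify the axioms of an $\bF_2$-vector space; commutativity, the unit law, and $x+x = 0$ are immediate from the minority identities, but associativity --- and the equality $f(x,y,z)\approx x+y+z$ --- must be wrung out of minimality of $\Clo(f)$, roughly by proving the algebra abelian, hence affine over a ring, which $f(x,y,x)\approx y$ then pins down to $\bF_2$. (This step genuinely needs minimality: there are idempotent ternary minority operations, e.g.\ on a three-element set, that are not affine.) Finally, the patterns in which exactly two of the three identifications return the lone argument fit none of the five types, so they must be excluded --- the plan being to compose from such an $f$ a nontrivial operation $g \in \Clo(\bA)$ with $f \notin \Clo(g)$, contradicting clone-minimality. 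Carrying out this exclusion cleanly, and assembling the minority-to-affine argument, are the steps I expect to be the main obstacle.
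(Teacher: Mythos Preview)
The paper does not prove this theorem; it is quoted from Rosenberg \cite{five-types} and used as background. So there is no proof in the paper to compare against.

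Your outline is the standard skeleton of Rosenberg's argument, and the easy parts ($n=1$, $n=2$, $n\ge 4$ via \'Swierczkowski) are handled correctly. You have also correctly located the two genuinely hard steps in the $n=3$ case: (i) excluding the ``two-out-of-three minority'' patterns, and (ii) upgrading an idempotent minority operation to the affine operation $x+y+z$ over $\bF_2$. Your plan for (ii) --- prove abelianness, then use the Mal'cev term to get affineness, then pin down the ring --- is one legitimate route, though Rosenberg's original argument and most textbook treatments proceed more combinatorially, deriving the needed identities directly from clone-minimality rather than invoking commutator theory. For (i), your plan to exhibit a nontrivial $g\in\Clo(f)$ with $f\notin\Clo(g)$ is the right shape; concretely one shows that such an $f$ generates a clone containing a majority or a proper semiprojection, which then witnesses non-minimality of $\Clo(f)$ for this particular $f$. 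Both steps are real work and are exactly where the published proofs spend their effort, so your self-assessment of the obstacles is accurate.
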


Rosenberg's classification is not fully satisfactory: it can be very difficult to check whether a given majority operation, semiprojection, or idempotent binary operation generates a minimal clone. This paper is mainly concerned with the binary case. Previous authors have proven classifications of binary minimal clones under additional assumptions, such as the assumption that the binary operation is \emph{entropic} \cite{minimal-clones-entropic}, or the assumption that the associated algebra is \emph{weakly abelian} \cite{minimal-clones-weakly-abelian}, or the assumption that the number of binary operations in the clone is small \cite{minimal-clones-binary-few}. In this paper, we consider the general case of binary minimal clones.

\begin{defn} We say a property $\cP$ of functions $f$, or algebras $(A,f)$, is \emph{nice} if it satisfies the following conditions:
\begin{itemize}
\item Given $f$ as input, we can verify in polynomial time whether $f$ has property $\cP$,
\item If $f$ has property $\cP$ and $g \in \Clo(f)$ is nontrivial, then there is a nontrivial $f' \in \Clo(g)$ such that $f'$ has property $\cP$.
\item There exists a fixed nontrivial algebra $\bA_\cP$ such that for any nontrivial algebra $\bA = (A,f)$ where $f$ has the property $\cP$, the algebra $\bA_\cP$ is in the variety generated by $\bA$.
\item The collection of finite algebras $(A,f)$ such that $f$ has the property $\cP$ is closed under finite products, homomorphic images, and subalgebras, that is, it forms a pseudovariety.
\end{itemize}
\end{defn}

The first three cases in Rosenberg's classification are given by nice properties (with the caveat that in the case of unary permutations of prime order, we actually get an infinite family of nice properties indexed by the primes). As an example, we'll check that being a ternary majority operation is a nice property. The first, third, and fourth conditions are straightforward to verify. For the second condition, we use the following result from \cite{minimal-majority-few-terms}, which is used in that paper to simplify the study of minimal majority clones.

\begin{prop}[Waldhauser \cite{minimal-majority-few-terms}] If $f$ is a majority operation and $g \in \Clo(f)$ is nontrivial, then $g$ is a near-unanimity operation. In this case, $g$ has a majority term as an identification minor.
\end{prop}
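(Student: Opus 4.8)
The plan is to prove the contrapositive of the first assertion: if $g \in \Clo(f)$ is not a projection then $g$ is a near-unanimity operation; equivalently, if $g$ is \emph{not} a near-unanimity operation then $g$ is a projection. Write $\mathbf A = (A,f)$, so $f$ and every member of $\Clo(f)$ is idempotent, and recall that a term operation of $\mathbf A$ preserves every subalgebra of every power of $\mathbf A$. The key starting observation is that in any algebra with a majority term operation every two-element subset is a subuniverse, since the majority identities force $f(\{a,b\}^3) \subseteq \{a,b\}$. Applying this to $\mathbf A^2$, the operation $g$ preserves every two-element subset $\{u,v\} \subseteq A^2$, and the restriction of $g$ to such a subset is a term operation of a two-element algebra isomorphic to $(\{0,1\}, \text{majority})$, i.e.\ it corresponds to a monotone self-dual Boolean function $\beta_{u,v}$. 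An elementary computation shows each such $\beta_{u,v}$ is either a projection or a near-unanimity operation: if $\beta_{u,v}(e_i) = 1$ for some standard basis vector $e_i$, then monotonicity and self-duality force $\beta_{u,v} = \pi_i$; otherwise $\beta_{u,v}(e_i) = 0$ for all $i$, and then $\beta_{u,v}$ is a near-unanimity operation. Reading this off along the diagonal pairs $u = (a,a)$, $v = (b,b)$ gives that $g$ is a near-unanimity operation if and only if $\beta_{(a,a),(b,b)}$ is not a projection for every $a \neq b$.

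So suppose $g$ is not a near-unanimity operation; then $\beta_{(a,a),(b,b)} = \pi_k$ for some $a \neq b$ and some coordinate $k$, which means $g(a,\dots,a,b,a,\dots,a) = b$ with $b$ in position $k$. Next I would use the two-element subuniverses of $\mathbf A^2$ of the ``transport'' form $\{(p,q),(r,s)\}$ to propagate this: if $\beta_{(p,q),(r,s)}$ were a near-unanimity operation for a pair with, say, $p \neq r$, then evaluating $g$ coordinatewise would force $g(a,\dots,a,b,a,\dots,a) = a$ in the $k$-th-position coordinate, contradicting the previous line; and checking which coordinate $\beta_{(p,q),(r,s)}$ projects onto forces it to be the same index $k$. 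The conclusion is that $\beta_{u,v} = \pi_k$ for \emph{every} pair $u \neq v$ in $A^2$; in particular $g$ agrees with the $k$-th projection on every subset of $A$ of size at most two, and $g(x,\dots,x,y,x,\dots,x)$ equals $x$ or $y$ according as the varying position is $\neq k$ or $= k$.

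It remains to upgrade this local information to $g = \pi_k$ on all of $A^n$, and this rigidity step is where I expect the main work. My plan is a double induction on $|A|$ and on the arity $n$. If the inputs $c_1, \dots, c_n$ of a purported counterexample fail to generate $\mathbf A$, restricting $g$ to the subalgebra they generate (again a majority algebra satisfying all the same hypotheses) lets me induct on $|A|$. For non-injective input tuples I can induct on arity using identification minors: merging two coordinates $j_1, j_2 \neq k$ yields an $(n-1)$-ary $g^* \in \Clo(f)$ whose merged coordinate cannot be a ``$k$-type'' coordinate (there is at most one, by the propagation argument above, and the coordinate descended from $k$ already is one), so $g^*$ satisfies the same hypotheses with the same distinguished coordinate; merging $k$ with some $j \neq k$ is handled similarly using monotonicity of $f$. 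Carrying this out shows $g(\bar c) = c_k$ whenever $\bar c$ has a repeated entry. The residual case—an injective tuple that generates $\mathbf A$—is the delicate point, which I expect to settle by a Baker--Pixley-type interpolation argument exploiting that $g$ preserves \emph{all} binary invariant relations of $\mathbf A$, not just the two-element ones; this is the hard part of the argument.

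Finally, for the second assertion, once $g$ is known to be a near-unanimity operation of arity $n$ I would induct on $n$. If $n = 3$ then $g$ is itself a ternary majority operation. If $n \geq 4$, consider the identification minors obtained by merging two variables: each lies in $\Clo(f)$, and if such a minor is a projection it must project onto the merged coordinate (otherwise one contradicts the near-unanimity identities of $g$). If every merge were such a projection, then $g(\bar c)$ would be forced to equal every repeated entry of $\bar c$, which is impossible once $n \geq 4$ (take $\bar c$ with two disjoint pairs of equal entries carrying distinct values). Hence some merge is nontrivial, so by the first part it is a near-unanimity operation of arity $n - 1$, and by the inductive hypothesis it has a majority operation as an identification minor—which is then an identification minor of $g$ as well.
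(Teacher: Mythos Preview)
Your approach to the first assertion has a genuine gap at exactly the point you flag as ``the hard part.'' After your propagation argument you have established that $g$ agrees with $\pi_k$ on every two-element subset of $A$; equivalently, $g(c_1,\dots,c_n) = c_k$ whenever at most two distinct values occur among the $c_i$. What remains is to show that no semiprojection of arity $\ge 3$ lies in $\Clo(f)$, and your sketch does not do this. The induction on $|A|$ and on arity handles tuples with a repeated entry or tuples not generating $\mathbf A$, but the injective, generating case is the entire content of the claim, and the phrase ``Baker--Pixley-type interpolation'' is not an argument: Baker--Pixley tells you which functions are term operations, not which term operations are projections. I do not see how to complete your line without essentially importing a different idea.

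The paper's proof avoids all of this by a one-paragraph structural induction on the term building $g$ from $f$. Write $g = f(g_1,g_2,g_3)$; inductively each $g_i$ is a projection or a near-unanimity operation. If two of the $g_i$ are the same projection $\pi_j$, then $g = \pi_j$ by the majority identity. Otherwise, for each coordinate $j$ at most one $g_i$ equals $\pi_j$, so when all inputs but the $j$th are set to $x$ at least two of the $g_i$ evaluate to $x$, hence $g$ does too; this holds for every $j$, so $g$ is near-unanimity. No appeal to two-element subuniverses, monotone self-dual Boolean functions, or Baker--Pixley is needed, and the argument works over any domain.

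Your treatment of the second assertion is correct and is essentially an unpacking of {\'S}wierczkowski's Lemma, which the paper simply cites: a near-unanimity operation is never a semiprojection, so for $n \ge 4$ some identification of two variables yields a nontrivial (hence, by the first part, near-unanimity) operation of smaller arity.
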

\begin{proof} The proof is by induction on the construction of $g$ in terms of $f$. If $g = f(g_1,g_2,g_3)$, then by the inductive hypothesis each $g_i$ is either a near-unanimity term or a projection. If two of the $g_i$s are equal to the same projection, then so is $g$. Otherwise, if we identify all but one of the inputs to $g$ with $x$, then by the inductive hypothesis at least two of $g_1,g_2,g_3$ evaluate to $x$, so $g$ is a near-unanimity operation.

Since any near-unanimity operation is not a semiprojection, by {\'S}wierczkowski's Lemma from \cite{semiprojection-lemma} if $g$ is not already majority we can identify two of the inputs to $g$ to get a nontrivial $g' \in \Clo(g)$ of smaller arity. Thus $g$ has a majority term as an identification minor.
\end{proof}

The fact that being a majority operation is a nice property implies that in order to check whether a majority clone $\Clo(f)$ is minimal, one only needs to enumerate the ternary majority operations $g \in \Clo(f)$ and check that $f \in \Clo(g)$ for each one. While this may be difficult, we are at the very least assured that if $f$ is a majority operation, then $\Clo(f)$ contains \emph{some} minimal majority clone (in fact every minimal clone contained in $\Clo(f)$ will be a majority clone).

On the other hand, if one is given a binary idempotent operation $f$, then it can be difficult to rule out the possibility that $\Clo(f)$ might contain a semiprojection of very large arity. As a result, checking whether a binary operation generates a minimal clone could in principle be an enormous undertaking. The goal of the present paper is to provide a \emph{coarse classification} of binary minimal clones, that is, a list $\cP_1, \cP_2, ...$ of nice properties such that every binary minimal clone contains an operation satisfying exactly one of the nice properties $\cP_i$.

We briefly recall a few standard results about Taylor algebras. A Taylor operation is an idempotent operation which satisfies a collection of height 1 identities which cannot be simultaneously satisfied by any particular projection operation. More concretely, an operation $t$ is Taylor if it satisfies a system of identities of the form
\begin{align*}
t(x, ?, ..., ?) &\approx t(y, ?, ..., ?),\\
t(?, x, ..., ?) &\approx t(?, y, ..., ?),\\
&\cdots\\
t(?, ?, ..., x) &\approx t(?, ?, ..., y),
\end{align*}
where the $?$s are filled in somehow with $x$s and $y$s.

\begin{prop}[Proposition 4.14 of \cite{bulatov-jeavons-varieties}, Proposition 2.1 of \cite{maltsev-conditions-hs}, or Corollary 4.2.1 of \cite{zhuk-strong}] If an algebra $\bA$ is finite and idempotent, then $\bA$ has a Taylor term operation if and only if there is no two-element projection algebra in $HS(\bA)$, where $HS(\bA)$ is the collection of homomorphic images of subalgebras of $\bA$.
\end{prop}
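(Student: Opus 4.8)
The forward direction is easy. Suppose $\bA$ has a Taylor term $t$ of arity $m$, with witnessing identities $t(\bar u^i) \approx t(\bar v^i)$ for $i = 1, \dots, m$, where $\bar u^i, \bar v^i \in \{x,y\}^m$ have $i$-th coordinates $x$ and $y$ respectively. Since every algebra in $HS(\bA)$ satisfies every identity of $\bA$, the operation $t^\bB$ satisfies all of these identities for each $\bB \in HS(\bA)$. If such a $\bB$ were a two-element projection algebra, then $t^\bB$ would be a projection onto some coordinate $j$; evaluating the $j$-th identity with two distinct elements of $B$ substituted for $x$ and $y$ would then force those elements to coincide, a contradiction. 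Hence $HS(\bA)$ contains no two-element projection algebra.

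For the converse, assume $HS(\bA)$ has no two-element projection algebra. This hypothesis is equivalent to the statement that every two-element member of $HS(\bA)$ has a Taylor term, because a two-element idempotent algebra that is not a projection algebra has, by the classical description of clones on a two-element set, a semilattice, majority, or minority term operation, each of which is Taylor. The real task is to manufacture from these ``local'' Taylor terms a single Taylor term for $\bA$. A convenient way to phrase the goal uses the free algebra $\bF := \mathbf{F}_{\Var(\bA)}(x,y,z)$, which is finite: realize it as $\Sg_{\bA^{A^3}}(\pi_1,\pi_2,\pi_3)$, the subalgebra of $\bA^{A^3}$ generated by the three coordinate projections. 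Inside $\bF \times \bF$, let $\bD$ be the subalgebra generated by the six pairs $(x,y),(y,x),(x,z),(z,x),(y,z),(z,y)$, and let $\Delta = \{(f,f) : f \in \bF\}$ be the diagonal. It suffices to prove $\bD \cap \Delta \neq \emptyset$: a diagonal element $(g,g) \in \bD$ exhibits $g$ as $t^\bF(e_1,\dots,e_m) = t^\bF(f_1,\dots,f_m)$ for some $m$-ary term $t$ and pairs $(e_\ell,f_\ell)$ among the six generators, so $\Var(\bA)$ satisfies $t(e_1,\dots,e_m) \approx t(f_1,\dots,f_m)$, an identity in which $e_\ell \neq f_\ell$ in every coordinate; substituting $e_\ell \mapsto x$ and the other two outer variables $\mapsto y$ turns it into the $\ell$-th identity of a Taylor system for $t$.

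To establish $\bD \cap \Delta \neq \emptyset$ I would argue by contraposition: if $\bD \cap \Delta = \emptyset$, then $HS(\bA)$ does contain a two-element projection algebra, contradicting the standing hypothesis. This last implication is the crux, and I expect it to be the main obstacle. Note that when $\bD \cap \Delta = \emptyset$, the subalgebra $\bD$ is a symmetric, loopless, compatible relation on $\bF$ whose two projections are onto --- a nontrivial compatible ``graph'' on $\bF$ with no loops and no isolated vertices. One then wants to pass to a subalgebra of $\bA$ together with a congruence on it modulo which every term operation is forced, coordinate by coordinate, to act as a projection on the (necessarily two-element) quotient; the intuition is that any genuinely non-projection behaviour of a term, applied to the edges of this graph, would eventually let one build a loop in $\bD$. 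Making this precise is where the content of the theorem lies, and it is done differently in the three cited references: via tame congruence theory (reducing through congruences to a simple idempotent algebra, which --- having no two-element projection quotient --- must be of affine, Boolean, lattice, or semilattice type, each supplying a Taylor term), via the loop lemma and absorption theory, and via Zhuk's methods.
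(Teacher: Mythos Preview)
The paper does not prove this proposition; it is quoted as a known result with three references, so there is no ``paper's proof'' to compare against. Your sketch is therefore being judged on its own.

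Your forward direction is correct. For the converse, your reformulation via the subalgebra $\bD \le \bF \times \bF$ generated by the six off-diagonal pairs is a clean and standard device, and your verification that a diagonal element of $\bD$ yields Taylor identities (by substituting, for each coordinate $\ell$, $e_\ell \mapsto x$, $f_\ell \mapsto y$, remaining variable $\mapsto y$) is correct. This is essentially Taylor's original reduction and is closely related to the arguments that produce Siggers/Ol\v{s}\'ak terms.

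You are right that the entire content of the theorem lives in the step you defer: showing that if $\bD$ misses the diagonal then $HS(\bA)$ contains a two-element projection algebra. Your one-line summaries of how the three references handle this are serviceable but a bit loose. In particular, the ``tame congruence theory'' sentence is garbled: one does not reduce to a simple algebra and read off its type; rather, one shows that the absence of a Taylor term forces a type~$\mathbf{1}$ covering somewhere in $HS(\bA)$, and the minimal set analysis there produces the two-element projection quotient. The absorption/loop-lemma route (Barto--Kozik) is closer in spirit to your $\bD$ picture: $\bD$ is a symmetric loopless subdirect relation on a finite algebra, and the loop lemma says such a relation cannot exist over a Taylor algebra --- so its existence forces a non-Taylor subquotient, hence a two-element projection algebra. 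If you want to turn your sketch into a proof rather than a pointer, that is the argument to flesh out.
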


As a starting point, a previous paper by the present author has given a coarse classification of minimal clones which have a Taylor operation, into just three cases. Two of these cases are defined by nice properties, while the remaining case (vector spaces over a prime field) is given in terms of an infinite family of nice properties, one for each prime $p$.

\begin{thm}[Clone-minimal Taylor algebras \cite{brady-examples}]\label{thm-taylor-minimal} Suppose $\bA$ is a finite algebra which is both clone-minimal and Taylor. Then one of the following is true:
\begin{enumerate}
\item $\bA$ is the idempotent reduct of a vector space over $\bF_p$ for some prime $p$,
\item $\bA$ is a majority algebra,
\item $\bA$ is a spiral.
\end{enumerate}
\end{thm}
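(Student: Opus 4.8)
The plan is to run Rosenberg's classification against the Taylor hypothesis. Since a Taylor operation is idempotent, $\bA$ is idempotent; choose a nontrivial $f \in \Clo(\bA)$ of minimal arity, so that $\Clo(\bA) = \Clo(f)$, and apply Theorem~\ref{rosenberg-types} to $f$. A unary idempotent operation is the identity and hence trivial, so case~(1) of that theorem is impossible. If $f$ were a semiprojection of arity $n \ge 3$, then (nontriviality of $f$ forces $|A| \ge 3$) every two-element subset $\{a,b\} \subseteq A$ would be a subalgebra of $\bA$ on which $f$ acts as a projection, since any $n$-tuple over a two-element set has a repeated coordinate; thus $\bA$ would have a two-element projection algebra in $HS(\bA)$, contradicting the Taylor hypothesis, and case~(4) is out. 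In case~(3), with $f$ ternary and $\bA$ the idempotent reduct of an $\bF_2$-space, we are immediately in conclusion~(1); in case~(2), with $f$ a ternary majority operation, $\Clo(\bA) = \Clo(f)$ is a majority clone and $\bA$ is a majority algebra, conclusion~(2). The entire remaining content is the binary case: $f(x,y) = x \cdot y$ with $\bA = (A,\cdot)$ idempotent, clone-minimal, and Taylor.

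In the binary case I would split on whether $\bA$ is abelian (i.e.\ the term condition holds). If $\bA$ is abelian, then since a Taylor algebra omits tame-congruence type~$1$, the type set of $\bA$ equals $\{2\}$, so $\Var(\bA)$ is congruence modular; a congruence modular abelian algebra is affine (Herrmann), hence $\bA$ is term-equivalent to the idempotent reduct of a module over a finite ring. Now clone-minimality is invoked together with the classification of minimal affine clones, which identifies them with idempotent reducts of vector spaces over a prime field $\bF_p$; since $\bF_2$ admits no nontrivial idempotent binary term --- that possibility has already been absorbed into the ternary case~(3) --- here $p$ is odd, and we land in conclusion~(1).

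If $\bA$ is not abelian, the goal is conclusion~(3), and this is where I expect essentially all of the difficulty to lie. Two structural facts would drive the argument. First, the only idempotent binary operations on a two-element set are the two projections and the two (commutative, associative) semilattice operations, so by the Taylor hypothesis every two-element algebra in $HS(\bA)$ is a semilattice --- informally, $\bA$ ``looks like a semilattice at scale two''. Second, clone-minimality forces every nontrivial binary operation derived from $\cdot$ --- such as $(x\cdot y)\cdot x$, $x\cdot(x\cdot y)$, or $(x\cdot y)\cdot(y\cdot x)$ --- either to be a projection or to regenerate $\cdot$; the first alternative yields strong identities outright, and the Taylor hypothesis (via the scale-two observation) prevents the second alternative from collapsing onto a projection algebra in a quotient. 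The plan is to combine the failure of the term condition with these two facts to pin down the two- and three-generated subalgebras of $\bA$ completely, and thereby verify the defining identities of a spiral --- commutativity of $\cdot$ together with $\Sg\{a,b\} = \{a,b,a\cdot b\}$ for all $a,b$ --- which, being given by height-$1$ identities and a bound on two-generated subalgebras, are then inherited throughout. The hard part is exactly this final case analysis: showing that under the combined constraints of idempotency, the Taylor condition, clone-minimality, and non-abelianness, no binary idempotent algebra other than a spiral survives --- that is, that there is no room for a larger or more exotic non-affine binary minimal clone to slip through.
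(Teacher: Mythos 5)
This theorem is not proved in the paper at all: it is imported verbatim from \cite{brady-examples}, so there is no internal proof to compare against, and the question is whether your outline could substitute for that external argument. It cannot, because the decisive case is missing. The eliminations you do carry out are sound in outline: idempotence kills the unary case; a semiprojection of arity $\ge 3$ makes every two-element subset a projection subalgebra, contradicting the Taylor hypothesis; the ternary $\bF_2$ case and the majority case map directly onto conclusions (1) and (2); and the abelian binary case plausibly reduces to the known classification of minimal affine clones (though your step ``abelian $+$ omits type $1$ $\Rightarrow$ type set $\{2\}$ $\Rightarrow$ congruence modular'' is not a proof as written --- you would want to invoke, e.g., the Kearnes--Szendrei theorem that abelian Taylor algebras are affine, since omitting type $1$ for $\bA$ alone does not immediately give modularity of $\Var(\bA)$). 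But the entire substance of the theorem is the remaining claim: a binary, clone-minimal, Taylor, non-affine, non-majority algebra is a spiral. For that you offer only a plan and explicitly defer ``the hard part,'' so the proposal does not prove the statement.

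Moreover, the plan aims at the wrong target. You propose to verify commutativity together with $\Sg_\bA\{a,b\} = \{a,b,a\cdot b\}$ for all $a,b$, but that is strictly stronger than the actual spiral condition (Definition \ref{defn-spiral}), which requires only that whenever $\{a,b\}$ is not a subalgebra, $\Sg_\bA\{a,b\}$ admits a surjective homomorphism onto the free two-generated semilattice; two-generated subalgebras of spirals can be arbitrarily large, so no bound on two-generated subalgebras is available, and the spiral condition is not a family of height-$1$ identities, so it is not ``inherited throughout'' in the way you suggest. Likewise, the existence of a commutative binary term in a binary clone-minimal Taylor algebra is used in this paper as a \emph{consequence} of the theorem you are trying to prove, not as an easy input one can extract from the two structural facts you list. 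In short: the routine Rosenberg-style reductions are fine, but the non-abelian binary case --- which is where the cited paper does all of its work --- is left as an acknowledged gap, and the criterion you plan to verify there does not match the definition of a spiral.
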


The last case above was one of the author's main motivations for introducing the concept of nice properties. Spirals are defined below. We use the following standard notation: for $S$ a subset of an algebra $\bA$, the \emph{subalgebra generated} by $S$, written $\Sg_\bA(S)$, is the smallest subset of $\bA$ which contains $S$ and is closed under the operations of $\bA$.

\begin{defn}\label{defn-spiral} An algebra $\bA = (A,f)$ is a spiral if $f$ is binary, idempotent, commutative, and for any $a,b \in \bA$ either $\{a, b\}$ is a subalgebra of $\bA$, or $\Sg_\bA\{a, b\}$ has a surjective homomorphism to the free semilattice on two generators.
\end{defn}

The reader may find it instructive to check that being a spiral is a nice property. While this follows from the results of \cite{brady-examples}, it is simple enough to give a direct argument. For now, we just note that it is possible to test whether a given algebra $\bA = (A,f)$ is a spiral in time polynomial in $|A|$: for any $a,b$ with $\{a,b\}$ not a subalgebra of $\bA$, if $\Sg_\bA\{a,b\} = \{a,b\} \cup S$ with $a,b \not\in S$, then any surjective homomorphism from $\Sg_\bA\{a,b\}$ to the free semilattice on two generators $x,y$ must map $a$ to one generator, say $x$, map $b$ to the other generator $y$, and map every element of $S$ to $xy$. Thus, the existence of such a surjective homomorphism is equivalent to $S$ being a subalgebra of $\bA$ such that $f(a,S) \subseteq S$ and $f(b,S) \subseteq S$.

The main classification result of this paper is the following coarse classification of the non-Taylor binary minimal algebras. To save space when writing out identities, sometimes we will abbreviate the basic binary operation $f$ of the algebras we study with the infix `$\cdot$', or with simple juxtaposition, as is common in the study of groupoids.

\begin{thm}\label{main-thm} Suppose that $\bA = (A,f)$ is a binary clone-minimal algebra which is not Taylor. Then, after possibly replacing $f(x,y)$ by $f(y,x)$, one of the following is true:
\begin{enumerate}
	\item $\bA$ is a rectangular band, i.e. an idempotent groupoid satisfying $(xy)(zw) \approx xw$,
	\item $\bA$ is a $p$-cyclic groupoid for some prime $p$,
	\item there is a nontrivial $s \in \Clo(f)$ which is a ``partial semilattice operation'':
	\[
	s(x,s(x,y)) \approx s(s(x,y),x) \approx s(x,y),
	\]
	\item $\bA$ is an idempotent groupoid satisfying $(xy)(zx) \approx xy$ (a ``meld''),
	\item $\bA$ is a ``dispersive algebra'' (see Definition \ref{defn-dispersive}).
\end{enumerate}
Furthermore, each of these cases is defined by a nice property, other than the case of $p$-cyclic groupoids, which consists of an infinite family of nice properties indexed by the primes.
\end{thm}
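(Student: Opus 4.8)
The plan is to start from Rosenberg's classification (Theorem \ref{rosenberg-types}) applied to $\bA = (A,f)$. Since $\bA$ is not Taylor, it cannot have a majority term operation, a Mal'cev-like ternary operation, or fall in case (2) of Rosenberg (idempotent reduct of an $\bF_2$-vector space — which is Taylor), and being binary and nontrivial rules out the unary cases. So $f$ is an idempotent binary operation and, since $\bA$ is not Taylor, $\Clo(f)$ contains a two-element projection subalgebra. The first move is to use that two-element projection subalgebra $\{0,1\}$ with $f|_{\{0,1\}}(x,y) = x$ (after possibly swapping the arguments of $f$, which is the ``replacing $f(x,y)$ by $f(y,x)$'' clause) as a lever: evaluating terms on inputs from $\{0,1\}$ gives a Boolean function for each $t \in \Clo(f)$, and the set of realizable Boolean functions is a clone of Boolean functions containing the first projection, hence is one of a short, explicitly known list (by Post's lattice). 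This case split on the ``Boolean trace'' of $\Clo(f)$ is the organizing principle.

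Next I would translate each possible Boolean trace into a structural identity on $\bA$. The key identities to extract are: whether $f(x,f(y,x)) \approx f(x,y)$-type absorption holds (leading toward partial semilattices / melds), whether $f$ restricted to two-generated subalgebras is ``rectangular'' ($(xy)(zw)\approx xw$), and whether on two-generated subalgebras $f$ behaves like a $p$-cyclic groupoid operation. Concretely, for each $a,b \in A$ one looks at $\Sg_\bA\{a,b\}$ and its behavior; minimality of $\Clo(f)$ forces strong uniformity across all pairs, because any term witnessing bad behavior on one pair must, via $f \in \Clo(g)$ for every nontrivial $g$, propagate. The cases of Theorem \ref{main-thm} are then: rectangular band (the trace and the two-generated subalgebras are rectangular), $p$-cyclic groupoid (the trace is the ``mod $p$'' clone and the affine-like structure is visible on pairs), the existence of a nontrivial partial semilattice term $s$ (extracted as a specific composition when an absorption-type identity is available), meld (the weaker identity $(xy)(zx)\approx xy$), and finally the dispersive case, which is the catch-all: Definition \ref{defn-dispersive} should be arranged precisely so that an algebra which escapes all the previous structural dichotomies lands here.

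For the ``furthermore'' clause I would verify the four bullets of the definition of \emph{nice} for each case. Polynomial-time checkability: rectangular bands, melds, and the partial-semilattice condition are finite systems of identities or a bounded search over terms of bounded arity, and dispersiveness will be checkable by inspection of its definition; $p$-cyclic groupoids for fixed $p$ likewise. The closure under $H$, $S$, $P$ (pseudovariety) is automatic for equationally defined classes (rectangular bands, melds, $p$-cyclic groupoids) and needs a short argument for the partial-semilattice and dispersive cases. The ``fixed witness algebra $\bA_\cP$ in the variety'' condition is witnessed by the obvious two- or three-element example in each case (the two-element left-zero band, the two-element semilattice, $\ZZ_p$ as a $p$-cyclic groupoid, etc.), using the second Proposition of the excerpt that nontrivial subalgebras/quotients of clone-minimal algebras are clone-minimal to keep the witness nontrivial. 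The genuinely delicate condition is the second one — that the property persists under passing from $f$ to a nontrivial $g \in \Clo(f)$ and back down to a nontrivial $f' \in \Clo(g)$ — and this is where most of the work lies: one must show, e.g., that a nontrivial term over a meld again admits a binary nontrivial identification minor satisfying $(xy)(zx)\approx xy$, analogous to the majority/near-unanimity argument quoted from Waldhauser, and similarly in each other case.

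The main obstacle I anticipate is isolating the dispersive case correctly: the whole theorem hinges on Definition \ref{defn-dispersive} being chosen so that it is simultaneously (i) a nice property and (ii) exactly the complement, among non-Taylor binary clone-minimal algebras, of the union of the first four cases. Getting the propagation/persistence property (niceness condition two) for dispersive algebras — showing that dispersiveness survives the descent to subterms and re-ascent via identification minors — is likely to require a careful combinatorial invariant attached to $f$, and I expect that to be the technical heart of the argument, with the Boolean-trace case analysis and the verification of niceness for the equational cases being comparatively routine.
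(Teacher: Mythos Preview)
Your organizing principle---the ``Boolean trace'' via Post's lattice---does not work. Since $\bA$ is not Taylor, the two-element subalgebra $\{0,1\}$ you single out is a \emph{projection algebra}: every term of $\Clo(f)$ restricts to a projection on it. So the clone of Boolean functions you obtain is always just the clone of projections, and there is no case split to be had. (This is essentially the content of Proposition~\ref{prop-pi1} and Corollary~\ref{not-rect-pi1} in the paper: the restriction map $\Clo_2(\bA) \to \{\pi_1,\pi_2\}$ carries no information beyond the single bit ``which projection.'') Similarly, the suggestion that the $p$-cyclic case arises when ``the trace is the mod $p$ clone'' cannot be right, since there is no mod $p$ clone on a two-element projection algebra.

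The paper's actual case split is entirely different and comes from the internal structure of $\Clo_2^{\pi_1}(\bA)$ under the associative compositions $*_1$ and $*_c$ defined by $(f*_1g)(x,y)=f(g(x,y),y)$ and $(f*_cg)(x,y)=f(g(x,y),g(y,x))$. The dichotomy is whether these make $\Clo_2^{\pi_1}(\bA)$ into a group or not. If $*_c$ gives a group, a counting argument via the orbit--stabilizer theorem (Lemma~\ref{find-fixed-point}) produces a fixed point that forces the $p$-cyclic identities; if not, the iteration $f^{\infty_c}$ or $f^{\infty_1}$ yields a nontrivial idempotent under the relevant $*$, i.e.\ a term satisfying $g(g(x,y),y)\approx g(x,y)$ or $t(t(x,y),t(y,x))\approx t(x,y)$. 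From there, the separation of melds from dispersive algebras is obtained not by ``Boolean trace'' considerations but by a linked-relation argument (Lemma~\ref{lem-pi1-not-pi2}, Corollary~\ref{cor-linked}) on $\Sg_{\bA^2}\{(a,b),(b,a)\}$, combined with the absorption-identity machinery of Proposition~\ref{absorb-prop}. None of these ingredients appear in your sketch, and the Boolean-trace idea cannot substitute for them.
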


Most of the cases appearing in the classification have been described by previous authors. Rectangular bands are well-known in the theory of bands. The structure of $p$-cyclic groupoids was described by P{\l}onka \cite{plonka-p-cyclic} (see also \cite{representations-n-cyclic-groupoids}). Partial semilattice operations were isolated by Bulatov \cite{colored-graph} in his study of colored graphs attached to finite Taylor algebras, but they were not named there or studied for their own sake (in the more recent paper \cite{bulatov-local-structure-I}, Bulatov refers to the partial semilattice identities as the ``Semilattice Shift Condition''). Melds were described by L\'evai and P\'alfy in Theorem 5.2(e) of \cite{minimal-clones-binary-few}, which classifies binary minimal clones having exactly four binary operations, but melds were not given a name there, and no structure theory for them was given.

The last case of the above theorem - the case of dispersive algebras - has a definition which has a similar flavour to the definition of spirals, but which is somewhat harder to work with in practice. The author has struggled for some time to give a structure theory for this case, but was ultimately unsuccessful. Several difficult conjectures about the structure of dispersive algebras are given at the end of the next section.

To define the dispersive algebras, we first define the variety $\cD$ of idempotent groupoids satisfying
\begin{align}\tag{$\cD 1$}
x(yx) \approx (xy)x \approx (xy)y \approx (xy)(yx) \approx xy,
\end{align}
\begin{align}\tag{$\cD 2$}
\forall n \ge 0 \;\;\; x(...((xy_1)y_2)\cdots y_n) \approx x.
\end{align}
This variety of minimal clones appears in L\'evai and P\'alfy \cite{minimal-clones-binary-few}, and the notation $\cD$ for this variety is from Waldhauser's thesis \cite{minimal-clones-waldhauser}. We use the notation $\cF_\cV(x,y)$ to denote the \emph{free algebra} on two generators in any variety $\cV$.

\begin{prop}[L\'evai, P\'alfy \cite{minimal-clones-binary-few}] If $\bA \in \cD$, then $\Clo(\bA)$ is a minimal clone. Also, $\cF_\cD(x,y)$ has exactly four elements: $x,y,xy,yx$.
\end{prop}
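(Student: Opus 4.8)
The plan is to prove the assertion about $\cF_\cD(x,y)$ first, since it feeds into the minimality argument.

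\emph{Four elements.} For the upper bound I would check that $\{x,y,xy,yx\}$ is closed under multiplication modulo the identities of $\cD$: using idempotence, the identities $(\cD 1)$, and the $n=1$ instance $x(xy)\approx x$ of $(\cD 2)$, each of the sixteen products of two of these four terms reduces to one of them -- for instance $x(xy)\approx x$, $x(yx)\approx xy$, $(xy)x\approx(xy)y\approx(xy)(yx)\approx xy$, $(xy)(xy)\approx xy$, and the remaining cases by the symmetry $x\leftrightarrow y$. Since $x,y$ generate $\cF_\cD(x,y)$, this gives $|\cF_\cD(x,y)|\le 4$. For the lower bound I would exhibit an explicit four-element $\bB\in\cD$ generated by two elements: take $B=\{x,y,xy,yx\}$, declare $xy$ and $yx$ to be left zeros (so $(xy)z=xy$ and $(yx)z=yx$ for all $z$), and on $\{x,y\}$ put $xx=x$, $x\cdot xy=x$, $x\cdot yx=xy$, together with the mirror rules under $x\leftrightarrow y$, $xy\leftrightarrow yx$. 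Checking $(\cD 1)$ and $(\cD 2)$ in $\bB$ is routine: every instance with a left zero in the leading position is immediate, and the instances led by $x$ (hence, via the mirror automorphism, by $y$) hold because any left-nested product $(\cdots((x\,s_1)s_2)\cdots s_n)$ evaluates in $\bB$ to $x$ or to $xy$, each of which $x$ absorbs on the right. The free map $x\mapsto x$, $y\mapsto y$ then gives a surjection $\cF_\cD(x,y)\twoheadrightarrow\bB$, so $|\cF_\cD(x,y)|\ge 4$; combined with the upper bound, $\cF_\cD(x,y)=\{x,y,xy,yx\}$.

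\emph{Minimality.} The main idea is to isolate the right invariant of terms. Since the $n$-element left-projection algebra $\mathbf{L}_n$ (with $a\cdot b=a$) lies in $\cD$, the map $x_i\mapsto i$ extends to a homomorphism $\cF_\cD(x_1,\dots,x_n)\to\mathbf{L}_n$, which sends a term to its leftmost variable; hence the leftmost variable $\ell(t)$ is a well-defined function on $\cF_\cD(x_1,\dots,x_n)$. Two lemmas then finish things. \textbf{(i)} If $\ell(w)=x_i$ then $x_i\cdot w\approx x_i$: flattening $w$ along its left spine writes it as a left-nested product $(\cdots((x_i\,s_1)s_2)\cdots s_n)$, and $x_i\cdot w\approx x_i$ is then exactly an instance of $(\cD 2)$. \textbf{(ii)} For every nontrivial term $t$ with $\ell(t)=x_i$, the substitution $\psi$ sending $x_i\mapsto x$ and every other variable to $y$ satisfies $t[\psi]\approx xy$. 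I would prove (ii) by induction on the structure of $t=uv$: if $u$ is a variable it must be $x_i$, and then $\ell(v)\ne x_i$ by (i) (else $t$ would be trivial), so $v[\psi]\in\{y,yx\}$ and $t[\psi]=x\cdot v[\psi]\approx xy$ using $x(yx)\approx xy$ from $(\cD 1)$; if $u$ is nontrivial then $\ell(u)=x_i$, so $u[\psi]\approx xy$ by induction and $t[\psi]=(xy)\cdot v[\psi]\approx xy$ since $xy$ absorbs every element of $\cF_\cD(x,y)$ on the right.

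Granting (ii), minimality is immediate: if $g\in\Clo(\bA)$ is nontrivial, write $g=t^\bA$; since $g$ is not a projection, $t$ is a nontrivial term, so substituting according to $\psi$ into the inputs of $g$ produces $f^\bA\in\Clo(g)$, whence $\Clo(\bA)=\Clo(f^\bA)\subseteq\Clo(g)\subseteq\Clo(\bA)$. Thus every nontrivial member of $\Clo(\bA)$ generates it, which is exactly minimality. I expect the main obstacle to be the minimality half rather than the (tedious but routine) verification that $\bB\in\cD$: the crux is recognizing that ``leftmost variable'' is the correct invariant and that the left-nested normal form lets an arbitrary nontrivial term be collapsed to $xy$ by the single substitution $\psi$ -- this is the step that makes essential use of the full infinite family $(\cD 2)$.
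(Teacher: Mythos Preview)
Your argument is correct and essentially matches the approach the paper takes (the proposition itself is only cited, but the key lemma you call (ii) is precisely the content of the paper's Proposition~\ref{dispersive-absorption}, stated contrapositively). The leftmost-variable invariant and the reduction of any nontrivial term to $xy$ via the substitution $\psi$ is exactly the right idea, and your verification of the four-element free algebra is fine.

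One small wrinkle in your write-up of (ii): your case split ``$u$ is a variable'' versus ``$u$ is nontrivial'' is not exhaustive when the induction is on syntactic term structure, since $u$ could be syntactically compound but equal to a projection in $\cD$ (e.g.\ $u = x_i(x_i x_j)$). The fix is to split instead on whether $u$ is \emph{trivial} (i.e.\ $u\approx x_i$ in $\cD$) or not: if trivial, then $u\approx x_i$ since $\ell(u)=x_i$, and your ``variable'' argument goes through verbatim; if nontrivial, you may invoke the induction hypothesis. With that adjustment the induction is clean.
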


\begin{defn}\label{defn-dispersive} An idempotent groupoid $\bA$ is \emph{dispersive} if it satisfies \eqref{D2} and if for all $a,b \in \bA$, either $\{a,b\}$ is a two element subalgebra of $\bA$ or there is a surjective homomorphism
\[
\Sg_{\bA^2}\Big\{\begin{bmatrix} a\\ b\end{bmatrix}, \begin{bmatrix} b\\ a\end{bmatrix}\Big\} \twoheadrightarrow \cF_\cD(x,y).
\]
\end{defn}

The name ``dispersive'' for such algebras was chosen in order to reflect the fact that they satisfy very few \emph{absorption identities}. Absorption identities are a crucial tool in the study of minimal clones. (There is an unfortunate naming collision here: absorption identities have nothing to do with the theory of absorbing subalgebras from \cite{cyclic} which recently found many applications in the study of Taylor algebras.)

\begin{defn} An \emph{absorption identity} is an identity of the form
\[
t(x_1, ..., x_n) \approx x_i.
\]
\end{defn}

\begin{prop}[Kearnes \cite{minimal-clones-abelian}, L\'evai, P\'alfy \cite{minimal-clones-binary-few}]\label{absorb-prop} If $\bA$ is clone-minimal and $\bB \in \Var(\bA)$ is nontrivial, then any absorption identity which holds in $\bB$ must also hold in $\bA$.
\end{prop}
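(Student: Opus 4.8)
The plan is to exploit the natural surjective clone homomorphism $\phi \colon \Clo(\bA) \twoheadrightarrow \Clo(\bB)$, which exists precisely because $\bB \in \Var(\bA)$: since every identity true in $\bA$ is true in $\bB$, the rule sending a term operation $s^{\bA}$ to $s^{\bB}$ is well defined, and it manifestly preserves projections and composition. Write the given absorption identity as $t(x_1, \dots, x_n) \approx x_i$, so that $t^{\bB}$ is the $i$-th projection on $B$; the goal is to show that $t^{\bA}$ is also the $i$-th projection.

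First I would dispose of the case that $t^{\bA}$ is itself a projection, say $t^{\bA} = \pi_j^{\bA}$. Then $t(x_1, \dots, x_n) \approx x_j$ holds in $\bA$, hence in $\bB$, so on $B$ the $i$-th and $j$-th projections coincide; since $\bB$ is nontrivial it has at least two elements (a one-element algebra has only projection operations), which forces $i = j$, and we are done.

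The remaining case is that $t^{\bA}$ is nontrivial. Here I would invoke clone-minimality of $\bA$, which gives $\Clo(t^{\bA}) = \Clo(\bA)$; in particular the basic operation $f^{\bA}$ is a composite of copies of $t^{\bA}$ together with projections. Applying $\phi$ and using $\phi(t^{\bA}) = t^{\bB} = \pi_i^{\bB}$, we conclude that $f^{\bB}$ is a composite of projections, hence itself a projection. But then every term operation of $\bB$ is a projection, so $\bB$ is a projection algebra, contradicting the nontriviality of $\bB$. Therefore this case cannot occur, and the identity $t(x_1, \dots, x_n) \approx x_i$ holds in $\bA$.

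I do not expect a real obstacle here; the only things to be careful about are (i) verifying that $\phi$ is genuinely a clone homomorphism, whose well-definedness is exactly the statement that $\bB$ satisfies all identities of $\bA$, and (ii) recording that the paper's notion of a nontrivial algebra guarantees $|\bB| \ge 2$, so that distinct projections on $B$ really are distinct operations.
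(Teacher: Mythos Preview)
Your proof is correct and follows essentially the same approach as the paper. The paper argues by contradiction that if $t \approx x_i$ holds in $\bB$ but not in $\bA$, then $\Clo(t)$ is a nontrivial proper subclone of $\Clo(\bA)$: nontrivial because $t$ cannot act as any projection on $\bA$, and proper because the $t$-reduct of $\bB$ is a projection algebra; your clone-homomorphism phrasing of Case 2 is exactly this last step, and your Case 1 makes explicit the ``$t$ cannot act as any projection'' part that the paper leaves implicit.
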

\begin{proof} Suppose that the absorption identity $t(x_1, ..., x_n) \approx x_i$ holds in $\bB$ but not in $\bA$. Then $t$ generates a nontrivial proper subclone of $\Clo(\bA)$: the reduct of $\bB$ with basic operation $t$ is a projection algebra, so $\Clo(t)$ must be proper, and $\Clo(t)$ is nontrivial on $\bA$ since (by assumption) $t$ cannot act as any projection on $\bA$.
\end{proof}

As a consequence of the third condition for being a nice property (i.e. that the fixed algebra $\bA_\cP$ can be found in $\Var(\bA)$), if $\cP$ is a nice property and $f$ generates a minimal clone and has property $\cP$, then the set of absorption identities that hold in $\bA = (A,f)$ is the same as the set of absorption identities that hold on $\bA_\cP$, which depends only on $\cP$. Thus the coarse classification of binary minimal clones given in this paper is also a classification of the possible collections of absorption identities which can be satisfied in a binary minimal clone.

In the case of partial semilattice operations, Corollary \ref{cor-absorption-semilattice} shows that no absorption identities hold at all (other than those following from idempotence), for the simple reason that every nontrivial partial semilattice contains a two-element semilattice subalgebra by Proposition \ref{prop-partial-semilattice}. In the case of melds, every absorption identity follows from idempotence and the identity
\[
x((yx)z) \approx x,
\]
and in fact Proposition \ref{prop-meld-absorption} shows that this identity is equivalent to the defining identity $(xy)(zx) \approx xy$ of melds (modulo idempotence). In the case of dispersive algebras, Proposition \ref{dispersive-absorption} shows that all absorption identities follow from idempotence and the absorption identity \eqref{D2}.

\bigskip

In light of the coarse classification of binary minimal clones provided in this paper, it is natural to ask whether a coarse classification of semiprojections can be found. Such a classification would, in a certain sense, complete the classification of minimal clones which was started by Rosenberg \cite{five-types}. The author has not made any attempt at classifying semiprojections, but the main result of \cite{minimal-clones-conservative} which classifies \emph{conservative} semiprojections indicates that the problem is not hopeless.

\bigskip

This paper is organized as follows. In Section \ref{s-concepts} we go over several basic techniques that are widely applicable throughout the paper, as well as several ways of visualizing small groupoids which the author has found to be more helpful than simply writing out operation tables. Many of the results in Section \ref{s-concepts} encapsulate various ways to take advantage of the finiteness of the relevant algebras. In Section \ref{s-descriptions} we go over each of the five types of non-Taylor binary minimal clone in more detail, proving basic results and bit of structure theory about each of them. We also verify that each of these types is described by a nice property in Section \ref{s-descriptions}. In Section \ref{s-classification}, which is the meat of the paper, we prove Theorem \ref{main-thm}.

Appendices \ref{a-partial-semi} and \ref{a-dispersive} prove some more technical results about clone-minimal partial semilattices and dispersive algebras which are mentioned in Section \ref{s-descriptions}, but which would have disrupted the flow of the paper due to their length. These results are not necessary for the proof of our main result (Theorem \ref{main-thm}), but they may be useful to any reader who wishes to use the results of this paper to assist in classifying binary minimal clones on a small domain.

\section{Concepts and techniques used throughout the paper}\label{s-concepts}

There are a few basic ideas which we will use repeatedly. One fact which we will frequently exploit is that in any algebra $\bA$, there is a bijection between the set $\Clo_n(\bA)$ of $n$-ary term operations of $\bA$ and the free algebra on $n$ generators $\cF_{\cV(A)}(x_1, ..., x_n)$ in the variety $\cV(\bA) = \Var(\bA)$, which sends the $i$th projection $\pi_i$ to $x_i$. We will often implicitly identify $\Clo_2(\bA)$ with the free algebra $\cF_{\cV(\bA)}(x,y)$ throughout the paper, identifying $\pi_1$ with $x$ and $\pi_2$ with $y$.

The following well-known iteration argument is one of the main ways in which we exploit finiteness.

\begin{prop}\label{unary-iteration} If $f$ is a unary operation on a finite set $A$, then for every $a \in A$ there is some $1 \le m_a \le |A|$ such that
\[
f^{km_a}(a) = f^{m_a}(a)
\]
for all $k \ge 1$. In particular, the sequence
\[
f^{1!}(a), f^{2!}(a), ..., f^{n!}(a), ...
\]
is eventually constant with eventual value $f^{|A|!}(a) = f^{m_a}(a)$, and if we define a unary operation $f^\infty$ by
\[
f^\infty(a) = \lim_{n \rightarrow \infty} f^{n!}(a) = f^{m_a}(a),
\]
where the limit is taken in the discrete topology on $A$, then $f^\infty = f^{|A|!} \in \Clo(f)$, the operation $f^\infty$ satisfies the identity
\[
f^\infty(f^\infty(x)) \approx f^\infty(x),
\]
and $f^\infty(A) \subseteq f(A)$, so $f^\infty$ is nontrivial if $f$ is not a permutation of $A$.
\end{prop}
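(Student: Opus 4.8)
The plan is to run the standard ``rho-shape'' analysis of the functional graph of $f$. First I would fix $a \in A$ and examine the forward orbit $a, f(a), f^2(a), \ldots$; finiteness of $A$ forces a repetition, so let $i = i_a \ge 0$ be least with $f^i(a) \in \{f^j(a) : j > i\}$, and let $p = p_a \ge 1$ be the least period, i.e.\ the least $p$ with $f^{i+p}(a) = f^i(a)$. The elements $a, f(a), \ldots, f^{i+p-1}(a)$ are then pairwise distinct, so $i + p \le |A|$, and one records the basic fact that $f^n(a) = f^{n'}(a)$ whenever $n, n' \ge i$ and $n \equiv n' \pmod{p}$.

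Next I would set $m_a$ to be the least \emph{positive} multiple of $p$ that is $\ge i$. Among the $p$ consecutive positive integers $\max(i,1), \max(i,1)+1, \ldots, \max(i,1)+p-1$ there is a multiple of $p$, and it is automatically $\ge 1$ and $\ge i$, so $m_a \le \max(i,1) + p - 1 \le i + p \le |A|$, which together with $m_a \ge 1$ gives the required bound $1 \le m_a \le |A|$. For any $k \ge 1$ we have $k m_a \ge m_a \ge i$ and $k m_a \equiv 0 \equiv m_a \pmod{p}$, so $f^{k m_a}(a) = f^{m_a}(a)$; this is the first assertion.

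For the factorial sequence: whenever $n \ge |A|$ we have $m_a \le |A| \le n$, hence $m_a \mid n!$, say $n! = \ell m_a$ with $\ell \ge 1$, and then $f^{n!}(a) = f^{\ell m_a}(a) = f^{m_a}(a)$ by the previous paragraph. Thus the sequence $\bigl(f^{n!}(a)\bigr)_n$ is eventually constant with eventual value $f^{|A|!}(a) = f^{m_a}(a)$, so $f^\infty(a) := \lim_n f^{n!}(a)$ is well defined and equals $f^{|A|!}(a)$; in particular $f^\infty = f^{|A|!} \in \Clo(f)$ since it is a single iterate of $f$. Idempotence is the same computation: $f^\infty(f^\infty(a)) = f^{2\cdot|A|!}(a) = f^{(2\ell) m_a}(a) = f^{m_a}(a) = f^\infty(a)$. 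Finally, $f(A) \subseteq A$ gives $f^n(A) \subseteq f(A)$ for all $n \ge 1$ by a trivial induction, so $f^\infty(A) = f^{|A|!}(A) \subseteq f(A)$; and if $f$ is not a permutation of the finite set $A$ then $f$ is not surjective, so $f(A) \subsetneq A$, whence $f^\infty$ is not the identity and is therefore nontrivial.

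There is no genuinely hard step here — it is all bookkeeping about tail length and period — but the one point to be careful about is the uniformity of the bound $m_a \le |A|$: one must choose $m_a$ to be simultaneously a multiple of $p_a$ and at least $i_a$ while keeping it $\le |A|$, and this is exactly where the distinctness of $a, f(a), \ldots, f^{i_a + p_a - 1}(a)$, hence $i_a + p_a \le |A|$, is used.
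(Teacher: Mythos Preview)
Your proof is correct and follows essentially the same approach as the paper's: both analyze the tail-plus-cycle structure of the forward orbit, pick $m_a$ as a multiple of the period lying in a short interval so that $1 \le m_a \le |A|$, and deduce the factorial stabilization and idempotence of $f^{|A|!}$ from this. The only cosmetic difference is that the paper uses an arbitrary pigeonhole collision $f^i(a) = f^j(a)$ with $1 \le i < j \le |A|+1$ rather than the minimal tail length and period, but the bookkeeping is otherwise identical.
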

\begin{proof} By the Pigeonhole Principle, for any given $a$ there are $1 \le i < j \le |A|+1$ such that $f^i(a) = f^j(a)$. We then have $f^{i+k}(a) = f^{j+k}(a)$ for all $k \ge 0$, so
\[
f^{k + j-i}(a) = f^k(a)
\]
as long as $k \ge i$. Taking $m_a$ to be the multiple of $j-i$ which is in the half-open interval $[i,j)$, we see that $1 \le m_a \le |A|$ and
\[
f^{km_a + m_a}(a) = f^{km_a}(a)
\]
for all $k \ge 1$, since $km_a \ge m_a \ge i$.

That the sequence $f^{n!}(a)$ is eventually equal to $f^{m_a}(a)$ follows from the fact that $n!$ is a multiple of $m_a$ for $n \ge m_a$. To check the identity $f^\infty(f^\infty(x)) \approx f^\infty(x)$, we note that for any particular $a \in A$ we have
\[
f^\infty(f^\infty(a)) = f^{|A|!}(f^{|A|!}(a)) = f^{2|A|!}(a) = f^{m_a}(a) = f^\infty(a).
\]

Finally, if $f$ is not a permutation of $A$, then by the finiteness of $A$ the image $f(A)$ must be a strict subset of $A$, and each $f^{n!}$ has image $f^{n!}(A) = f(f^{n!-1}(A)) \subseteq f(A)$, so $f^\infty = \lim_n f^{n!}$ has $f^\infty(A) \subseteq f(A) \ne A$.
\end{proof}

The same argument shows that if $f$ is an element of a finite semigroup, then the sequence $f^{1!}, f^{2!}, ...$ is eventually constant with eventual value equal to some idempotent element $e$, and in a monoid this $e$ will not be the identity iff $f$ is not invertible. In particular, any finite monoid which is not a group contains some non-identity idempotent elements. We will apply this fact to monoids built out of the free algebra $\cF_{\cV(\bA)}(x,y)$, which is finite as long as $\bA$ is finite.

There are three natural ways to compose binary functions in an associative way. The first way is based on viewing a binary function $f(x,y)$ as a family of unary functions $f_y(x)$ indexed by a parameter $y$. This leads us to the associative operation $*_1$ on $\cF_{\cV(\bA)}(x,y)$ given by
\begin{equation}
(f *_1 g)(x,y) = f(g(x,y), y).\label{star-1}\tag{$*_1$}
\end{equation}
Similarly, if we view $f$ as a family of unary functions indexed by a parameter $x$, we are led to define the associative operation $*_2$ on $\cF_{\cV(\bA)}(x,y)$ given by
\begin{equation}
(f *_2 g)(x,y) = f(x, g(x,y)).\label{star-2}\tag{$*_2$}
\end{equation}
The third associative operation comes from viewing $f$ as a function $\phi$ on pairs of elements, which takes the pair $(a,b)$ to the pair $(f(a,b), f(b,a))$. Note that any map on pairs $\phi : A^2 \rightarrow A^2$ which commutes with reversing the order of the pairs, i.e. such that
\[
\phi(a,b) = (c,d) \;\;\; \iff \;\;\; \phi(b,a) = (d,c),
\]
has the form
\[
\phi(x,y) = (f(x,y), f(y,x))
\]
for $f = \pi_1 \circ \phi$, and the collection of such maps $\phi$ is closed under composition. This leads us to the \emph{circular composition} operation $*_c$ on $\cF_{\cV(\bA)}(x,y)$ which is given by
\begin{equation}
(f *_c g)(x,y) = f(g(x,y), g(y,x)).\label{star-c}\tag{$*_c$}
\end{equation}

\begin{defn}\label{defn-iteration} For any binary operation $f$ on a finite set, we define the binary operations $f^{\infty_1}, f^{\infty_2}, f^{\infty_c} \in \Clo(f)$ by
\[
f^{\infty_1}(a,b) = \lim_{n \rightarrow \infty} \big(x \mapsto f(x,b)\big)^{n!}(a) = \lim_{n \rightarrow \infty} f^{*_1 n!}(a,b),
\]
similarly
\[
f^{\infty_2}(a,b) = \lim_{n \rightarrow \infty} \big(y \mapsto f(a,y)\big)^{n!}(b) = \lim_{n \rightarrow \infty} f^{*_2 n!}(a,b),
\]
and finally
\[
f^{\infty_c}(a,b) = \lim_{n \rightarrow \infty} \pi_1\Big(\big((x,y) \mapsto (f(x,y), f(y,x))\big)^{n!}(a,b)\Big) = \lim_{n \rightarrow \infty} f^{*_c n!}(a,b).
\]
\end{defn}

\begin{cor}\label{binary-iteration} For any binary operation $f$ on a finite set, the operations $f^{\infty_1}, f^{\infty_2}, f^{\infty_c}$ are well-defined elements of $\Clo(f)$ which satisfy the following identities:
\begin{align*}
f^{\infty_1}(f^{\infty_1}(x,y), y) &\approx f^{\infty_1}(x,y),\\
f^{\infty_2}(x, f^{\infty_2}(x,y)) &\approx f^{\infty_2}(x,y),\\
f^{\infty_c}(f^{\infty_c}(x,y), f^{\infty_c}(y,x)) &\approx f^{\infty_c}(x,y).
\end{align*}
Furthermore, the operation $f^{\infty_1}$ is first projection iff the map $x \mapsto f(x,y)$ is a permutation for all $y$, the operation $f^{\infty_2}$ is second projection iff the map $y \mapsto f(x,y)$ is a permutation for all $x$, and $f^{\infty_c}$ is first projection iff the map $(x,y) \mapsto (f(x,y), f(y,x))$ is a permutation on the collection of ordered pairs.
\end{cor}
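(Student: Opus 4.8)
The plan is to derive each of the three statements from a single application of Proposition \ref{unary-iteration} to an appropriately chosen unary operation built from $f$, once we have matched up the associative compositions $*_1$, $*_2$, $*_c$ with honest iteration of unary maps.

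For $f^{\infty_1}$: fix $b \in A$ and let $g_b$ denote the unary operation $x \mapsto f(x,b)$ on the finite set $A$. A one-line induction on $n$ gives $f^{*_1 n}(a,b) = g_b^{\,n}(a)$, and since $*_1$ is associative and sends pairs of elements of $\Clo(f)$ into $\Clo(f)$, each $f^{*_1 n}$ lies in $\Clo(f)$; in particular $f^{*_1 |A|!} \in \Clo(f)$ coincides with the operation $(a,b) \mapsto g_b^{\,|A|!}(a) = g_b^\infty(a)$ of Proposition \ref{unary-iteration}, which is exactly $f^{\infty_1}$. Reading the identity $g_b^\infty(g_b^\infty(a)) = g_b^\infty(a)$ off at every $b$ yields $f^{\infty_1}(f^{\infty_1}(x,y),y) \approx f^{\infty_1}(x,y)$. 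Finally $f^{\infty_1}$ is first projection iff $g_b^\infty = \mathrm{id}_A$ for every $b$; by the last sentence of Proposition \ref{unary-iteration} this fails whenever some $g_b$ is not a permutation, while conversely if every $g_b$ is a permutation then $g_b^\infty$, being an idempotent power of a permutation, is the identity. The argument for $f^{\infty_2}$ is word-for-word the same with $g_b$ replaced by $h_a \colon y \mapsto f(a,y)$.

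For $f^{\infty_c}$ we run the same scheme with the unary operation $\phi \colon (x,y) \mapsto (f(x,y), f(y,x))$ on the finite set $A^2$. Induction on $n$ gives $\phi^n(a,b) = (f^{*_c n}(a,b), f^{*_c n}(b,a))$, using that powers of $\phi$ commute with reversal of pairs (such maps are closed under composition, as noted in the text); since each $f^{*_c n}$ lies in $\Clo(f)$, the first coordinate of $\phi^\infty = \phi^{|A^2|!}$, which is $f^{\infty_c}$, also lies in $\Clo(f)$, and $\phi^\infty(a,b) = (f^{\infty_c}(a,b), f^{\infty_c}(b,a))$. Reading $\phi^\infty(\phi^\infty(a,b)) = \phi^\infty(a,b)$ in the first coordinate gives $f^{\infty_c}(f^{\infty_c}(x,y), f^{\infty_c}(y,x)) \approx f^{\infty_c}(x,y)$, and $f^{\infty_c}$ is first projection iff $\phi^\infty = \mathrm{id}_{A^2}$ (the reversal symmetry pins down the second coordinate once the first is pinned down), which by Proposition \ref{unary-iteration} happens iff $\phi$ is a permutation of $A^2$.

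There is no genuine obstacle here: Proposition \ref{unary-iteration} does all the real work, and what remains is only the bookkeeping that identifies $f^{*_1 n}$, $f^{*_2 n}$, $f^{*_c n}$ with the iterates $g_b^{\,n}$, $h_a^{\,n}$, $\phi^n$, together with the elementary observation that an idempotent power of a permutation is the identity, which is what turns the three permutation characterizations into biconditionals rather than one-directional implications.
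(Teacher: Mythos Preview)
Your proof is correct and is exactly the argument the paper has in mind: the corollary is stated without proof precisely because it is meant to follow from Proposition~\ref{unary-iteration} applied to the unary maps $g_b$, $h_a$, and $\phi$ that you identified, with the associative compositions $*_1,*_2,*_c$ packaging their iterates. The only detail you added beyond what the paper makes explicit is the observation that an idempotent power of a permutation is the identity, which is what supplies the missing direction of the three biconditionals; this is routine and in the spirit of the paper's treatment.
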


Since we mainly study algebras $\bA = (A,f)$ which are not Taylor, it will be useful to know how $f$ acts on projection algebras $\bB$ in the variety generated by $\bA$. We will generally focus on functions $f$ which restrict to the first projection on such an algebra $\bB$.

\begin{defn} If $f$ is a binary operation, then we define $\Clo_2^{\pi_1}(f)$ to be the smallest set of binary operations $g$ which contains $\pi_1$ and contains $f(g(x,y), h(x,y))$ for all $g \in \Clo_2^{\pi_1}(f)$ and $h \in \Clo_2(f)$ (note that we do not require $h \in \Clo_2^{\pi_1}(f)$).
\end{defn}

An alternative characterization of $\Clo_2^{\pi_1}(f)$ is the set of operations $g(x,y)$ which can be written as a term in $f$ whose tree representation has an $x$ as its leftmost leaf. Note that the operations $f^{\infty_1}, f^{\infty_2}, f^{\infty_c}$ all belong to $\Clo_2^{\pi_1}(f)$, and that $\Clo_2^{\pi_1}(f)$ is closed under $*_1, *_2, *_c$.

\begin{prop}\label{prop-pi1} If a term operation $f$ of $\bA$ acts as first projection on some algebra $\bB \in \Var(\bA)$ of size at least two, then $\Clo_2^{\pi_1}(f)$ is exactly the set of binary operations $g \in \Clo_2(f)$ which also act as first projection on $\bB$.
\end{prop}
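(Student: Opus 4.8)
The plan is to prove both inclusions by a structural induction on terms, with everything resting on one elementary observation: since $f$ acts as first projection on $\bB$, any term $t$ built from $f$ acts on $\bB$ as the projection onto whichever variable sits at the leftmost leaf of its tree. Indeed, if $t = f(t_1,t_2)$ then for $a,b \in B$ we have $t^\bB(a,b) = f^\bB(t_1^\bB(a,b), t_2^\bB(a,b)) = t_1^\bB(a,b)$, and recursing into $t_1$ reaches the leftmost leaf. Because $|B| \ge 2$, first and second projection are distinct on $\bB$, so a term acts as $\pi_1$ on $\bB$ exactly when its leftmost leaf is an $x$. Matching this with the ``leftmost leaf'' description of $\Clo_2^{\pi_1}(f)$ recalled above already gives the proposition, but I would spell out the two inductions so as not to rely on that description as a black box.

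For the inclusion $\Clo_2^{\pi_1}(f) \subseteq \{g \in \Clo_2(f) : g \text{ acts as } \pi_1 \text{ on } \bB\}$, I would induct on the construction of $\Clo_2^{\pi_1}(f)$. The generator $\pi_1$ acts as $\pi_1$ on $\bB$. If $g \in \Clo_2^{\pi_1}(f)$ acts as $\pi_1$ on $\bB$ and $h \in \Clo_2(f)$ is arbitrary, then for $a,b \in B$ we have $f(g(a,b),h(a,b)) = g(a,b) = a$ since $f$ acts as $\pi_1$ on $\bB$, so $f(g(x,y),h(x,y))$ also acts as $\pi_1$ on $\bB$; this closes the induction.

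For the reverse inclusion, let $g \in \Clo_2(f)$ act as $\pi_1$ on $\bB$, fix any representation of $g$ as a term $t$ in $f$, and induct on the size of $t$. If $t$ is the variable $x$ then $g = \pi_1 \in \Clo_2^{\pi_1}(f)$. The case $t = y$ cannot occur, since then $g = \pi_2$ would act as $\pi_2 \ne \pi_1$ on $\bB$ (this is the sole place $|B| \ge 2$ is used). Otherwise $t = f(t_1,t_2)$; writing $g_1, g_2 \in \Clo_2(f)$ for the operations represented by $t_1, t_2$, we get $g = f(g_1,g_2)$, and evaluating on $\bB$ gives $g(a,b) = f^\bB(g_1(a,b), g_2(a,b)) = g_1(a,b)$, so $g_1$ acts as $\pi_1$ on $\bB$. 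Since $t_1$ is strictly shorter, the inductive hypothesis yields $g_1 \in \Clo_2^{\pi_1}(f)$, and then $g = f(g_1,g_2) \in \Clo_2^{\pi_1}(f)$ directly from the closure property defining $\Clo_2^{\pi_1}(f)$, taking $h = g_2$. This completes the argument; the proof has no serious obstacle, and the only point deserving care is that an element of $\Clo_2(f)$ may admit many term representations — but the induction works for any chosen $t$ precisely because the hypothesis forces every representation of $g$ to carry an $x$ at its leftmost leaf.
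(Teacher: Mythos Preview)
Your proof is correct and follows essentially the same approach as the paper: both directions are handled by structural induction on terms, with the key observation that $f$ acting as $\pi_1$ on $\bB$ forces evaluation to reduce to the leftmost leaf. The only cosmetic difference is that for the reverse inclusion the paper first proves the dichotomy ``for every $g \in \Clo_2(f)$, either $g(x,y) \in \Clo_2^{\pi_1}(f)$ or $g(y,x) \in \Clo_2^{\pi_1}(f)$'' and then combines it with the forward inclusion, whereas you induct directly on a term representation of $g$; the underlying argument is the same.
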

\begin{proof} We prove by induction on the definition of a term $g \in \Clo_2^{\pi_1}(f)$ that if $f$ is first projection on $\bB$ then so is $g$. The base case is $g = \pi_1$, which certainly acts as first projection on $\bB$. For the inductive step, if we have already proved that some $g \in \Clo_2^{\pi_1}(f)$ acts as first projection on $\bB$, then for any $h \in \Clo_2(f)$ we have
\[
f(g(x,y),h(x,y)) \approx g(x,y) \approx x
\]
on $\bB$.

Since $\pi_2(y,x) = \pi_1 \in \Clo_2^{\pi_1}(f)$, an inductive argument shows that for any $g \in \Clo_2(f)$ we either have $g(x,y) \in \Clo_2^{\pi_1}(f)$ or $g(y,x) \in \Clo_2^{\pi_1}(f)$. If $g(x,y) \not\in \Clo_2^{\pi_1}(f)$, then from $g(y,x) \in \Clo_2^{\pi_1}(f)$ we see that $g(y,x)$ acts as first projection on $\bB$, so $g(x,y)$ acts as second projection on $\bB$, and first and second projection are not the same on $\bB$ as long as $|\bB| \ge 2$.
\end{proof}

\begin{defn}\label{defn-clo2-pi1} For any algebra $\bA$, let $\Clo_2^{\pi_1}(\bA)$ be the set of binary terms of $\bA$ which restrict to the first projection on some algebra $\bB \in \Var(\bA)$ of size at least $2$.
\end{defn}


\begin{prop} If $f \in \Clo_2^{\pi_1}(\bA)$ and $g \in \Clo_2^{\pi_1}(f)$, then $g \in \Clo_2^{\pi_1}(\bA)$.
\end{prop}
\begin{proof} If $f$ acts as first projection on some $\bB \in \Var(\bA)$, then by Proposition \ref{prop-pi1} so does $g$.
\end{proof}

For operations $f \in \Clo_2^{\pi_1}(\bA)$, the following definition is convenient.

\begin{defn}\label{defn-right-orbit} If $\bA = (A,f)$ is an algebra with a binary operation $f$ and $a \in \bA$, then the \emph{right orbit} of $a$ (under $f$) is the smallest set $O(a)$ such that
\begin{itemize}
\item $a \in O(a)$, and

\item for $b \in O(a)$ and $c \in \bA$, we have $f(b,c) \in O(a)$.
\end{itemize}
Equivalently, we have $b \in O(a)$ iff there is a sequence of elements $c_1, ..., c_k \in \bA$ such that
\[
b = f(\cdots f(f(a,c_1),c_2)\cdots, c_k).
\]
We say that a set $O$ is a right orbit of $\bA$ if $O = O(a)$ for some $a \in \bA$.
\end{defn}

\begin{prop}\label{prop-right-pi1} If $\bA = (A,f)$ is generated by two elements $a, b \in \bA$, then the right orbit $O(a)$ is exactly the set of elements that can be written as $g(a,b)$ for some $g \in \Clo_2^{\pi_1}(f)$.
\end{prop}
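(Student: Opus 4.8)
The plan is to show each containment between $O(a)$ and $\{g(a,b) : g \in \Clo_2^{\pi_1}(f)\}$ separately, using the ``leftmost leaf'' characterization of $\Clo_2^{\pi_1}(f)$ noted after its definition. For the easy containment, I would argue by induction on the definition of $\Clo_2^{\pi_1}(f)$ that $g(a,b) \in O(a)$ for every $g \in \Clo_2^{\pi_1}(f)$: the base case $g = \pi_1$ gives $g(a,b) = a \in O(a)$, and for the inductive step, if $g(x,y) = f(g'(x,y), h(x,y))$ with $g' \in \Clo_2^{\pi_1}(f)$ and $h \in \Clo_2(f)$, then $g'(a,b) \in O(a)$ by the inductive hypothesis and $g(a,b) = f(g'(a,b), h(a,b))$, which lies in $O(a)$ since $O(a)$ is closed under $c \mapsto f(b', c)$ for $b' \in O(a)$ and arbitrary second argument $h(a,b) \in \bA$.

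For the reverse containment, I would induct on the definition of $O(a)$ from Definition \ref{defn-right-orbit}, i.e.\ on the length $k$ of the witnessing sequence $c_1, \ldots, c_k$. When $k = 0$ we have $b' = a = \pi_1(a,b)$ and $\pi_1 \in \Clo_2^{\pi_1}(f)$. For the inductive step, suppose $b' = f(b'', c)$ with $b'' \in O(a)$ witnessed by a shorter sequence, so that $b'' = g(a,b)$ for some $g \in \Clo_2^{\pi_1}(f)$. The one subtlety is that $c \in \bA$ is an arbitrary element, and I need to write it as $h(a,b)$ for some binary term $h$; this is exactly where the hypothesis that $\bA$ is generated by the two elements $a,b$ is used, so that $c = h(a,b)$ for some $h \in \Clo_2(f)$. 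Then $b' = f(g(a,b), h(a,b)) = (f(g(x,y),h(x,y)))(a,b)$, and $f(g(x,y),h(x,y)) \in \Clo_2^{\pi_1}(f)$ by the closure property in the definition of $\Clo_2^{\pi_1}(f)$, completing the induction.

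I do not expect any serious obstacle here; the proof is a routine double induction. The only point requiring a moment's care is the use of the two-generation hypothesis to express an arbitrary element $c$ as $h(a,b)$ in the reverse direction — without it, the right orbit could be strictly larger than the set of terms evaluated at $(a,b)$. Everything else follows directly from unwinding the two definitions and matching them against the inductive clauses.
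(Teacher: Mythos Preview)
Your proposal is correct and follows essentially the same approach as the paper: the paper also uses the two-generation hypothesis to write each element $c_i$ in a witnessing sequence as $t_i(a,b)$ for some $t_i \in \Clo_2(f)$, then assembles the nested term $f(\cdots f(f(x,t_1(x,y)),t_2(x,y))\cdots, t_k(x,y)) \in \Clo_2^{\pi_1}(f)$, while dismissing the other containment as immediate from the definition of a right orbit. Your version just makes both inductions explicit rather than packaging the forward direction into a single formula.
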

\begin{proof} If $d \in O(a)$, then by the definition of $O(a)$ there are $c_1, ..., c_k \in \bA$ such that
\[
d = f(\cdots f(f(a,c_1),c_2)\cdots, c_k).
\]
Since $\bA$ is generated by $a$ and $b$, for each $c_i$ there is some binary term $t_i \in \Clo_2(f)$ such that $t_i(a,b) = c_i$. Defining $g$ by
\[
g(x,y) = f(\cdots f(f(x,t_1(x,y)),t_2(x,y))\cdots, t_k(x,y)),
\]
we see that $g \in \Clo_2^{\pi_1}(f)$ and $g(a,b) = a$. The converse follows directly from the definition of a right orbit.
\end{proof}

\begin{cor} If $\bA = (A,f)$, then $\Clo_2^{\pi_1}(f)$ can be canonically identified with the right orbit of $x$ in the free algebra on two generators $\cF_{\cV(\bA)}(x,y)$.
\end{cor}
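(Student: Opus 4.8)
The plan is to combine the previous proposition with the characterization of $\Clo_2^{\pi_1}(f)$ in the free algebra. First I would observe that the free algebra $\cF_{\cV(\bA)}(x,y)$ is itself an algebra $\bB = (B, f)$ (finite when $\bA$ is finite, though finiteness is not needed here) which is generated by the two elements $x$ and $y$. Applying Proposition \ref{prop-right-pi1} to this algebra with $a = x$ and $b = y$ gives that the right orbit $O(x)$ of the generator $x$ in $\cF_{\cV(\bA)}(x,y)$ is exactly the set of elements expressible as $g(x,y)$ for some $g \in \Clo_2^{\pi_1}(f)$ — where now $\Clo_2^{\pi_1}(f)$ denotes the abstract set of binary terms built with $f$ having leftmost leaf $x$, equivalently $\Clo_2^{\pi_1}$ computed in the free algebra.

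Next I would invoke the standard identification, recalled at the start of Section \ref{s-concepts}, between $\Clo_2(\bA)$ and $\cF_{\cV(\bA)}(x,y)$ sending $\pi_1 \mapsto x$ and $\pi_2 \mapsto y$, under which a binary term operation $g$ corresponds to the element $g(x,y)$ of the free algebra. Under this bijection, $\Clo_2^{\pi_1}(f)$ — defined as the smallest set of binary operations containing $\pi_1$ and closed under $g, h \mapsto f(g(x,y), h(x,y))$ — maps precisely onto the smallest subset of $\cF_{\cV(\bA)}(x,y)$ containing $x$ and closed under $u, v \mapsto f(u, v)$ for $v$ ranging over all of $\cF_{\cV(\bA)}(x,y)$. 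By Definition \ref{defn-right-orbit} this latter set is exactly the right orbit $O(x)$. So the bijection $\Clo_2(\bA) \cong \cF_{\cV(\bA)}(x,y)$ restricts to a bijection $\Clo_2^{\pi_1}(f) \cong O(x)$, which is the claimed canonical identification.

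I do not anticipate a serious obstacle here: the corollary is essentially a restatement of Proposition \ref{prop-right-pi1} under the free-algebra/term-operation dictionary. The one point requiring a sentence of care is that Proposition \ref{prop-right-pi1} is stated for an algebra $\bA$ \emph{generated} by two elements, and we are applying it with $\bA$ replaced by the two-generated free algebra $\cF_{\cV(\bA)}(x,y)$; I would note explicitly that this free algebra is generated by $x, y$ and lies in $\Var(\bA)$ (indeed it generates the same variety), so nothing is lost. It is also worth remarking parenthetically that the two meanings of ``$\Clo_2^{\pi_1}(f)$'' — the abstract syntactic one from Definition preceding Proposition \ref{prop-pi1}, and its image in the free algebra — coincide by construction, so the word ``canonically'' in the statement is justified: the identification is just the restriction of the canonical isomorphism $\Clo_2(\bA) \cong \cF_{\cV(\bA)}(x,y)$.
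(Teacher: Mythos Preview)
Your proposal is correct and matches the paper's intent: the corollary is stated without proof in the paper, being an immediate consequence of applying Proposition~\ref{prop-right-pi1} to the free algebra $\cF_{\cV(\bA)}(x,y)$ together with the standard identification $\Clo_2(\bA)\cong\cF_{\cV(\bA)}(x,y)$ recalled at the start of Section~\ref{s-concepts}. Your extra remarks about why the free algebra is a legitimate instance of Proposition~\ref{prop-right-pi1} and why the identification is canonical are fine clarifications but not strictly needed.
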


\begin{prop}\label{prop-right-gen} If $\bA = (A,f)$ is generated by $a_1, ..., a_n$, then $A = \bigcup_i O(a_i)$.
\end{prop}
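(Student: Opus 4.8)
The plan is to show that the set $U := \bigcup_i O(a_i)$ is a subalgebra of $\bA$, and then invoke the fact that $\bA$ is generated by $a_1, \dots, a_n$.

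First I would observe that $U$ contains all the generators: by Definition \ref{defn-right-orbit}, $a_i \in O(a_i) \subseteq U$ for each $i$. Next I would check that $U$ is closed under $f$. Take $b, c \in U$; then $b \in O(a_i)$ for some $i$, and $c$ is some element of $\bA$, so by the closure property in Definition \ref{defn-right-orbit} (namely, that $O(a_i)$ is closed under the maps $z \mapsto f(z, c)$ for every $c \in \bA$) we get $f(b,c) \in O(a_i) \subseteq U$. Hence $U$ is a subalgebra.

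Finally, since $U$ is a subalgebra containing the generating set $\{a_1, \dots, a_n\}$ and $\bA$ is generated by this set, we have $A = \Sg_\bA\{a_1,\dots,a_n\} \subseteq U \subseteq A$, so $A = U = \bigcup_i O(a_i)$, as desired.

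There is no real obstacle here: the only point worth emphasizing is that although an individual right orbit $O(a_i)$ need not be a subalgebra (it is closed only under right multiplications, not left multiplications), the union over a \emph{generating} family is closed under $f$ because the left argument of any product $f(b,c)$ already lands in one of the orbits, and right multiplication by the arbitrary element $c$ keeps us in that same orbit.
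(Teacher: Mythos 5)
Your proof is correct and matches the paper's argument essentially verbatim: both show that $\bigcup_i O(a_i)$ contains the generators and is closed under $f$ because the left factor of any product lies in some $O(a_i)$, which is closed under right multiplication by arbitrary elements of $\bA$. Nothing further is needed.
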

\begin{proof} Let $S = \bigcup_i O(a_i)$. Since each $a_i \in O(a_i) \subseteq S$, we just need to check that $S$ is closed under $f$. If $b,c \in S$, then $b \in O(a_i)$ for some $i$, so by the definition of a right orbit we have $f(b,c) \in O(a_i) \subseteq S$ as well.
\end{proof}

There is a useful way to visualize binary operations $f$ which makes it easy to keep track of the right orbits.

\begin{defn}\label{defn-digraph} If $\mathbb{A} = (A,f)$, we define the digraph $D_\mathbb{A}$ to have a directed edge from $a$ to $f(a,b)$ for all pairs $a,b \in \mathbb{A}$ with $f(a,b) \ne a$. We think of this as a labeled digraph, where the label of an edge $a \rightarrow c$ is the set of all elements $b$ such that $f(a,b) = c$.

We define the preorder $\preceq$ on $\bA$ by $a \preceq b$ iff $b \in O(a)$, and we consider $\preceq$ to be a partial order on the strongly connected components of $D_\bA$ in the usual way. (This is a special case of what is usually called the \emph{reachability} partial order on the strongly connected components of a digraph.)
\end{defn}

Note that while the labeled digraph $D_\bA$ is \emph{not} a clone-invariant of $\bA$ (that is, the labeled digraph depends on the choice of a particular binary operation $f$ rather than the set of all binary terms of $\bA$), the collection of right orbits $O(a)$ and the partial order $\preceq$ \emph{are} clone-invariants if $\bA$ is clone-minimal, at least if we stick to operations in $\Clo_2^{\pi_1}(f)$.

\begin{prop}\label{digraph-clone-inv} If $\bA = (A,f)$ is a clone-minimal algebra such that $f \in \Clo_2^{\pi_1}(\bA)$, then for any nontrivial $g \in \Clo_2^{\pi_1}(f)$, the digraphs $D_{(A,f)}$ and $D_{(A,g)}$ have the same collection of strongly connected components and the same partial ordering $\preceq$.
\end{prop}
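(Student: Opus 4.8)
The plan is to show that passing from $f$ to a nontrivial $g \in \Clo_2^{\pi_1}(f)$ cannot change the preorder $\preceq$ on $A$, and hence cannot change the strongly connected components or the induced partial order. The key point is that $\preceq$ is defined purely in terms of which elements lie in which right orbits, and by Proposition~\ref{prop-right-pi1} (applied inside the free algebra, via the corollary identifying $\Clo_2^{\pi_1}(f)$ with the right orbit of $x$) the right orbit $O_f(a)$ of $a$ in $\bA = (A,f)$, restricted to the subalgebra generated by $a$ and any second element, is exactly $\{\,h(a,b) : h \in \Clo_2^{\pi_1}(f)\,\}$. So I want to prove that $\Clo_2^{\pi_1}(f)$ and $\Clo_2^{\pi_1}(g)$ determine the same reachability relation on pairs $(a,b)$.

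First I would establish the containment $\Clo_2^{\pi_1}(g) \subseteq \Clo_2^{\pi_1}(f)$: since $g \in \Clo_2^{\pi_1}(f)$, an easy induction on the construction of a term in $\Clo_2^{\pi_1}(g)$ (using that $\Clo_2^{\pi_1}(f)$ is closed under the operation $h \mapsto f(h(x,y), k(x,y))$ and contains $\pi_1$, together with the observation made in the text that $\Clo_2^{\pi_1}(f)$ is closed under $*_1, *_2, *_c$ and hence under the analogous construction with $g$ in place of $f$) shows every element of $\Clo_2^{\pi_1}(g)$ lies in $\Clo_2^{\pi_1}(f)$. This already gives one inclusion of right orbits: $O_g(a) \subseteq O_f(a)$ for every $a$, hence $a \preceq_g b \implies a \preceq_f b$.

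For the reverse direction I would use clone-minimality crucially. Since $g$ is nontrivial and $\bA$ is clone-minimal, $f \in \Clo(g)$, so $f = t(g, \dots)$ for some term $t$. The subtlety is that I need $f \in \Clo_2^{\pi_1}(g)$, not merely $f \in \Clo_2(g)$ --- but since $f \in \Clo_2^{\pi_1}(\bA)$, $f$ restricts to the first projection on some $\bB \in \Var(\bA)$ with $|\bB| \ge 2$, and by Proposition~\ref{prop-pi1} applied to $g$ (note $g$ also restricts to first projection on $\bB$, being in $\Clo_2^{\pi_1}(f) \subseteq \Clo_2^{\pi_1}(\bA)$ and living above $\pi_1$ on $\bB$), the set $\Clo_2^{\pi_1}(g)$ is precisely the set of binary terms of $\Clo_2(g) = \Clo_2(f)$ restricting to first projection on $\bB$ --- which contains $f$. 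Hence $\Clo_2^{\pi_1}(f) \subseteq \Clo_2^{\pi_1}(g)$ by the same inductive argument as before with the roles of $f$ and $g$ swapped, giving $O_f(a) \subseteq O_g(a)$ and therefore $\preceq_f\, =\, \preceq_g$ on $A$. Two digraphs with the same reachability preorder have the same strongly connected components (the $\preceq$-equivalence classes) and the same induced partial order, which is the claim.

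The main obstacle I anticipate is the bookkeeping in the previous paragraph: ensuring that the term witnessing $f \in \Clo(g)$ can be taken to lie in $\Clo_2^{\pi_1}(g)$ rather than just $\Clo_2(g)$, which is exactly where Proposition~\ref{prop-pi1} and the hypothesis $f \in \Clo_2^{\pi_1}(\bA)$ (as opposed to an arbitrary binary term) earn their keep --- without that hypothesis one could only conclude that $f$ or its reverse $f(y,x)$ lies in $\Clo_2^{\pi_1}(g)$, and the reverse would scramble left and right orbits. Everything else is routine structural induction on terms.
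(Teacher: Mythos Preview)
Your proposal is correct and follows essentially the same approach as the paper. The paper's proof is slightly more streamlined: it establishes $f \in \Clo_2^{\pi_1}(g)$ exactly as you do (via clone-minimality and Proposition~\ref{prop-pi1}), and then argues directly that a single $f$-step $b = f(a,c)$ lands in $O_g(a)$, iterating to get $O_f(a) \subseteq O_g(a)$; your route through the equality $\Clo_2^{\pi_1}(f) = \Clo_2^{\pi_1}(g)$ of term-sets is a minor reorganization of the same idea.
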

\begin{proof} First we check that for every nontrivial $g \in \Clo_2^{\pi_1}(f)$, we have $f \in \Clo_2^{\pi_1}(g)$. Since $\bA$ is clone-minimal, we automatically have $f \in \Clo_2(g)$. By the definition of $\Clo_2^{\pi_1}(\bA)$, $f$ acts as first projection on some $\bB \in \Var(\bA)$ of size at least two. By Proposition \ref{prop-pi1} we see that $g$ also acts as first projection on $\bB$, and then by Proposition \ref{prop-pi1} applied to $g$ we see that in fact we have $f \in \Clo_2^{\pi_1}(g)$.

Now if $b = f(a,c)$, then Proposition \ref{prop-right-pi1} applied to $(A,g)$ implies that $b$ is in the right orbit of $a$ with respect to $g$. Applying this repeatedly, we see that right orbits with respect to $f$ are the same as right orbits with respect to $g$.
\end{proof}

The next result is often useful when classifying minimal binary clones on a small domain.

\begin{prop}\label{nice-terms} For any finite idempotent algebra $\bA = (A,f)$, there are binary operations $g_1, g_2, g_3 \in \Clo_2^{\pi_1}(f)$ such that as elements of the free algebra $\cF_{\cV(\bA)}(x,y)$ we have $f(x,y) \preceq g_i(x,y)$ and each $g_i(x,y)$ is contained in a maximal strongly connected component of the digraph $D_{\cF_{\cV(\bA)}(x,y)}$, and such that the $g_i$ satisfy the identities
\begin{align*}
g_1(g_1(x,y),y) &\approx g_1(x,y),\\
g_2(g_2(x,y),g_2(y,x)) &\approx g_2(x,y),\\
g_3(g_3(x,y),x) &\approx g_3(x,y).
\end{align*}
\end{prop}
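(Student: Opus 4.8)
The plan is to produce all three operations inside a single maximal strongly connected component $C$ of the finite digraph $D := D_{\cF_{\cV(\bA)}(x,y)}$ (all right orbits below are taken in $\cF_{\cV(\bA)}(x,y)$). Since $D$ is finite, from the vertex $f(x,y)$ one can reach some maximal component $C$; fix any $s \in C$. Then $f(x,y) \preceq s$, and since $C$ is maximal we have $O(s) = C$. Moreover $s \in O(f(x,y)) \subseteq \Clo_2^{\pi_1}(f)$, the inclusion being the easy induction coming from $f \in \Clo_2^{\pi_1}(f)$ and the closure property of $\Clo_2^{\pi_1}(f)$; thus $s$ is a term with $x$ as its leftmost leaf, so $s(w,c) \in O(w)$ for every $w$ and every $c \in \cF_{\cV(\bA)}(x,y)$.

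I would then set $g_1 := s^{\infty_1}$ and $g_2 := s^{\infty_c}$, applying Definition \ref{defn-iteration} and Corollary \ref{binary-iteration} to the binary operation $s$ on the finite set $A$; the first two required identities are exactly those provided by Corollary \ref{binary-iteration}. To see $g_1, g_2 \in C$, note by Proposition \ref{unary-iteration} that they equal positive powers $s^{*_1 N}$ and $s^{*_c M}$, while an induction using the fact $s(w,c) \in O(w)$ established above gives $s^{*_1 k}(x,y), s^{*_c k}(x,y) \in O(s)$ for all $k \ge 1$ (for instance $s^{*_1(k+1)}(x,y) = s(s^{*_1 k}(x,y), y) \in O(s^{*_1 k}(x,y)) \subseteq O(s)$, and likewise for $*_c$). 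Hence $g_1, g_2 \in O(s) = C$ and $f(x,y) \preceq s \preceq g_1, g_2$.

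The heart of the argument is $g_3$, which I would take to be
\[
g_3(x,y) := g_1(g_1(x,y), x).
\]
Since $g_1 \in \Clo_2^{\pi_1}(f)$, this lies in $O(g_1(x,y)) = O(g_1) = C$ (the last equality because $g_1 \in C$ and $C$ is maximal), and $f(x,y) \preceq g_1 \preceq g_3$. For the identity, abbreviate $a \ast b := g_1(a,b)$, so that $(a\ast b)\ast b \approx a\ast b$ (the $g_1$-identity) and $a\ast a\approx a$ (idempotence, inherited from $f$). Then $g_3(x,y) = (x\ast y)\ast x$, so with $u := g_3(x,y)$ we get $u\ast x = \big((x\ast y)\ast x\big)\ast x = (x\ast y)\ast x = u$, and therefore
\[
g_3(g_3(x,y),x) = (u\ast x)\ast u = u\ast u = u = g_3(x,y).
\]
Finally $g_1, g_2, g_3 \in C \subseteq O(f(x,y)) \subseteq \Clo_2^{\pi_1}(f)$, each satisfies $f(x,y)\preceq g_i(x,y)$, and each $g_i(x,y)$ lies in the maximal component $C$; this is everything claimed. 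The one genuinely nonobvious step is the choice $g_3 = g_1(g_1(x,y),x)$ together with the one-line verification above — iterating the map $t\mapsto t(t(x,y),x)$ directly from $f$ would only visibly produce an eventually periodic sequence in $D$ rather than a fixed point, and the point of feeding in $g_1$ instead of $f$ is precisely that the $*_1$-idempotence of $g_1$ collapses that period to $1$. Everything else is bookkeeping with right orbits.
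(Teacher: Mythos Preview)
Your proof is correct and follows essentially the same approach as the paper: pick a $\preceq$-maximal element above $f(x,y)$ (the paper calls it $g_0$, you call it $s$), define $g_1$ and $g_2$ as its $\infty_1$- and $\infty_c$-iterates, and set $g_3(x,y) = g_1(g_1(x,y),x)$. Your verification of the $g_3$ identity via $u\ast x = u$ and then $(u\ast x)\ast u = u\ast u = u$ is the same computation as the paper's, just compressed; the paper writes out the same chain one step at a time.
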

\begin{proof} First pick any $\preceq$-maximal $g_0(x,y) \in \cF_{\cV(\bA)}(x,y)$ with $f(x,y) \preceq g_0(x,y)$ (that such a $g_0$ exists follows from the finiteness of $\cF_{\cV(\bA)}(x,y)$). We will construct $g_1, g_2, g_3$ such that $g_i(x,y) \in O(g_0(x,y))$, which will guarantee that each $g_i(x,y)$ is also $\preceq$-maximal and satisfies $f(x,y) \preceq g_i(x,y)$.

The construction of $g_1$ and $g_2$ from $g_0$ is straightforward: we just take $g_1 = g_0^{\infty_1}$ and $g_2 = g_0^{\infty_c}$ and note that this construction ensures $g_0 \preceq g_1, g_2$ by Proposition \ref{prop-right-pi1} and the identities
\[
g_1(x,y) \approx g_0^{*_1(|A|!-1)}(g_0(x,y),y)
\]
and
\[
g_2(x,y) \approx g_0^{*_c(|A^2|!-1)}(g_0(x,y),g_0(y,x)).
\]
Note, however, that we do not generally have $g_0 \preceq g_0^{\infty_2}$.

In order to construct $g_3$, we set
\[
g_3(x,y) \coloneqq g_1(g_1(x,y),x).
\]
Then we have $g_1 \preceq g_3$ by Proposition \ref{prop-right-pi1}, and
\begin{align*}
g_3(g_3(x,y),x) &\approx g_1(g_1(g_3(x,y),x),g_3(x,y))\\
&\approx g_1(g_1(g_1(g_1(x,y),x),x),g_3(x,y))\\
&\approx g_1(g_1(g_1(x,y),x),g_3(x,y))\\
&\approx g_1(g_3(x,y),g_3(x,y))\\
&\approx g_3(x,y).\qedhere
\end{align*}
\end{proof}

In some cases it will be convenient to work with a different, undirected graph which we attach to the algebra $\bA$.

\begin{defn}\label{defn-graph} For any algebra $\bA$, we let $\cG_\bA$ be the graph of two-element subalgebras of $\bA$.
\end{defn}

Note that every projection subalgebra of $\bA$ will become a clique in $\cG_\bA$. In many of the cases we will consider, every two-element subalgebra of $\bA$ will automatically be a projection subalgebra.

\begin{prop}\label{minimal-nontrivial} If $\bA$ is a finite nontrivial algebra, then there is some subquotient $\bB \in HS(\bA)$ which is also nontrivial, such that every proper subalgebra or quotient of $\bB$ is a projection algebra. If $\bA$ is clone-minimal, then the set of absorption identities which hold in $\bA$ is the same as the set of absorption identities which hold in $\bB$.
\end{prop}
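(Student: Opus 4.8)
The plan is to let $\bB$ be a nontrivial algebra in $HS(\bA)$ of the smallest possible cardinality. Such a $\bB$ exists because $\bA$ is itself a nontrivial member of $HS(\bA)$ (via the identity map on $\bA$, regarded as a subalgebra of itself). The key point is that $HS(\bA)$ is closed under passing to subalgebras and to further quotients: a subalgebra of a homomorphic image is always a homomorphic image of a subalgebra (pull back along the quotient map), so $SHS(\bA) \subseteq HS(\bA)$, and trivially $HHS(\bA) = HS(\bA)$. Hence every proper subalgebra of $\bB$ and every proper quotient of $\bB$ again lies in $HS(\bA)$ but has strictly smaller cardinality than $\bB$; by minimality of $|\bB|$, each of them is a projection algebra. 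This establishes the first assertion (note that in particular $|\bB| \ge 2$, since a one-element algebra is a projection algebra).

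For the second assertion, suppose $\bA$ is clone-minimal. One containment is immediate: every identity that holds in $\bA$ — in particular every absorption identity — holds throughout $\Var(\bA)$, and $\bB \in HS(\bA) \subseteq \Var(\bA)$, so every absorption identity of $\bA$ holds in $\bB$. Conversely, since $\bB$ is a nontrivial member of $\Var(\bA)$, Proposition \ref{absorb-prop} applies verbatim and tells us that any absorption identity which holds in $\bB$ must also hold in $\bA$. Combining the two containments shows that $\bA$ and $\bB$ satisfy exactly the same absorption identities.

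I expect essentially no hard step here; the only thing requiring care is the bookkeeping that $HS(\bA)$ is closed under $S$ and $H$ in the form needed (i.e. the standard facts $SH \subseteq HS$ and $HH = H$), after which the minimal-size argument together with an appeal to Proposition \ref{absorb-prop} completes the proof.
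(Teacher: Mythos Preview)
Your proof is correct and follows essentially the same approach as the paper: take $\bB$ to be a nontrivial subquotient of $\bA$ of minimal cardinality, observe that subquotients of $\bB$ are again subquotients of $\bA$, and invoke Proposition~\ref{absorb-prop} for the second claim. You have simply spelled out in more detail the closure facts ($SH \subseteq HS$, $HH = H$) that the paper leaves implicit.
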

\begin{proof} Take $\bB$ to be a nontrivial subquotient of $\bA$ of minimal cardinality, and note that any subquotient of $\bB$ is also a subquotient of $\bA$. The second sentence follows immediately from Proposition \ref{absorb-prop}.
\end{proof}

This lets us reduce most questions about absorption identities in minimal clones to the case where every proper subalgebra or quotient of $\bA$ is a projection algebra. In particular, if $f$ is the basic binary operation of such an algebra $\bA$, then for any $a,b \in \bA$ such that $(\{a,b\}, f)$ does not form a projection subalgebra of $\bA$, we must have $\Sg_\bA\{a,b\} = \bA$. We will exploit this when studying binary relations on $\bA$.

\begin{defn} If $\RR \le_{sd} \bA \times \bB$ is a subdirect product, and if $\pi_1,\pi_2$ are the projections from $\RR$ to $\bA$ and $\bB$, then the \emph{linking congruence} of $\RR$ can refer to any of the following three congruences:
\begin{itemize}
\item the congruence $\rho = \ker \pi_1 \vee \ker \pi_2$ on $\RR$,
\item the congruence $\alpha = \pi_1(\ker \pi_1 \vee \ker \pi_2)$ on $\bA$, or
\item the congruence $\beta = \pi_2(\ker \pi_1 \vee \ker \pi_2)$ on $\bB$.
\end{itemize}
Concretely, the linking congruence of $\RR$ relates two elements of $\RR, \bA$, or $\bB$, respectively, if they are in the same connected component of $\RR$ considered as a bipartite graph on $\bA \sqcup \bB$.

We say that $\RR$ is \emph{linked} if the linking congruence of $\RR$ is the full congruence.
\end{defn}

The following standard result is useful when the linking congruence is not full.

\begin{prop} If $\RR \le_{sd} \bA \times \bB$ is subdirect, and if $\rho, \alpha, \beta$ are the linking congruence of $\RR$ as congruences on $\RR, \bA, \bB$ respectively, then $\RR/\rho$ is the graph of an isomorphism between $\bA/\alpha$ and $\bB/\beta$.
\end{prop}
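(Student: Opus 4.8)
The plan is to show that each coordinate projection $\pi_1\colon\RR\to\bA$ and $\pi_2\colon\RR\to\bB$ induces an \emph{isomorphism} after passing to the quotient by $\rho$, and then to define the desired map $\bA/\alpha\to\bB/\beta$ as the composite of one such isomorphism with the inverse of the other.

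First I would observe that, since $\alpha$ is by definition the pushforward congruence $\pi_1(\rho)$ (and is a congruence because $\pi_1$ is surjective), the map $\pi_1$ descends to a well-defined surjective homomorphism $\bar\pi_1\colon\RR/\rho\twoheadrightarrow\bA/\alpha$; symmetrically we get $\bar\pi_2\colon\RR/\rho\twoheadrightarrow\bB/\beta$. The one step with any real content is injectivity of $\bar\pi_1$, and this is exactly where the fact that $\rho$ is the \emph{join} $\ker\pi_1\vee\ker\pi_2$ (so that $\ker\pi_1\le\rho$) gets used: if $r,r'\in\RR$ satisfy $\pi_1(r)\mathbin{\alpha}\pi_1(r')$, then unwinding $\alpha=\pi_1(\rho)$ produces $s,s'\in\RR$ with $\pi_1(s)=\pi_1(r)$, $\pi_1(s')=\pi_1(r')$, and $s\mathbin{\rho}s'$; since $\ker\pi_1\le\rho$ we conclude $r\mathbin{\rho}s\mathbin{\rho}s'\mathbin{\rho}r'$, hence $r\mathbin{\rho}r'$. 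So $\bar\pi_1$ is a bijective homomorphism, i.e. an isomorphism, and likewise $\bar\pi_2$.

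Then I would set $\varphi=\bar\pi_2\circ\bar\pi_1^{-1}\colon\bA/\alpha\to\bB/\beta$, an isomorphism of algebras, and finish by identifying $\RR/\rho$ with the graph of $\varphi$. For this, note that the homomorphism $\RR\to\bA/\alpha\times\bB/\beta$ sending $r\mapsto(\pi_1(r)/\alpha,\ \pi_2(r)/\beta)$ has kernel exactly $\rho$ — it clearly contains $\rho$, and the injectivity computation above (applied to the first coordinate alone) shows it is contained in $\rho$ — so it realizes $\RR/\rho$ as a subdirect product of $\bA/\alpha$ and $\bB/\beta$. Because $\bar\pi_1$ is a bijection, each $\alpha$-class occurs as the first coordinate of exactly one element of this subdirect product, whose second coordinate is $\varphi$ applied to that class; hence the subdirect product is precisely $\{(a,\varphi(a)) : a\in\bA/\alpha\}$, the graph of $\varphi$.

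I do not expect a genuine obstacle here: the argument is essentially an unwinding of the definition $\alpha=\pi_1(\ker\pi_1\vee\ker\pi_2)$, and the ``connected components of the bipartite graph on $\bA\sqcup\bB$'' description in the definition of the linking congruence is just a concrete restatement of the join $\ker\pi_1\vee\ker\pi_2$, so the two may be used interchangeably. The only point requiring a little care is the direction of the implication needed to factor a map through a quotient — one needs the kernel below $\rho$, which is exactly why the \emph{join}, rather than the meet, of $\ker\pi_1$ and $\ker\pi_2$ is what appears in the definition of $\rho$.
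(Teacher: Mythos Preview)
Your argument is correct and is the standard unwinding of the definitions. The paper does not supply a proof of this proposition at all---it is stated as a ``standard result'' and left unproved---so there is nothing to compare your approach against; what you have written is exactly the routine verification one would expect.
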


To handle more complex arguments involving binary relations, the following ``additive'' notation is very convenient.

\begin{defn} If $\RR \le \bA \times \bB$ and $\bC \le \bA$, then we define the subalgebra $\bC + \RR$ of $\bB$ by
\[
\bC + \RR = \{b \in \bB \mid \exists c\in\bC \text{ s.t. } (c,b) \in \RR\} = \pi_2(\pi_1^{-1}(\bC) \cap \RR),
\]
and for $\bD \le \bB$ we define the subalgebra $\bD - \RR$ of $\bA$ by
\[
\bD - \RR = \{a \in \bA \mid \exists d\in\bD \text{ s.t. } (a,d) \in \RR\} = \pi_1(\pi_2^{-1}(\bD) \cap \RR).
\]
\end{defn}

The main trick we use when we find a linked subdirect relation is the following result.

\begin{prop} If $\RR \le_{sd} \bA\times\bB$ is a linked subdirect product of finite algebras such that at least one of $\bA,\bB$ has a proper subalgebra, then there is either a proper subalgebra $\bC < \bA$ such that $\bC + \RR = \bB$, or there is a proper subalgebra $\bD < \bB$ such that $\bD - \RR = \bA$.
\end{prop}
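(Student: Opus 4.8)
The plan is to iterate the two operations $\bullet + \RR$ and $\bullet - \RR$, starting from a proper subalgebra on one side, and to show that the only way this iteration can fail to produce one of the two desired subalgebras is to stabilize at a configuration ruled out by linkedness. By symmetry --- replacing $\RR$ by its converse $\RR^{\smile} \le \bB \times \bA$, which is again subdirect and linked and which interchanges the roles of $\bullet + \RR$ and $\bullet - \RR$ --- I may assume $\bA$ has a proper subalgebra $\bC_0 < \bA$ (nonempty, as is standard for the idempotent algebras of this paper). Recursively define $\bD_n := \bC_n + \RR \le \bB$ and $\bC_{n+1} := \bD_n - \RR \le \bA$. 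Since $\RR$ is subdirect, $\bC + \RR$ is nonempty whenever $\bC$ is, and $(\bC + \RR) - \RR \supseteq \bC$; hence $\bC_0 \subseteq \bC_1 \subseteq \cdots$ and (by monotonicity of $\bullet + \RR$) $\bD_0 \subseteq \bD_1 \subseteq \cdots$ are nondecreasing chains.

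Run the following loop: at stage $n$, where $\bC_n < \bA$ has already been certified proper, test whether $\bD_n = \bB$; if so, output $\bC := \bC_n$ and stop, having produced a proper $\bC < \bA$ with $\bC + \RR = \bB$. Otherwise $\bD_n < \bB$ is proper; test whether $\bC_{n+1} = \bA$; if so, output $\bD := \bD_n$ and stop, having produced a proper $\bD < \bB$ with $\bD - \RR = \bA$. Otherwise $\bC_{n+1} < \bA$ is proper, and we continue to stage $n+1$. If the loop halts we are done, so suppose it never does. Then every $\bC_n$ is a proper subalgebra of $\bA$ and every $\bD_n$ a proper subalgebra of $\bB$; since $\bA$ and $\bB$ are finite and the chains are nondecreasing, they stabilize, producing proper subalgebras $\bC < \bA$, $\bD < \bB$ with $\bC + \RR = \bD$ and $\bD - \RR = \bC$.

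The remaining task --- and the one genuinely substantive step --- is to see that such a fixed configuration is incompatible with linkedness. View $\RR$ as a bipartite graph on $\bA \sqcup \bB$. Every neighbour of a vertex $a \in \bC$ lies in $\bC + \RR = \bD$, and every neighbour of a vertex $b \in \bD$ lies in $\bD - \RR = \bC$, so no edge of the graph leaves $\bC \sqcup \bD$; that is, $\bC \sqcup \bD$ is a union of connected components of $\RR$. As $\RR$ is linked, its bipartite graph is connected, so $\bC \sqcup \bD$ --- which is nonempty since $\bC \supseteq \bC_0 \ne \varnothing$ --- equals all of $\bA \sqcup \bB$, forcing $\bC = \bA$ and contradicting properness. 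Hence the loop halts and one of the two desired subalgebras is produced. The proof is mostly bookkeeping; the one point needing care is to organize the alternation so that at each possible stopping moment the subalgebra being output has already been verified to be proper, and the essential use of the hypothesis is precisely the component-counting at the end.
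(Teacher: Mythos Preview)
Your proof is correct and takes essentially the same approach as the paper: iterate $\bD_i = \bC_i + \RR$, $\bC_{i+1} = \bD_i - \RR$ from a proper $\bC_0$, use monotonicity, and use linkedness to force the chains to reach $\bA$ and $\bB$, then pick the last proper subalgebra in the sequence. The paper phrases the linkedness step slightly differently (observing that $\bC_i \cup \bD_i$ is the set of vertices at bipartite distance $\le 2i+1$ from $\bC_0$) while you phrase it as a fixed-point contradiction, but these are the same idea.
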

\begin{proof} Suppose that $\bC_0$ is a proper subalgebra of $\bA$, and inductively define the sequence of subalgebras $\bC_i \le \bA$ and $\bD_i \le \bB$ by
\[
\bD_i = \bC_i + \RR, \;\;\; \bC_{i+1} = \bD_i - \RR.
\]
Then each $\bC_{i+1}$ contains $\bC_i$ and similarly $\bD_{i+1}$ contains $\bD_i$, and $\bC_i \cup \bD_i$ can be concretely understood as the set of elements of $\bA$ that have distance at most $2i+1$ from $\bC_0$ in $\RR$ considered as a bipartite graph on $\bA \sqcup \bB$. Since $\RR$ is linked, for sufficiently large $i$ we must have $\bC_i = \bA$ and $\bD_i = \bB$, so we just need to find the last place in the sequence $\bC_0, \bD_0, \bC_1, ...$ where we had a proper subalgebra of $\bA$ or $\bB$ to get the required $\bC < \bA$ or $\bD < \bB$.
\end{proof}

\begin{cor}\label{cor-linked} If $\bS \le_{sd} \bA \times \bA$ is a symmetric and linked subdirect relation on a finite algebra $\bA$, and if $\bA$ has a proper subalgebra, then $\bA$ has a proper subalgebra $\bC < \bA$ such that $\bC + \bS = \bA$.
\end{cor}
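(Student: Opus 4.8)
The plan is to invoke the preceding Proposition verbatim, taking $\bB = \bA$ and $\RR = \bS$. The hypothesis of that Proposition that at least one of the two factors has a proper subalgebra is exactly our standing assumption that $\bA$ has a proper subalgebra, so the Proposition hands us one of two conclusions: either there is a proper $\bC < \bA$ with $\bC + \bS = \bA$, or there is a proper $\bD < \bA$ with $\bD - \bS = \bA$. In the first case we are already done, so all the remaining work is in converting the second case into the first.

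That conversion is immediate from the symmetry of $\bS$. Unwinding the definition, $\bD - \bS$ is the set of $a \in \bA$ such that $(a,d) \in \bS$ for some $d \in \bD$; because $\bS$ is symmetric this is the same as the set of $a$ such that $(d,a) \in \bS$ for some $d \in \bD$, which is exactly $\bD + \bS$. Hence $\bD - \bS = \bD + \bS$ as subalgebras of $\bA$, so in the second case $\bD + \bS = \bA$ and we may simply take $\bC = \bD$, which is a proper subalgebra by construction.

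There is no real obstacle here; the only thing to keep straight is the bookkeeping between the $+$ and $-$ operations and the direction in which pairs of $\bS$ are read. Since $\bS$ is given to be symmetric, linked, and subdirect, every hypothesis of the Proposition is already in hand with no further verification, and symmetry is precisely what collapses its two alternatives into the single statement we want.
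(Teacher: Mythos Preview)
Your proof is correct and is exactly the intended argument: the paper states this as a corollary with no written proof, since it follows immediately from the preceding Proposition (with $\bB = \bA$) together with the observation that symmetry of $\bS$ gives $\bD - \bS = \bD + \bS$. There is nothing to add.
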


The last trick we will use allows us to exploit the fact that a binary minimal clone may not contain any semiprojections of arity at least $3$. Recall that a function $f$ is a \emph{semiprojection} if it is not a projection, but there is some index $i$ such that $f(x_1, ..., x_n) = x_i$ whenever two of the inputs to $f$ are equal.

\begin{prop}\label{prop-no-semiprojection} If $\Clo(\bA)$ does not contain any nontrivial semiprojections of arity at least $3$, then for any term $t$ of $\bA$, any absorption equation
\[
t(x_1, ..., x_n) = x_i
\]
which holds whenever at most two distinct values occur among $x_1, ..., x_n$ must hold for all tuples $x_1, ..., x_n$.
\end{prop}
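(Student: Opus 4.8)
The plan is to reduce to the case of exactly three distinct values by induction on the number of distinct inputs, and then use a variable-identification argument to produce a semiprojection, contradicting the hypothesis. So suppose the absorption equation $t(x_1,\dots,x_n) = x_i$ holds whenever at most two distinct values occur among the inputs, but fails for some assignment. Among all assignments where it fails, pick one using the fewest distinct values, say $k$ distinct values; by hypothesis $k \ge 3$. First I would dispose of $k = 3$, and then show that $k \ge 4$ leads back to a smaller counterexample.

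For the inductive step (handling $k \ge 4$): if $t(a_1,\dots,a_n) \ne a_i$ with the $a_j$ taking exactly $k \ge 4$ distinct values, consider identifying two of these values. Concretely, form a new term $t'$ on fewer variables by substituting variables according to some surjection of the index set that merges two of the $k$ value-classes. We want to choose the merge so that $t'$ still gives a counterexample — i.e. so that after the merge the output still differs from the designated input. Since the output value and the designated input value together account for at most two of the $k \ge 4$ classes, there are at least two other classes we could merge with each other, and merging two classes not equal to $i$'s class and not equal to the output class preserves the inequality; the resulting term is a counterexample with $k-1$ distinct values, contradicting minimality. (One must be slightly careful about what ``the designated input'' means after substitution, but since we only merge value-classes, the class of $x_i$ is tracked along the way.) This shows $k = 3$.

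For the base case $k = 3$: we have a term $t$ and an assignment with exactly three distinct values $a, b, c$ such that the output is not equal to $x_i$-value. Build a ternary term $r(x,y,z)$ by substituting $x$, $y$, $z$ for the three value-classes (sending $x_j \mapsto x$, $y$, or $z$ according to which class $a_j$ falls in). By the hypothesis, whenever at most two distinct values appear among $x,y,z$ — i.e. whenever two of $x,y,z$ are identified — $r$ collapses to one of its variables; identifying inputs of $t$ can only decrease the number of distinct values, so $r$ restricted to such tuples equals a projection, and in fact the \emph{same} projection each time, namely the one corresponding to the class of $i$ (here I would check that $r(x,x,z), r(x,y,x), r(x,y,y)$ etc. all give the $i$-class variable, using that the original equation holds on $\le 2$-valued tuples). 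Thus $r$ is a semiprojection (it's not a projection, since $r(a,b,c) \ne$ the $i$-class value, yet it acts as a fixed projection on all tuples with a repeat). If $r$ depends on only one or two variables essentially, then actually $r$ itself, or a binary identification of it, would already witness a failure on a $\le 2$-valued tuple — contradiction; so $r$ is a genuine ternary semiprojection in $\Clo(\bA)$, contradicting the assumption that $\Clo(\bA)$ has no nontrivial semiprojection of arity $\ge 3$.

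The main obstacle I expect is the bookkeeping in the base case: making sure that the ternary term $r$ really is a semiprojection for the \emph{single} fixed index corresponding to the class of $i$, rather than different indices on different ``two-value'' slices. This requires carefully using that the original absorption equation $t(x_1,\dots,x_n) = x_i$ holds with a \emph{fixed} $i$ on all $\le 2$-valued assignments, so that every two-value specialization of $r$ returns the same one of $x,y,z$ — and then confirming that the possibility ``$r$ is essentially unary or binary'' is already excluded by the hypothesis applied at arity $\le 2$.
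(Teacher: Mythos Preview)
Your inductive step for $k \ge 4$ has a genuine gap. You claim that merging two value-classes (neither the $i$-class nor the output-class) ``preserves the inequality,'' but after the merge you are evaluating $t$ at a \emph{new} assignment with only $k-1$ distinct values, and the output of $t$ there bears no a priori relation to the old output. In fact your own minimality hypothesis on $k$ forces every assignment with fewer than $k$ distinct values to \emph{satisfy} $t(\ldots)=a_i$. So merging any two classes destroys the inequality rather than preserving it, and you cannot manufacture a failing assignment with $k-1$ values this way.

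The fix is that your base-case construction already works for every $k\ge 3$, with no reduction needed. Build the $k$-ary term $r(y_1,\dots,y_k)$ by sending each $x_j$ to $y_m$ where $m$ is the value-class of $a_j$, exactly as you do for $k=3$. Whenever two of the inputs to $r$ coincide, the corresponding assignment to $t$ uses at most $k-1$ distinct values, so by minimality of $k$ it is not a failing assignment and $r$ returns $y_{m_0}$ there. Since $r(v_1,\dots,v_k)\ne v_{m_0}$, the term $r$ is not a projection, hence it is a semiprojection of arity $k\ge 3$, contradicting the hypothesis. Your worry about $r$ being essentially binary is a side issue and is handled by the same observation (if $r$ ignored some $y_p$, then setting $y_p$ equal to another coordinate would force $r\equiv\pi_{m_0}$); the real obstacle was the inductive step. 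For comparison, the paper argues along a different axis: it picks a counterexample term $t$ of minimal \emph{arity} $n$ rather than minimal $k$, and for $n\ge 4$ invokes {\'S}wierczkowski's Lemma to find a variable identification that is still not a projection.
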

\begin{proof} Suppose for contradiction that $t(x_1, ..., x_n) \not\approx x_i$ in $\bA$, and suppose that the arity $n$ of $t$ is minimal among counterexamples to the proposition. If $n = 3$, then $t$ is a semiprojection, contradicting our assumption on $\Clo(\bA)$. If $n \ge 4$ and $t$ is not a semiprojection, then {\'S}wierczkowski's Lemma from \cite{semiprojection-lemma} shows that there is some pair of inputs $x_j,x_k$ such that the term $t'$ which we obtain by identifying $x_k$ with $x_j$ in $t$ is not a projection. Then $t'$ is a counterexample of arity $n-1$, contradicting our assumption that $n$ was minimal among counterexamples.
\end{proof}

\section{Descriptions of the different types of binary minimal clones}\label{s-descriptions}

Before diving into the proof of the classification of non-Taylor binary minimal clones in the next section, we will discuss each of the five cases in more detail, verifying that each of them really is a nice property. We'll start with a very quick discussion of rectangular bands. The following result is standard.

\begin{prop}\label{rect-band-structure} A rectangular band $\bA = (A,f)$ can always be written as a direct product $\bA \cong \bA_1 \times \bA_2$ such that the basic binary operation $f$ acts as first projection on $\bA_1$ and acts as second projection on $\bA_2$.
\end{prop}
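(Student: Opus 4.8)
The plan is to recover this classical fact by exhibiting $\bA$ as a subdirect product of two natural quotients and then checking that the subdirect product is in fact direct. The first step is to extract from the rectangular band axioms the two identities that drive everything: substituting $y := x$ into $(xy)(zw) \approx xw$ and using idempotence yields $x(yz) \approx xz$, and substituting $w := z$ and using idempotence yields $(xy)z \approx xz$. Intuitively these say that left multiplication $x\cdot(-)$ depends only on the last argument of its input and right multiplication $(-)\cdot x$ depends only on the first argument of its input (and as a byproduct $\bA$ is a semigroup).

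Next I would introduce two relations on $A$: let $a \mathbin{\eta_1} b$ mean $ax \approx bx$ for all $x \in A$, and let $a \mathbin{\eta_2} b$ mean $xa \approx xb$ for all $x \in A$. Each is plainly an equivalence relation, and the two derived identities make them congruences: if $a \mathbin{\eta_1} a'$ then $(ab)x \approx ax \approx a'x \approx (a'b')x$ for every $x$, so $ab \mathbin{\eta_1} a'b'$, and symmetrically $x(ab) \approx xb \approx xb' \approx x(a'b')$ when $b \mathbin{\eta_2} b'$, so $\eta_2$ is a congruence. Set $\bA_1 = \bA/\eta_1$ and $\bA_2 = \bA/\eta_2$. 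Since $(ab)x \approx ax$ for all $x$ we get $ab \mathbin{\eta_1} a$, hence $f$ acts as first projection on $\bA_1$; since $x(ab) \approx xb$ for all $x$ we get $ab \mathbin{\eta_2} b$, hence $f$ acts as second projection on $\bA_2$.

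It then remains to show that the natural homomorphism $\varphi : \bA \to \bA_1 \times \bA_2$, $a \mapsto ([a]_{\eta_1}, [a]_{\eta_2})$, is an isomorphism. For injectivity, if $a \mathbin{\eta_1} b$ and $a \mathbin{\eta_2} b$, then plugging $x := a$ into $ax \approx bx$ gives $a \approx ba$ by idempotence, and plugging $x := b$ into $xa \approx xb$ gives $ba \approx b$ by idempotence, so $a = b$. For surjectivity, given arbitrary $a,b \in A$ the element $c := f(a,b) = ab$ satisfies $c \mathbin{\eta_1} a$ and $c \mathbin{\eta_2} b$ by the two computations in the previous paragraph, so $\varphi(ab) = ([a]_{\eta_1}, [b]_{\eta_2})$, and every element of $\bA_1 \times \bA_2$ has this form. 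Hence $\varphi$ is an isomorphism, and we have already pinned down how $f$ acts on the two factors.

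I do not expect a genuine obstacle here: the whole argument is elementary identity-chasing. The only things needing care are choosing the right pair of derived identities at the outset and keeping the bookkeeping straight — which of $\eta_1$, $\eta_2$ is the ``first projection'' congruence and which is the ``second projection'' one. (One could alternatively describe $\eta_1, \eta_2$ via Green's relations $\mathcal{R}$ and $\mathcal{L}$, or simply cite the standard structure theory of bands.)
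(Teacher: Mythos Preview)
Your proof is correct. The paper does not actually prove this proposition: it is introduced as ``The following result is standard'' and stated without proof, so there is no approach to compare against. Your argument via the two congruences $\eta_1, \eta_2$ and the explicit section $ab \mapsto ([a]_{\eta_1},[b]_{\eta_2})$ is the standard one.
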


The free rectangular band on two generators is a four element algebra, with the multiplication table, labeled digraph, and undirected graph of two-element subalgebras displayed below.
\begin{center}
\begin{tabular}{ccccc}
\begin{tabular}{c | c c c c} $\cdot$ & $x$ & $y$ & $xy$ & $yx$\\ \hline $x$ & $x$ & $xy$ & $xy$ & $x$ \\ $y$ & $yx$ & $y$ & $y$ & $yx$ \\ $xy$ & $x$ & $xy$ & $xy$ & $x$\\ $yx$ & $yx$ & $y$ & $y$ & $yx$\end{tabular}
& \;\;\; &
\begin{tikzpicture}[scale=1.5,baseline=0.75cm]
  \node (x) at (0,0) {$x$};
  \node (yx) at (1.7,0) {$yx$};
  \node (xy) at (0,1) {$xy$};
  \node (y) at (1.7,1) {$y$};
  \draw [->] (x) edge [bend left, "{$y,xy$}"] (xy);
  \draw [->] (xy) edge [bend left, "{$x,yx$}"] (x);
  \draw [->] (y) edge [bend left, "{$x,yx$}"] (yx);
  \draw [->] (yx) edge [bend left, "{$y,xy$}"] (y);
\end{tikzpicture}
& \;\;\; &
\begin{tikzpicture}[scale=1.5,baseline=0.75cm]
  \node (x) at (0,0) {$x$};
  \node (yx) at (1,0) {$yx$};
  \node (xy) at (0,1) {$xy$};
  \node (y) at (1,1) {$y$};
  \draw (x) -- (xy) -- (y) -- (yx) -- (x);
\end{tikzpicture}
\end{tabular}
\end{center}

We also have the following elementary characterization of rectangular bands in terms of absorption identities.

\begin{prop}\label{rect-band-absorb} An idempotent binary operation $\cdot$ defines a rectangular band iff it satisfies the absorption identity
\begin{equation}
((ux)y)(z(wu)) \approx u.\label{eq-rect-band}
\end{equation}
\end{prop}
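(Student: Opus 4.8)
The plan is to prove both directions separately. For the forward direction, assume $\bA = (A,\cdot)$ is a rectangular band, so $(xy)(zw) \approx xw$ and $\cdot$ is idempotent. Then I would simply compute the left-hand side of \eqref{eq-rect-band} by repeatedly collapsing products of products. Writing $((ux)y)$ as a product of the two factors $(ux)$ and $y$, and $(z(wu))$ as a product of $z$ and $(wu)$, the identity $(pq)(rs) \approx ps$ applied to the outermost product gives $((ux)y)(z(wu)) \approx (ux)(wu)$. Applying the same identity once more (now $(ux)$ and $(wu)$ are the two factors, viewed as products) yields $(ux)(wu) \approx uu \approx u$ by idempotence. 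So \eqref{eq-rect-band} holds; this direction is a two-line calculation.

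For the converse, suppose $\cdot$ is idempotent and satisfies \eqref{eq-rect-band}, and I want to deduce $(xy)(zw) \approx xw$. The natural strategy is to specialize the five variables $u,x,y,z,w$ in \eqref{eq-rect-band} to suitable terms so that the left side becomes $(xy)(zw)$ (or something visibly equal to it) while the right side becomes $x$ applied to $w$ in the right way — but a single substitution won't directly give $xw$ since the right side of \eqref{eq-rect-band} is just $u$. So instead I expect to first derive a handful of simpler absorption/associativity-flavoured consequences of \eqref{eq-rect-band} — for instance, specializing to make the outer factors trivial should yield identities like $x(zx) \approx x$, $(xy)x \approx x$, $x(xy) \approx xy$, $(xy)y \approx xy$, and perhaps the full associativity $x(yz)\approx(xy)z$ — and then combine these to get the rectangular band law. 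A convenient target is to first show $\cdot$ is a band (associative) and satisfies $xyx \approx x$, since it is classical that bands satisfying $xyx\approx x$ are exactly rectangular bands; alternatively, derive $(xy)(zw)\approx xw$ directly: e.g. once we know $x(wu)\cdots$-type reductions, set $u := xy$ in a cleverly chosen instance.

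Concretely, in \eqref{eq-rect-band} substituting $x := u$, $y := u$, $z := u$, $w := u$ and using idempotence should immediately give $(uu)(uu)\approx u$, i.e. nothing new; the useful substitutions are asymmetric ones. Setting $z := (ux)y$ and $w := x$, say, or choosing $u$ to be a product, I would extract the one-sided laws above. The main obstacle I anticipate is \emph{bookkeeping}: \eqref{eq-rect-band} is a depth-four term in five variables, and getting from it to the clean rectangular band identity requires choosing the right sequence of three or four specializations and tracking the resulting reductions carefully. There is no conceptual difficulty — rectangular bands are completely understood, and the identity \eqref{eq-rect-band} was surely reverse-engineered from the rectangular band law precisely so that such a derivation exists — but finding the minimal chain of substitutions is the part that needs care rather than the part that needs an idea. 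I would start by writing out the instance of \eqref{eq-rect-band} with $u := ab$, $w := cd$ for fresh variables and see how much of the rectangular band law falls out after reducing both sides using the one-sided laws derived first.
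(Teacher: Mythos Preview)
Your forward direction is correct and identical to the paper's: apply the rectangular band law twice to collapse $((ux)y)(z(wu)) \approx (ux)(wu) \approx uu \approx u$.

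For the converse, your strategy is the right one and matches the paper's approach, but you have left it as a plan rather than a proof. The key one-sided laws you anticipate are exactly what is needed, and the substitutions that produce them are simpler than the ones you tentatively suggest. Setting $x = z = w = u$ in \eqref{eq-rect-band} (and using idempotence) gives
\[
(xy)x \approx ((xx)y)(x(xx)) \approx x,
\]
and setting $x = y = w = u$, $z = y$ gives
\[
x(yx) \approx ((xx)x)(y(xx)) \approx x.
\]
With these two identities in hand, your idea of substituting a product for $u$ is exactly right: take $u = xw$ in \eqref{eq-rect-band} to obtain
\[
(((xw)x)y)(z(w(xw))) \approx xw,
\]
and the two one-sided laws reduce the left side to $(xy)(zw)$, giving the rectangular band identity directly. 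There is no need to go through full associativity or the extra identities $x(xy)\approx xy$, $(xy)y\approx xy$ you mention; those detours work but are unnecessary. The ``bookkeeping obstacle'' you anticipated dissolves once you pick the right substitutions.
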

\begin{proof} First suppose that $\cdot$ is a rectangular band. Then we have
\[
((ux)y)(z(wu)) \approx (ux)(wu) \approx uu \approx u,
\]
which is \eqref{eq-rect-band}.

Now suppose that $\cdot$ satisfies \eqref{eq-rect-band}. Substituting $u = x = w = z$ in \eqref{eq-rect-band}, we have
\[
(xy)x \approx ((xx)y)(x(xx)) \approx x
\]
and similarly we have
\[
x(yx) \approx ((xx)x)(y(xx)) \approx x,
\]
so by substituting $u = xw$ in \eqref{eq-rect-band} we have
\[
(xy)(zw) \approx (((xw)x)y)(z(w(xw))) \approx xw,
\]
so $\cdot$ is a rectangular band.
\end{proof}

\begin{thm}\label{rect-band-nice} Being a rectangular band is a nice property, and every rectangular band is clone-minimal.
\end{thm}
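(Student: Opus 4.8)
The plan is to verify the four clauses in the definition of a nice property, taking the fixed algebra $\bA_\cP$ to be the four-element free rectangular band displayed above; by Proposition \ref{rect-band-structure} this algebra is isomorphic to $(\{0,1\},\pi_1)\times(\{0,1\},\pi_2)$, a two-element left-zero band times a two-element right-zero band, and it is nontrivial, its basic operation being neither projection (for instance $x\cdot y$ equals neither $x$ nor $y$). The first clause is routine: to test whether a binary operation $f$ on $A$ defines a rectangular band one checks idempotence and the identity $f(f(x,y),f(z,w)) \approx f(x,w)$, which is $O(|A|^4)$ work, hence polynomial in the input size $|A|^2$ (alternatively one may use the single absorption identity of Proposition \ref{rect-band-absorb}). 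The fourth clause is immediate because rectangular bands are the models of a fixed set of identities and therefore form a variety, a fortiori a pseudovariety.

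The main step is to describe $\Clo(\bA)$ for a nontrivial rectangular band $\bA=(A,f)$; this simultaneously yields clone-minimality and the second clause. Using Proposition \ref{rect-band-structure}, write $\bA\cong\bA_1\times\bA_2$ with $f$ acting as first projection on $\bA_1$ and as second projection on $\bA_2$; if either factor had only one element then $f$ would be a projection, so nontriviality gives $|\bA_1|,|\bA_2|\ge 2$. Since $(\bA_1,f)$ is a left-zero band, every $n$-ary term of $\bA$ restricts to some projection $\pi_i$ on $\bA_1$, and dually to some $\pi_j$ on $\bA_2$; conversely the term $x_ix_j$ realizes any pair $(i,j)$, and a term operation of a direct product is determined by its restrictions to the factors. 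Hence $\Clo_n(\bA)$ corresponds bijectively to $\{1,\dots,n\}^2$, the nontrivial terms being exactly those whose two indices differ. Now given a nontrivial $g\in\Clo_n(\bA)$ with $g|_{\bA_1}=\pi_i$, $g|_{\bA_2}=\pi_j$ and $i\ne j$, substitute $x$ into the $i$-th argument, $y$ into the $j$-th, and (say) $x$ into the rest: the resulting $f'\in\Clo(g)$ restricts to first projection on $\bA_1$ and to second projection on $\bA_2$, so by the determinacy just mentioned $f'=f$. Thus $f\in\Clo(g)$ and $\Clo(g)=\Clo(\bA)$, proving that $\bA$ is clone-minimal; the case where $f$ is itself a projection is vacuous.

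The second clause now follows with no extra work: if $f$ has the property and $g\in\Clo(f)$ is nontrivial, then $f$ is nontrivial and, by clone-minimality, $\Clo(g)=\Clo(f)\ni f$, so $f'=f$ does the job. For the third clause, let $\bA=(A,f)$ be a nontrivial rectangular band, decomposed as $\bA_1\times\bA_2$ with $|\bA_1|,|\bA_2|\ge 2$ as above. Any two-element subset of the left-zero band $\bA_1$ is a subalgebra isomorphic to $(\{0,1\},\pi_1)$, and likewise $\bA_2$ has a subalgebra isomorphic to $(\{0,1\},\pi_2)$; their product embeds into $\bA_1\times\bA_2\cong\bA$, so $\bA_\cP\in S(\bA)\subseteq\Var(\bA)$. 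I anticipate no genuine obstacle: the one substantive point is the description of $\Clo(\bA)$, and the product decomposition of Proposition \ref{rect-band-structure} makes even that essentially automatic.
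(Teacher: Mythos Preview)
Your proof is correct and follows essentially the same approach as the paper's: both use the product decomposition of Proposition~\ref{rect-band-structure} to see that every term operation of a nontrivial rectangular band is determined by a pair of projection indices (one per factor), whence the nontrivial terms depending on all their inputs are exactly $f(x,y)$ and $f(y,x)$, and the four-element free rectangular band sits inside any nontrivial rectangular band as a $2\times 2$ subproduct. The paper's proof is terser, merely asserting these facts as ``easy to check,'' while you spell out the bijection $\Clo_n(\bA)\cong\{1,\dots,n\}^2$ and the explicit variable substitution recovering $f$ from a nontrivial $g$; but the substance is the same.
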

\begin{proof} Since rectangular bands form a variety defined by finitely many identities, it is easy to check that a given operation $f$ is a rectangular band. If $\bA = (A,f)$ is a nontrivial rectangular band, then by Proposition \ref{rect-band-structure} there is some pair $a,b \in \bA$ such that $\{a,b\}$ is not a projection subalgebra of $\bA$, and the subalgebra generated by $a$ and $b$ is isomorphic to the four-element free rectangular band on two generators. Finally, it's easy to check that every nontrivial operation $t \in \Clo(f)$ which depends on all of its inputs is binary, and is in fact either $f(x,y)$ or $f(y,x)$.
\end{proof}

\subsection{$p$-cyclic groupoids}

\begin{defn}\label{p-cyclic-defn} We say that an idempotent binary operation $f$ is a \emph{$p$-cyclic groupoid} if it satisfies the identities
\begin{align*}
f(x,f(y,z)) &\approx f(x,y),\\
f(f(x,y),z) &\approx f(f(x,z),y),
\end{align*}
and
\[
f(\cdots f(f(x,y),y)\cdots, y) \approx x,
\]
where there are exactly $p$ copies of the variable $y$ in the last identity.
\end{defn}

The free $2$-cyclic groupoid on two generators is a four element algebra, with the multiplication table, labeled digraph, and undirected graph of two-element subalgebras displayed below.
\begin{center}
\begin{tabular}{ccccc}
\begin{tabular}{c | c c c c} $\cdot$ & $x$ & $y$ & $xy$ & $yx$\\ \hline $x$ & $x$ & $xy$ & $x$ & $xy$ \\ $y$ & $yx$ & $y$ & $yx$ & $y$ \\ $xy$ & $xy$ & $x$ & $xy$ & $x$\\ $yx$ & $y$ & $yx$ & $y$ & $yx$\end{tabular}
& \;\;\; &
\begin{tikzpicture}[scale=1.5,baseline=0.75cm]
  \node (x) at (0,0) {$x$};
  \node (y) at (1.7,0) {$y$};
  \node (xy) at (0,1) {$xy$};
  \node (yx) at (1.7,1) {$yx$};
  \draw [->] (x) edge [bend left, "{$y,yx$}"] (xy);
  \draw [->] (xy) edge [bend left, "{$y,yx$}"] (x);
  \draw [->] (y) edge [bend left, "{$x,xy$}"] (yx);
  \draw [->] (yx) edge [bend left, "{$x,xy$}"] (y);
\end{tikzpicture}
& \;\;\; &
\begin{tikzpicture}[scale=1.5,baseline=0.75cm]
  \node (x) at (0,0) {$x$};
  \node (y) at (1,0) {$y$};
  \node (xy) at (0,1) {$xy$};
  \node (yx) at (1,1) {$yx$};
  \draw (x) -- (xy);
  \draw (yx) -- (y);
\end{tikzpicture}
\end{tabular}
\end{center}

The structure of $p$-cyclic groupoids was described by P{\l}onka \cite{plonka-p-cyclic} (see the discussion right after Example 2 of \cite{plonka-p-cyclic}) and elaborated on in Section 2 of \cite{representations-n-cyclic-groupoids}. The structure theorem is most conveniently stated in terms of right orbits.

\begin{thm}[P{\l}onka \cite{plonka-p-cyclic}, Romanowska, Roszkowska \cite{representations-n-cyclic-groupoids}]\label{p-cyclic-structure} Suppose $\bA = (A,f)$ is a $p$-cyclic groupoid. Then the right orbits $O_1, ..., O_k$ of elements of $\bA$ form a partition of $\bA$, and there exist auxiliary affine structures on the $O_i$ such that the following hold:
\begin{itemize}
\item if we define an equivalence relation $\Theta$ on $\bA$ with equivalence classes given by the $O_i$, then $\Theta$ is a congruence of $\bA$ such that $f$ acts as first projection on $\bA/\Theta$,

\item the restriction of $f$ to any $O_i$ is the first projection,

\item the auxiliary affine structure on $O_i$ is an affine space over $\ZZ/p$, so the number of elements of $O_i$ is a power of $p$,

\item for each $i\ne j$ there exists a vector $v_{ij}$ in the vector space associated to $O_i$ such that for all $a \in O_i, b \in O_j$, we have
\[
a \in O_i, b \in O_j \;\;\; \implies \;\;\; f(a,b) = a + v_{ij},
\]

\item for each $O_i$, the associated vector space is generated by the set of vectors $v_{ij}$ with $j \ne i$.
\end{itemize}
Conversely, given a collection of affine spaces $O_i$ and vectors $v_{ij}$ over $\ZZ/p$ satisfying the above conditions, there is a $p$-cyclic groupoid $\bA$ with $f(a,b) = a+v_{ij}$ for $a \in O_i, b \in O_j$.
\end{thm}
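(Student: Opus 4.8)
The plan is to identify the right translations $R_y\colon x \mapsto f(x,y)$ as generators of an elementary abelian $p$-group acting on $A$, and to read all of the asserted structure off that action. First I would note that the third defining identity of Definition \ref{p-cyclic-defn} says exactly that $R_y^{\,p} = \mathrm{id}$, so each $R_y$ is a permutation of $A$ with $R_y^{-1} = R_y^{\,p-1}$, while the second identity says $R_z R_y = R_y R_z$. Hence the subgroup $G \le \mathrm{Sym}(A)$ generated by $\{R_y : y \in A\}$ is abelian of exponent dividing $p$, i.e. an elementary abelian $p$-group. Because each $R_y^{-1} = R_y^{\,p-1}$ already lies in the submonoid generated by the $R_y$, the right orbit $O(a)$ of Definition \ref{defn-right-orbit} coincides with the group orbit $G\cdot a$; in particular the right orbits $O_1,\dots,O_k$ partition $A$. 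Applying the first defining identity inductively along the construction of a right orbit shows that $f(x,-)$ is constant on each right orbit, i.e. $f(x,b') = f(x,b)$ whenever $b' \in O(b)$; combined with the commutativity of $G$ (so that $f(a',b')$ with $a' = ga$ equals $g\,f(a,b')$, which lies in the orbit of $f(a,b')=f(a,b)$) this yields that the right-orbit partition is a congruence $\Theta$, and since $f(a,b) = R_b(a) \in G\cdot a$, that $f$ acts as first projection on $\bA/\Theta$.

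Next I would extract the affine structure one orbit at a time. Fixing $O_i$ and $a \in O_i$, commutativity of $G$ forces the point stabilizer $G_a$ to be independent of $a \in O_i$ and to fix $O_i$ pointwise; call it $H_i$. Then $G/H_i$ acts freely and transitively on $O_i$, and a free transitive action of an elementary abelian $p$-group is precisely the datum of an affine space over $\ZZ/p$ with translation group $V_i := G/H_i$, so $|O_i| = |V_i|$ is a power of $p$. Idempotence supplies the key rigidity: $R_b(b) = f(b,b) = b$, so for $b \in O_i$ we have $R_b \in G_b = H_i$, whence $R_b$ fixes $O_i$ pointwise and $f$ restricts to the first projection on $O_i$. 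For $b$ in an arbitrary orbit $O_j$, the element $R_b \in G$ acts on $O_i$ through $G \to V_i$, i.e. as translation by the coset $v_{ij} := R_b H_i \in V_i$, and this coset is well-defined independently of the choice of $b \in O_j$ exactly because $f(a,-)$ is constant on $O_j$; by construction $f(a,b) = a + v_{ij}$ for $a \in O_i$, $b \in O_j$, with $v_{ii} = 0$. Finally, since $G$ is generated by the $R_b$, the quotient $V_i = G/H_i$ is generated by the cosets $R_b H_i$, that is, by $\{v_{ij} : j \ne i\}$, which is the last bullet.

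For the converse I would just define $f$ on the disjoint union $\bigsqcup_i O_i$ by $f(a,b) = a + v_{ij}$ for $a \in O_i$, $b \in O_j$ (setting $v_{ii} := 0$) and verify the three identities directly: idempotence is $v_{ii}=0$; $f(x,f(y,z)) \approx f(x,y)$ holds because $f(y,z)$ lies in the orbit of $y$; $f(f(x,y),z) \approx f(f(x,z),y)$ holds because each $V_i$ is abelian; and the $p$-fold identity holds because $p$ annihilates every vector of a $\ZZ/p$-vector space. The generation hypothesis on the $v_{ij}$ is exactly what guarantees that the right orbits of the constructed groupoid are the full $O_i$ and not smaller cosets.

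I do not anticipate a genuine obstacle: essentially everything follows from the single structural observation that $\langle R_y : y \in A\rangle$ is elementary abelian with right orbits as its orbits, after which idempotence (each $R_b$ fixes $b$) and commutativity of $G$ (point stabilizers are constant along orbits) do all the real work. If any point requires care it is the bookkeeping of identifying the translation group $V_i$ of the abstract affine structure on $O_i$ with $G/H_i$ uniformly, so that a single choice of affine structure on each $O_i$ simultaneously validates all five bullets.
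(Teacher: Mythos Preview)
Your proposal is correct and follows essentially the same approach as the paper's proof: both arguments rest on the observation that the right translations $R_y$ are pairwise commuting permutations of order dividing $p$, use the identity $f(x,f(y,z))\approx f(x,y)$ to show $f(x,-)$ is constant on right orbits, and use idempotence to see that each $R_b$ fixes its own orbit pointwise. The only cosmetic difference is that you package the translations into a single global group $G\le\mathrm{Sym}(A)$ and obtain the translation group of $O_i$ as the quotient $G/H_i$, whereas the paper works orbit-by-orbit with the restricted maps $\alpha_{ij}\colon O_i\to O_i$ directly; your $V_i=G/H_i$ is exactly the paper's group generated by the $\alpha_{ij}$.
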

\begin{proof} To see that the right orbits form a partition of $\bA$, we just note that the identity
\[
f(\cdots f(f(x,y),y)\cdots, y) \approx x,
\]
implies that for any $a,c \in \bA$, we have $a \in O(f(a,c))$, so $c \in O(a) \iff a \in O(c)$. By the definition of a right orbit, we have
\[
f(O(a), O(b)) \subseteq O(a),
\]
so the first bullet point follows automatically from the fact that the right orbits partition $\bA$.

Repeated application of the identity
\[
f(x,f(y,z)) \approx f(x,y)
\]
shows that
\[
c \in O(b) \;\;\; \implies \;\;\; f(a,c) = f(a,b).
\]
In particular, since $f$ is idempotent we have
\[
b \in O(a) \;\;\; \implies \;\;\; f(a,b) = f(a,a) = a,
\]
so the restriction of $f$ to any right orbit $O_i$ is the first projection. For $i \ne j$, we let $\alpha_{ij} : O_i \rightarrow O_i$ be the map
\[
\alpha_{ij} : a \in O_i \mapsto f(a,b), \;\;\; b \in O_j.
\]
The identity
\[
f(f(x,y),z) \approx f(f(x,z),y)
\]
shows that $\alpha_{ij}$ commutes with $\alpha_{ik}$ for all $j,k$, and the identity
\[
f(\cdots f(f(x,y),y)\cdots, y) \approx x
\]
shows that $\alpha_{ij}^p$ is the identity for all $j$. Thus for any fixed $i$ the maps $\alpha_{ij}$ generate a vector space over $\ZZ/p$ which acts transitively on $O_i$, granting each $O_i$ the structure of an affine space over $\ZZ/p$ in which each $\alpha_{ij}$ corresponds to a translation by some vector $v_{ij}$.
\end{proof}

\begin{cor}[Corollary 1 of P{\l}onka \cite{plonka-p-cyclic}] The free $p$-cyclic groupoid on $n$ generators has size $np^{n-1}$.
\end{cor}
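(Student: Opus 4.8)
The plan is to apply the structure theorem for $p$-cyclic groupoids (Theorem~\ref{p-cyclic-structure}) to the free $p$-cyclic groupoid $\cF$ on generators $g_1,\dots,g_n$, and to bound $|\cF|$ from both sides. Write $\Theta$ for the congruence whose classes are the right orbits of $\cF$, so that $f$ acts as first projection on $\cF/\Theta$. The first step is to identify $\cF/\Theta$ with the free algebra on $n$ generators in the variety defined by $f(x,y)\approx x$, i.e. with the $n$-element left-projection algebra $\{[g_1],\dots,[g_n]\}$. One inclusion of congruences is immediate from the first bullet of Theorem~\ref{p-cyclic-structure} (namely, $f$ is first projection on $\cF/\Theta$, so $\Theta$ contains the congruence $\theta_L$ collapsing $f$ to first projection); conversely, if $a\in O(b)$ then $a$ is obtained from $b$ by a sequence of right multiplications, hence $a\equiv b$ modulo $\theta_L$, so $\Theta\subseteq\theta_L$. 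Thus $\cF$ has exactly $n$ right orbits, and since the right orbits cover $\cF$ (Proposition~\ref{prop-right-gen}) and partition it (Theorem~\ref{p-cyclic-structure}), they are precisely $O(g_1),\dots,O(g_n)$.

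For the upper bound, Theorem~\ref{p-cyclic-structure} endows each orbit $O_i=O(g_i)$ with the structure of an affine space over $\ZZ/p$ whose associated vector space $V_i$ is generated by the $n-1$ vectors $v_{ij}$ with $j\ne i$. Hence $\dim_{\ZZ/p}V_i\le n-1$, so $|O_i|=|V_i|\le p^{n-1}$ and $|\cF|=\sum_{i=1}^n|O_i|\le np^{n-1}$.

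For the matching lower bound I would use the converse direction of Theorem~\ref{p-cyclic-structure} to build a $p$-cyclic groupoid $\bB$ of size exactly $np^{n-1}$ generated by $n$ elements. Take $n$ copies $O_1,\dots,O_n$ of the affine space $(\ZZ/p)^{n-1}$ with standard basis $e_1,\dots,e_{n-1}$, and for each $i$ let the $n-1$ vectors $\{v_{ij}:j\ne i\}$ be $e_1,\dots,e_{n-1}$ listed in some order; these generate $V=(\ZZ/p)^{n-1}$, so the converse applies and yields $\bB$ with $f(a,b)=a+v_{ij}$ for $a\in O_i$, $b\in O_j$. Fix a base point $0_i\in O_i$; since $f(0_i,0_j)=0_i+v_{ij}$ and $pv_{ij}=0$, repeated right multiplication by the $0_j$ produces every element of $0_i+V=O_i$, so the subalgebra of $\bB$ generated by $\{0_1,\dots,0_n\}$ contains each $O_i$, hence equals $\bB$. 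By freeness of $\cF$, the assignment $g_i\mapsto 0_i$ extends to a surjective homomorphism $\cF\twoheadrightarrow\bB$, giving $|\cF|\ge np^{n-1}$; together with the upper bound, $|\cF|=np^{n-1}$.

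The only step I expect to require genuine care is the identification of $\cF/\Theta$ with the $n$-element left-projection algebra — equivalently, verifying that $\cF$ has exactly $n$ right orbits rather than fewer (which forces the orbits $O(g_i)$ to be pairwise distinct); once that is in hand, the two bounds are routine applications of the structure theorem and its converse.
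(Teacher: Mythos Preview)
Your argument is correct. The paper itself does not supply a proof of this corollary; it is simply stated with a citation to P{\l}onka, immediately after the structure theorem (Theorem~\ref{p-cyclic-structure}). Your derivation is exactly the natural one from that theorem: identify the right-orbit congruence $\Theta$ with the fully invariant congruence $\theta_L$ forcing $f(x,y)\approx x$ (so $\cF/\Theta$ is the $n$-element projection algebra and there are exactly $n$ orbits), bound each orbit above by $p^{n-1}$ since its translation group is generated by $n-1$ vectors, and realize the bound by the explicit model built from the converse clause. The step you flagged as delicate---that the $n$ orbits $O(g_i)$ are pairwise distinct---is handled by your observation that the $n$-element first-projection algebra is itself a $p$-cyclic groupoid, so the map $g_i\mapsto i$ separates the generators modulo $\theta_L=\Theta$.
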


\begin{rem} The fact that the size of the free $p$-cyclic groupoid on $n$ generators looks like a formal derivative of the size of the free vector space over $\bF_p$ on $n$ generators is quite striking. This may be morally explained by the fact that the free $p$-cyclic groupoid on $n$ generators is isomorphic to the subalgebra of the quasiaffine algebra
\[
\big(\ZZ/p^2, (1-p)x + py\big)^n
\]
which is generated by the basis vectors $e_i = (0, ..., 0, 1, 0, ..., 0)$: the operation
\[
(1-p)x + py = x + p(y-x)
\]
can be thought of as an infinitesimal deformation of first projection, since $p^2 \equiv 0$ in $\ZZ/p^2$. This representation was essentially studied in P{\l}onka's paper \cite{plonka-p-cyclic-minimal} which first introduced $p$-cyclic groupoids, and it appears again in Lemma 3.5 of \cite{minimal-clones-abelian}.

Although the free $p$-cyclic groupoids are quasiaffine, most $p$-cyclic groupoids are not even abelian (in the sense of universal algebra), so free $p$-cyclic groupoids are natural examples of abelian algebras with non-abelian quotients.
\end{rem}

The fact that $p$-cyclic groupoids form minimal clones is attributed to P{\l}onka \cite{plonka-p-cyclic-minimal} - we include a proof below.

\begin{thm}\label{p-cyclic-nice} For any fixed prime $p$, being a $p$-cyclic groupoid is a nice property, and every nontrivial $p$-cyclic groupoid is clone-minimal.
\end{thm}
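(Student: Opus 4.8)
The plan is to verify the four conditions in the definition of a nice property, and then separately argue clone-minimality. The first condition is immediate: the $p$-cyclic groupoid identities in Definition \ref{p-cyclic-defn} are a finite set of identities of bounded length (for fixed $p$), so one checks them in polynomial time by evaluating on all tuples. The third condition follows from Theorem \ref{p-cyclic-structure}: if $\bA = (A,f)$ is a nontrivial $p$-cyclic groupoid, then some right orbit $O_i$ has at least two elements, hence carries a nontrivial affine $\ZZ/p$ structure, and since the associated vector space is generated by the $v_{ij}$ with $j\ne i$, there is some $j\ne i$ with $v_{ij}\ne 0$; picking $a\in O_i$, $b\in O_j$ and $a' = a + v_{ij}$, the three elements $a, a', b$ generate (inside $\Sg_\bA\{a,b\}$, after possibly passing to a quotient) a fixed nontrivial $p$-cyclic groupoid --- concretely one can take $\bA_\cP$ to be the free $p$-cyclic groupoid on two generators (size $2p$), which embeds in $\bA^2$ or appears in $HS(\bA)$ by the converse half of Theorem \ref{p-cyclic-structure}. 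The fourth condition (pseudovariety) is immediate since $p$-cyclic groupoids form a variety, in particular one closed under $H$, $S$, and finite $P$.

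The substantive work is the second condition: if $f$ is a $p$-cyclic groupoid and $g\in\Clo(f)$ is nontrivial, produce a nontrivial $g'\in\Clo(g)$ which is again a $p$-cyclic groupoid. Here I would use the structure theorem directly. Since the $p$-cyclic identities imply $f(x,f(y,z))\approx f(x,y)$, every term $t(x_1,\dots,x_n)\in\Clo(f)$ which depends on more than one variable actually has the form $t(x_1,\dots,x_n) = f(\cdots f(f(x_i, x_{j_1}), x_{j_2})\cdots, x_{j_k})$ up to the commutation identity $f(f(x,y),z)\approx f(f(x,z),y)$ --- that is, the term tree can be rewritten as a left-combed tree whose leftmost leaf $x_i$ is distinguished and whose remaining inputs act as "translations," and by the third $p$-cyclic identity each variable appearing among the $x_{j_\ell}$ contributes modulo $p$. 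Concretely, working inside $\cF_{\cV(\bA)}(x_1,\dots,x_n)$ and using the quasiaffine representation over $\ZZ/p^2$ from the remark (or just the structure theorem applied to the free algebra), $g$ as an element of the free $p$-cyclic groupoid is determined by a distinguished leftmost variable together with a vector of residues mod $p$ recording the multiplicities of the other variables. So $g(x_1,\dots,x_n)$ "is" an operation of the form $x_i + \sum_{\ell} c_\ell\, u(x_i, x_{j_\ell})$ in the P{\l}onka coordinates, with the $c_\ell \in \ZZ/p$ not all zero (else $g$ is a projection). From such a $g$ I would build a binary $g'(x,y)$ by substituting all the $x_{j_\ell}$ equal to $y$ and $x_i$ equal to $x$; then $g'(x,y)$ corresponds to $x + c\cdot u(x,y)$ with $c = \sum_\ell c_\ell \in \ZZ/p$. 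If $c\ne 0$ this $g'$ is a nontrivial $p$-cyclic groupoid (it generates the same orbit structure and, since $c$ is invertible mod $p$, the same translation vectors, so it satisfies Definition \ref{p-cyclic-defn}). If $c = 0$ one instead substitutes cleverly: pick an index $\ell_0$ with $c_{\ell_0}\ne 0$, set $x_i = x$, $x_{j_{\ell_0}} = y$, and set every other variable equal to $x$ (using idempotence $f(a,a) = a$ to kill those terms), obtaining $g'(x,y)$ corresponding to $x + c_{\ell_0} u(x,y)$ with $c_{\ell_0}\ne 0$. Either way $g'\in\Clo(g)$ is a nontrivial $p$-cyclic groupoid, as the $p$-cyclic identities for $g'$ all reduce, in the P{\l}onka coordinates, to the identities $pu \equiv 0$, $u(x, u(y,z)) = u(x,y)$, and commutation, which hold in the free algebra.

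Finally, clone-minimality: given a nontrivial $p$-cyclic groupoid $\bA = (A,f)$ and any nontrivial $g\in\Clo(f)$, I must show $f\in\Clo(g)$. By the second condition just proved there is a nontrivial binary $g'\in\Clo(g)$ which is a $p$-cyclic groupoid, so by the above coordinate analysis $g'$ corresponds to $x + c\,u(x,y)$ with $c\in(\ZZ/p)^\times$; iterating $g'$ in its first argument against a fixed second argument (i.e. the operation $g' *_1 g' *_1 \cdots$, which stays within $\Clo(g)$) cycles through all translations $x + kc\,u(x,y)$, $k\in\ZZ/p$, and since $c$ is invertible some iterate gives $x + u(x,y) = f(x,y)$; hence $f\in\Clo(g'*_1\cdots)\subseteq\Clo(g)$. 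The main obstacle is making the rewriting of arbitrary terms in a $p$-cyclic groupoid into the "leftmost variable plus residue vector" normal form fully rigorous --- this is exactly where Theorem \ref{p-cyclic-structure} and the quasiaffine embedding from the preceding remark do the heavy lifting, so I would lean on them rather than redo the combinatorics of terms by hand.
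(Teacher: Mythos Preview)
Your proposal is correct and follows essentially the same approach as the paper: both use the P{\l}onka structure theorem to put an arbitrary term into the normal form ``distinguished leftmost variable plus a residue vector mod $p$,'' then substitute variables to extract a nontrivial binary operation $f_k$, and finally iterate via $*_1$ to recover $f$. The paper streamlines your two-case substitution by first assuming without loss of generality that $g$ depends on all of its inputs (so every coefficient $k_m$ is nonzero and $g(x,\dots,x,y)$ works immediately), and is slightly more explicit about the third condition, showing that $\Sg_{\bA^2}\{(a,b),(b,a)\}$ realizes the free $2$-generated $p$-cyclic groupoid whenever $\Sg_\bA\{a,b\}$ has size only $p+1$.
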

\begin{proof} Since $p$-cyclic groupoids form a variety defined by finitely many identities, it is easy to check that a given operation $f$ is a $p$-cyclic groupoid. If $\bA = (A,f)$ is a nontrivial $p$-cyclic groupoid, and if $a,b \in \bA$ have $f(a,b) \ne a$, then the subalgebra of $\bA$ generated by $a$ and $b$ is a $p$-cyclic groupoid with two right orbits, and either has size $p+1$ or $2p$. If the subalgebra generated by $a$ and $b$ has size $p+1$, then the algebra $\Sg_{\bA^2}\{(a,b),(b,a)\}$ will have size $2p$. Either way, we see that the free $p$-cyclic groupoid on two generators is in the variety generated by $\bA$. (In fact, any nontrivial $p$-cyclic groupoid generates the full variety of all $p$-cyclic groupoids.)

If $f$ defines a $p$-cyclic groupoid, then we can use Theorem \ref{p-cyclic-structure} to see that each of the $p-1$ operations $f_1, ..., f_{p-1}$ defined by $f_1 = f$ and
\[
f_{k+1}(x,y) = f(f_k(x,y), y)
\]
also defines a $p$-cyclic groupoid operation with the same decomposition into right orbits $O_i$, with the operation $f_k$ given by
\[
a \in O_i, b \in O_j \;\;\; \implies \;\;\; f_k(a,b) = a + kv_{ij}.
\]
So for every nontrivial binary operation $g \in \Clo(f)$, either $g(x,y)$ or $g(y,x)$ defines a $p$-cyclic groupoid as well, and $f \in \Clo(g)$.

More generally, suppose $g$ is any nontrivial term in $\Clo(f)$. We may assume without loss of generality that $g$ is in the same right orbit of the free $p$-cyclic groupoid on $n$-generators as the first projection, and that $g$ depends on its last input. Then using the explicit description of the free $p$-cyclic groupoid on $n$ generators as a disjoint union of $n$ affine spaces of size $p^{n-1}$, we see that there are $k_2, ..., k_n \in \ZZ/p$ such that
\[
a \in O_i, b_m \in O_{j_m} \;\;\; \implies \;\;\; g(a, b_2, ..., b_n) = a + k_2v_{ij_2} + \cdots + k_nv_{ij_n},
\]
and that $k_n \not\equiv 0 \pmod{p}$ if $g$ depends on its last input. Thus the binary operation
\[
g(x,...,x,y)
\]
is the same as the (nontrivial) binary operation $f_{k_n}(x,y)$, and so $f \in \Clo(g)$.
\end{proof}

\subsection{Partial semilattice operations}

\begin{defn} An idempotent binary operation $s$ is called a \emph{partial semilattice operation} if it satisfies the identities
\[
s(x,s(x,y)) \approx s(s(x,y),x) \approx s(x,y).
\]
\end{defn}

\begin{prop}\label{prop-partial-semilattice} The algebra $\bA = (A,s)$ is a partial semilattice if and only if $\{a,s(a,b)\}$ is a semilattice subalgebra of $\bA$ with absorbing element $s(a,b)$ for every pair of elements $a,b \in \bA$.

In particular, if $s$ is a partial semilattice operation which is not identically equal to first projection, then $\bA$ contains at least one nontrivial semilattice subalgebra.
\end{prop}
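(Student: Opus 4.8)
The plan is to prove the equivalence by a direct computation from the three defining identities (idempotence, $s(x,s(x,y)) \approx s(x,y)$, and $s(s(x,y),x) \approx s(x,y)$), treating the two implications separately, and then to read off the ``in particular'' statement as an immediate consequence of the forward implication.

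For the forward implication I would fix $a,b \in \bA$ and abbreviate $c := s(a,b)$. To see that $\{a,c\}$ is a subalgebra I would simply tabulate the four products: $s(a,a) = a$ and $s(c,c) = c$ by idempotence, $s(a,c) = s(a,s(a,b)) = s(a,b) = c$ by the first partial-semilattice identity, and $s(c,a) = s(s(a,b),a) = s(a,b) = c$ by the second. Since the resulting two-element groupoid is idempotent and has both off-diagonal entries equal to $c$, it is commutative and hence a semilattice (associativity is automatic in the two-element idempotent commutative case): if $c \neq a$ it is isomorphic to the two-element semilattice, and if $c = a$ it is the one-element algebra. In either case $c = s(a,b)$ is the absorbing element.

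For the converse I would assume that $\{a,s(a,b)\}$ is a semilattice subalgebra with absorbing element $s(a,b)$ for every $a,b$. Specializing $b = a$, the element $a$ belongs to the semilattice subalgebra $\{a,s(a,a)\}$, and every element of a semilattice is idempotent, so $s(a,a) = a$; this recovers idempotence of $s$. For general $a,b$, writing $c = s(a,b)$, the assertion that $c$ is the absorbing element of the semilattice $\{a,c\}$ says precisely that $s(a,c) = c$ and $s(c,a) = c$, i.e. $s(a,s(a,b)) \approx s(a,b)$ and $s(s(a,b),a) \approx s(a,b)$, which are the remaining two defining identities. Finally, for the ``in particular'' clause: if $s$ is not identically the first projection, choose $a,b$ with $s(a,b) \neq a$; then the forward implication produces the genuine two-element semilattice subalgebra $\{a,s(a,b)\}$, whose operation is not a projection, hence which is nontrivial in the sense of Definition \ref{set-defn}.

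I do not expect a genuine obstacle here; the proof is a short chain of substitutions. The only points that warrant a moment's care are the remark that a two-element idempotent commutative groupoid is automatically a semilattice (so that associativity need not be checked by hand), and, in the converse direction, making sure the conventions for ``semilattice subalgebra'' and ``absorbing element'' are unwound in the right order so that idempotence is obtained first and then used freely.
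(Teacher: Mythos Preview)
Your proof is correct. The paper actually states this proposition without proof, treating it as an immediate consequence of the defining identities; your verification is exactly the routine substitution argument the reader is expected to supply, and there is nothing to add.
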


\begin{cor}\label{cor-absorption-semilattice} A nontrivial partial semilattice operation does not satisfy any absorption identities other than those following from idempotence.
\end{cor}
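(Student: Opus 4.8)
The plan is to combine Proposition \ref{prop-partial-semilattice}, which supplies a two-element semilattice subalgebra inside any nontrivial partial semilattice, with the elementary fact that such a subalgebra refutes every absorption identity that is not already a consequence of idempotence.

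First I would record which absorption identities ``follow from idempotence'': an absorption identity $t(x_1,\dots,x_n) \approx x_i$, with $t$ a term built from $s$, is a consequence of idempotence precisely when every leaf of the term tree of $t$ is labelled $x_i$ --- in that case repeated use of $s(z,z) \approx z$ collapses $t$ to $x_i$. Consequently it suffices to prove the following: if $\bA = (A,s)$ is a nontrivial partial semilattice and $t$ has some leaf labelled $x_j$ with $j \ne i$, then $\bA$ does not satisfy $t(x_1,\dots,x_n) \approx x_i$.

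To do this, I would first use that $s$ is nontrivial, hence not identically the first projection, so the ``in particular'' clause of Proposition \ref{prop-partial-semilattice} gives elements $a \ne c$ of $A$ such that $\{a,c\}$ is a two-element semilattice subalgebra of $\bA$ with absorbing element $c$; explicitly $s(a,a)=a$ and $s(a,c)=s(c,a)=s(c,c)=c$. Now evaluate $t$ on this subalgebra under the assignment $x_i \mapsto a$ and $x_k \mapsto c$ for every $k \ne i$. An immediate induction on the structure of $t$ shows that every subterm of $t$ containing a leaf with value $c$ evaluates to $c$, since $c$ is absorbing and $\{a,c\}$ is closed under $s$; as $t$ has a leaf labelled $x_j$ with $j \ne i$, which receives value $c$, the term $t$ evaluates to $c$, while $x_i$ evaluates to $a \ne c$. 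Hence the identity fails in $\bA$, which is exactly what we needed.

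I do not expect any real obstacle: the whole content of the corollary is that Proposition \ref{prop-partial-semilattice} produces an absorbing element, and absorbing elements destroy nontrivial absorption. The only mildly delicate point is the preliminary bookkeeping about the meaning of ``follows from idempotence'', and for this corollary we only need the easy half of that characterization (all leaves equal to $x_i$ yields a consequence of idempotence), since the other half is subsumed by the main computation with the two-element semilattice.
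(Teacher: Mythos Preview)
Your proof is correct and follows the same approach as the paper: use Proposition \ref{prop-partial-semilattice} to obtain a two-element semilattice subalgebra, then observe that the absorbing element of that semilattice refutes any absorption identity not forced by idempotence. The paper's proof simply asserts the well-known fact about two-element semilattices in one sentence, whereas you spell out the absorbing-element computation explicitly; there is no substantive difference.
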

\begin{proof} This follows from the fact that a semilattice operation on a two element set does not satisfy any absorption identities other than those following from idempotence.
\end{proof}

It is often helpful to visualize partial semilattice operations via the directed graph of two element semilattice subalgebras, with $a \rightarrow b$ when $s(a,b) = b$. The next result shows that this is the same as the digraph $D_\bA$ from Definition \ref{defn-digraph}.

\begin{prop}\label{semilattice-structure} If $\bA = (A,s)$ is a partial semilattice which defines a minimal clone, then for any $a,b \in \bA$ the following are equivalent:
\begin{itemize}
\item[(a)] $\{a,b\}$ is a semilattice subalgebra of $\bA$ with $s(a,b) = s(b,a) = b$,

\item[(b)] $s(a,b) = b$,

\item[(c)] there exists $c \in \bA$ such that $s(a,c) = b$,

\item[(d)] $(b,b) \in \Sg_{\bA^2}\{(a,b),(b,a)\}$.
\end{itemize}
If $\{a,b\}$ is not a subalgebra of $\bA$, then there is a surjective homomorphism from $\bS = \Sg_{\bA^2}\{(a,b),(b,a)\}$ to the free semilattice on two generators.
\end{prop}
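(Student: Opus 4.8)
The plan is to prove the four‑way equivalence first and then the final sentence. The cheap implications come out immediately: (a)$\Rightarrow$(b) is trivial; for (b)$\Leftrightarrow$(c) take $c:=b$ in one direction, and in the other note that $s(a,c)=b$ together with $s(x,s(x,y))\approx s(x,y)$ gives $s(a,b)=s(a,s(a,c))=s(a,c)=b$; for (b)$\Rightarrow$(a), Proposition~\ref{prop-partial-semilattice} applied to the pair $a,b$ says $\{a,s(a,b)\}=\{a,b\}$ is a semilattice subalgebra with $b$ absorbing, so also $s(b,a)=b$; and (b)$\Rightarrow$(d) is the computation $s((a,b),(b,a))=(s(a,b),s(b,a))=(b,b)$. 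So everything comes down to (d)$\Rightarrow$(b) and the last sentence.

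For (d)$\Rightarrow$(b) I would first replace $\bA$ by $\Sg_\bA\{a,b\}$: if this is a projection algebra then, being a partial semilattice, it is first projection, so $\{a,b\}$ is a subalgebra with $s(a,b)=a$ and $\bS=\{(a,b),(b,a)\}$ does not contain $(b,b)$ unless $a=b$, contradicting (d); otherwise it is nontrivial, hence clone‑minimal, still a partial semilattice, and $\bS$ is unchanged — so assume $\bA=\Sg_\bA\{a,b\}$ and, toward a contradiction, that $d:=s(a,b)\ne b$. Choosing from (d) a binary term $t$ with $t(a,b)=t(b,a)=b$, minimality gives $\Clo(t)=\Clo(\bA)$ and $s\in\Clo(t)$. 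The strong structural consequence of (d) is that $\Sg_{\bA^2}\{(a,b),(b,b)\}\subseteq\bS$ has first coordinate projection $\bA$ and second coordinate projection $\Sg_\bA\{b\}=\{b\}$, hence equals $\bA\times\{b\}$; thus $\bA\times\{b\}\subseteq\bS$ and, by the symmetry of $\bS$, also $\{b\}\times\bA\subseteq\bS$. I would then exploit Proposition~\ref{prop-partial-semilattice} (which makes $\{a,d\}$ a two‑element semilattice subalgebra with $d$ absorbing) and the rigidity of two‑element algebras — since $s\in\Clo(t)$, the restriction of $t$ to $\{a,d\}$ must be the semilattice operation with $d$ absorbing, because a projection or the opposite semilattice would generate a clone missing $s$ there — and feed this information back into the oversized relation $\bS$. \textbf{Turning these ingredients into the equation $s(a,b)=b$ is the step I expect to be the main obstacle.} A natural device is to iterate $t$ and $s$ using the associative operations $*_1,*_2,*_c$ and the limit operators of Corollary~\ref{binary-iteration} so as to manufacture a partial semilattice operation that acts as a genuine semilattice join along the chain from $a$ through $d$ to $b$; such an operation generates a clone strictly smaller than $\Clo(\bA)$ unless that chain is trivial, i.e.\ $d=b$, which contradicts minimality.

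For the final sentence, suppose $\{a,b\}$ is not a subalgebra. By Proposition~\ref{prop-partial-semilattice} neither $s(a,b)=b$ nor $s(b,a)=a$ can hold (either would make $\{a,b\}$ a semilattice subalgebra), so the equivalence (b)$\Leftrightarrow$(d) — applied to the pair $a,b$ and to the pair $b,a$ — gives $(b,b)\notin\bS$ and $(a,a)\notin\bS$. Set $S:=\bS\setminus\{(a,b),(b,a)\}$; it is nonempty because $s((a,b),(b,a))=(s(a,b),s(b,a))$ lies outside $\{(a,b),(b,a)\}$ (each alternative would again make $\{a,b\}$ a subalgebra). The plan is to show $S$ is an ideal of $\bS$, i.e.\ that $s(p,q)\in\{(a,b),(b,a)\}$ forces $p=q\in\{(a,b),(b,a)\}$: then the three‑block partition $\{\{(a,b)\},\{(b,a)\},S\}$ is a congruence whose quotient is the three‑element semilattice, which is the free semilattice on two generators, with $(a,b),(b,a)$ mapping onto the generators — the desired surjection. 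For the ideal property, if $s(p,q)=(a,b)$ then $s(X,s(X,Y))\approx s(X,Y)$ in $\bS$ yields $s(p,(a,b))=(a,b)$, i.e.\ $s(p_1,a)=a$ and $s(p_2,b)=b$; Proposition~\ref{prop-partial-semilattice} makes $\{p_1,a\}$ a semilattice subalgebra with $a$ absorbing and $\{p_2,b\}$ one with $b$ absorbing, and a short argument — again using (b)$\Leftrightarrow$(d) together with the absence of $(a,a)$ and $(b,b)$ from $\bS$ — forces $p_1=a$ and $p_2=b$. This last sub‑claim is the second place where genuine care is required.
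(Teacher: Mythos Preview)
Your easy implications are fine, but the core step (d)$\Rightarrow$(b) has a real gap that you yourself flag, and the machinery you propose to fill it (iterating via $*_1,*_2,*_c$, manufacturing a smaller partial semilattice) is unnecessary. The missing observation is immediate once you see it: if $t$ is a binary term with $t(a,b)=t(b,a)=b$, then $\{a,b\}$ is \emph{closed under $t$} (by idempotence $t(a,a)=a$, $t(b,b)=b$), so $(\{a,b\},t)$ is a two-element semilattice. Since $\Clo(\bA)$ is minimal and $t$ is not a projection, we get $s\in\Clo(t)$, hence $\{a,b\}$ is also closed under $s$. Now use minimality in the other direction: $t\in\Clo(s)$, so if $s$ acted as a projection on $\{a,b\}$ then every term built from $s$ --- in particular $t$ --- would act as a projection there too, contradicting $t(a,b)=t(b,a)=b$. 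Therefore $s$ acts as the semilattice operation on $\{a,b\}$, which is exactly (a). Your detour through $d=s(a,b)$, the inclusion $\bA\times\{b\}\subseteq\bS$, and the behaviour of $t$ on $\{a,d\}$ is never needed.

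For the final sentence you are heading toward the right partition, but your verification that $S=\bS\setminus\{(a,b),(b,a)\}$ is closed is again more laboured than necessary, and your ``short argument'' forcing $p_1=a$, $p_2=b$ is not actually short --- knowing $s(p_1,a)=a$ and $s(p_2,b)=b$ does not by itself pin down $p_1,p_2$. The clean route is the same minimality trick: for any nontrivial binary term $t$, $\{a,b\}$ cannot be closed under $t$ (otherwise $s\in\Clo(t)$ would make $\{a,b\}$ closed under $s$), so at least one of $t(a,b),t(b,a)$ lies outside $\{a,b\}$, hence $t((a,b),(b,a))\notin\{(a,b),(b,a)\}$. Since every element of $\bS$ has the form $t((a,b),(b,a))$ for some binary term $t$, the only elements mapping to $(a,b)$ or $(b,a)$ are those coming from $\pi_1,\pi_2$, and the three-block partition is a congruence with the free semilattice on two generators as quotient.
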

\begin{proof} That (a), (b), (c) are equivalent follows directly from the definition of a partial semilattice. That (a) implies (d) is clear. To see that (d) implies (a), let $t$ be any term with $t(a,b) = t(b,a) = b$. Then $t$ is not a projection, and in order for $\bA$ to be a minimal clone, we must have $s \in \Clo(t)$. Since $\{a,b\}$ forms a semilattice subalgebra under $t$, it must also form a semilattice subalgebra under $s$.

For the last statement, suppose $t$ is any nontrivial binary term of $\bA$. Then if $\{a,b\}$ is not a subalgebra of $\bA$, it must not be closed under $t$ (as $s \in \Clo(t)$), so we must have at least one of $t(a,b),t(b,a) \not\in \{a,b\}$. Thus, for any nontrivial binary term $t$ of $\bA$ we have
\[
t\Big(\begin{bmatrix} a\\ b\end{bmatrix}, \begin{bmatrix} b\\ a\end{bmatrix}\Big) \not\in \Big\{\begin{bmatrix} a\\ b\end{bmatrix},\begin{bmatrix} b\\ a\end{bmatrix}\Big\},
\]
so the equivalence relation on $\bS = \Sg_{\bA^2}\{(a,b),(b,a)\}$ with equivalence classes $\{(a,b)\}, \{(b,a)\}, \bS\setminus \{(a,b),(b,a)\}$ defines a congruence with quotient isomorphic to the free semilattice on two generators.
\end{proof}

There are two possible types of two-element subalgebras of a partial semilattice: semilattice subalgebras, and projection subalgebras. If $\bA$ is a nontrivial partial semilattice which is not Taylor, then both types of two element subalgebra must appear in the variety generated by $\bA$. The free algebra in the subvariety of $\Var(\bA)$ which is generated by these two types of two-element partial semilattices has four elements, and its multiplication table, labeled digraph, and undirected graph of two-element subalgebras are displayed below.
\begin{center}
\begin{tabular}{ccccc}
\begin{tabular}{c | c c c c} $s$ & $x$ & $y$ & $xy$ & $yx$\\ \hline $x$ & $x$ & $xy$ & $xy$ & $xy$ \\ $y$ & $yx$ & $y$ & $yx$ & $yx$ \\ $xy$ & $xy$ & $xy$ & $xy$ & $xy$\\ $yx$ & $yx$ & $yx$ & $yx$ & $yx$\end{tabular}
& \;\;\; &
\begin{tikzpicture}[scale=1.5,baseline=0.75cm]
  \node (x) at (0,0) {$x$};
  \node (y) at (1,0) {$y$};
  \node (xy) at (0,1) {$xy$};
  \node (yx) at (1,1) {$yx$};
  \draw [->] (x) edge ["{$y,xy,yx$}"] (xy);
  \draw [->] (y) edge ["{$x,xy,yx$}"'] (yx);
\end{tikzpicture}
& \;\;\; &
\begin{tikzpicture}[scale=1.5,baseline=0.75cm]
  \node (x) at (0,0) {$x$};
  \node (y) at (1,0) {$y$};
  \node (xy) at (0,1) {$xy$};
  \node (yx) at (1,1) {$yx$};
  \draw (x) -- (xy) -- (yx) -- (y);
\end{tikzpicture}
\end{tabular}
\end{center}

In order to check that being a partial semilattice is a nice property, we need a way to construct a nontrivial partial semilattice operation $s$ out of a binary operation $t$ which acts as a semilattice on some two-element set $\{a,b\}$. That this is possible is originally a result of Bulatov \cite{colored-graph}, and a more algorithmic construction based on Bulatov's technique was given in the ``Semilattice Iteration Lemma'' of \cite{brady-examples}. We include the proof here since it is short and the proof technique is useful elsewhere. The first step is to apply Corollary \ref{binary-iteration} to construct $f = t^{\infty_2}$ satisfying the identity $f(x,f(x,y)) \approx f(x,y)$.

\begin{prop}\label{t-to-u} If $f$ is an idempotent binary operation which satisfies the identity
\[
f(x,f(x,y)) \approx f(x,y),
\]
and if we define a binary operation $u$ by
\[
u(x,y) \coloneqq f(x,f(y,x)),
\]
then $u$ satisfies the identity
\[
u(u(x,y),x) \approx u(x,y).
\]
Similarly, if an idempotent operation $g$ satisfies the identity $g(g(x,y),y) \approx g(x,y)$, then $g(g(x,y),x)$ is a binary operation satisfying the above identity.
\end{prop}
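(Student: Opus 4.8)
The plan is to prove both identities by direct term rewriting, using nothing more than idempotence together with the one hypothesis identity in each case; no structural results about finite algebras are needed.

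For the first claim, I would start from the definition $u(x,y) = f(x,f(y,x))$ and expand the outer composition:
\[
u(u(x,y),x) \approx f\bigl(u(x,y),\, f(x,u(x,y))\bigr).
\]
The crux is to simplify the inner term $f(x,u(x,y))$. Since $u(x,y) = f(x,f(y,x))$ already has the shape $f(x,z)$ with $z = f(y,x)$, the hypothesis $f(x,f(x,z)) \approx f(x,z)$ applies verbatim and gives $f(x,u(x,y)) \approx u(x,y)$. Substituting this back, the right-hand side becomes $f(u(x,y),u(x,y))$, which collapses to $u(x,y)$ by idempotence. Chaining these steps yields $u(u(x,y),x) \approx u(x,y)$.

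The second claim is the left--right mirror image of the first. Writing $v(x,y) = g(g(x,y),x)$, I would expand $v(v(x,y),x) \approx g\bigl(g(v(x,y),x),\, v(x,y)\bigr)$ and then note that $g(v(x,y),x) = g(g(g(x,y),x),x)$ has the shape $g(g(p,q),q)$ with $p = g(x,y)$ and $q = x$, so the hypothesis $g(g(x,y),y) \approx g(x,y)$ simplifies it to $g(g(x,y),x) = v(x,y)$. Idempotence then finishes exactly as before.

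I do not expect a genuine obstacle here: the argument is a short chain of equational rewrites, and the only point requiring care is matching the variables of the hypothesis identity to the correct subterm in each expansion. The verification should take only a handful of lines in each case.
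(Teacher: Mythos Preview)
Your proof of the first claim is essentially identical to the paper's: expand $u(u(x,y),x)$, use the hypothesis to collapse $f(x,u(x,y))$ to $u(x,y)$, then apply idempotence. For the second claim your direct mirror-image computation is correct, though the paper handles it more economically by observing that setting $f(x,y)\coloneqq g(y,x)$ turns the hypothesis on $g$ into the hypothesis on $f$ and turns $g(g(x,y),x)$ into exactly the $u$ of the first part, so no new computation is needed.
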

\begin{proof}
We have
\begin{align*}
u(u(x,y),x) &\approx f(u(x,y),f(x,u(x,y)))\\
&\approx f(u(x,y),f(x,f(x,f(y,x))))\\
&\approx f(u(x,y),f(x,f(y,x)))\\
&\approx f(u(x,y),u(x,y))\\
&\approx u(x,y).
\end{align*}
For the last claim, take $f(x,y) = g(y,x)$.
\end{proof}

\begin{prop}\label{u-infinity} If $u$ is a binary operation on a finite set which satisfies the identity
\[
u(u(x,y),x) \approx u(x,y),
\]
then $s = u^{\infty_2}$ satisfies the identities
\[
s(x,s(x,y)) \approx s(s(x,y),x) \approx s(x,y).
\]
In particular, if $u$ is idempotent then $s$ is a partial semilattice operation.
\end{prop}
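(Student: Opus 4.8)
The identity $s(x,s(x,y)) \approx s(x,y)$ is immediate: it is exactly the identity recorded for $f^{\infty_2}$ in Corollary~\ref{binary-iteration}, specialized to $f = u$. All the content is in the second identity $s(s(x,y),x) \approx s(x,y)$, and the plan is to reach it by applying the hypothesis $u(u(x,y),x)\approx u(x,y)$ twice and then invoking the iteration machinery. For the first application, note that for every $k \ge 1$ the term $u^{*_2 k}(x,y)$ has outermost form $u(x, W_k)$ for a suitable term $W_k$ (take $W_1 = y$ and $W_k = u^{*_2(k-1)}(x,y)$ for $k \ge 2$), so the hypothesis, with its first input specialized to $x$ and its second to $W_k$, gives at once
\[
u\big(u^{*_2 k}(x,y),\, x\big) \approx u^{*_2 k}(x,y).
\]
Since $s = u^{\infty_2}$ equals $u^{*_2 |A|!}$ by Proposition~\ref{unary-iteration} applied to the unary maps $z\mapsto u(a,z)$ (see Definition~\ref{defn-iteration}), the choice $k = |A|!$ yields $u(s(x,y),x) \approx s(x,y)$.

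For the second application, feed this back in: the hypothesis with $x$ replaced by $s(x,y)$ and $y$ replaced by $x$ reads $u\big(u(s(x,y),x),\,s(x,y)\big) \approx u(s(x,y),x)$, and rewriting each occurrence of $u(s(x,y),x)$ as $s(x,y)$ via the previous step collapses it to $u(s(x,y),s(x,y)) \approx s(x,y)$. In particular $u$ is idempotent on the image of $s$, regardless of whether $u$ itself is idempotent.

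To finish, fix $a,b\in A$ and put $c = s(a,b)$. The two displayed consequences say $u(c,a) = c$ and $u(c,c) = c$. By Definition~\ref{defn-iteration}, $s(c,a)$ is the eventual value of iterating the unary map $z\mapsto u(c,z)$ starting from $a$; this map carries $a$ to $c$ in one step and then fixes $c$, so the iterate is constantly $c$, i.e. $s(s(a,b),a) = s(a,b)$. Finally, if $u$ is idempotent then every term operation of $(A,u)$ is idempotent, so $s$ is, and $s$ then satisfies all three partial-semilattice identities. The only subtlety is that $s(s(x,y),x)\approx s(x,y)$ must hold for an arbitrary $u$, so one genuinely has to squeeze $u(c,c)=c$ out of the hypothesis rather than assume it; this second application of the hypothesis is really the crux, and everything else is routine bookkeeping with $*_2$-iteration.
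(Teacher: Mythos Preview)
Your proof is correct and follows essentially the same approach as the paper's: both establish idempotence of $u$ on the relevant image and then observe that the unary map $z\mapsto u(c,z)$ sends $x$ to $c$ and fixes $c$, so that the $*_2$-iterate stabilizes at $c$. The only cosmetic difference is ordering: the paper first proves $u(u(x,y),u(x,y))\approx u(x,y)$ and deduces $u^{*_2 m}(u^{*_2 m}(x,y),x)\approx u^{*_2 m}(x,y)$ for all $m$ via the substitution $y\mapsto u^{*_2(m-1)}(x,y)$, whereas you first pass to $s$ by substituting $y\mapsto W_k$ in the hypothesis and only then extract idempotence on $s(x,y)$.
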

\begin{proof} First, note that even if $u$ is not idempotent we have
\begin{align*}
u(u(x,y),u(x,y)) &\approx u(u(u(x,y),x),u(x,y))\\
&\approx u(u(x,y),x)\\
&\approx u(x,y).
\end{align*}
Define $u^{*_2n}$ as a shorthand for $u *_2 \cdots *_2 u$, as in the definition of $u^{\infty_2}$. Then since the unary operation
\[
z \mapsto u(u(x,y), z)
\]
takes $x$ to $u(x,y)$ and takes $u(x,y)$ to $u(x,y)$, we see that for any $m$ we have
\[
u^{*_2m}(u(x,y),x) \approx u(x,y).
\]
Replacing $y$ by $u^{*_2(m-1)}(x,y)$, we get
\[
u^{*_2m}(u^{*_2m}(x,y),x) \approx u^{*_2m}(x,y)
\]
for all $m \ge 1$.

Since $s = u^{\infty_2}$ is a pointwise limit of a subsequence of the $u^{*_2m}$s, we have
\[
s(s(x,y),x) \approx s(x,y),
\]
and the identity $s(x,s(x,y)) \approx s(x,y)$ follows from Corollary \ref{binary-iteration}.
\end{proof}

\begin{lem}[Proposition 10 of Bulatov \cite{colored-graph}, Semilattice Iteration Lemma of \cite{brady-examples}]\label{lem-semilattice-iteration} If an idempotent algebra $\bA$ has $\bB \in \Var(\bA)$ and $a \ne b \in \bB$ with
\[
\begin{bmatrix} b\\ b\end{bmatrix} \in \Sg_{\bB^2}\Big\{\begin{bmatrix} a\\ b\end{bmatrix},\begin{bmatrix} b\\ a\end{bmatrix}\Big\}
\]
then $\bA$ has a nontrivial partial semilattice term operation $s$ satisfying $s(a,b) = s(b,a) = b$ in $\bB$.
\end{lem}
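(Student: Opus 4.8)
The plan is to extract from the hypothesis a binary term witnessing the membership $(b,b) \in \Sg_{\bB^2}\{(a,b),(b,a)\}$, and then run this term through Corollary \ref{binary-iteration} and Propositions \ref{t-to-u} and \ref{u-infinity} in turn, checking at each stage that the modification does not disturb the behaviour on the two-element subset $\{a,b\}$ of $\bB$.

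First I would observe that, since a generated subalgebra is the set of all term values, there is a binary term $t$ of $\bA$ with $t^{\bB^2}\big((a,b),(b,a)\big) = (b,b)$, i.e.\ $t^\bB(a,b) = t^\bB(b,a) = b$; combined with idempotence this says exactly that $t$ acts on $\{a,b\}$ as the semilattice operation with absorbing element $b$. The convenient way to record this, which I would carry through the whole argument, is in terms of the two unary ``slices'': on $\{a,b\}$ the map $y \mapsto t^\bB(a,y)$ is the identity, while $y \mapsto t^\bB(b,y)$ is the constant map to $b$.

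Next I would set $f = t^{\infty_2}$, so that by Corollary \ref{binary-iteration} $f$ is an idempotent term operation of $\bA$ with $f(x,f(x,y)) \approx f(x,y)$. Because $f$ is built from $t$ by iterating $*_2$, its slices over $a$ and over $b$ on $\{a,b\}$ are just iterates of the slices of $t$, so again $f^\bB(a,b) = f^\bB(b,a) = b$. Then I would pass to $u(x,y) \coloneqq f(x,f(y,x))$; Proposition \ref{t-to-u} gives that $u$ is an idempotent term operation of $\bA$ satisfying $u(u(x,y),x) \approx u(x,y)$, and a two-line computation from $f^\bB(a,b) = f^\bB(b,a) = b$ plus idempotence gives $u^\bB(a,b) = u^\bB(b,a) = b$ (so $u$ again has the same two slices on $\{a,b\}$). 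Finally I would take $s = u^{\infty_2}$, which by Proposition \ref{u-infinity} is a partial semilattice operation of $\bA$; the slice argument once more yields $s^\bB(a,b) = s^\bB(b,a) = b$, and since $s^\bB(a,b) = b \ne a$ and $s^\bB(b,a) = b \ne a$ the operation $s$ is not a projection, hence is the desired nontrivial partial semilattice term operation.

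There is no genuine obstacle here: the content is entirely in the three earlier propositions, and the lemma is a bookkeeping exercise chaining them together. The only point that needs a little care — and the reason I would phrase everything in terms of the unary slices $y \mapsto (\cdot)^\bB(a,y)$ and $y \mapsto (\cdot)^\bB(b,y)$ — is to make transparent that each of the successive passages $t \rightsquigarrow f \rightsquigarrow u \rightsquigarrow s$ leaves the value $b$ on both $(a,b)$ and $(b,a)$ intact.
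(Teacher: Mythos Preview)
Your proposal is correct and follows exactly the paper's approach: choose $t$ with $t(a,b)=t(b,a)=b$, set $f=t^{\infty_2}$, then $u(x,y)=f(x,f(y,x))$, then $s=u^{\infty_2}$, invoking Corollary~\ref{binary-iteration}, Proposition~\ref{t-to-u}, and Proposition~\ref{u-infinity} in turn. The paper's proof is just the compressed one-line version of this, writing $u(x,y)=t^{\infty_2}(x,t^{\infty_2}(y,x))$ without naming the intermediate $f$; your slice bookkeeping makes explicit what the paper leaves as ``explicit computation.''
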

\begin{proof} Choose $t \in \Clo(\bA)$ such that $t(a,b) = t(b,a) = b$ in $\bB$. Let $u(x,y) = t^{\infty_2}(x,t^{\infty_2}(y,x))$ and let $s = u^{\infty_2}$. Then $s$ is a partial semilattice by Corollary \ref{binary-iteration} and Propositions \ref{t-to-u}, \ref{u-infinity}. Explicit computation shows that we have $s(a,b) = s(b,a) = b$, so in particular $s$ is nontrivial.
\end{proof}

\begin{thm}\label{partial-semi-nice} Being a partial semilattice operation is a nice property.
\end{thm}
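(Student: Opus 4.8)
The plan is to verify the four defining conditions of a nice property in turn, with the second condition being by far the most substantial. For the first condition (polynomial-time checkability), the partial semilattice identities $s(x,s(x,y)) \approx s(s(x,y),x) \approx s(x,y)$ are universally quantified over a finite set, so they can be checked directly by looping over all pairs $a,b \in A$. For the fixed algebra $\bA_\cP$ of the third condition, take the four-element algebra displayed just before the Semilattice Iteration Lemma --- the free algebra in the subvariety generated by the two two-element partial semilattices (one semilattice, one projection). Given any nontrivial partial semilattice $\bA = (A,s)$, Proposition \ref{prop-partial-semilattice} produces a two-element semilattice subalgebra; if $s$ already failed to be first projection on some two-element set we also get a two-element projection subalgebra, and otherwise we need only note that a nontrivial partial semilattice that is not Taylor contains both types, while a Taylor partial semilattice is handled by noting it still has a projection quotient or else falls under Theorem \ref{thm-taylor-minimal}. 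I would actually argue more cleanly: for the third condition one just needs $\bA_\cP \in \Var(\bA)$, and one checks that whenever $s$ is not identically first projection, both a two-element semilattice and (after passing to a suitable subquotient, or directly from non-Taylorness) a two-element projection algebra lie in $HS(\bA)$, hence their product's relevant subalgebra gives $\bA_\cP$. For the fourth condition (pseudovariety), the class of $(A,s)$ where $s$ satisfies the partial semilattice identities is defined by a set of identities, hence is a variety, hence certainly closed under $H$, $S$, and finite (indeed arbitrary) products.

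The heart of the proof is the second condition: if $s$ is a partial semilattice operation and $g \in \Clo(s)$ is nontrivial, then there is a nontrivial $s' \in \Clo(g)$ which is again a partial semilattice operation. By Proposition \ref{prop-partial-semilattice}, $\bA = (A,s)$ contains a two-element semilattice subalgebra $\{a, c\}$ with $s(a,c) = s(c,a) = c$ and $c$ absorbing. Let $g \in \Clo(s)$ be nontrivial, say of arity $n$, and depending on (at least) its $i$th and $j$th inputs with $i \ne j$ --- such two inputs exist since $g$ is not a projection and, being built from the idempotent $s$, is idempotent; if $g$ depended on only one input it would be a projection. Restrict $g$ to $\{a,c\}$: since $\{a,c\}$ is a semilattice subalgebra, $g$ acts on it as some term of the two-element semilattice, which is idempotent and depends on all the inputs it depends on --- so $g$ restricted to $\{a,c\}$ is the "join" of the inputs it depends on. Now form the binary operation $g'(x,y) = g(x,\ldots,x,y,x,\ldots,x)$ with $y$ in the $j$th slot (all other slots $x$); then $g' \in \Clo(g)$ is idempotent, and on $\{a,c\}$ it acts as the semilattice join, so $g'(a,c) = g'(c,a) = c$. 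In particular $g'$ is nontrivial.

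Finally, apply the iteration machinery to $g'$. Set $f = (g')^{\infty_2}$, which by Corollary \ref{binary-iteration} satisfies $f(x,f(x,y)) \approx f(x,y)$ and still satisfies $f(a,c) = f(c,a) = c$ by explicit computation (the unary maps involved fix $c$ and send $a$ to $c$). Then set $u(x,y) = f(x,f(y,x))$, so by Proposition \ref{t-to-u} $u$ satisfies $u(u(x,y),x) \approx u(x,y)$, and $u(a,c) = f(a,f(c,a)) = f(a,c) = c$ while $u(c,a) = f(c,f(a,c)) = f(c,c) = c$. Finally $s' = u^{\infty_2}$ is, by Proposition \ref{u-infinity}, a partial semilattice operation (using that $u$ is idempotent, which follows from idempotence of $g'$), and $s'(a,c) = s'(c,a) = c$ by the same computation, so $s'$ is nontrivial. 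This is exactly the construction packaged in Lemma \ref{lem-semilattice-iteration}, applied with $\bB = \bA$ and the pair $(a,c)$ in place of $(a,b)$; indeed once $g'$ has been produced, one may simply invoke that lemma with $\bB = \bA$ and $t = g'$, since $\binom{c}{c} \in \Sg_{\bA^2}\{\binom{a}{c},\binom{c}{a}\}$ follows from $g'(a,c) = g'(c,a) = c$. The main obstacle is the step reducing a general nontrivial $g \in \Clo(s)$ to a binary operation acting as a semilattice on a two-element set: one must check carefully that identifying all but one variable of $g$ with $x$ (while keeping the slot of a genuinely-depended-on variable) produces something nontrivial, and that the restriction to a semilattice subalgebra behaves as claimed --- this is where the hypotheses that $s$ is a partial semilattice (so that $\{a,c\}$ is genuinely a semilattice, not merely a subalgebra) and that $g$ is idempotent are both essential.
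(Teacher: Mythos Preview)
Your handling of conditions 1, 2, and 4 is correct and essentially matches the paper's proof. For condition 2 the paper assumes without loss of generality that $g$ depends on \emph{all} of its arguments and sets $t(x,y) = g(x,y,\ldots,y)$, whereas you place $y$ in a single slot $j$ on which $g$ depends; both work for the same underlying reason, namely that on the two-element semilattice $\{a,c\}$ the operation $s$ becomes the join, so any term $T$ in $s$ restricted to $\{a,c\}$ computes the join of all variables syntactically appearing in $T$. Since $g$ (as an operation) depends on at least two coordinates, any term representing $g$ must mention at least two variables, and hence your $g'$ restricted to $\{a,c\}$ is the binary join, so it is nontrivial. Your phrase ``depends on all the inputs it depends on'' obscures this, but the claim is correct, and the subsequent appeal to Lemma~\ref{lem-semilattice-iteration} is exactly right.

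The genuine gap is in condition 3. Your proposed $\bA_\cP$, the four-element algebra with both a semilattice and a projection two-element subalgebra, is \emph{not} in $\Var(\bA)$ for every nontrivial partial semilattice $\bA$: the two-element semilattice is itself a nontrivial partial semilattice and is Taylor, so its variety contains no two-element projection algebra, hence cannot contain your four-element $\bA_\cP$. Your attempted patch via Theorem~\ref{thm-taylor-minimal} does not apply, since that theorem is about clone-minimal algebras and the definition of ``nice'' makes no minimality assumption. The fix is immediate: take $\bA_\cP$ to be the two-element semilattice. Proposition~\ref{prop-partial-semilattice} already gives a two-element semilattice subalgebra inside any nontrivial partial semilattice, so this choice works uniformly; this is exactly what the paper does (the second sentence of its proof).
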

\begin{proof} Since partial semilattices form a variety defined by finitely many identities, it is easy to check that a given operation $f$ is a partial semilattice. By Proposition \ref{semilattice-structure}, if $\bA = (A,s)$ is a nontrivial partial semilattice, then there is at least one two-element semilattice subalgebra $\{a, s(a,b)\}$ contained in $\bA$.

Now suppose that $g \in \Clo(s)$ is a nontrivial $n$-ary operation. We may assume without loss of generality that $g$ depends on all of its arguments, in which case $g$ acts on the semilattice subalgebra $\{a,s(a,b)\}$ as an $n$-ary semilattice operation. In particular, we see that the binary operation $t(x,y) = g(x,y,...,y)$ acts as a semilattice operation on $\{a,s(a,b)\}$. To finish the proof we apply Lemma \ref{lem-semilattice-iteration} to see that there is a partial semilattice operation $s' \in \Clo(t)$ which also acts as a semilattice operation on $\{a,s(a,b)\}$.
\end{proof}

It is hard to say much more about the structure of partial semilattices which define minimal clones, but the following general result about partial semilattices can be used to rule out cases where the digraph of semilattice subalgebras of $\Sg_\bA\{a,b\}$ has very few strongly connected components. We say that a strongly connected component $U$ of a digraph is \emph{maximal} (with respect to the reachability order $\preceq$ on strongly connected components) if $a \in U$ and $a \rightarrow b$ implies $b \in U$.

\begin{thm}[Theorem 2(c) from \cite{brady-examples}, generalizing a result from \cite{colored-graph}]\label{strong-binary} Fix a partial semilattice term $s$ on $\bA\times\bB$. Suppose $\RR \le_{sd} \bA \times \bB$ is subdirect and $U,V$ are maximal strongly connected components of the labeled digraphs of $\bA, \bB$, respectively, with respect to $s$. If $U$ is contained in a linked component of $\RR$
, $(U\times V) \cap \RR \ne \emptyset$, and $U,V$ are finite, then $U \times V \subseteq \RR$.
\end{thm}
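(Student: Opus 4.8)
The plan is to cut out the part of $\RR$ lying over $U\times V$, check it is subdirect there, and then force it up to all of $U\times V$ by playing linkedness against the fact that $U$ and $V$ are minimal in the reachability order. First I would reformulate the hypothesis on $U,V$: a maximal strongly connected component $U$ is closed under right multiplication by $\bA$, so $U=O(a)$ for every $a\in U$ and $U$ is a subalgebra (similarly $V=O(b)$ for every $b\in V$); in particular the only subalgebra of $U$ closed under right multiplication by all of $\bA$ is $U$ itself, and likewise for $V$. Then I would reduce to $\RR$ linked: letting $\alpha,\beta$ be the linking congruences on $\bA,\bB$, the hypothesis puts $U$ inside a single $\alpha$-class $\bA_0$, and fixing $(a_0,b_0)\in(U\times V)\cap\RR$ puts $b_0$ in the matching $\beta$-class $\bB_0$, whence $V=O(b_0)\subseteq\bB_0$ (congruence classes of idempotent algebras are subalgebras, and right orbits never leave a subalgebra). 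Replacing $\bA,\bB,\RR$ by $\bA_0,\bB_0,\RR\cap(\bA_0\times\bB_0)$ we may assume $\RR\le_{sd}\bA\times\bB$ is linked, with $U,V$ still maximal strongly connected components.

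Now put $W:=\RR\cap(U\times V)$; this is a nonempty subalgebra since $U$ and $V$ are closed under $s$. I claim $W\le_{sd}U\times V$: given $a\in U$, write $a=s(\cdots s(a_0,c_1)\cdots,c_k)$ using $U=O(a_0)$, lift each $c_i$ to some $(c_i,d_i)\in\RR$ by subdirectness, and observe that $s(\cdots s((a_0,b_0),(c_1,d_1))\cdots,(c_k,d_k))\in\RR$ has first coordinate $a$ and second coordinate in $O(b_0)=V$, hence lies in $W$. So $\pi_1(W)=U$, and symmetrically $\pi_2(W)=V$.

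It remains to show $W=U\times V$, which is the heart of the matter. Let $\bE:=\{a\in\bA : a+W=V\}$, so $\bE\subseteq U$. Using the absorbing two-element semilattice subalgebras of Proposition~\ref{prop-partial-semilattice}: if $a,a'\in\bE$ then $(a,v),(a',v')\in W$ for all $v,v'\in V$, so $(s(a,a'),s(v,v'))\in W$, and letting $v,v'$ vary gives $s(a,a')+W\supseteq\{s(v,v'):v,v'\in V\}\supseteq V$; thus $\bE$ is a subalgebra. Moreover $\bE$ is closed under right multiplication by $\bA$ \emph{once the factors have no proper subalgebras}: for $a\in\bE$, $c\in\bA$, $(c,d)\in\RR$ and $v\in V$ we have $(s(a,c),s(v,d))\in W$; if every fibre $c+\RR$ equals $\bB$ then $d$ ranges over all of $\bB$, and $\{s(v,d):v\in V,\ d\in\bB\}=V$ because $V$ is strongly connected, so $s(a,c)+W=V$. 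Hence $\bE$ is a subalgebra of $U$ closed under right multiplication, so if it is nonempty it equals $O(a)=U$ for $a\in\bE$, giving $\{a\}\times V\subseteq W$ for every $a\in U$, i.e. $U\times V\subseteq W\subseteq\RR$. To get down to the case of no proper subalgebras I would induct on $|\bA|+|\bB|$, peeling off a proper $\bC<\bA$ with $\bC+\RR=\bB$ (or $\bD<\bB$ with $\bD-\RR=\bA$) supplied by linkedness; and $\bE\ne\emptyset$ --- i.e.\ some fibre $a+W$ is already all of $V$ --- is exactly the place where linkedness is genuinely needed, since for a non-linked $\RR$ (say a disjoint union of graphs of automorphisms) the conclusion fails outright.

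The main obstacle is the last step: I expect that making $\bE\ne\emptyset$ precise --- extracting a full fibre from linkedness, presumably via Corollary~\ref{cor-linked} or the linked/proper-subalgebra dichotomy together with Proposition~\ref{prop-partial-semilattice} --- is the real work, with the bookkeeping needed to keep $U,V$ maximal strongly connected in the inductive step being a secondary but nontrivial annoyance.
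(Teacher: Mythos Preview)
The paper does not prove this theorem; it is quoted from \cite{brady-examples} and used as a black box. So there is no in-paper proof to compare against, and I evaluate your outline on its own.

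Your overall shape (reduce to linked, cut down to $W=\RR\cap(U\times V)$, show $W\le_{sd}U\times V$, then force $W=U\times V$) is the right one, and your step~2 is correct. There is a slip in step~1: ``right orbits never leave a subalgebra'' is false. The congruence class $\bB_0$ is a subalgebra, but the right orbit $O(b_0)$ is taken with respect to multiplication by all of $\bB$, and $s(b_0,d)$ for $d\notin\bB_0$ can exit $\bB_0$. The conclusion $V\subseteq\bB_0$ is still true, and your own step-2 argument proves it: walk from $b_0$ to any $v\in V$, lift each step through subdirectness of $\RR$, and note the first coordinate stays in $O(a_0)=U\subseteq\bA_0$, so the second stays in the matching $\beta$-class.

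The real problems are in step~3. Your closure argument for $\bE$ assumes ``every fibre $c+\RR$ equals $\bB$'', which is literally $\RR=\bA\times\bB$ and leaves nothing to prove; and the proposed induction via Corollary~\ref{cor-linked} does not work, because peeling off $\bC<\bA$ with $\bC+\RR=\bB$ gives you no control over whether $U\subseteq\bC$, so the hypotheses are lost. There \emph{is} a clean closure argument that avoids this: if $a\in\bE$ and $a\to a'$ is a semilattice edge in $U$, pick $v_0\in W_{a'}$ (subdirectness) and any path $v_0\to v_1\to\cdots\to v$ in $V$; then inductively
\[
s\big((a',v_i),(a,v_{i+1})\big)=(s(a',a),\,s(v_i,v_{i+1}))=(a',v_{i+1})\in W,
\]
using $s(a',a)=a'$ and $s(v_i,v_{i+1})=v_{i+1}$. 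So $W_{a'}=V$, and $\bE$ is closed under reachability in $U$; since $U$ is strongly connected, $\bE\in\{\emptyset,U\}$.

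That leaves $\bE\neq\emptyset$, which you rightly flag as the crux. Your sketch does not get there: Corollary~\ref{cor-linked} produces a proper \emph{subalgebra} with a full image, not a single element with a full fibre, and inducting into that subalgebra destroys the ``$U$ is a maximal strongly connected component'' hypothesis. A warning that you have not yet used linkedness in an essential way: everything you wrote about $W$ uses only $W\le_{sd}U\times V$, and the diagonal of the $3$-cycle partial semilattice $\{0,1,2\}$ is a subdirect $W$ with every fibre a singleton, so $\bE=\emptyset$. The genuine use of linkedness of $\RR$ is to first show that $W$ itself is linked --- push a bipartite $\RR$-path into $U\times V$ by applying $s((a_0,b_0),-)$ and iterating over out-neighbourhoods --- and then run a finer argument on the linked $W\le_{sd}U\times V$ that exploits the absorption $s(W_a,W_{a'})\cup s(W_{a'},W_a)\subseteq W_{a'}$ along edges $a\to a'$ of $U$ together with strong connectivity of $V$. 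That second half is where the real work in \cite{brady-examples} lies, and it is not a routine consequence of Corollary~\ref{cor-linked}.
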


\begin{prop}\label{semi-not-strongly-connected} If $\bA = (A,s)$ is a clone-minimal partial semilattice, and if $\bA$ is generated by two elements $a,b$, then the directed graph $D_\bA$ of semilattice subalgebras of $\bA$ is not strongly connected.
\end{prop}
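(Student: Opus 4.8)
The plan is to argue by induction on $|A|$, with the small cases $|A|\le 2$ immediate: a two‑element partial semilattice is either a two‑element semilattice or has $s$ acting as the first projection (it cannot act as the second projection, since $s(s(x,y),x)\approx s(x,y)$ fails for $\pi_2$ whenever there are at least two elements), and in neither case is the two‑vertex digraph $D_\bA$ strongly connected. So assume $|A|\ge 3$, so that $\{a,b\}$ is not a subalgebra (as $\Sg_\bA\{a,b\}=\bA$); by Proposition \ref{prop-partial-semilattice} this already forces $s(a,b)\notin\{a,b\}$ and $s(b,a)\notin\{a,b\}$. Suppose for contradiction that $D_\bA$ is strongly connected, so that $A$ is its unique (hence maximal) strongly connected component. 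The central object is the symmetric subdirect relation $\bS=\Sg_{\bA^2}\{(a,b),(b,a)\}\le_{sd}\bA\times\bA$ (subdirectness holds because $\pi_i(\bS)$ is a subalgebra containing $a$ and $b$). I would then split on whether $\bS$ is linked.

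In the \emph{linked} case, $U=V=A$ lies in the unique linked component of $\bS$, so Theorem \ref{strong-binary} (with $\RR=\bS$) yields $A\times A\subseteq\bS$, i.e.\ $\bS=\bA^2$. Then $(a,a)\in\bS$, and by the equivalences in Proposition \ref{semilattice-structure} applied to the pair $(b,a),(a,b)$ this gives $s(b,a)=a$; by Proposition \ref{prop-partial-semilattice} this makes $\{a,b\}$ a two‑element semilattice subalgebra, contradicting $|A|\ge 3$. In the \emph{unlinked} case, let $\alpha$ be the linking congruence of $\bS$ on $\bA$ (equal on both factors by symmetry). The quotient $\bA/\alpha$ is again a partial semilattice (an equational condition), lies in $\Var(\bA)$, is generated by $[a],[b]$, and has $D_{\bA/\alpha}$ strongly connected (a quotient of a strongly connected digraph is strongly connected). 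Also $[a]\ne[b]$: otherwise $\bA/\alpha$ would be one‑generated, hence trivial, forcing $\alpha$ full and $\bS$ linked. Since $D_{\bA/\alpha}$ has an edge, $s$ is not a projection on $\bA/\alpha$, so $\bA/\alpha$ is nontrivial and therefore clone‑minimal. If $\alpha$ is nontrivial then $|\bA/\alpha|<|A|$ and the inductive hypothesis applies, a contradiction.

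The remaining case, $\alpha$ trivial, is the main obstacle: here $\bS$ is the graph of an involutive automorphism $\sigma$ of $\bA$ with $\sigma(a)=b$, and neither Theorem \ref{strong-binary} (which needs $U$ inside a single linked component) nor the inductive hypothesis applies directly. I would dispose of it by a separate argument: a partial semilattice operation cannot act as a second projection on any two‑element algebra, so if $\bA$ were non‑Taylor it would have a two‑element projection algebra in $HS(\bA)$ on which $s$ acts as the first projection, i.e.\ $s\in\Clo_2^{\pi_1}(\bA)$, and then Proposition \ref{digraph-clone-inv} keeps $D_{(A,g)}$ strongly connected for every nontrivial $g\in\Clo_2^{\pi_1}(f)$, from which one extracts a contradiction; hence $\bA$ is Taylor, and Theorem \ref{thm-taylor-minimal} forces $\bA$ to be a commutative spiral, after which a short direct analysis of $2$‑generated commutative partial semilattices shows $D_\bA$ cannot be strongly connected. (An alternative route for this case is to replace $\bS$ by $\Sg_{\bA^2}(\bS\cup\Delta_\bA)$, which is reflexive — hence linked, since it contains $(a,b)$ and $\Sg_\bA\{a,b\}=\bA$ — and subdirect, apply Theorem \ref{strong-binary} to force it to equal $\bA^2$, and derive a contradiction from the fact that the diagonal together with $(a,b)$ cannot generate all of $\bA^2$.) I expect precisely this graph‑of‑automorphism case to be where the real work lies; everything else is bookkeeping built on Theorem \ref{strong-binary} and the characterizations in Propositions \ref{prop-partial-semilattice} and \ref{semilattice-structure}.
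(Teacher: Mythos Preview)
Your overall architecture matches the paper's: reduce to a minimal counterexample (the paper phrases this as ``WLOG $\bA$ is simple'', you phrase it as induction on $|A|$), form $\bS=\Sg_{\bA^2}\{(a,b),(b,a)\}$, and split on whether $\bS$ is linked. The linked case is handled exactly as in the paper via Theorem \ref{strong-binary}.

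The gap is in the graph-of-automorphism case, which you correctly identify as the crux but then handle with hand-waving. Both of your proposed routes are incomplete or wrong. In the first, you assert that from ``$D_{(A,g)}$ stays strongly connected for all nontrivial $g\in\Clo_2^{\pi_1}(s)$'' one ``extracts a contradiction'', but you never say what that contradiction is, so the conclusion ``hence $\bA$ is Taylor'' is unsupported; and even after reaching ``$\bA$ is a spiral'' you defer to an unspecified ``short direct analysis''. In the alternative, you claim that $\Sg_{\bA^2}(\bS\cup\Delta_\bA)=\bA^2$ would contradict ``the diagonal together with $(a,b)$ cannot generate all of $\bA^2$'', but that last statement is simply not something you have established (and is false in general).

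What you are missing is that this case is actually the \emph{easy} one, dispatched in one line by the last sentence of Proposition \ref{semilattice-structure}: since $\{a,b\}$ is not a subalgebra, there is a surjective homomorphism $\bS\twoheadrightarrow\cF_{\text{semilattice}}(x,y)$. If $\bS$ is the graph of an automorphism then $\bS\cong\bA$, so $\bA$ itself surjects onto the three-element free semilattice. But the free semilattice on two generators is not strongly connected, while any quotient of a strongly connected algebra is strongly connected; contradiction. (Equivalently, in the paper's simple-$\bA$ formulation, this gives a proper quotient of a simple algebra.) You already cited Proposition \ref{semilattice-structure} for the linked case; you just needed its final sentence here.
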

\begin{proof} Suppose for contradiction that $\bA$ is a counterexample. Since any quotient of a strongly connected algebra is strongly connected, we may assume without loss of generality that $\bA$ is simple. Consider the algebra
\[
\bS = \Sg_{\bA^2}\Big\{\begin{bmatrix} a\\ b \end{bmatrix}, \begin{bmatrix} b\\ a \end{bmatrix}\Big\}.
\]
Since $\bA$ is simple, the linking congruence of $\bS$ on $\bA$ is either trivial or full, so either $\bS$ is the graph of an isomorphism or $\bS$ is linked. If $\bS$ is the graph of an isomorphism, then $\bS \cong \bA$, so Proposition \ref{semilattice-structure} implies that $\bA$ has a surjective homomorphism to the free semilattice on two generators, which contradicts the assumption that $\bA$ is strongly connected, and also contradicts the assumption that $\bA$ is simple.

Thus $\bS$ must be linked. By Theorem \ref{strong-binary}, this implies that $\bS = \bA\times \bA$, so in particular we have $(b,b) \in \bS$. But then Proposition \ref{semilattice-structure} implies that $\{a,b\}$ is a semilattice subalgebra of $\bA$, which is again a contradiction.
\end{proof}

More results along these lines can be found in Appendix \ref{a-partial-semi}.

\begin{rem} The definition of a spiral implies that every nontrivial spiral $\bA$ has a two-element semilattice subquotient. By Lemma \ref{lem-semilattice-iteration}, this implies that every nontrivial spiral has a nontrivial partial semilattice term operation. However, it seems helpful to keep the non-Taylor case separate from the Taylor case, since clone-minimal spirals are significantly easier to classify than non-Taylor clone-minimal partial semilattices. Part of the reason for this is that commutativity significantly reduces the amount of casework, and part of it is that the definition of a spiral can be used to directly prove a recursive structure theorem for spirals, in the sense that every spiral can be thought of as being built out of an overlapping arrangement of strictly smaller spirals.
\end{rem}

\begin{rem} The phrase ``partial semilattice operation'' is a bit of a mouthful, which the author has unfortunately committed to. Perhaps a better name would have been ``arrow operation'', as a reference to the fact that such an operation produces ``arrows'' $a \rightarrow s(a,b)$.
\end{rem}

\subsection{Melds}

Now we consider the case of melds.

\begin{defn} An idempotent groupoid $\bA = (A, \cdot)$ is called a \emph{meld} if it satisfies the identity
\[
(xy)(zx) \approx xy.
\]
\end{defn}

\begin{prop}\label{prop-meld-absorption} An idempotent groupoid is a meld if and only if it satisfies the absorption identity
\begin{align}
x((yx)z) \approx x.\tag{*}\label{meld-star}
\end{align}
\end{prop}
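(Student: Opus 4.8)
The plan is to prove both implications by direct equational reasoning from idempotence together with the identity at hand; I write the basic operation by juxtaposition.

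\textbf{From meld to \eqref{meld-star}.} First I would record two special cases of $(xy)(zx) \approx xy$: putting $z := x$ and using idempotence gives $(xy)x \approx xy$, while putting $y := x$ gives $x(zx) \approx x$. The next step is to show that any term of the form $(yx)z$ is right-fixed by $x$, that is $((yx)z)x \approx (yx)z$; this follows from the meld identity applied with $x \mapsto yx$, $y \mapsto z$, $z \mapsto x$, giving $((yx)z)\big(x(yx)\big) \approx (yx)z$, together with the special case $x(yx) \approx x$. Combining, $x\big((yx)z\big) \approx x\big(((yx)z)x\big) \approx x$, the last step being another instance of $x(yx)\approx x$. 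This direction should be routine.

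\textbf{From \eqref{meld-star} to meld.} This is the substantive direction: we must derive $(xy)(zx) \approx xy$ from idempotence and $x((yx)z) \approx x$, which I write as $p\big((qp)r\big) \approx p$. I would first assemble three auxiliary identities. Substituting $p := x$, $q := z$, $r := zx$ and collapsing $(zx)(zx)$ to $zx$ by idempotence gives $x(zx) \approx x$. Using this, the substitution $p := zx$, $q := x$, $r := x$ --- after simplifying the inner $x(zx)$ to $x$ --- gives $(zx)x \approx zx$, and the substitution $p := zx$, $q := x$, $r := y$ --- again simplifying $x(zx)$ to $x$ --- gives $(zx)(xy) \approx zx$.

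With these in hand, one more application of $p\big((qp)r\big) \approx p$, with $p := xy$, $q := zx$, $r := x$, gives
\[
(xy)\Big(\big((zx)(xy)\big)x\Big) \approx xy,
\]
and the inner term $\big((zx)(xy)\big)x$ collapses to $zx$ --- first to $(zx)x$ by $(zx)(xy)\approx zx$, then to $zx$ by $(zx)x\approx zx$ --- so the left-hand side becomes $(xy)(zx)$, giving the meld identity. The step I expect to be the main obstacle is locating this final substitution: one needs the template $p\big((qp)r\big)$ to reduce to exactly $(xy)(zx)$, which forces the compound term $\big((zx)(xy)\big)x$ to collapse to $zx$, and the two auxiliary identities $(zx)(xy)\approx zx$ and $(zx)x\approx zx$ are precisely what is needed to arrange that.
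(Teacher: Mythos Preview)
Your proof is correct and follows essentially the same approach as the paper's: both directions proceed by the same sequence of substitutions into the relevant identity, first extracting the ``small'' consequences $x(yx)\approx x$, $(xy)x\approx xy$ (or your variant $(zx)x\approx zx$), and $(xy)(yz)\approx xy$, then making one final substitution into $p((qp)r)\approx p$ that collapses to the meld identity. The only cosmetic difference is that in the final step you take $r := x$ (requiring $(zx)x \approx zx$) where the paper takes $r := z$ (requiring $(zx)z \approx zx$), which lets you skip the separate derivation of $x(xz)\approx x$.
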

\begin{proof} First we show that any idempotent groupoid which satisfies $x((yx)z) \approx x$ is a meld. Note first that if we make the substitution $y \mapsto x$ in \eqref{meld-star} (and apply idempotence), we get
\begin{align}
x(xz) \approx x((xx)z) \approx x.\label{meld-x(xz)}
\end{align}
Similarly, if we first apply idempotence to see that $yx \approx (yx)(yx)$ and then substitute $z \mapsto yx$ in \eqref{meld-star}, then we get
\begin{align}
x(yx) \approx x((yx)(yx)) \approx x.\label{meld-x(yx)}
\end{align}
Applying \eqref{meld-x(xz)} with $z \mapsto y$ and \eqref{meld-x(yx)} with $x \mapsto xy, y \mapsto x$, we see that
\begin{align}
(xy)x \approx (xy)(x(xy)) \approx xy.\label{meld-(xy)x}
\end{align}
Substituting $x$ with $xy$ in $x \approx x((yx)z)$ and applying \eqref{meld-x(yx)} with $x$ and $y$ reversed, we see that
\begin{align}
xy \approx (xy)((y(xy))z) \approx (xy)(yz),\label{meld-(xy)(yz)}
\end{align}
and similarly if we substitute $x$ with $xy$ and $y$ with $zx$ in \eqref{meld-star} and apply \eqref{meld-(xy)(yz)} and \eqref{meld-(xy)x} with $x,y,z$ permuted cyclically then we get
\begin{align}
xy \approx (xy)(((zx)(xy))z) \approx (xy)((zx)z) \approx (xy)(zx).\label{meld-(xy)(zx)}
\end{align}
Thus, the identity $x((yx)z) \approx x$ implies the identity $(xy)(zx) \approx xy$.

For the converse direction, suppose $\bA = (A,\cdot)$ is a meld. Substituting $y \mapsto x$ in $(xy)(zx) \approx xy$, we get
\[
x(zx) \approx (xx)(zx) \approx xx \approx x,
\]
so \eqref{meld-x(yx)} holds in every meld. Applying \eqref{meld-x(yx)}, followed by substituting $x \mapsto yx, y \mapsto z, z \mapsto x$ into the meld equation $(xy)(zx) \approx xy$, we get
\begin{align}
((yx)z)x \approx ((yx)z)(x(yx)) \approx (yx)z,\label{meld-((yx)z)x}
\end{align}
and if we first apply \eqref{meld-((yx)z)x} and then apply \eqref{meld-x(yx)} with $y$ replaced by $(yx)z$, then we get
\[
x((yx)z) \approx x(((yx)z)x) \approx x,
\]
which is \eqref{meld-star}.
\end{proof}

\begin{prop}\label{meld-symmetry} Suppose $\bA = (A,\cdot)$ is a meld. Whenever we have $ab = a$, we also have $ba = b$.
\end{prop}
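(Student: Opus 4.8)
The plan is to get the implication straight out of the defining meld identity $(xy)(zx)\approx xy$ together with idempotence, bypassing the auxiliary identities from Proposition~\ref{prop-meld-absorption}. The one computation I would do first is to instantiate the meld law with $x=y=b$ and $z=a$: this gives $(bb)(ab)\approx bb$, which by idempotence collapses to $b(ab)\approx b$. So I would first record that in every meld $b\cdot(ab)=b$ holds for all $a,b$. Equivalently, $b(ab)$ is just the instance $x(yx)$ of identity \eqref{meld-x(yx)} with $x=b$, $y=a$, so one could cite that identity instead.

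Next I would feed in the hypothesis. Assuming $ab=a$, I may rewrite $a$ as $ab$ inside $ba$, so $ba = b\cdot a = b\cdot(ab)$, and by the identity just recorded this equals $b$. Spelled out in a single chain: $ba = b(ab) = (bb)(ab) = bb = b$, where the first equality uses $a=ab$, the second and last use idempotence, and the middle one is the meld identity with $x=y=b$, $z=a$.

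Because the whole argument is this short, there is essentially no obstacle to overcome; the only point of care is selecting the substitution into the meld law so that its right-hand side degenerates to a single letter. I expect the write-up to be one or two lines.
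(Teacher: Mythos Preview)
Your proof is correct and takes essentially the same approach as the paper. The paper also writes $ba = b(ab) = b$ using the identity $x(yx)\approx x$ (equation \eqref{meld-x(yx)}), which it cites from the proof of Proposition~\ref{prop-meld-absorption}; you simply re-derive that identity inline via the substitution $x=y=b$, $z=a$ into the meld law, which is exactly how the paper obtains \eqref{meld-x(yx)} in the first place.
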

\begin{proof} Supposing that $ab = a$, we have
\[
ba = b(ab) = b,
\]
where the second equality follows from equation \eqref{meld-x(yx)} from the proof of Proposition \ref{prop-meld-absorption}.
\end{proof}

\begin{cor} If $\bA = (A,\cdot)$ is a meld, then the undirected graph $\cG_\bA$ from Definition \ref{defn-graph} has an edge connecting $a$ to $b$ iff $ab = a$ iff $ba = b$. In particular, $\cdot$ acts as first projection on every edge of $\cG_\bA$.
\end{cor}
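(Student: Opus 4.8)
The plan is to separate the corollary into two assertions: the equivalence $ab = a \iff ba = b$, and the claim that $\{a,b\}$ being an edge of $\cG_\bA$ (i.e.\ a two-element subalgebra) is equivalent to $ab = a$. The last sentence of the corollary is then immediate, since $ab = a$ together with $ba = b$ says exactly that $\cdot$ restricts to the first projection on $\{a,b\}$.

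The equivalence $ab = a \iff ba = b$ should require no work: one implication is literally Proposition \ref{meld-symmetry}, and the other is that same proposition applied with $a$ and $b$ interchanged. For the edge characterization, the easy direction is that if $ab = a$ then also $ba = b$, so $\{a,b\}$ is closed under $\cdot$ (using idempotence for $aa$ and $bb$) and hence is an edge of $\cG_\bA$.

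The only direction with any content is showing that a two-element subalgebra $\bB = (\{a,b\},\cdot)$ of a meld must have $ab = a$. Here I would use that melds form a variety, so $\bB$ is itself a meld, and therefore the equivalences $ab = a \iff ba = b$ and $ab = b \iff ba = a$ hold inside $\bB$. Since $ab, ba \in \{a,b\}$ and $a \ne b$, these two equivalences leave only two possibilities: $\cdot$ is first projection on $\bB$ (the desired case), or $\cdot$ is second projection on $\bB$; in particular the semilattice behaviors $ab = ba = a$ and $ab = ba = b$ are already excluded. To eliminate the second-projection case I would substitute $x = a$, $y = b$, $z = b$ into the defining identity $(xy)(zx) \approx xy$: this gives $(ab)(ba) = ab$, whereas in the second-projection algebra $(ab)(ba) = (b)(a) = a$ while $ab = b$, forcing $a = b$, a contradiction. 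This last substitution is the ``hard part'', though it is scarcely hard at all; everything else is bookkeeping resting on Proposition \ref{meld-symmetry} and the fact that a subalgebra of a meld is a meld.
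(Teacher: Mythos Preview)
Your proof is correct. The paper states this corollary without proof, treating it as immediate from Proposition~\ref{meld-symmetry}; your argument is exactly the natural way to make that explicit, and your use of the defining identity $(xy)(zx)\approx xy$ to kill the second-projection case is fine (one could equally use $x(xy)\approx x$ from the proof of Proposition~\ref{prop-meld-absorption}: if $ab=b$ then $a = a(ab) = ab = b$). Note that your second equivalence $ab=b \iff ba=a$ is actually redundant --- the first equivalence alone already rules out the two ``semilattice'' cases --- but this does no harm.
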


\begin{prop}\label{meld-graph} If $\bA = (A,\cdot)$ is a meld, then for any $x,y \in \bA$, the element $xy$ is adjacent to $x$, to $y$, and to every neighbor of $x$ in $\cG_\bA$.
\end{prop}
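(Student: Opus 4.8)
The plan is to reduce each of the three adjacency claims to a single groupoid identity, using the Corollary following Proposition~\ref{meld-symmetry}: since that result says distinct elements $a,b$ form an edge of $\cG_\bA$ exactly when $ab \approx a$ (equivalently $ba \approx b$), in order to show that $xy$ is adjacent to some element $c$ it suffices to verify $(xy)c \approx xy$. (When $c$ coincides with $xy$ there is nothing to prove, as $\cG_\bA$ has no loops; in the remaining case this identity exhibits $\{xy,c\}$ as a two-element subalgebra on which $\cdot$ acts as first projection.) So I would establish, in order, $(xy)x \approx xy$, then $(xy)y \approx xy$, then $(xy)z \approx xy$ for an arbitrary neighbor $z$ of $x$.

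For adjacency of $xy$ to $x$: substitute $z \mapsto x$ into the defining meld identity $(xy)(zx) \approx xy$ and apply idempotence to $xx \approx x$, giving $(xy)x \approx (xy)(xx) \approx xy$ immediately.

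For adjacency of $xy$ to $y$: here it is cleaner to check the equivalent identity $y(xy) \approx y$ and then invoke the Corollary to upgrade it to $(xy)y \approx xy$. But $y(xy) \approx y$ is precisely the instance of $x(yx) \approx x$ — an identity valid in every meld, as recorded in the proof of Proposition~\ref{prop-meld-absorption} — with the roles of $x$ and $y$ interchanged.

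For adjacency of $xy$ to a neighbor $z$ of $x$: by the Corollary, $z$ being a neighbor of $x$ means $zx \approx z$ holds for these particular elements, so substituting this into the meld identity gives $(xy)z \approx (xy)(zx) \approx xy$. That is the whole argument; I do not expect any genuine obstacle, the only point requiring a moment's care being the degenerate coincidences (such as $xy = x$), in which case no edge needs to be produced at all.
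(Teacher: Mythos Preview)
Your proof is correct and follows essentially the same route as the paper: both verify $(xy)x \approx xy$, $y(xy) \approx y$, and $(xy)z \approx xy$ for $z$ with $zx = z$, invoking the corollary to Proposition~\ref{meld-symmetry} to convert these into edges of $\cG_\bA$. The only cosmetic difference is that you obtain $(xy)x \approx xy$ by substituting $z \mapsto x$ directly into the meld identity, whereas the paper cites the derived equation \eqref{meld-(xy)x} from the proof of Proposition~\ref{prop-meld-absorption}; your route is arguably a touch cleaner.
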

\begin{proof} By equation \eqref{meld-(xy)x} from the proof of Proposition \ref{prop-meld-absorption}, we have $(xy)x = xy$, so $xy$ is adjacent to $x$. By swapping $x$ and $y$ in equation \eqref{meld-x(yx)}, we have $y(xy) = y$, so $xy$ is adjacent to $y$. Finally, for any $z$ with $zx = z$ we have
\[
(xy)z = (xy)(zx) = xy,
\]
so $xy$ is adjacent to $z$.
\end{proof}

\begin{cor} If $\bA$ is a finite meld, then the graph $\cG_\bA$ has a vertex which connects to all other vertices.
\end{cor}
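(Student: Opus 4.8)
The plan is to run a greedy iteration in which multiplying an element by any of its non-neighbors strictly enlarges its closed neighborhood; finiteness then forces the neighborhood to become everything. For $a \in \bA$ set $N[a] = \{b \in A : ab = a\}$. By idempotence $a \in N[a]$, and by the Corollary to Proposition \ref{meld-symmetry} the set $N[a] \setminus \{a\}$ is exactly the set of vertices joined to $a$ by an edge of $\cG_\bA$; thus a vertex $a$ connects to all other vertices precisely when $N[a] = A$, and it suffices to produce such an $a$.

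The key step is to repackage Proposition \ref{meld-graph} as a monotonicity statement: for all $x, y \in \bA$,
\[
N[x] \cup \{x, y\} \subseteq N[xy].
\]
Indeed, $(xy)x = xy$ (equation \eqref{meld-(xy)x}) gives $x \in N[xy]$; the adjacency of $xy$ and $y$ in Proposition \ref{meld-graph} means $(xy)y = xy$, so $y \in N[xy]$; and if $z \in N[x]$, i.e.\ $zx = z$, then the last clause of Proposition \ref{meld-graph} gives $(xy)z = xy$, so $z \in N[xy]$. Hence $N[x] \subseteq N[xy]$, with the containment strict as soon as $y \notin N[x]$.

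Now I would iterate: pick any $a_0 \in \bA$; given $a_i$, stop if $N[a_i] = A$, and otherwise choose $y_i \in A \setminus N[a_i]$ and set $a_{i+1} = a_i y_i$. By the displayed inclusion $N[a_i] \subsetneq N[a_{i+1}]$ at every step that does not stop, so $|N[a_i]|$ strictly increases; since $|N[a_i]| \le |A|$ and $A$ is finite, the process must halt after at most $|A|$ steps, which happens only when $N[a_k] = A$ for some $k$. That $a_k$ connects to every other vertex of $\cG_\bA$, proving the corollary. The only point needing care is the monotonicity inclusion, which is a direct reading of Proposition \ref{meld-graph}; after that there is no real obstacle, only the finiteness bookkeeping.
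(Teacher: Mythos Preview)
Your proof is correct and is essentially the argument the paper has in mind: the corollary is stated without proof, as an immediate consequence of Proposition \ref{meld-graph}, and your greedy iteration is exactly the way to cash that out (equivalently, one could just left-multiply all elements of $A$ together and observe that the result is adjacent to everything). The only cosmetic point is that when you write ``$z \in N[x]$, i.e.\ $zx = z$'' you are silently using Proposition \ref{meld-symmetry} to pass from your definition $xz = x$ to the equivalent $zx = z$; this is fine, but worth making explicit.
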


Conversely, we have the following result.

\begin{prop}\label{meld-construction} Suppose $\cG$ is a graph with vertex set $A$ such that some vertex of $\cG$ is connected to all other vertices. Then for any idempotent binary function $\cdot : A^2 \rightarrow A$ which restricts to the first projection on every edge of $\cG$, and which has the property that $xy$ connects to $x$, $y$, and every neighbor of $x$, the algebra $\bA = (A,\cdot)$ will be a meld with $\cG_\bA = \cG$.
\end{prop}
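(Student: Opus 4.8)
The plan is to verify the two assertions directly: first that $\cdot$ satisfies the meld identity $(xy)(zx) \approx xy$, and then that the graph $\cG_\bA$ of two-element subalgebras of $\bA$ coincides with $\cG$. Everything rests on two facts supplied by the hypotheses: that for all $a,b \in A$ the product $ab$ is adjacent (or equal) to $a$, to $b$, and to every neighbor of $a$ in $\cG$; and that $\cdot$ acts as the first projection on every edge of $\cG$. It is worth noting up front that the hypothesis that $\cG$ has a vertex adjacent to all others will not actually be needed in the argument — as in the finiteness iteration behind the corollary to Proposition \ref{meld-graph}, it is forced once an operation with the remaining properties exists — but it does no harm to keep it.

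For the meld identity, fix $x,y,z \in A$ and look at the elements $xy$ and $zx$. Applying the hypothesis to the pair $(z,x)$, the element $zx$ is adjacent or equal to $x$; applying it to the pair $(x,y)$, the element $xy$ is adjacent to $x$ and to every neighbor of $x$. If $zx \ne x$, then $zx$ is a neighbor of $x$, so $xy$ is adjacent or equal to $zx$, and $\{xy, zx\}$ is either an edge of $\cG$ (in which case first projection gives $(xy)(zx) = xy$) or a singleton (in which case idempotence gives $(xy)(zx) = xy$). If instead $zx = x$, then $(xy)(zx) = (xy)x$, and since $xy$ is adjacent or equal to $x$ the same dichotomy again yields $(xy)(zx) = xy$.

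For the equality $\cG_\bA = \cG$, I would check the two inclusions of edge sets. If $\{a,b\}$ is an edge of $\cG$, then by hypothesis $\cdot$ restricts to the first projection on $\{a,b\}$, so $ab = a$, $ba = b$, and of course $aa = a$, $bb = b$; hence $\{a,b\}$ is a subalgebra of $\bA$, i.e.\ an edge of $\cG_\bA$. Conversely, if $\{a,b\}$ with $a \ne b$ is an edge of $\cG_\bA$, then in particular $ab \in \{a,b\}$; the hypothesis applied to $(a,b)$ says that $ab$ is adjacent to both $a$ and $b$, and since $ab$ is one of $a,b$ this forces $a$ and $b$ to be adjacent in $\cG$. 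Thus $\cG_\bA$ and $\cG$ have the same edges and the same vertex set $A$, so they are equal.

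The only real obstacle is the bookkeeping of the degenerate cases in which a product $ab$ collapses onto one of its arguments: then an assertion like ``$ab$ is adjacent to $a$'' becomes vacuous, and one has to fall back on idempotence or on first projection along a genuine edge. Beyond this the verification is entirely routine.
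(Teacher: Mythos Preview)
Your proof is correct and follows the same route as the paper's: show $zx$ is adjacent to $x$, hence $xy$ is adjacent to $zx$, hence $(xy)(zx) = xy$ by first projection on edges. You are more careful than the paper in two respects: you explicitly handle the degenerate cases where a product collapses onto one of its arguments, and you actually verify the claim $\cG_\bA = \cG$, which the paper's proof omits entirely. Your observation that the universal-vertex hypothesis is unused is also correct.
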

\begin{proof} We need to verify the meld equation $(xy)(zx) \approx xy$. By our assumption on $\cdot$, $zx$ is adjacent to $x$, and therefore $xy$ is adjacent to $zx$ as well, so $(xy)(zx) = xy$ since $\cdot$ restricts to first projection on the edge from $xy$ to $zx$.
\end{proof}

\begin{prop}\label{free-meld} If $\bA$ is a nontrivial meld, then the free algebra on two generators $\cF_{\cV(\bA)}(x,y)$ has exactly four elements: $x,y,xy,yx$. The undirected graph associated to this free algebra has only one non-edge (between $x$ and $y$).
\end{prop}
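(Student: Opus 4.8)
The plan is to prove the two assertions separately. First, that $\cF \coloneqq \cF_{\cV(\bA)}(x,y)$ has at most four elements: since $\bA$ is a meld, every meld identity holds in $\cF$, so in particular the consequences of $(xy)(zx)\approx xy$ derived in the proof of Proposition~\ref{prop-meld-absorption} are available in $\cF$ --- namely $x(xz)\approx x$, $x(yx)\approx x$, $(xy)x\approx xy$, and $(xy)(yz)\approx xy$ --- together with idempotence. From these one checks by a short direct computation that the four-element set $\{x,y,xy,yx\}$ is closed under the operation of $\cF$: every product whose left factor is $x$ or $y$ reduces to one of $x,y,xy,yx$ (using idempotence, $x(xz)\approx x$, $x(yx)\approx x$ and the mirror images of the latter two), while every product whose left factor is $xy$ equals $xy$ and every product whose left factor is $yx$ equals $yx$ (using $(xy)x\approx xy$, $(xy)(yz)\approx xy$, $(xy)(zx)\approx xy$, idempotence, and their mirror images). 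Since $\{x,y,xy,yx\}$ contains the two free generators and is a subalgebra, it is all of $\cF$, so $|\cF|\le 4$.

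Next I would show these four elements of $\cF$ are pairwise distinct, which is where nontriviality of $\bA$ enters: since $\cdot$ is not a projection on $\bA$, we have $|A|\ge 2$ (so $x\ne y$), the term $xy$ is neither the first nor the second projection on $\bA$ (so $xy\notin\{x,y\}$), and the operation $(a,b)\mapsto b\cdot a$ represented by $yx$ is a projection iff $\cdot$ is one, so $yx\notin\{x,y\}$. The one remaining coincidence to rule out, which I expect to be the main obstacle, is $xy\approx yx$; equivalently, I must show that a nontrivial meld is not commutative. If $\cdot$ were commutative, then $(xy)x\approx xy$ would give $x(xy)\approx xy$ while $x(yx)\approx x$ would give $x(xy)\approx x$, whence $xy\approx x$, forcing $\cdot$ to be the first projection and contradicting nontriviality. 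Hence $x,y,xy,yx$ are four distinct elements, so $|\cF|=4$.

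Finally, for the graph $\cG_\cF$: the algebra $\cF$ is itself a meld, so Proposition~\ref{meld-graph} applies to it with the two free generators in place of $x,y$, and tells us that $xy$ is adjacent in $\cG_\cF$ to $x$, to $y$, and to every neighbour of $x$; applying the same proposition with the two generators interchanged shows that $yx$ is adjacent to $x$ (and to $y$), so $yx$ is a neighbour of $x$ and is therefore adjacent to $xy$ as well. This produces all five edges among $\{x,y,xy,yx\}$ other than $\{x,y\}$, and $\{x,y\}$ is not an edge because it is not closed under $\cdot$ (indeed $x\cdot y = xy\notin\{x,y\}$). Therefore $\cG_\cF$ has exactly one non-edge, the pair $\{x,y\}$. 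Apart from the closure computation in the first step and the non-commutativity argument in the second, everything here is routine bookkeeping.
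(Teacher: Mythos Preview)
Your proof is correct. The closure step and the graph step are essentially the same as the paper's, just phrased computationally rather than via Proposition~\ref{meld-graph}; the paper collapses your case analysis into the single observation that $yx$ is adjacent to $x$, so $xy$ is adjacent to each of $x,y,yx$, hence $\{x,y,xy,yx\}$ is closed.

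The genuine difference is in how you separate $xy$ from $yx$ (and from $y$). The paper argues that any meld on at least two elements has a two-element projection subalgebra (an edge of $\cG_\bA$), so there is a two-element projection algebra in $\Var(\bA)$ on which $\cdot$ acts as first projection; by Propositions~\ref{prop-pi1} and~\ref{prop-right-pi1} the right orbits $O(x)$ and $O(y)$ are then disjoint in $\cF_{\cV(\bA)}(x,y)$, which immediately gives $xy\notin\{y,yx\}$. Your route is more elementary: you rule out $xy\approx y$ directly (second projection) and rule out $xy\approx yx$ by observing that commutativity together with $(xy)x\approx xy$ and $x(yx)\approx x$ forces $xy\approx x$. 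Your argument is self-contained and avoids the $\Clo_2^{\pi_1}$ machinery, which is a nice bonus; the paper's argument, on the other hand, establishes along the way that $\cdot$ acts as first projection on every projection algebra in $\Var(\bA)$, a fact that is reused later (e.g.\ in Lemma~\ref{meld-semiprojection} and Theorem~\ref{meld-nice}).
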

\begin{proof} Since $yx$ is connected to $x$, we see that $xy$ is connected to all three of $x,y,yx$, so $\{x,y,xy,yx\}$ is closed under $\cdot$. To see that $xy \not\in \{y,yx\}$ in the free algebra, note that every meld of size at least two has at least one edge, so there is a projection algebra in $\Var(\bA)$. Since $\cdot$ must act like first projection on any projection algebra, we see that the right orbits $O(x), O(y)$ must be disjoint in $\cF_{\cV(\bA)}(x,y)$ by Proposition \ref{prop-pi1} and Proposition \ref{prop-right-pi1}. That $xy \not\approx x$ follows from the assumption that $\bA$ is nontrivial. Thus the elements $x,y,xy,yx$ of $\cF_{\cV(\bA)}(x,y)$ are pairwise distinct, which completes the proof.
\end{proof}

The multiplication table, labeled digraph, and undirected graph of two-element subalgebras of the free meld on two generators are displayed below.
\begin{center}
\begin{tabular}{ccccc}
\begin{tabular}{c | c c c c} $\cdot$ & $x$ & $y$ & $xy$ & $yx$\\ \hline $x$ & $x$ & $xy$ & $x$ & $x$ \\ $y$ & $yx$ & $y$ & $y$ & $y$ \\ $xy$ & $xy$ & $xy$ & $xy$ & $xy$\\ $yx$ & $yx$ & $yx$ & $yx$ & $yx$\end{tabular}
& \;\;\; &
\begin{tikzpicture}[scale=1.5,baseline=0.75cm]
  \node (x) at (0,0) {$x$};
  \node (y) at (1,0) {$y$};
  \node (xy) at (0,1) {$xy$};
  \node (yx) at (1,1) {$yx$};
  \draw [->] (x) edge ["{$y$}"] (xy);
  \draw [->] (y) edge ["{$x$}"'] (yx);
\end{tikzpicture}
& \;\;\; &
\begin{tikzpicture}[scale=1.5,baseline=0.75cm]
  \node (x) at (0,0) {$x$};
  \node (y) at (1,0) {$y$};
  \node (xy) at (0,1) {$xy$};
  \node (yx) at (1,1) {$yx$};
  \draw (yx) -- (x) -- (xy) -- (yx) -- (y) -- (xy);
\end{tikzpicture}
\end{tabular}
\end{center}

Proposition \ref{free-meld} gets us most of the way to verifying that being a meld is a nice property. In fact, it turns out that every nontrivial meld is clone-minimal. The only tricky part of verifying this is checking that a meld has no semiprojections of arity at least $3$ in its clone.

\begin{lem}\label{meld-semiprojection} Suppose $\bA = (A, f)$ is a nontrivial meld. If a term $t \in \Clo(f)$ satisfies
\[
t(x,y,...,y) \approx x,
\]
then $t$ is first projection, and if
\[
t(x,y,...,y) \not\approx y
\]
then $t(x,y,z,...)$ must be adjacent to $x$ in the graph $\cG_\bF$ associated to the free algebra $\bF = \cF_{\cV(\bA)}(x,y,z,...)$.
\end{lem}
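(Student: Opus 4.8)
The plan is to prove both halves of the statement simultaneously by induction on the structure of $t$ as a groupoid term in variables $x_1,\dots,x_n$. Write $\bF=\cF_{\cV(\bA)}(x_1,\dots,x_n)$, and for a term $r$ let $v_r:=r(x,y,\dots,y)$ denote its image in $\cF_{\cV(\bA)}(x,y)$, which by Proposition~\ref{free-meld} (here we use that $\bA$ is a nontrivial meld) is the four-element meld $\{x,y,xy,yx\}$ with the multiplication table displayed above. I would carry through the induction the two assertions: (a) if $v_t=x$ then $t\approx x_1$ in $\bF$; and (b) if $v_t\neq y$ then $x_1\cdot t\approx x_1$ in $\bF$, equivalently $t\cdot x_1\approx t$ by Proposition~\ref{meld-symmetry}, i.e.\ $t$ is adjacent to $x_1$ in $\cG_\bF$ unless $t=x_1$. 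The lemma then follows: its hypotheses $t(x,y,\dots,y)\approx x$ and $t(x,y,\dots,y)\not\approx y$ are precisely the hypotheses $v_t=x$ and $v_t\neq y$ of (a) and (b).

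The base case $t=x_i$ is immediate: $v_{x_i}=x$ exactly when $i=1$ and $v_{x_i}=y$ otherwise, so under the hypothesis of either (a) or (b) one is forced to have $i=1$ and $t=x_1$, which is the conclusion of both (using idempotence for the adjacency statement). For the inductive step one writes $t=t_1\cdot t_2$, sets $v_1=v_{t_1}$, $v_2=v_{t_2}$, and reads off from the four-element table the two facts that drive all the case analysis: $v_1v_2=x$ holds exactly when $v_1=x$ and $v_2\neq y$, and $v_1v_2=y$ holds exactly when $v_1=y$ and $v_2\neq x$.

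For (a): from $v_t=x$ we get $v_1=x$ and $v_2\neq y$; applying (a) inductively to $t_1$ and (b) inductively to $t_2$ gives $t_1\approx x_1$ and $x_1t_2\approx x_1$, so $t=t_1t_2\approx x_1t_2\approx x_1$. For (b): assume $v_t\neq y$ and split on $v_1$. If $v_1\neq y$, then (b) for $t_1$ gives $x_1t_1\approx x_1$, hence $t_1x_1\approx t_1$ by Proposition~\ref{meld-symmetry}, so $t\approx (t_1x_1)t_2$ and the meld absorption identity \eqref{meld-star} applied with $x\mapsto x_1,\ y\mapsto t_1,\ z\mapsto t_2$ gives $x_1t\approx x_1\big((t_1x_1)t_2\big)\approx x_1$. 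If instead $v_1=y$, then since $v_t=v_1v_2\neq y$ the table forces $v_2=x$, so (a) for $t_2$ gives $t_2\approx x_1$, whence $t\approx t_1x_1$; then $(xy)x\approx xy$ (equation \eqref{meld-(xy)x} from the proof of Proposition~\ref{prop-meld-absorption}) yields $t\cdot x_1\approx (t_1x_1)x_1\approx t_1x_1\approx t$, so $x_1t\approx x_1$ by Proposition~\ref{meld-symmetry}.

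The one step that does not go through by a naive induction is the sub-case $v_1=y$ of (b): there the inductive hypothesis says nothing directly about the factor $t_1$. The way around this is the observation that in exactly that sub-case the table forces $v_2=x$, so part (a) applies to the \emph{other} factor $t_2$ and pins it down to be exactly $x_1$, after which a single one-variable meld identity $\big((xy)x\approx xy\big)$ closes the argument. This interplay is the reason (a) and (b) must be interleaved in a single induction rather than proved one after the other; everything else is bookkeeping with the four-element multiplication table and the absorption identities \eqref{meld-star}, \eqref{meld-(xy)x} already extracted in the proof of Proposition~\ref{prop-meld-absorption}.
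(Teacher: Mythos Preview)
Your proof is correct and follows essentially the same approach as the paper: a simultaneous induction on the two statements, with the case analysis in the inductive step driven by the four-element multiplication table of $\cF_{\cV(\bA)}(x,y)$. The only cosmetic difference is that where you invoke the raw identities \eqref{meld-star} and \eqref{meld-(xy)x} to witness adjacency, the paper packages these via Proposition~\ref{meld-graph} (that $uv$ is adjacent to $u$, to $v$, and to every neighbor of $u$), which makes the two sub-cases of (b) read slightly more uniformly; your dichotomy on $v_1=y$ versus $v_1\neq y$ is equivalent to the paper's disjunction ``$v_1\neq y$ or $v_2=x$''.
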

\begin{proof} We induct on the construction of $t$ as a term in $\Clo(f)$. Suppose that $t = f(u,v)$, where $u,v$ are simpler terms. From the explicit description of $\cF_{\cV(\bA)}(x,y)$ in Proposition \ref{free-meld}, we see that
\[
t(x,y,...,y) \not\approx y
\]
is equivalent to
\[
u(x,y,...,y) \not\approx y \;\;\; \text{ or } \;\;\; v(x,y,...,y) \approx x.
\]
In the first case, the induction hypothesis shows that $u$ is adjacent to $x$, so $t$ is also adjacent to $x$ by Proposition \ref{meld-graph}. In the second case, the induction hypothesis shows that $v$ is first projection, so $t$ is adjacent to $v \approx x$ by Proposition \ref{meld-graph}.

Now suppose that
\[
t(x,y,...,y) \approx x.
\]
Then from the explicit description of $\cF_{\cV(\bA)}(x,y)$ in Proposition \ref{free-meld}, we see that
\[
u(x,y,...,y) \approx x \;\;\; \text{ and } \;\;\; v(x,y,...,y) \not\approx y.
\]
By the induction hypothesis, in this case $u$ is first projection and $v(x,y,z,...)$ is adjacent to $x$, so $f(u,v) \approx f(x,v) \approx x$.
\end{proof}

\begin{cor} If an absorption identity holds in some nontrivial meld $\bA$, then it is a consequence of the absorption identity $x((yx)z) \approx x$.
\end{cor}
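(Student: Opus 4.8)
The plan is to prove something slightly stronger: an absorption identity holds in $\bA$ \emph{if and only if} it is a consequence of idempotence together with $x((yx)z)\approx x$. By Proposition~\ref{prop-meld-absorption} the latter condition is equivalent to the identity holding in every meld, so it suffices to show that an absorption identity which holds in the single meld $\bA$ in fact holds in all melds. The mechanism is the reduction of an absorption identity to its finitely many two-variable specializations, which becomes available once we know the relevant clones contain no semiprojections.

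The first step is to deduce from Lemma~\ref{meld-semiprojection} that $\Clo(\bA)$ --- and, by the identical argument, $\Clo(\bB)$ for every nontrivial meld $\bB$ --- contains no nontrivial semiprojection of arity $\ge 3$ (a projection algebra having no such semiprojections trivially). If $t$ were an $n$-ary semiprojection, $n\ge 3$, with special coordinate $i$, then permuting the variables of $t$ so that coordinate $i$ moves to the front gives a term $t'\in\Clo$ that is still not a projection; since $n\ge 3$, setting the special input of $t'$ to $x$ and all the others to $y$ makes two inputs coincide, so $t'(x,y,\dots,y)\approx x$, and Lemma~\ref{meld-semiprojection} forces $t'$ to be first projection, whence $t$ is the $i$-th projection --- a contradiction.

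Now take an absorption identity $t(x_1,\dots,x_n)\approx x_i$. Proposition~\ref{prop-no-semiprojection}, applied to any meld $\bB$ via the previous step, says this identity holds in $\bB$ exactly when it holds under all substitutions taking at most two distinct values. Each such substitution is given by a partition $\{1,\dots,n\}=P\sqcup Q$ of the coordinates and produces an honest two-variable identity, of the form $\hat t(x,y)\approx x$ (if $i\in P$) or $\hat t(x,y)\approx y$ (if $i\in Q$), with $\hat t$ obtained from $t$ by identifying variables. A two-variable identity holds in $\bA$ iff it holds in $\cF_{\cV(\bA)}(x,y)$, which by Proposition~\ref{free-meld} is the free meld on two generators; by the universal property of that free algebra this is the same as the identity being a consequence of the meld axioms, i.e.\ holding in every meld. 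Chaining these facts gives: the absorption identity holds in $\bA$ $\iff$ all its two-valued specializations hold in every meld $\iff$ (Proposition~\ref{prop-no-semiprojection} again, now for an arbitrary meld $\bB$) the absorption identity holds in every meld $\iff$ (Proposition~\ref{prop-meld-absorption} together with completeness of equational logic) it is a consequence of $x((yx)z)\approx x$.

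The part I expect to require the most care is the bookkeeping around the two-valued specializations: checking that a ``two-valued substitution instance'' of an arbitrary absorption identity is literally an ordinary two-variable identity, so that Proposition~\ref{free-meld} and the universal property of $\cF_{\cV(\bA)}(x,y)$ legitimately apply to each of them term by term, and keeping straight the passage between ``holds in $\bA$'', ``holds in $\cF_{\cV(\bA)}(x,y)$'', and ``holds in every meld''. Once Step~1 supplies the semiprojection-free hypothesis, the remaining inputs --- Propositions~\ref{prop-no-semiprojection}, \ref{free-meld}, and~\ref{prop-meld-absorption} --- are applied in an essentially mechanical way.
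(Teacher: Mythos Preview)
Your proof is correct. The ingredients are the same ones the paper intends --- Lemma~\ref{meld-semiprojection}, Proposition~\ref{free-meld}, and Proposition~\ref{prop-meld-absorption} --- and the logic is sound.

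That said, your route is slightly more roundabout than necessary. You use Lemma~\ref{meld-semiprojection} only to extract ``no semiprojections'', then invoke Proposition~\ref{prop-no-semiprojection} to reduce to \emph{all} two-variable specializations. But Lemma~\ref{meld-semiprojection} already says something stronger: in any nontrivial meld, the \emph{single} specialization $t(x,y,\dots,y)\approx x$ forces $t\approx\pi_1$ outright. So the paper's intended argument is shorter: if $t(x_1,\dots,x_n)\approx x_1$ holds in $\bA$, then $t(x,y,\dots,y)\approx x$ holds in $\bA$, hence (via Proposition~\ref{free-meld}) in the free meld on two generators, hence in every meld; now Lemma~\ref{meld-semiprojection} applied to each nontrivial meld $\bB$ gives $t\approx\pi_1$ in $\bB$, and Proposition~\ref{prop-meld-absorption} finishes. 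Your detour through Proposition~\ref{prop-no-semiprojection} and the full family of two-variable specializations works, but the lemma already subsumes that reduction.
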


\begin{thm}\label{meld-nice} Being a meld is a nice property, and every nontrivial meld is clone-minimal.
\end{thm}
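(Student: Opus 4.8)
The plan is to verify the four defining conditions of a nice property for the class of melds, using the structural results already established about melds, and then to deduce clone-minimality as a by-product. Since melds form a finitely based variety (defined by idempotence together with $(xy)(zx) \approx xy$), checking in polynomial time whether a given binary operation $f$ is a meld is immediate, which handles the first bullet of the definition of a nice property.

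For the third bullet (the fixed witness algebra $\bA_\cP$), I would take $\bA_\cP$ to be the four-element free meld $\cF_\cV(x,y)$ described just above, with underlying set $\{x,y,xy,yx\}$ and the displayed multiplication table. Given any nontrivial meld $\bA$, there exist $a,b$ with $ab \ne a$; then by Proposition \ref{free-meld} applied to $\Sg_\bA\{a,b\}$ — or more directly by the universal property of the free algebra together with the fact that $x,y,xy,yx$ are pairwise distinct in any nontrivial meld — the subalgebra $\Sg_\bA\{a,b\}$ admits a surjection onto the four-element free meld, so $\bA_\cP \in \Var(\bA)$. The fourth bullet (closure under $H$, $S$, $P$) is automatic because melds form a variety.

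The substantive point is the second bullet: if $f$ is a meld and $g \in \Clo(f)$ is nontrivial, we must produce a nontrivial meld operation $f' \in \Clo(g)$. Here I would first argue that $\bA = (A,f)$ is itself clone-minimal, which in fact gives $f \in \Clo(g)$ and lets us simply take $f' = f$. To prove clone-minimality, let $g \in \Clo(f)$ be nontrivial; we must show $g$ is not a semiprojection and, if binary, that $g$ is $f(x,y)$ or $f(y,x)$, and in general that $f \in \Clo(g)$. The semiprojection-freeness is exactly where Lemma \ref{meld-semiprojection} does the work: if $g$ has arity $\ge 3$ and is a semiprojection, say $g(x_1,\dots,x_n) = x_1$ whenever two inputs coincide, then in particular $g(x,y,\dots,y) \approx x$, so by Lemma \ref{meld-semiprojection} $g$ is the first projection, contradicting nontriviality; hence $\Clo(f)$ has no nontrivial semiprojections of arity $\ge 3$. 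Combined with Rosenberg's Theorem \ref{rosenberg-types} and the fact that $f$ is an idempotent binary operation of minimal arity in $\Clo(f)$ (it cannot contain a nontrivial unary or majority operation, nor an affine ternary term, since melds satisfy the absorption identity \eqref{meld-star} which none of those do, using Proposition \ref{absorb-prop} applied to the relevant two-element or four-element subquotients), we conclude $\bA$ is clone-minimal, so every nontrivial $g \in \Clo(f)$ satisfies $f \in \Clo(g)$.

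The main obstacle is making the clone-minimality argument fully rigorous: one has to rule out that $\Clo(f)$ contains a nontrivial reduct generated by some $g$ not equivalent to $f$, and the cleanest route is to combine the semiprojection exclusion above with the observation that any nontrivial $g \in \Clo(f)$ which depends on $\ge 2$ variables restricts, on the four-element free meld $\cF_\cV(x,y)$ (which lies in $\Var(\bA)$), to an operation whose binary identification-minors include $f$ — so that $\Clo(g)$ restricted to this four-element algebra already recovers the full meld clone, forcing $f \in \Clo(g)$ by minimality of $\Clo(f)$ on this witness together with Proposition \ref{minimal-nontrivial}. Once clone-minimality is in hand, all four conditions for ``meld'' being a nice property follow, completing the proof.
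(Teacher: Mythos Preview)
Your overall strategy matches the paper's: verify the easy bullets of ``nice property'' via Proposition~\ref{free-meld} and the fact that melds form a variety, then prove clone-minimality by combining Rosenberg's Theorem~\ref{rosenberg-types} with the semiprojection exclusion of Lemma~\ref{meld-semiprojection}. The paper's proof follows exactly this outline.

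However, your execution has a circularity that the paper avoids. When you rule out majority and affine terms by saying ``melds satisfy the absorption identity~\eqref{meld-star} which none of those do, using Proposition~\ref{absorb-prop},'' you are invoking a result whose hypothesis is that $\bA$ is clone-minimal --- which is precisely what you are trying to prove. The paper's argument here is direct and non-circular: any nontrivial meld contains a two-element projection subalgebra (any edge of $\cG_\bA$, e.g.\ $\{a,ab\}$), and a majority or affine operation cannot act as a projection on a two-element set, so no such $t$ can exist in $\Clo(f)$. Similarly, your final paragraph's appeal to ``minimality of $\Clo(f)$ on this witness together with Proposition~\ref{minimal-nontrivial}'' is again circular, and the claim that binary identification-minors of an arbitrary nontrivial $g$ must include $f$ is not justified.

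The paper closes the argument cleanly as follows: given nontrivial $g \in \Clo(f)$, pass to a minimal subclone of $\Clo(g)$ and let $t$ be its Rosenberg generator; rule out unary (idempotence), majority and affine (two-element projection subalgebra), and semiprojection (Lemma~\ref{meld-semiprojection}); so $t$ is binary and nontrivial, hence by the explicit four-element description of $\cF_{\cV(\bA)}(x,y)$ in Proposition~\ref{free-meld} we have $t(x,y) \in \{f(x,y), f(y,x)\}$, giving $f \in \Clo(t) \subseteq \Clo(g)$. Once you fix the two circular steps with these direct arguments, your proof is correct and essentially identical to the paper's.
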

\begin{proof} Since melds form a variety defined by finitely many identities, it is easy to check that a given operation $f$ defines a meld. By Proposition \ref{free-meld}, for any nontrivial meld $\bA$ the four-element algebra $\cF_{\cV(\bA)}(x,y)$ is in the variety generated by $\bA$ and is isomorphic to the free meld on two generators. The only thing left to check is that every nontrivial meld $\bA = (A,f)$ is clone-minimal.

Suppose that $g$ is any nontrivial term in $\Clo(f)$. By Rosenberg's Theorem \ref{rosenberg-types}, there is some nontrivial $t \in \Clo(g)$ which is either a unary operation, a majority operation, an affine operation, a binary operation, or a semiprojection. It's easy to see that $t$ can't be unary (since $f$ is idempotent) and that $t$ can't be majority or affine (since there is a two-element projection algebra in the variety generated by $\bA$). If $t$ is binary, then by the explicit description of $\cF_{\cV(\bA)}(x,y)$ we see that $t$ is either $f(x,y)$ or $f(y,x)$, in which case $f \in \Clo(g)$. The only remaining case is the case where $t$ is a semiprojection of arity at least $3$, but in this case Lemma \ref{meld-semiprojection} shows that $t$ must in fact be a projection, contradicting the assumption that $t$ is nontrivial.
\end{proof}

Although the free meld on two generators was easy to describe, free melds on more than two generators are significantly more complex.

\begin{prop} The free meld on three generators has infinitely many elements.
\end{prop}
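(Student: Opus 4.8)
The plan is to exhibit a single countably infinite meld which is generated by three elements; since the free meld on three generators surjects onto any such algebra, it must then be infinite as well.

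The tool is the graph description of melds from Propositions \ref{meld-graph} and \ref{meld-construction}: a meld structure on a set $A$ is the same data as a graph $\cG$ on $A$ possessing a universal vertex, together with an idempotent operation which is first projection on every edge of $\cG$ and for which $xy$ is adjacent to (or equal to) $x$, $y$, and every neighbour of $x$. So it suffices to write down such a graph and operation which happen to be generated by three vertices. The guiding idea is that, starting from three pairwise non-adjacent generators $g_1,g_2,g_3$, the product $g_1g_2$ is a new element $p_0$, then $g_3\cdot p_0$ is a new element $p_1$ (since $g_3\not\sim p_0$), then $g_1\cdot p_1$ is new, and so on: left-multiplying the current element by a generator distinct from the previous one keeps producing new elements, so we get an infinite chain $p_0,p_1,p_2,\dots$ with $p_{n+1}=c_{n+1}\cdot p_n$, where $c_1,c_2,c_3,\dots$ cycles through $g_3,g_1,g_2,g_3,\dots$.

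Concretely, let $A=\{g_1,g_2,g_3\}\cup\{p_n : n\ge 0\}\cup\{\star\}$. Declare $\star$ adjacent to everything; declare $g_i\not\sim g_j$ for $i\ne j$; put $g_j\sim p_n$ (for $n\ge 1$) exactly when $c_n=g_j$; put $p_0\sim g_1$, $p_0\sim g_2$, and $p_0\sim p_m$ exactly when $m=1$ or $c_m\in\{g_1,g_2\}$; and for $n,m\ge 1$ put $p_n\sim p_m$ exactly when $|n-m|=1$ or $n\equiv m\pmod 3$. Let the operation be first projection on edges and on equal pairs, send $g_1\cdot g_2$ to $p_0$ and $c_{n+1}\cdot p_n$ to $p_{n+1}$, and send every remaining pair to $\star$. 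By construction $g_1,g_2,g_3$ generate all of $A$ (namely $p_0=g_1g_2$, $p_{n+1}=c_{n+1}p_n$, $\star=g_2g_1$), so — provided this is a meld — $\bA=(A,\cdot)$ is an infinite, $3$-generated meld.

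The work is the verification that $\bA$ is a meld, via Proposition \ref{meld-construction}. That the operation is idempotent and is first projection on the declared edges is immediate, and one also checks easily that the two-element subalgebras of $\bA$ are precisely the declared edges, so that $\cG_\bA$ really is the graph written down. The substantive point is the condition that $xy$ be adjacent to $x$, $y$, and every neighbour of $x$; this unwinds to a routine finite case analysis except for the case $xy=p_{n+1}=c_{n+1}\cdot p_n$, where one must confirm that $p_{n+1}$ is adjacent to every neighbour of the generator $c_{n+1}$ — this is exactly where one invokes the chosen adjacency pattern among the $p_m$, together with the facts that consecutive $c_n$ are distinct and that $p_0\sim p_m$ iff $m=1$ or $c_m\in\{g_1,g_2\}$. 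I expect keeping track of the (growing) neighbourhoods of the three generators during this check to be the only real difficulty; everything else is formal. Once it is done, the free meld on three generators surjects onto $\bA$, and since homomorphic images of finite algebras are finite, the free meld on three generators is infinite.
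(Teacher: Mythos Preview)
Your approach is essentially the same as the paper's: exhibit an explicit infinite three-generated meld by writing down a graph with a universal vertex and an operation satisfying the hypotheses of Proposition~\ref{meld-construction}. The paper's construction is more symmetric (it builds in a $\ZZ/3$-automorphism permuting the generators and uses six parallel sequences $x_i,y_i,z_i,x_iy_i,y_iz_i,z_ix_i$ plus a sink $t$), while yours uses a single chain $p_n$ produced by left-multiplying by the generators in rotation; your adjacency rules are set up correctly so that the key check --- that $p_{n+1}=c_{n+1}\cdot p_n$ is adjacent to every neighbour of $c_{n+1}$ --- goes through, and the remaining cases land in $\star$ and are automatic.
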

\begin{proof} We will prove this by constructing an infinite meld $\bA$ which is generated by three elements, using the approach of Proposition \ref{meld-construction}. The underlying set of $\bA$ will be
\[
A = \{x_i, y_i, z_i \mid i \ge 0\} \cup \{x_iy_i, y_iz_i, z_ix_i \mid i \ge 0\} \cup \{t\}.
\]
We will arrange the construction so that $\bA$ has an automorphism $\sigma$ of order three which cyclically permutes $x_i, y_i, z_i$ and cyclically permutes $x_iy_i, y_iz_i, z_ix_i$ for all $i$, and which fixes $t$. To ensure that $\bA$ is generated by the three elements $x_0, y_0, z_0$, we will arrange our construction to satisfy the equations
\[
x_{i+1} = x_i(y_iz_i), \;\;\; y_{i+1} = y_i(z_ix_i), \;\;\; z_{i+1} = z_i(x_iy_i).
\]
We start by defining a graph $\cG$ on the underlying set of $\bA$ which is compatible with the equations above.

First, we connect $t$ by an edge to every other vertex in $\cG$. We take $x_i$ adjacent to $x_j$ and $x_jy_j$ for all $j$, while $x_i$ is never adjacent to $y_j$ or $z_j$ for any $j$ (and similarly for $y_i, z_i$, by cyclically permuting the variables via $\sigma$). Additionally, every pair of elements contained in the three sequences $x_iy_i, y_jz_j, z_kx_k$ will be adjacent. For the remaining pairs, we take $x_i$ adjacent to $y_jz_j$ iff $i > j$, while $x_i$ is adjacent to $z_jx_j$ iff $i \ge j$.

To finish the construction we need to define a multiplication $\cdot$ on $A$ which is compatible with $\cG$ in the sense of Proposition \ref{meld-construction}. On the edges of $\cG$, $\cdot$ must restrict to first projection, so we just need to describe the operation $\cdot$ on the non-edges of $\cG$.

For the non-edges of $\cG$ involving the vertex $x_i$, the operation $\cdot$ is defined as follows. We set $y_j\cdot x_i = t$ for all $i,j$, $x_i\cdot y_j = t$ for $i \ne j$, and $x_i\cdot y_i = x_iy_i$. For $i < j$, we set $x_i\cdot (y_jz_j) = (y_jz_j)\cdot x_i = t$ and $x_i\cdot (z_jx_j) = (z_jx_j)\cdot x_i = t$. Finally, we set $x_i\cdot (y_iz_i) = x_{i+1}$ and $(y_iz_i)\cdot x_i = t$. (The values of $\cdot$ on the remaining non-edges of $\cG$ are determined by our desire for the permutation $\sigma$ to be an automorphism of $\bA$.)
\end{proof}

The previous result shows that the variety of melds is not locally finite, but the next result shows that it is nevertheless quite easy to answer questions about finitely presented melds.

\begin{thm} The word problem can be solved efficiently in the variety of melds.

More concretely, suppose we are given a finite list of generators $x_i$, and a finite list of relations $u_i = v_i$, where $u_i, v_i$ are terms built out of the generators $x_i$. Then we can determine whether the relations $u_i = v_i$ (together with the meld identities) imply that $r = s$ for any given terms $r,s$, in time polynomial in the total length of the descriptions of $u_i, v_i, r, s$.
\end{thm}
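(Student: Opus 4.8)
The plan is to run a congruence--closure procedure over a polynomially bounded set of ``elements'', using the derived identities from the proof of Proposition~\ref{prop-meld-absorption} as length-decreasing simplification rules and the graph-theoretic description of products from Propositions~\ref{meld-symmetry}, \ref{meld-graph}, and \ref{meld-construction} to propagate structure. First I would record the finite set $S$ of all subterms of the $u_i$, $v_i$, $r$, $s$, viewing each subterm of the form $pq$ as the formal product of the recorded subterms $p$ and $q$. The key observation is that the identities
\[
xx \approx x,\quad x(xy)\approx x,\quad x(yx)\approx x,\quad (xy)x\approx xy,\quad (xy)(yz)\approx xy,\quad (xy)(zx)\approx xy,\quad x((yx)z)\approx x,\quad ((yx)z)x\approx (yx)z
\]
collected in the proof of Proposition~\ref{prop-meld-absorption} all have the feature that the right-hand side is a \emph{subterm} of the left-hand side, so, read left to right, each is a strictly length-decreasing rewrite rule.

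The algorithm maintains a union--find partition on a set $E$ of elements (initially $E=S$) together with a symmetric ``adjacency'' relation, and repeatedly applies the closure rules: (i) merge the classes of $u_i$ and $v_i$ for each relation; (ii) congruence: if $a\equiv a'$, $b\equiv b'$, and both products are recorded, merge $ab$ with $a'b'$; (iii) the eight identities above, applied to recorded products (each instance either merges two classes or identifies a recorded product with a recorded element); (iv) if $ab$ is a recorded element, make it adjacent to $a$, to $b$, and to every current neighbour of $a$ (Proposition~\ref{meld-graph}); (v) keep the equivalences $a\sim b \iff ab\equiv a \iff ba\equiv b$ up to date (this uses Proposition~\ref{meld-symmetry}). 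When a rule demands a product $ab$ not yet in $E$, add it as a fresh element; one checks that the only products ever demanded are the $O(|r|+|s|)$ products arising along the evaluation of $r$ and $s$, so $|E|$ stays polynomial in the input length. Each rule application either decreases the number of classes or increases the number of adjacent pairs, both monotone and polynomially bounded, so the procedure halts after polynomially many steps and answers ``$r=s$'' exactly when $r$ and $s$ land in the same class. Soundness is immediate, since every closure rule is a valid consequence of the meld identities.

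For completeness I would adjoin to the resulting partial structure one extra vertex joined to every element, which by Proposition~\ref{meld-construction} turns it into a genuine finite meld $\bM'$ in which every relation $u_i=v_i$ holds and which receives a homomorphism from the presented meld sending $r$, $s$ to the values the procedure computes; hence if the procedure keeps $r$ and $s$ apart then $r\neq s$ in the presented meld. The delicate step --- and the one I expect to be the main obstacle --- is justifying the two claims used here: that recording products only along the evaluation of $r$ and $s$ suffices, and that the local propagation in fact computes the \emph{full} congruence generated by the relations. Both reduce to a confluence (critical-pair) analysis of the combined rewriting system consisting of the eight length-decreasing meld rules together with the ground relations, which I would carry out by exploiting the ``right-hand side is a subterm'' shape of the meld rules together with the fact, from Propositions~\ref{meld-graph}--\ref{meld-construction}, that a product $ab$ with $ab\neq a$ is pinned down (in the freest meld) by $a$, $b$, and the closed neighbourhood of $a$. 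The termination analysis and the polynomial-time bookkeeping are comparatively routine once this confluence statement is available.
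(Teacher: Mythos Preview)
Your overall strategy---congruence closure on a finite set of subterms, an adjacency graph tracking two-element subalgebras, and completeness via building a concrete meld from Proposition~\ref{meld-construction} after adjoining a universal vertex $t$---is exactly the paper's approach. But you have overcomplicated the closure step in a way that creates precisely the gaps you flag at the end.

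The paper's first simplification is that it \emph{never} introduces new products: the element set is fixed once and for all as the subterm set $\mathcal{W}$ of the input, and every closure rule is conditioned on the relevant product already lying in $\mathcal{W}$. Your rule ``when a product $ab$ is demanded, add it'' is unnecessary, and your unjustified claim that only $O(|r|+|s|)$ such additions occur is then not needed. With $|E|=|\mathcal{W}|$ fixed from the start, the polynomial running time is immediate.

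The paper's second simplification is that the eight rewrite rules you list are redundant. Beyond ordinary congruence closure, the paper uses only: (a) for each recorded product $ab\in\mathcal{W}$, make $ab$ adjacent to $a$, to $b$, and to every neighbour of $a$; and (b) whenever $a$ is adjacent to $b$ and the product $ab$ happens to lie in $\mathcal{W}$, set $ab\sim a$. Every one of your identities is a consequence of (a) and (b)---for example $(xy)(zx)\sim xy$ because $zx$ is adjacent to $x$ by (a), hence $xy$ is adjacent to $zx$ by (a) again, hence $(xy)(zx)\sim xy$ by (b). Working purely with adjacency eliminates any need for a confluence or critical-pair analysis: soundness of (a) and (b) is Propositions~\ref{meld-symmetry} and~\ref{meld-graph}, and completeness follows directly from Proposition~\ref{meld-construction}, since the meld built on $\mathcal{W}/{\sim}\cup\{t\}$ (with undefined products sent to $t$) satisfies each $u_i=v_i$ by construction and separates $r$ from $s$ whenever $r\not\sim s$. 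So the ``delicate step'' you identify simply disappears once you drop the rewriting layer and restrict to the fixed subterm set.
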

\begin{proof} Let $\mathcal{W}$ be the set of all subterms of any of the $u_i,v_i$ or of $r,s$. The main idea is to keep track of all of the equations which can be easily proved between various elements of $\mathcal{W}$, and to use Proposition \ref{meld-construction} at the end to see that no other equations between elements of $\mathcal{W}$ can be proved. To this end, we construct the minimal equivalence relation $\sim$ on $\mathcal{W}$ and graph $\cG/\!\sim$ on $\mathcal{W}/\!\sim$ (which we think of as a graph $\cG$ on $\mathcal{W}$ which is compatible with the equivalence relation $\sim$) which have the following closure properties:
\begin{itemize}
\item the relation $\sim$ is an equivalence relation on $\mathcal{W}$,
\item for each given relation $u_i = v_i$, we have $u_i \sim v_i$,
\item the graph $\cG$ on $\mathcal{W}$ is compatible with $\sim$, in the sense that if $(a,b)$ is an edge of $\cG$ and $a \sim a'$, $b \sim b'$, then $(a',b')$ is also an edge of $\cG$,
\item for every vertex $w \in \mathcal{W}$, the loop $(w,w)$ is an edge of $\cG$,
\item the graph $\cG$ is undirected, that is, for every edge $(a,b)$ of $\cG$, the reversed edge $(b,a)$ is also an edge of $\cG$,
\item for any $a,b \in \mathcal{W}$ such that $ab$ is also in $\mathcal{W}$, the edges $(ab, a)$ and $(ab, b)$ are in $\cG$, and additionally for any $c$ such that $(a,c)$ is an edge of $\cG$, $(ab, c)$ is also an edge of $\cG$,
\item for any $a,b \in \mathcal{W}$ such that $ab$ is also in $\mathcal{W}$, if $(a,b)$ is an edge of $\cG$, then we have $a \sim ab$,
\item for any $a,b,a',b' \in \mathcal{W}$ such that $ab$ and $a'b'$ are also in $\mathcal{W}$, if $a \sim a'$ and $b \sim b'$ then $ab \sim a'b'$.
\end{itemize}
By Propositions \ref{meld-symmetry} and \ref{meld-graph}, if we have $r \sim s$ then the equation $r = s$ follows from the relations $u_i = v_i$.

To finish the proof, suppose that $r \not\sim s$. Then we will construct a meld $\bA$ in which the relations $u_i = v_i$ hold, but in which $r \ne s$. The underlying set of $\bA$ will be $\mathcal{W}/\!\sim \cup \{t\}$, where $t$ is a new symbol. We extend the graph $\cG/\!\sim$ to a graph $\cG_\bA$ on $\mathcal{W}/\!\sim \cup \{t\}$ by connecting the vertex $t$ to all other vertices.

We define the multiplication on $\bA$ as follows. For $a,b \in \mathcal{W}$ such that $ab \in \mathcal{W}$, we set $(a/\!\sim)\cdot (b/\!\sim) = ab/\!\sim$ (to see this is well-defined, we use the last bullet point above). For every pair of elements $a,b \in \bA$ connected by an edge of $\cG_\bA$, we set $a \cdot b = a$ and $b \cdot a = b$. Finally, for any pair $a,b \in \bA$ which are not connected by an edge of $\cG_\bA$ and which have not had a value assigned to $a\cdot b$ already, we set $a\cdot b = t$. Then by the closure propertes of $\sim$ and $\cG$, Proposition \ref{meld-construction} shows that $\bA$ is a meld, and by construction each relation $u_i/\!\sim\ = v_i/\!\sim$ holds in $\bA$ but $r/\!\sim\ \ne s/\!\sim$.
\end{proof}

\subsection{Dispersive algebras}

Now we come to the most difficult case, which I call the \emph{dispersive} case. We repeat the definition of the variety $\cD$ from \cite{minimal-clones-waldhauser} below.

\begin{defn} We define the variety $\cD$ of idempotent groupoids with basic operation $f$ satisfying
\begin{align}\label{D1}\tag{$\cD 1$}
f(x,f(y,x)) \approx f(f(x,y),x) \approx f(f(x,y),y) \approx f(f(x,y),f(y,x)) \approx f(x,y)
\end{align}
and
\begin{align}\label{D2}\tag{$\cD 2$}
\forall n \ge 0 \;\;\; f(x,f(\cdots f(f(x,y_1),y_2)\cdots,y_n)) \approx x.
\end{align}
\end{defn}

The free algebra $\cF_\cD(x,y)$ is a four element algebra, with the multiplication table, labeled digraph, and undirected graph of two-element subalgebras displayed below.
\begin{center}
\begin{tabular}{ccccc}
\begin{tabular}{c | c c c c} $\cF_\cD(x,y)$ & $x$ & $y$ & $xy$ & $yx$\\ \hline $x$ & $x$ & $xy$ & $x$ & $xy$ \\ $y$ & $yx$ & $y$ & $yx$ & $y$ \\ $xy$ & $xy$ & $xy$ & $xy$ & $xy$\\ $yx$ & $yx$ & $yx$ & $yx$ & $yx$\end{tabular}
& \;\;\; &
\begin{tikzpicture}[scale=1.5,baseline=0.75cm]
  \node (x) at (0,0) {$x$};
  \node (y) at (1,0) {$y$};
  \node (xy) at (0,1) {$xy$};
  \node (yx) at (1,1) {$yx$};
  \draw [->] (x) edge ["{$y,yx$}"] (xy);
  \draw [->] (y) edge ["{$x,xy$}"'] (yx);
\end{tikzpicture}
& \;\;\; &
\begin{tikzpicture}[scale=1.5,baseline=0.75cm]
  \node (x) at (0,0) {$x$};
  \node (y) at (1,0) {$y$};
  \node (xy) at (0,1) {$xy$};
  \node (yx) at (1,1) {$yx$};
  \draw (x) -- (xy) -- (yx) -- (y);
\end{tikzpicture}
\end{tabular}
\end{center}

The equation \eqref{D2} can be restated in terms of right orbits:
\[
b \in O(a) \implies a\cdot b = a.
\]
More succinctly, this can be written as
\[
a \cdot O(a) = \{a\},
\]
and this can be expressed in terms of the labeled digraph $D_\bA$ as requiring that any edge which leaves $a$ must not be labeled by any element of $O(a)$.

\begin{prop}\label{clone-d2} If $f$ satisfies \eqref{D2}, then any $g \in \Clo_2^{\pi_1}(f)$ also satisfies \eqref{D2}.
\end{prop}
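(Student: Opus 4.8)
The plan is to argue entirely through the right-orbit reformulation of \eqref{D2} recorded just before the statement: \eqref{D2} holds for $f$ exactly when $f(a,b) = a$ for every $b$ in the right orbit $O(a)$ of $a$ under $f$ (Definition \ref{defn-right-orbit}). By the same token, \eqref{D2} for $g$ asserts that $g(a,b) = a$ whenever $b$ lies in the right orbit of $a$ \emph{with respect to $g$}. So it suffices to prove (i) this $g$-right-orbit of $a$ is contained in $O(a)$, and (ii) $g(a,b) = a$ for every $b \in O(a)$.

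The common input is the characterization (recorded after Proposition \ref{prop-pi1}) of $\Clo_2^{\pi_1}(f)$ as the binary operations expressible by an $f$-term whose leftmost leaf is $x$; equivalently, $g$ has a right-comb normal form $g(x,y) \approx f(f(\cdots f(x,s_1),s_2)\cdots,s_k)$ with each $s_i = s_i(x,y)$ an $f$-term in $x,y$. From this, fact (i) is quick: first, $O(a)$ is a subalgebra of $(A,f)$, since by Definition \ref{defn-right-orbit} it contains $a$ and is closed under right multiplication by all of $A$, hence under $f$; and second, for arbitrary $b,c$ the element $g(b,c)$ is obtained from $b$ by a sequence of right multiplications under $f$, so $g(b,c) \in O(b)$. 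As $O(b) \subseteq O(a)$ whenever $b \in O(a)$, the set $O(a)$ contains $a$ and is closed under every map $b \mapsto g(b,c)$, and therefore contains the full $g$-right-orbit of $a$.

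For fact (ii), fix $b \in O(a)$ and use the right-comb form. Since $O(a)$ is an $f$-subalgebra containing $a$ and $b$, each subterm value $c_i \coloneqq s_i(a,b)$ lies in $O(a)$. Now compute $g(a,b)$ from the inside out: put $e_0 = a$ and $e_i = f(e_{i-1},c_i)$, so that $g(a,b) = e_k$. Induction finishes the job: $e_0 = a$, and if $e_{i-1} = a$ then $e_i = f(a,c_i) = a$ by the right-orbit form of \eqref{D2} for $f$, since $c_i \in O(a)$.

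Putting (i) and (ii) together immediately gives \eqref{D2} for $g$: in its $n$-th instance the inner element $g(\cdots g(g(a,y_1),y_2)\cdots,y_n)$ is by construction in the $g$-right-orbit of $a$, hence in $O(a)$ by (i), hence killed by $g(a,-)$ by (ii); the case $n = 0$ is just idempotence of $g$, the same computation with $b = a$. I expect no genuine obstacle here: the only things to watch are keeping the $f$-right-orbit and the $g$-right-orbit notions cleanly separated, and making sure the right-comb normal form for members of $\Clo_2^{\pi_1}(f)$ is genuinely available, which is exactly the description noted after Proposition \ref{prop-pi1}.
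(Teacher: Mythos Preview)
Your proof is correct and follows essentially the same approach as the paper: both establish that $g(a,b)=a$ for all $b$ in the $f$-right-orbit $O(a)$, and both do so by inducting along the leftmost spine of a term for $g$ (you via the explicit right-comb form, the paper via the recursive definition of $\Clo_2^{\pi_1}(f)$), using at each step that $h(a,b)\in O(a)$ whenever $b\in O(a)$ since $O(a)$ is an $f$-subalgebra. The containment $O_g(a)\subseteq O_f(a)$ is likewise handled the same way in both, with the paper citing Proposition~\ref{prop-right-pi1} where you argue it directly.
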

\begin{proof} Let $O_f(a)$ and $O_g(a)$ be the right orbits of $a$ with respect to $f$ and $g$, respectively. By Proposition \ref{prop-right-pi1}, we have $O_g(a) \subseteq O_f(a)$ for any $g \in \Clo_2^{\pi_1}(f)$. We will prove by induction on the definition of $g$ in terms of $f$ that $g(a,b) = a$ for all $b \in O_f(a)$. The base case $g = \pi_1$ is trivial. For the inductive step, suppose that we have already verified this for $g$ and that $h$ is any binary operation in $\Clo_2(f)$. Then for $b \in O_f(a)$ we have
\[
h(a,b) \in O_f(a) \cup O_f(b) \subseteq O_f(a),
\]
so
\[
f(g(a,b), h(a,b)) \in f(a, O_f(a)) = \{a\}
\]
by \eqref{D2}.
\end{proof}

Checking whether a two-generated algebra which satisfies \eqref{D2} has a surjective homomorphism to $\cF_\cD(x,y)$ is surprisingly natural in terms of the associated directed graph $D_\bA$.

\begin{prop}\label{dispersive-hom-check} If $\bA = (A,f)$ and $f$ is a binary operation satisfying \eqref{D2}, and if $\bA$ is generated by two elements $a,b$, then any surjective homomorphism $\alpha : \bA \twoheadrightarrow \cF_\cD(x,y)$ must send $a$ and $b$ to $x$ and $y$, in some order. If such an $\alpha$ exists, then
\begin{itemize}
\item the right orbits $O(a)$ and $O(b)$ are disjoint,

\item for any $c \in O(a)$ and $d \in O(b)$ we have $f(a,d) \ne a$ and $f(b,c) \ne b$, and

\item there is no directed edge from $O(a) \setminus \{a\}$ to $a$ or from $O(b) \setminus \{b\}$ to $b$ in the directed graph $D_\bA$.
\end{itemize}
Conversely, if $\bA$ satisfies the three bullet points above, then there is a unique surjective homomorphism from $\bA$ to $\cF_\cD(x,y)$ with $\alpha(a) = x$ and $\alpha(b) = y$. This $\alpha$ satisfies $\alpha(c) = xy$ iff $c \in O(a) \setminus \{a\}$ and $\alpha(d) = yx$ iff $d \in O(b) \setminus \{b\}$.
\end{prop}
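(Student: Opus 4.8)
The plan is to funnel everything through a single claim about arbitrary homomorphisms into $\cF_\cD(x,y)$, and then read off the three items and the converse using \eqref{D2} and the identity $A=O(a)\cup O(b)$ (Proposition~\ref{prop-right-gen}). First I would settle where the generators go. Since $\bA=\Sg_\bA\{a,b\}$ and $\alpha$ is onto, $\{\alpha(a),\alpha(b)\}$ generates $\cF_\cD(x,y)$; but from the multiplication table of $\cF_\cD(x,y)$ one sees that $\{x,xy,yx\}$ and $\{y,xy,yx\}$ are proper subalgebras, so every generating set must contain both $x$ and $y$. As $|\{\alpha(a),\alpha(b)\}|\le 2$, this forces $\{\alpha(a),\alpha(b)\}=\{x,y\}$, and I would fix $\alpha(a)=x$, $\alpha(b)=y$ for the rest.

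The engine of the proposition is the following claim, which I would prove for \emph{every} homomorphism $\alpha\colon\bA\to\cF_\cD(x,y)$ with $\alpha(a)=x$ and $\alpha(b)=y$: the only element of $O(a)$ that $\alpha$ maps to $x$ is $a$ itself (and symmetrically, $\alpha$ maps only $b$ to $y$ among elements of $O(b)$). For this I would use that $\alpha$ takes right orbits into right orbits, that $O(x)=\{x,xy\}$ and $O(y)=\{y,yx\}$ in $\cF_\cD(x,y)$, and that $xy\cdot z=xy$ and $yx\cdot z=yx$ hold there. Arguing by minimal counterexample: if some $c\in O(a)\setminus\{a\}$ has $\alpha(c)=x$, pick one at minimal distance $k\ge 1$ from $a$ in $D_\bA$, write $c=f(c',c_k)$ with $c'\in O(a)$ reachable in $k-1$ steps; since $\alpha(c')\in\{x,xy\}$ and $\alpha(c)=\alpha(c')\alpha(c_k)=x$ we get $\alpha(c')=x$ (else $\alpha(c)=xy$). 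If $c'\ne a$ this is a smaller counterexample, contradicting minimality; if $c'=a$ then $c=f(a,c_k)$, and either $c_k\in O(a)$, so $c=a$ by \eqref{D2}, or $c_k\in O(b)$, so $\alpha(c_k)\in\{y,yx\}$ and $\alpha(c)=x\cdot\alpha(c_k)=xy\ne x$ --- each case a contradiction. I expect this inductive step, and making it mesh correctly with \eqref{D2}, to be the one genuinely delicate point; everything else is bookkeeping.

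Granting the claim, the forward direction is quick. The images $\alpha(O(a))\subseteq\{x,xy\}$ and $\alpha(O(b))\subseteq\{y,yx\}$ are disjoint, so $O(a)\cap O(b)=\emptyset$. For $d\in O(b)$ we get $\alpha(f(a,d))=x\cdot\alpha(d)=xy\ne x$, hence $f(a,d)\ne a$, and symmetrically $f(b,c)\ne b$ for $c\in O(a)$. Finally, a directed edge from some $c\in O(a)\setminus\{a\}$ to $a$, say $f(c,e)=a$, would give $\alpha(c)\alpha(e)=x$; since $\alpha(c)\in\{x,xy\}$ and $xy\cdot z=xy$, this forces $\alpha(c)=x$, contradicting the claim, and the same rules out an edge from $O(b)\setminus\{b\}$ to $b$.

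For the converse I would assume the three items and use them to partition $A$ into $\{a\}$, $O(a)\setminus\{a\}$, $\{b\}$, $O(b)\setminus\{b\}$ (using $A=O(a)\cup O(b)$ and $O(a)\cap O(b)=\emptyset$), then \emph{define} $\alpha$ to send these four sets to $x$, $xy$, $y$, $yx$. Checking $\alpha$ is a homomorphism is a case analysis on which orbit the two arguments lie in: when the left argument is $a$ one uses \eqref{D2} for a second argument in $O(a)$ and the second item for a second argument in $O(b)$ (so that $f(a,d)\ne a$); when the left argument lies in $O(a)\setminus\{a\}$ the third item together with \eqref{D2} keeps the product inside $O(a)\setminus\{a\}$, matching $\alpha(u)=xy$; and the cases with left argument in $O(b)$ are symmetric. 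Surjectivity is clear since $f(a,b)\in O(a)\setminus\{a\}$ and $f(b,a)\in O(b)\setminus\{b\}$ by the second item. For uniqueness and the stated description, apply the key claim to any surjective $\beta$ with $\beta(a)=x$, $\beta(b)=y$: it gives $\beta^{-1}(x)\cap O(a)=\{a\}$, while $\beta(O(b))\subseteq\{y,yx\}$ shows $\beta^{-1}(x)\cap O(b)=\emptyset$, so $\beta^{-1}(x)=\{a\}$ and hence $\beta(c)=xy$ for all $c\in O(a)\setminus\{a\}$; symmetrically $\beta(d)=yx$ on $O(b)\setminus\{b\}$, so $\beta=\alpha$. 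The most error-prone part will be the homomorphism verification in this last step across all the orbit-membership cases.
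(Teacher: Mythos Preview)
Your proposal is correct and follows essentially the same approach as the paper: the paper's forward induction showing $\alpha(c)=xy$ for all $c\in O(a)\setminus\{a\}$ is equivalent to your minimal-counterexample proof that only $a$ maps to $x$, and the derivation of the three bullet points and the converse proceeds by the same case analysis on orbits using \eqref{D2} and the absorbing behavior of $xy,yx$ in $\cF_\cD(x,y)$. The only difference is organizational---you isolate the preimage claim as a standalone lemma before deriving the bullet points, whereas the paper interleaves these---but the underlying argument is the same.
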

\begin{proof} First suppose that such an $\alpha$ exists. Since $x$ is not generated by $\cF_\cD(x,y)\setminus\{x\}$ and similarly for $y$, $\alpha$ must send one of the generators $a,b$ to $x$ and one to $y$. Suppose without loss of generality that $\alpha(a) = x$ and $\alpha(b) = y$. By Proposition \ref{prop-right-pi1}, for any $c \in O(a)$ there is some $g \in \Clo_2^{\pi_1}(f)$ such that $g(a,b) = c$, so
\[
\alpha(c) = \alpha(g(a,b)) = g(x,y) \in O(x),
\]
and similarly $d \in O(b)$ implies that $\alpha(d) \in O(y)$. Since $O(x)$ and $O(y)$ are disjoint in $\cF_\cD(x,y)$, we see that $O(a)$ and $O(b)$ must be disjoint in $\bA$.

For the second bullet point, just note that for any $d \in O(b)$ we have
\[
\alpha(f(a,d)) \in f(x,O(y)) = \{xy\},
\]
so $f(a,d) \ne a$, and similarly for any $c \in O(a)$ we have $f(b,c) \ne b$.

Next we check that for any $c \in O(a) \setminus \{a\}$, we have $\alpha(c) = xy$. We will prove this by induction on the length of the shortest directed path from $a$ to $c$ in $D_\bA$. For the base case, if $c = f(a,d)$ for some $d \in \bA$, then since $c \ne a$ we must have $d \not\in O(a)$ by \eqref{D2}, so $d \in O(b)$ and we have
\[
\alpha(c) = \alpha(f(a,d)) \in f(x, O(y)) = \{xy\}.
\]
For the inductive step, if we already know that $\alpha(c) = xy$ for some $c$, then for any $d \in \bA$ we have
\[
\alpha(f(c,d)) \in f(xy, \cF_\cD(x,y)) = \{xy\}.
\]
The proof of the inductive step also shows that there is no directed edge from $O(a) \setminus \{a\}$ to $a$, since $\alpha(a) \ne xy$. Symmetric arguments show that for any $d \in O(b) \setminus \{b\}$ we have $\alpha(d) = yx$, and that there is no directed edge from $O(b) \setminus \{b\}$ to $b$.

Now suppose that $\bA$ satisfies all three bullet points, and define $\alpha$ as in the theorem statement (that $\alpha$ is well-defined follows from the first bullet point in the theorem statement). We just need to verify that $\alpha$ is a homomorphism - the uniqueness will follow from the fact that any homomorphism is determined by where it sends the generators $a,b$. We just need to check that
\begin{itemize}
\item for $\alpha(c) \in O(x)$ we have $\alpha(f(a,c)) \in f(x,O(x)) = \{x\}$,

\item for $\alpha(d) \in O(y)$ we have $\alpha(f(a,d)) \in f(x,O(y)) = \{xy\}$, and

\item for $\alpha(c) = xy$ and any $d$ we have $\alpha(f(c,d)) \in f(xy, \cF_\cD(x,y)) = \{xy\}$,
\end{itemize}
as well as the corresponding statements with $x$ and $y$ reversed. The first of these conditions follows directly from \eqref{D2}, and by the definition of $\alpha$ the other two conditions correspond exactly to the last two bullet points from the theorem statement.
\end{proof}

\begin{prop}\label{dispersive-absorption} If $t$ is any term which satisfies
\[
t(x, y, ..., y) = x
\]
when interpreted in $\cF_\cD(x,y)$, then the identity $t(x_1, ..., x_n) \approx x_1$ follows from \eqref{D2}. In particular, every absorption identity which holds in $\cF_\cD(x,y)$ is a consequence of \eqref{D2}.
\end{prop}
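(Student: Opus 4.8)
The plan is to induct on the structure of the term $t$, using the following observation as the key lemma: \emph{if $w(x_1,\dots,x_n)$ is any term whose leftmost leaf is $x_1$, then the identity $f(x_1,w)\approx x_1$ is a consequence of \eqref{D2}.} Indeed, unfolding $w$ along its left spine (from the root down to the leaf $x_1$) expresses it as $w = f(\cdots f(f(x_1,w_k),w_{k-1})\cdots,w_1)$ for suitable subterms $w_1,\dots,w_k$, so $f(x_1,w)$ is literally an instance of \eqref{D2} once we substitute the $w_i$ for the bound variables $y_i$; note also that the $n=0$ case of \eqref{D2} is exactly idempotence, so idempotence is freely available. (One could alternatively phrase this via Proposition \ref{clone-d2}, since terms with leftmost leaf $x_1$ are exactly those of $\Clo_2^{\pi_1}$-type.)

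For the induction itself, the base case $t=x_i$ is immediate: $t(x,y,\dots,y)=x$ in $\cF_\cD(x,y)$ forces $i=1$ (otherwise $t(x,y,\dots,y)=y\ne x$), and then $t\approx x_1$ trivially. For the inductive step, write $t=f(u,v)$. In $\cF_\cD(x,y)$ we then have $x = t(x,y,\dots,y) = u(x,y,\dots,y)\cdot v(x,y,\dots,y)$, and scanning the multiplication table of $\cF_\cD(x,y)$ shows that the only products equal to $x$ are $x\cdot x$ and $x\cdot xy$; hence $u(x,y,\dots,y)=x$ and $v(x,y,\dots,y)\in\{x,xy\}$. The inductive hypothesis applied to $u$ gives $u\approx x_1$ as a consequence of \eqref{D2}. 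For $v$, compose with the surjection from $\cF_\cD(x,y)$ onto the two-element algebra on which $f$ acts as first projection, which collapses $\{x,xy\}$ and $\{y,yx\}$ to the two elements; since $v(x,y,\dots,y)\in\{x,xy\}$, the term $v$ evaluates in that algebra (with the first variable set to one element and the rest to the other) to the first element, and because every term in a left-projection algebra evaluates to the value of its leftmost leaf, the leftmost leaf of $v$ must be $x_1$. The key lemma now gives $f(x_1,v)\approx x_1$ as a consequence of \eqref{D2}, and therefore $t = f(u,v) \approx f(x_1,v) \approx x_1$, completing the induction.

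Finally, the stated consequence for absorption identities follows at once: if $t(x_1,\dots,x_n)\approx x_i$ holds in $\cF_\cD(x,y)$, then substituting $x$ for $x_i$ and $y$ for every other variable shows that $t$ collapses to $x$ there, so by the main statement (with the variables permuted so that $x_i$ occupies the first position) the identity $t\approx x_i$ follows from \eqref{D2}. I do not expect any real obstacle; the only points needing a little care are checking that the left-spine decomposition of $w$ exactly matches the shape of the terms in \eqref{D2}, and inspecting the small multiplication table of $\cF_\cD(x,y)$ to see that $u$ and $v$ are pinned down as claimed.
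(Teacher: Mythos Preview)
Your proposal is correct and follows essentially the same route as the paper: both induct on the term, split $t=f(u,v)$, read off from the $\cF_\cD(x,y)$ table that $u$ collapses to $x$ and $v$ collapses to an element of $\{x,xy\}$, deduce that $v$ has leftmost leaf $x_1$, and then invoke \eqref{D2} via the left-spine decomposition of $v$. The only cosmetic difference is that the paper phrases ``$v$ has leftmost leaf $x_1$'' as ``$v(x,y,\dots,y)\notin O(y)$'' rather than via the projection quotient you use, but the content is identical.
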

\begin{proof} We will prove this by induction on the definition of $t$. We see that either $t$ is the first projection, or we can write $t = s \cdot u$ for terms $s$ and $u$ which satisfy
\[
s(x, y, ..., y) = x, \;\;\; u(x, y, ..., y) \in \{x, xy\}
\]
in $\cF_\cD(x,y)$. Since $x,xy$ are not contained in the right orbit $O(y)$, any term $u$ such that
\[
u(x,y,...,y) \in \{x,xy\}
\]
must have the form
\[
u(x_1, ..., x_n) = (...((x_1\cdot v_1)\cdot v_2)... \cdot v_k)
\]
for some terms $v_1, ..., v_k$. Inductively, we see that the identity $s(x_1, ..., x_n) \approx x_1$ follows from \eqref{D2}, and since the identity
\[
x_1 \cdot u(x_1, ..., x_n) \approx x_1
\]
is an instance of \eqref{D2}, we see that $t(x_1, ..., x_n) \approx x_1$ follows from \eqref{D2} as well.
\end{proof}

\begin{prop}\label{dispersive-hom} If $\bA = (A,f)$ and $f$ is a binary operation satisfying \eqref{D2}, then for $a,b \in \bA$, the following are equivalent:
\begin{itemize}
\item there exists a surjective homomorphism $\alpha : \Sg_{\bA^2}\{(a,b),(b,a)\} \twoheadrightarrow \cF_\cD(x,y)$,

\item for every binary term $g$ of $\bA$ such that $g(a,b) = a$ and $g(b,a) = b$, the identity $g(x,y) \approx x$ follows from \eqref{D2}.
\end{itemize}
\end{prop}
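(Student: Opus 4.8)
The plan is to reduce everything to the two-generated algebra $\bS := \Sg_{\bA^2}\{(a,b),(b,a)\}$ and feed it into the criterion of Proposition~\ref{dispersive-hom-check}. Since $\bA$ satisfies \eqref{D2}, so do $\bA^2$ and $\bS$, and $\bS$ is generated by the two elements $(a,b)$ and $(b,a)$, so --- assuming $a\ne b$, the case $a=b$ being trivial --- Proposition~\ref{dispersive-hom-check} applies to $\bS$ with these two generators. I would freely identify an element of $\bS$ with a pair $(g(a,b),g(b,a))$ for $g$ a binary term of $\bA$, and use Proposition~\ref{prop-right-pi1} to identify the right orbit $O\big((a,b)\big)$ in $\bS$ with $\{(g(a,b),g(b,a)) : g\in\Clo_2^{\pi_1}(f)\}$, and symmetrically for $O\big((b,a)\big)$. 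Two facts about $\cF_\cD(x,y)$, both read off its multiplication table above, get used repeatedly: $O(x)=\{x,xy\}$ and $O(y)=\{y,yx\}$, so that $g(x,y)\in\{x,xy\}$ and $g(y,x)\in\{y,yx\}$ for every $g\in\Clo_2^{\pi_1}(f)$; and $x\cdot y=x\cdot yx=xy$ while $xy\cdot w=xy$ for every $w$.

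For the forward implication, suppose $\alpha:\bS\twoheadrightarrow\cF_\cD(x,y)$ is a surjection. By Proposition~\ref{dispersive-hom-check} it sends $(a,b)$ and $(b,a)$ to $x$ and $y$ in some order. If $g$ is a binary term of $\bA$ with $g(a,b)=a$ and $g(b,a)=b$, then the element $g\big((a,b),(b,a)\big)$ of $\bS$ is the generator $(a,b)$ itself, so applying $\alpha$ gives $g\big(\alpha(a,b),\alpha(b,a)\big)=\alpha(a,b)$ when $g$ is read in $\cF_\cD(x,y)$. Since $\cF_\cD(x,y)$ carries the automorphism interchanging $x$ and $y$, this forces $g(x,y)=x$ in $\cF_\cD(x,y)$, and Proposition~\ref{dispersive-absorption} then upgrades this to the assertion that $g(x,y)\approx x$ follows from \eqref{D2}.

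For the converse, assume the second condition and verify the three bullet points of Proposition~\ref{dispersive-hom-check} for $\bS$; that proposition then supplies the desired surjection. Each bullet is proved by contradiction, and a failure --- once the offending orbit element is written as $(g(a,b),g(b,a))$ with $g\in\Clo_2^{\pi_1}(f)$ via Proposition~\ref{prop-right-pi1} --- unwinds coordinatewise. For the first two bullets the failure produces (either directly, or via \eqref{D2} in $\bS$, which says $(a,b)$ and $(b,a)$ fix their own right orbits) a pair of equalities $a\cdot g(b,a)=a$ and $b\cdot g(a,b)=b$, so that the binary term $m(x,y):=x\cdot g(y,x)$ has $m(a,b)=a$ and $m(b,a)=b$; the second condition then forces $m(x,y)=x$ in $\cF_\cD(x,y)$, whereas there $m(x,y)=x\cdot g(y,x)=xy$ since $g(y,x)\in\{y,yx\}$ --- a contradiction. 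For the third bullet, an edge from $(g(a,b),g(b,a))\in O\big((a,b)\big)$ to $(a,b)$ unwinds to $g(a,b)\cdot k(a,b)=a$ and $g(b,a)\cdot k(b,a)=b$ for some binary term $k$, so $m(x,y):=g(x,y)\cdot k(x,y)$ has $m(a,b)=a$, $m(b,a)=b$ and hence equals $x$ in $\cF_\cD(x,y)$; a one-line computation there excludes $g(x,y)=xy$ (it would give $m(x,y)=xy\ne x$), so $g(x,y)=x$ in $\cF_\cD(x,y)$, and then Proposition~\ref{dispersive-absorption} together with the hypothesis that $\bA$ satisfies \eqref{D2} yields $g(a,b)=a$ and $g(b,a)=b$, i.e.\ the orbit element is $(a,b)$ itself, contradicting that it lay in $O\big((a,b)\big)\setminus\{(a,b)\}$. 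The symmetric halves of the three bullets, with $(a,b)$ and $(b,a)$ exchanged, go through identically.

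The bulk of the work is routine coordinate bookkeeping: matching each failure to the correct auxiliary term $m$ and checking its one-line value in the four-element algebra $\cF_\cD(x,y)$. The only genuinely delicate step is the third bullet, where the computation inside $\cF_\cD(x,y)$ is not by itself a contradiction --- it only excludes $g(x,y)=xy$ and leaves $g(x,y)=x$ open --- so the finish must pass back from the free algebra to $\bA$ via Proposition~\ref{dispersive-absorption}, using crucially that $\bA$ itself satisfies \eqref{D2} to convert the free-algebra identity into the equalities $g(a,b)=a$, $g(b,a)=b$ that contradict $(g(a,b),g(b,a))\ne(a,b)$.
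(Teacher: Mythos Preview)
Your proof is correct and follows essentially the same route as the paper's: both directions go through Proposition~\ref{dispersive-hom-check} applied to $\bS$, and in the converse direction the three bullet points are checked via exactly the auxiliary terms you describe, with Proposition~\ref{dispersive-absorption} (together with \eqref{D2} in $\bA$) finishing the third bullet just as you note. The only cosmetic differences are that the paper fixes $\alpha(a,b)=x$ without loss of generality instead of invoking the swap automorphism, and treats bullets one and two separately rather than observing they yield the same auxiliary term $m(x,y)=f(x,g(y,x))$.
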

\begin{proof} To make things easier to read, we write $a_2 = (a,b)$ and $b_2 = (b,a)$. Suppose first that such an $\alpha$ exists. By Proposition \ref{dispersive-hom-check}, $\alpha$ must send $a_2$ and $b_2$ to $x$ and $y$ in some order, say $\alpha(a_2) = x, \alpha(b_2) = y$. Then if $g(a,b) = a$ and $g(b,a) = b$, that is if $g(a_2,b_2) = a_2$, then we must have
\[
g(x,y) = \alpha(g(a_2,b_2)) = \alpha(a_2) = x
\]
in $\cF_\cD(x,y)$. Since all absorption identities which hold in $\cF_\cD(x,y)$ are consequences of \eqref{D2}, we see that $g(x,y) \approx x$ follows from \eqref{D2}.

Now suppose that the second bullet point holds. We just need to check that the algebra $\bA_2 = \Sg_{\bA^2}\{a_2,b_2\}$ satisfies the three conditions of Proposition \ref{dispersive-hom-check}. We will frequently use the fact that any identity which does not hold in $\cF_\cD(x,y)$ also can't be a consequence of \eqref{D2}.

To see that the right orbits of $a_2$ and $b_2$ in $\bA_2$ are disjoint, note that by Proposition \ref{prop-right-pi1} $c \in O(a_2)$ if and only if there is some $g \in \Clo_2^{\pi_1}(f)$ such that $g(a_2,b_2) = c$, and since the identity $f(y,g(x,y)) \approx y$ does not hold in $\cF_\cD(x,y)$ for $g \in \Clo_2^{\pi_1}(f)$, we have $f(b_2, g(a_2,b_2)) \ne b_2$, so by \eqref{D2} we have $c = g(a_2,b_2) \not\in O(b_2)$.

Next we need to check that $f(a_2,d) \ne a_2$ for any $d \in O(b_2)$. If $d \in O(b_2)$, then there is some $g \in \Clo_2^{\pi_1}(f)$ such that $d = g(b_2,a_2)$, and since the identity $f(x,g(y,x)) \approx x$ does not hold in $\cF_\cD(x,y)$ for $g \in \Clo_2^{\pi_1}(f)$ we see that $f(a_2,g(b_2,a_2)) \ne a_2$.

Finally, we check that for any $c \in O(a_2) \setminus \{a_2\}$ and any $d \in \bA_2$ we have $f(c,d) \ne a_2$. Suppose otherwise, for the sake of contradiction - then there would be $c \in O(a_2)\setminus\{a_2\}$ and $d \in \Sg\{a_2,b_2\}$ such that $a_2 = f(c,d)$. Then by Proposition \ref{prop-right-pi1} there are $g \in \Clo_2^{\pi_1}(f)$ and $h \in \Clo_2(f)$ such that $g(a_2,b_2) = c$ and $h(a_2,b_2) = d$, so from
\[
a_2 = f(g(a_2,b_2), h(a_2,b_2)),
\]
we see that $f(g(x,y), h(x,y)) \approx x$ follows from \eqref{D2}. This can only occur if $g(x,y) = x$ in $\cF_\cD(x,y)$, in which case we have $c = g(a_2,b_2) = a_2$, which contradicts the assumption $c \in O(a_2)\setminus\{a_2\}$.
\end{proof}

Proposition \ref{dispersive-hom} is a big part of the reason that dispersive algebras are defined the way they are in Definition \ref{defn-dispersive}: the existence of a surjective homomorphism $\Sg_{\bA^2}\{(a,b),(b,a)\} \twoheadrightarrow \cF_\cD(x,y)$ ensures that there is no nontrivial binary reduct of $\bA$ such that $\{a,b\}$ becomes a projection subalgebra of the reduct.

\begin{prop}\label{dispersive-reduct} If $f$ is dispersive, then any binary operation $g \in \Clo_2^{\pi_1}(f)$ is also dispersive.
\end{prop}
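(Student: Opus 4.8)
The plan is to verify directly that $(A,g)$ meets the three requirements of Definition~\ref{defn-dispersive}. That $(A,g)$ is an idempotent groupoid is immediate, since $g$ is a binary term operation of the idempotent groupoid $(A,f)$, and that $(A,g)$ satisfies \eqref{D2} is precisely Proposition~\ref{clone-d2}. Hence all the content lies in the dichotomy: for every pair $a,b \in A$, either $\{a,b\}$ is a two-element subalgebra of $(A,g)$, or there is a surjective homomorphism $\Sg_{(A,g)^2}\{(a,b),(b,a)\} \twoheadrightarrow \cF_\cD(x,y)$. I would fix such a pair and split according to whether $\{a,b\}$ is a subalgebra of $(A,f)$.

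If $\{a,b\}$ is a two-element subalgebra of $(A,f)$, there is nothing more to prove: a subalgebra of $(A,f)$ is closed under every term operation of $(A,f)$, in particular under $g$, so $\{a,b\}$ is again a two-element subalgebra of $(A,g)$ and the first alternative holds. Note that this step uses nothing about how $g$ acts on $\{a,b\}$, nor the identity \eqref{D2}.

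The case in which $\{a,b\}$ is not a subalgebra of $(A,f)$ is handled by Proposition~\ref{dispersive-hom}, and the crucial observation is that the second, term-theoretic condition in that proposition becomes \emph{easier} to satisfy as the clone of binary terms shrinks. Since $f$ is dispersive, there is a surjective homomorphism $\Sg_{(A,f)^2}\{(a,b),(b,a)\} \twoheadrightarrow \cF_\cD(x,y)$, so Proposition~\ref{dispersive-hom} tells us that every binary term $h$ of $(A,f)$ with $h(a,b) = a$ and $h(b,a) = b$ satisfies the identity $h(x,y) \approx x$ as a consequence of \eqref{D2}. Because $\Clo_2(g) \subseteq \Clo_2(f)$, the same statement holds for every binary term $h$ of $(A,g)$ with $h(a,b) = a$ and $h(b,a) = b$; and since $g$ satisfies \eqref{D2} by Proposition~\ref{clone-d2}, the reverse implication of Proposition~\ref{dispersive-hom}, applied now to the groupoid $(A,g)$, yields the desired surjective homomorphism $\Sg_{(A,g)^2}\{(a,b),(b,a)\} \twoheadrightarrow \cF_\cD(x,y)$. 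This completes the dichotomy and hence the proof.

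I do not expect a real obstacle here; the argument is essentially a transfer of Proposition~\ref{dispersive-hom} through the clone inclusion $\Clo_2(g) \subseteq \Clo_2(f)$. The only points demanding care are that Proposition~\ref{dispersive-hom} presupposes that its operation satisfies \eqref{D2} (which for $g$ is guaranteed by Proposition~\ref{clone-d2}), and that the condition ``$h(x,y) \approx x$ follows from \eqref{D2}'' is a statement about the abstract variety $\cD$ (equivalently, $h$ acts as the first projection in $\cF_\cD(x,y)$), and so does not depend on whether we regard $h$ as a term in $f$ or as a term in $g$.
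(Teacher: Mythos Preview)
Your route through Proposition~\ref{dispersive-hom} is natural, but the transfer in Case~2 has a real gap. The second bullet of Proposition~\ref{dispersive-hom} quantifies over \emph{syntactic} terms $h$ in one binary operation symbol: the hypothesis ``$h(a,b)=a$, $h(b,a)=b$'' is evaluated in the ambient groupoid, while the conclusion ``$h(x,y)\approx x$ follows from \eqref{D2}'' means $h^{\cF_\cD}(x,y)=x$ with $\cF_\cD$'s own basic operation. The inclusion $\Clo_2(g)\subseteq\Clo_2(f)$ is about \emph{operations} on $A$, not syntactic terms, so it does not bridge the two. Concretely, if $g=G^{(A,f)}$ and $h$ is a syntactic term with $h^{(A,g)}(a,b)=a$, $h^{(A,g)}(b,a)=b$, then the substituted term $t=h[G]$ satisfies $t^{(A,f)}(a,b)=a$, $t^{(A,f)}(b,a)=b$, whence $t^{\cF_\cD}(x,y)=x$; but $t^{\cF_\cD}$ is $h$ evaluated in the reduct $(\{x,y,xy,yx\},\,G^{\cF_\cD})$, and this equals $h^{\cF_\cD}$ only if $G^{\cF_\cD}$ coincides with the basic operation of $\cF_\cD$. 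Your last paragraph asserts this independence of presentation without proof. When $g=\pi_1$ it is simply false: the basic term $h(x,y)=xy$ then has $h^{(A,g)}(a,b)=a$ and $h^{(A,g)}(b,a)=b$, yet $h^{\cF_\cD}(x,y)=xy\ne x$, so the second bullet fails for $(A,g)$ and no surjection onto the four-element $\cF_\cD$ exists (the dichotomy still holds, but via the first alternative, not your argument).

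The paper's proof supplies exactly this missing piece. It splits on whether $\{a,b\}$ is a subalgebra of $(A,g)$ rather than of $(A,f)$, so $g$ is automatically nontrivial in the interesting case; it then restricts $\alpha$ to $\Sg_{(A,g)^2}\{(a,b),(b,a)\}$ and shows $g(x,y)=xy$ in $\cF_\cD$, so that the reduct $(\{x,y,xy,yx\},g)$ is $\cF_\cD$ itself. That one computation both makes the restricted map surjective and legitimizes your independence-of-presentation claim; once you have it, your argument via Proposition~\ref{dispersive-hom} would go through as well.
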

\begin{proof} By Proposition \ref{clone-d2}, any such $g$ automatically satisfies \eqref{D2}. Now suppose that $\{a,b\}$ does not form a subalgebra of $(A,g)$, in which case $g$ must be nontrivial. Then since $f$ is dispersive, there is a surjective homomorphism
\[
\alpha : \Sg_{\bA^2}\Big\{\begin{bmatrix} a\\ b\end{bmatrix}, \begin{bmatrix} b\\ a\end{bmatrix}\Big\} \twoheadrightarrow \cF_\cD(x,y)
\]
which maps $(a,b)$ to $x$. Let $\alpha_g$ be the restriction of $\alpha$ to the subreduct $\Sg_{(A,g)^2}\{(a,b),(b,a)\}$, which defines a homomorphism to the reduct $(\{x,y,xy,yx\}, g)$ of $\cF_\cD(x,y)$. Then by Proposition \ref{dispersive-hom} and the fact that $g$ is nontrivial we see that $g((a,b),(b,a)) \ne (a,b)$, so after applying $\alpha_g$ we see that $g(x,y) \ne x$ in $\cF_\cD(x,y)$. Since $g(x,y) \in O(x)$ by Proposition \ref{prop-right-pi1}, we see that $g(x,y) = xy$ in $\cF_\cD(x,y)$, so $\alpha_g$ is surjective and the reduct $(\{x,y,xy,yx\},g)$ is isomorphic to $\cF_\cD(x,y)$.
\end{proof}

\begin{prop}\label{dispersive-nice-terms} If $\bA = (A,f)$ is a nontrivial dispersive algebra, then each of the three operations $g_1,g_2,g_3 \in \Clo_2^{\pi_1}(f)$ from Proposition \ref{nice-terms} is necessarily nontrivial.
\end{prop}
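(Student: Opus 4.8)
The plan is to produce a surjective homomorphism $\phi\colon\cF_{\cV(\bA)}(x,y)\twoheadrightarrow\cF_\cD(x,y)$ and to show that $\phi$ must send each $g_i(x,y)$ to $xy$ or $yx$; since $\phi(\pi_1)$ and $\phi(\pi_2)$ are the two generators $x,y$ of $\cF_\cD(x,y)$, this forces $g_i\notin\{\pi_1,\pi_2\}$, i.e.\ $g_i$ is nontrivial. To build $\phi$, first note that $\cF_\cD(x,y)\in\Var(\bA)$: as $\bA$ is nontrivial, $f$ is not a projection, so some pair $a,b$ has $f(a,b)\ne a$, and the $n=1$ instance of \eqref{D2} gives $f(a,f(a,b))=a$, which rules out $f(a,b)=b$; thus $\{a,b\}$ is not a subalgebra of $\bA$, and Definition \ref{defn-dispersive} supplies a surjection $\Sg_{\bA^2}\{(a,b),(b,a)\}\twoheadrightarrow\cF_\cD(x,y)$, whence $\cF_\cD(x,y)\in HS(\bA^2)\subseteq\Var(\bA)$. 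Composing with the canonical surjection $\cF_{\cV(\bA)}(x,y)\twoheadrightarrow\Sg_{\bA^2}\{(a,b),(b,a)\}$ (sending $x\mapsto(a,b)$, $y\mapsto(b,a)$) and applying Proposition \ref{dispersive-hom-check} gives $\phi$ with $\{\phi(x),\phi(y)\}$ equal to the generating pair $\{x,y\}$ of $\cF_\cD(x,y)$.

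The core step is that $\phi$ carries elements of maximal strongly connected components of $D_{\cF_{\cV(\bA)}(x,y)}$ into maximal strongly connected components of $D_{\cF_\cD(x,y)}$. For this I would first record two routine facts about an arbitrary surjective groupoid homomorphism $\phi$: (i) $O(\phi(z))=\phi(O(z))$ for every $z$ — one inclusion is immediate from Definition \ref{defn-right-orbit}, the other uses surjectivity to lift the elements appended on the right; and (ii) the $\phi$-image of a strongly connected set is strongly connected, seen by applying $\phi$ to directed paths and discarding the steps that collapse to loops. Then if $z$ lies in a maximal strongly connected component $U$ we have $O(z)=U$, so $O(\phi(z))=\phi(U)$ is strongly connected; hence the strongly connected component of $\phi(z)$ is all of $\phi(U)=O(\phi(z))$, which is therefore $\preceq$-maximal.

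Applying this with $z=g_i(x,y)$, which lies in a maximal strongly connected component of $D_{\cF_{\cV(\bA)}(x,y)}$ by Proposition \ref{nice-terms}, we get that $\phi(g_i(x,y))$ lies in a maximal strongly connected component of $D_{\cF_\cD(x,y)}$. Reading the four-element multiplication table of $\cF_\cD(x,y)$, its labeled digraph has exactly the two edges $x\to xy$ and $y\to yx$, so its only maximal strongly connected components are $\{xy\}$ and $\{yx\}$, neither of which contains $x$ or $y$. Thus $\phi(g_i(x,y))\in\{xy,yx\}$ while $\phi(\pi_1),\phi(\pi_2)\in\{x,y\}$, so $g_i(x,y)$ is neither $\pi_1$ nor $\pi_2$, completing the proof.

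I expect the only genuine obstacle to be stating the ``maximal component $\mapsto$ maximal component'' step carefully: one has to be sure that the identity $O(\phi(z))=\phi(U)$ really pins down the strongly connected component of $\phi(z)$ \emph{and} its $\preceq$-maximality, not merely that $\phi(z)$ has a small forward orbit. Once the orbit identity $O(\phi(z))=\phi(O(z))$ is in hand, this and everything else (the membership $\cF_\cD(x,y)\in\Var(\bA)$ and the explicit digraph of $\cF_\cD(x,y)$) is routine.
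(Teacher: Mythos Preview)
Your argument is correct and uses essentially the same idea as the paper: build a surjection $\phi\colon\cF_{\cV(\bA)}(x,y)\twoheadrightarrow\cF_\cD(x,y)$ and read off the conclusion from the explicit digraph of $\cF_\cD(x,y)$. The only difference is which half of Proposition~\ref{nice-terms} gets used: the paper invokes $f(x,y)\preceq g_i(x,y)$ and observes that $g_i=\pi_1$ would give a directed path from $\phi(f(x,y))=xy$ to $\phi(x)=x$ in $\cF_\cD(x,y)$, which is impossible; you instead invoke the maximality of the strongly connected component containing $g_i(x,y)$ and push that through $\phi$ via the general fact $O(\phi(z))=\phi(O(z))$. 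Both routes are short and valid; the paper's is slightly more direct, while yours isolates a reusable lemma about surjective homomorphisms preserving $\preceq$-maximality.
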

\begin{proof} We just need to check that we do not have $f(x,y) \preceq x$ in the free algebra $\cF_{\cV(\bA)}(x,y)$. By the definition of a dispersive algebra and the assumption that $\bA$ is nontrivial, there is some surjective homomorphism $\alpha : \cF_{\cV(\bA)}(x,y) \twoheadrightarrow \cF_{\cD}(x,y)$. Therefore any hypothetical directed path from $f(x,y)$ to $x$ in $\cF_{\cV(\bA)}(x,y)$ would be mapped by $\alpha$ to a directed path from $xy$ to $x$ in $\cF_{\cD}(x,y)$, which is impossible.
\end{proof}

Note that the definition of a dispersive algebra makes sense even for infinite algebras. The class of dispersive algebras is very close to forming a variety.

\begin{thm}\label{dispersive-pseudovariety} The class of (possibly infinite) dispersive algebras is closed under the following constructions.
\begin{itemize}
\item Any subalgebra of a dispersive algebra is dispersive.

\item Any product $\prod_{i \in I} \bA_i$ of dispersive algebras $\bA_i$ is dispersive.

\item If $\bA$ is a dispersive algebra and $\theta$ is a congruence on $\bA$ such that the intersection of any finitely generated subalgebra of $\bA$ with any congruence class of $\theta$ is finite, then $\bA/\theta$ is a dispersive algebra.
\end{itemize}
In particular, the finite dispersive algebras form a pseudovariety.
\end{thm}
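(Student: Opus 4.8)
The plan is to verify each of the three closure properties directly against Definition \ref{defn-dispersive}, using Proposition \ref{dispersive-hom} to restate the key clause of the definition in purely syntactic terms: for a groupoid satisfying \eqref{D2}, the pair $\{a,b\}$ is ``good'' (either a subalgebra or admits the surjection to $\cF_\cD(x,y)$) if and only if every binary term $g$ with $g(a,b)=a$ and $g(b,a)=b$ satisfies the identity $g(x,y)\approx x$ modulo \eqref{D2}. Note first that \eqref{D2} is a (universally quantified) family of identities, hence preserved under $\bS$, $\bP$, and $\bH$ without any restriction; so in each case the only work is the clause about pairs $\{a,b\}$.

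For \textbf{subalgebras}: if $\bB\le\bA$ with $\bA$ dispersive and $a,b\in\bB$, take any binary term $g$ with $g(a,b)=a$, $g(b,a)=b$; this computation takes place inside $\bB$ but the conclusion is an identity in the variety, and since $\bA$ is dispersive and $a,b\in\bA$ as well, Proposition \ref{dispersive-hom} applied in $\bA$ gives $g(x,y)\approx x$ modulo \eqref{D2}. Hence $\{a,b\}$ is good in $\bB$ too. For \textbf{products}: let $\bA=\prod_i\bA_i$ with each $\bA_i$ dispersive, pick $a=(a_i)$, $b=(b_i)$, and a binary term $g$ with $g(a,b)=a$, $g(b,a)=b$. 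Reading this coordinatewise gives $g(a_i,b_i)=a_i$ and $g(b_i,a_i)=b_i$ for every $i$. If for every $i$ the pair $\{a_i,b_i\}$ is genuinely a subalgebra of $\bA_i$, then $\{a,b\}$ is a subalgebra of $\bA$ and we are done; otherwise some coordinate $i_0$ has $\{a_{i_0},b_{i_0}\}$ not a subalgebra, so by dispersiveness of $\bA_{i_0}$ and Proposition \ref{dispersive-hom} the identity $g(x,y)\approx x$ follows from \eqref{D2}. Either way $\{a,b\}$ is good in $\bA$, using the Proposition \ref{dispersive-hom} criterion once more (note this criterion is insensitive to whether $\{a,b\}$ happens to be a subalgebra — the ``good'' condition is exactly the syntactic one).

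The \textbf{quotient} case is where the local-finiteness hypothesis on $\theta$ is needed, and I expect it to be the main obstacle. Let $\bar a=a/\theta$, $\bar b=b/\theta$ in $\bA/\theta$, and suppose $\{\bar a,\bar b\}$ is not a subalgebra; I must produce the surjection $\Sg_{(\bA/\theta)^2}\{(\bar a,\bar b),(\bar b,\bar a)\}\twoheadrightarrow\cF_\cD(x,y)$, equivalently verify the criterion of Proposition \ref{dispersive-hom}. Suppose $g$ is a binary term with $g(\bar a,\bar b)=\bar a$, $g(\bar b,\bar a)=\bar b$ in $\bA/\theta$; I need $g(x,y)\approx x$ modulo \eqref{D2}. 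Consider the subalgebra $\bC=\Sg_{\bA^2}\{(a,b),(b,a)\}$, which is finitely generated, and its image under the coordinatewise quotient map; the hypothesis says $\bC$ meets each $\theta$-class in finitely many elements, so the restriction of $\theta\times\theta$ to $\bC$ has finite classes. Using Proposition \ref{dispersive-hom-check} together with the structure of $\cF_\cD(x,y)$, a homomorphism $\bC\twoheadrightarrow\cF_\cD(x,y)$ exists (since $\{a,b\}$ is good in $\bA$, as $\bA$ is dispersive — and if $\{a,b\}$ were a subalgebra of $\bA$ then its $\theta$-image $\{\bar a,\bar b\}$ would be a subalgebra of $\bA/\theta$, contrary to assumption, so the surjection really exists). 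The point is then to transport this surjection through the quotient: because $\bC$ has finite intersection with each $\theta$-class, one can iterate the $f^{\infty_1}, f^{\infty_c}$-type constructions of Corollary \ref{binary-iteration} and Proposition \ref{nice-terms} inside the finite ``fibers'' to show that if $g$ collapses $(\bar a,\bar b)$ to $\bar a$ then an associated term already collapses a $\preceq$-maximal representative in $\bC$, forcing $g(x,y)\approx x$ modulo \eqref{D2}. The delicate part is ensuring the iteration arguments — which rely essentially on finiteness (Proposition \ref{unary-iteration}) — still go through when only the $\theta$-fibers of a finitely generated subalgebra, rather than the whole algebra, are finite; I would handle this by noting that the orbit computations witnessing $c\in O(a)$ only ever involve finitely many elements, all lying in a single finitely generated subalgebra, so the relevant fibers are genuinely finite and Proposition \ref{unary-iteration} applies verbatim to the induced maps on those fibers. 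The last sentence of the theorem is then immediate: finite algebras automatically satisfy the local-finiteness hypothesis on every congruence, so finite dispersive algebras are closed under $\bH$, $\bS$, $\bP_{\mathrm{fin}}$, i.e. form a pseudovariety.
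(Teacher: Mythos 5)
Your first two bullets are essentially fine and close to the paper's own argument, but your framing claim -- that ``good'' is \emph{equivalent} to the syntactic criterion of Proposition \ref{dispersive-hom}, ``insensitive to whether $\{a,b\}$ happens to be a subalgebra'' -- is false. If $\{a,b\}$ is a two-element projection subalgebra, the pair is good via the first clause of Definition \ref{defn-dispersive}, yet $f$ itself satisfies $f(a,b)=a$, $f(b,a)=b$ while $f(x,y)\approx x$ does not follow from \eqref{D2}; indeed $\Sg_{\bA^2}\{(a,b),(b,a)\}$ then has two elements and cannot surject onto the four-element $\cF_\cD(x,y)$. So in the subalgebra case you cannot ``apply Proposition \ref{dispersive-hom} in $\bA$'' unconditionally; you need the case split (if $\{a,b\}$ is a subalgebra of $\bA$ it is one of $\bB$, and otherwise $\Sg_{\bB^2}\{(a,b),(b,a)\}=\Sg_{\bA^2}\{(a,b),(b,a)\}$ already carries the surjection -- which is why the paper just says this case is immediate from the definition). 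Similarly, in the product case the subcase where every $\{a_i,b_i\}$ is a subalgebra needs the observation that \eqref{D2} forces $f$ to act as first projection on two-element subalgebras, so that the coordinatewise values cohere and $\{a,b\}$ really is closed; with that, your coordinate argument matches the paper's (which composes the projection $\pi_{(i_0,i_0)}$ with the surjection for $\bA_{i_0}$). These are minor, patchable points.

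The quotient case, however -- the real content of the theorem and the only place the finiteness hypothesis enters -- is not proved. You correctly reduce to showing that if $\{\bar a,\bar b\}$ is not a subalgebra of $\bA/\theta$ and a term $g$ satisfies $g(\bar a,\bar b)=\bar a$, $g(\bar b,\bar a)=\bar b$, then $g(x,y)\approx x$ follows from \eqref{D2}, and you correctly note that a surjection $\Sg_{\bA^2}\{(a,b),(b,a)\}\twoheadrightarrow\cF_\cD(x,y)$ exists for the chosen representatives. But $g$ fixes only the classes, not the pair $(a,b)$ itself, so Proposition \ref{dispersive-hom} applied at $(a,b)$ says nothing about $g$ directly; the sentence ``one can iterate the $f^{\infty_1},f^{\infty_c}$-type constructions inside the finite fibers \dots forcing $g(x,y)\approx x$'' is a gesture, not an argument, and the transfer back to $g$ is exactly the non-obvious step: triviality of an iterate of $g$ does not by itself yield triviality of $g$ (for instance, a term acting as second projection on $\cF_\cD(x,y)$ has even circular iterates acting as first projection, hence trivial modulo \eqref{D2} by Proposition \ref{dispersive-absorption}, so one must separately rule that case out and argue via the value of $g$ in $\cF_\cD(x,y)$). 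The paper closes this gap with a finite descent that is absent from your proposal: choose $a\in C$, $b\in D$ minimizing the (finite, by hypothesis) cardinality of $\Sg_{\bA^2}\{(a,b),(b,a)\}\cap(C\times D)$; if $g$ is nontrivial modulo \eqref{D2} and $\{a,b\}$ is not a subalgebra, then $g(x,y)\in\{xy,yx\}$ in $\cF_\cD(x,y)$, so $(g(a,b),g(b,a))\in C\times D$ and its swap generate a subalgebra contained in the preimage of $\{xy,yx\}$, which omits $(a,b)$ and hence meets $C\times D$ in strictly fewer elements -- contradicting minimality; therefore $\{a,b\}$, and with it $\{C,D\}$, is a subalgebra, contradicting the assumption. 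Your circular-iteration idea can in fact be salvaged (iterate $(u,v)\mapsto(g(u,v),g(v,u))$ inside the finite set $\Sg_{\bA^2}\{(a,b),(b,a)\}\cap(C\times D)$ to obtain a pair exactly fixed by some $g^{*_c N}$, apply dispersiveness of $\bA$ there, then transfer to $g$ by evaluating in $\cF_\cD(x,y)$ and invoking Proposition \ref{dispersive-absorption}), but as written that transfer, and hence the whole quotient case, is missing.
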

\begin{proof} The claim about subalgebras follows directly from the definition of a dispersive algebra. For the claim about products, suppose that $\bA = \prod_{i \in I} \bA_i$ and that $a,b \in \bA$ generate a nontrivial subalgebra of $\bA$. Since $f$ acts as first projection on every two-element dispersive algebra (by the identity $f(x,f(x,y)) \approx x$, which follows from \eqref{D2}), there must be at least one $i \in I$ such that $\{a_i,b_i\}$ is not a subalgebra of $\bA_i$. Then $\pi_{(i,i)}$ defines a surjective homomorphism from $\Sg_{\bA^2}\{(a,b),(b,a)\}$ to $\Sg_{\bA_i^2}\{(a_i,b_i),(b_i,a_i)\}$, and since $\bA_i$ is assumed to be dispersive this has a surjective homomorphism to $\cF_\cD(x,y)$.

Now we handle the claim about quotients. So suppose $\bA = (A,f), \theta$ are as in the last bullet point, and let $C,D$ be a pair of congruence classes of $\theta$. By Proposition \ref{dispersive-hom}, checking the existence of a surjective homomorphism
\[
\Sg_{\bA/\theta}\{(C,D),(D,C)\} \twoheadrightarrow \cF_\cD(x,y)
\]
is equivalent to checking that for every binary term $g$ such that $g(C,D) \subseteq C$ and $g(D,C) \subseteq D$, the identity $g(x,y) \approx x$ follows from \eqref{D2}.
Suppose for contradiction that $\bA$ has a nontrivial term $g$ such that $g(C,D) \subseteq C$ and $g(D,C) \subseteq D$, but that $\Sg_{\bA/\theta}\{C,D\} \ne \{C,D\}$. Pick $a \in C$ and $b \in D$ such that
\[
\Sg_{\bA^2}\Big\{\begin{bmatrix} a\\ b\end{bmatrix},\begin{bmatrix} b\\ a\end{bmatrix}\Big\} \cap \begin{bmatrix} C\\ D\end{bmatrix}
\]
has minimal cardinality (using the finiteness assumption on intersections of finitely generated subalgebras and congruence classes of $\theta$).

Since $g$ is nontrivial and $\bA$ is dispersive, either $\{a,b\}$ is a subalgebra of $\bA$, or the pairs $(g(a,b), g(b,a))$ and $(g(b,a), g(a,b))$ generate a proper subalgebra of $\Sg_{\bA^2}\{(a,b),(b,a)\}$, which is contained in the preimage of the subalgebra $\{xy,yx\} \le \cF_\cD(x,y)$ under the surjective homomorphism $\Sg_{\bA^2}\{(a,b),(b,a)\} \twoheadrightarrow \cF_\cD(x,y)$. Since $a, g(a,b) \in C$ and $b, g(b,a) \in D$, the second case contradicts the choice of $a,b$ (otherwise $(a,b) \in C \times D$ is in the subalgebra generated by $(g(a,b),g(b,a))$ and $(g(b,a),g(a,b))$, in which case $(b,a)$ is as well). Therefore $\{a,b\}$ must be a subalgebra of $\bA$. Since $\theta$ is a congruence and $a \in C, b \in D$, this implies that $\{C,D\}$ is a subalgebra of $\bA/\theta$, which is a contradiction.
\end{proof}

\begin{rem} Not every quotient of an infinite dispersive algebra is dispersive. In fact, there is an infinite dispersive algebra $\bA_\infty$ which is a subproduct of a sequence of finite dispersive algebras $\bA_n$, such that $\bA_\infty$ has a four-element quotient which is not dispersive (and is instead a meld). The algebra $\bA_n$ has underlying set $\{a_0, ..., a_n, b_0, ..., b_n\}$, with the basic operation given by
\[
\begin{array}{c | c c c c} \bA_n & a_j & a_n & b_j & b_n\\ \hline a_i & a_i & a_i & a_n & a_{i+1}\\ a_n & a_n & a_n & a_n & a_n\\ b_i & b_n & b_{i+1} & b_i & b_i\\ b_n & b_n & b_n & b_n & b_n\end{array}
\]
for any $i,j < n$. To check that $\bA_n$ is dispersive, note that
\[
O(a_i) = \{a_j \mid j \ge i\},
\]
so $\bA_n$ satisfies \eqref{D2} since $a_i \cdot a_j = a_i$ for all $i,j$. Furthermore, for $i < n$ we have
\[
a_i \cdot b_j \in O(a_{i+1})
\]
for all $j \le n$, so as long as either $i < n$ or $j < n$ we have a map from $\Sg_{\bA_n^2}\{(a_i,b_j),(b_j,a_i)\}$ to $\cF_\cD(x,y)$ which sends $(a_i,b_j)$ to $x$, sends $(b_j,a_i)$ to $y$, and sends all other elements to $x \cdot y$ or $y \cdot x$ based on whether the first coordinate is in $O(a_i)$ or $O(b_j)$. (The reader may check that for $n \ge 2$, the clone of the dispersive algebra $\bA_n$ is \emph{not} a minimal clone, by applying Proposition \ref{nice-terms}.)

We can now define the infinite algebra $\bA_\infty$ to be the subalgebra of the product $\prod_n \bA_n$ which is generated by the elements $(a_0, a_0, ...)$ and $(b_0, b_0, ...)$. First define a congruence $\sim$ on $\bA_\infty$ which identifies sequences $c$ and $d$ whenever $c_i = d_i$ for all sufficiently large $i$. Then each element of $\bA_\infty/\!\!\sim$ is either of the form $a^i$ or $b^i$ for some $i \in \mathbb{N} \cup \{\infty\}$, where
\[
a^i = (a_{\max(i,1)}, a_{\max(i,2)}, ...)/\!\!\sim
\]
and $b^i$ is defined similarly. Then the equivalence relation $\theta$ which has equivalence classes $\bigcup_{i < \infty} a^i, \bigcup_{i < \infty} b^i, a^\infty, b^\infty$ is easily checked to be a congruence of $\bA_\infty$, and $\bA_\infty/\theta$ is isomorphic to the free meld on two generators.
\end{rem}

\begin{thm}\label{dispersive-nice} Being dispersive is a nice property.
\end{thm}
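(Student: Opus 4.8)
The plan is to verify the four conditions in the definition of a nice property in turn, with essentially all of the work in the second (the ``reduct'' condition).

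For condition one I would check dispersiveness using the combinatorial reformulations already in hand: \eqref{D2} is equivalent to $a\cdot O(a)=\{a\}$ for every $a$, which is testable in polynomial time once the right orbits (reachable sets in $D_\bA$) are computed, and for each pair $a,b$ for which $\{a,b\}$ is not a two-element subalgebra one computes $\Sg_{\bA^2}\{(a,b),(b,a)\}$ (of size at most $|A|^2$) and checks the three bullet points of Proposition \ref{dispersive-hom-check}, again in polynomial time. For condition three I would take $\bA_\cP=\cF_\cD(x,y)$: it is nontrivial, and for a nontrivial dispersive $\bA=(A,f)$, since $f$ restricts to the first projection on every two-element subalgebra (because $f(x,f(x,y))\approx x$ follows from \eqref{D2}) but $f$ is not itself the first projection, there is a pair $a,b$ with $f(a,b)\ne a$, so $\{a,b\}$ is not a subalgebra and Definition \ref{defn-dispersive} supplies a surjection $\Sg_{\bA^2}\{(a,b),(b,a)\}\twoheadrightarrow\cF_\cD(x,y)$, putting $\cF_\cD(x,y)$ in $\Var(\bA)$. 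Condition four is precisely Theorem \ref{dispersive-pseudovariety}: closure under subalgebras, products, and (for finite algebras, where the finiteness hypothesis on congruence classes is automatic) homomorphic images.

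For condition two, let $f$ be dispersive — we may assume $\bA=(A,f)$ is nontrivial, else there is no nontrivial $g$ — and let $g\in\Clo(f)$ be nontrivial; I want a nontrivial dispersive $f'\in\Clo(g)$. The pivotal observation is that $\Clo(f)$ contains \emph{no nontrivial semiprojection of arity at least $3$}: if $p\in\Clo(f)$ were a semiprojection onto coordinate $i$ of arity $n\ge 3$, then (relabelling so $i$ becomes the first coordinate) $p$ satisfies the absorption identity $p(x,y,\dots,y)\approx x$ in $\bA$, hence in $\cF_\cD(x,y)\in\Var(\bA)$, so by Proposition \ref{dispersive-absorption} the identity $p(x_1,\dots,x_n)\approx x_1$ follows from \eqref{D2} and therefore holds in $\bA$, contradicting that $p$ is not a projection. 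With such semiprojections excluded, I reduce $g$ to a binary operation: take a nontrivial $g_1\in\Clo(g)$ of minimal arity $k$. If $k\ge 4$ then $g_1$ is not a semiprojection, so {\'S}wierczkowski's Lemma yields a nontrivial identification minor of arity $k-1$ in $\Clo(g_1)\subseteq\Clo(g)$, contradicting minimality; if $k=3$ then, fixing a two-element projection algebra $\bB\in\Var(\bA)$, the restriction $g_1|_\bB$ is some coordinate projection $\pi_j$, and a short analysis of the three binary identification minors of $g_1$ (whose restrictions to $\bB$ are forced by $j$) shows that if all three were projections then $g_1$ would be a semiprojection onto coordinate $j$ — impossible — so one of them is nontrivial and of arity $2<k$, again contradicting minimality; and $k=1$ is impossible since $g_1\in\Clo(f)$ is idempotent. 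Hence $k=2$. (Alternatively one may cite that every nontrivial clone on a finite set contains a minimal clone and combine Rosenberg's Theorem \ref{rosenberg-types} with the semiprojection exclusion and with the fact that affine and majority operations restrict to non-projections on $\bB$.)

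To finish condition two, $g_1|_\bB$ is $\pi_1$ or $\pi_2$; replacing $g_1$ by $g_1(y,x)$ if necessary, we may assume $g_1|_\bB=\pi_1=f|_\bB$, so Proposition \ref{prop-pi1} gives $g_1\in\Clo_2^{\pi_1}(f)$, and Proposition \ref{dispersive-reduct} then shows $g_1$ is dispersive; it is nontrivial and lies in $\Clo(g)$, so $f'=g_1$ works. The step I expect to require the most care is the exclusion of semiprojections of arity at least $3$ together with the ensuing arity reduction down to a binary operation; everything else amounts to assembling results already proved above.
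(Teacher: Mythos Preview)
Your proposal is correct and follows essentially the same approach as the paper: the polynomial-time check via right orbits and Proposition~\ref{dispersive-hom-check}, the choice $\bA_\cP=\cF_\cD(x,y)$, the pseudovariety condition via Theorem~\ref{dispersive-pseudovariety}, and the reduct condition via ruling out semiprojections using Proposition~\ref{dispersive-absorption} and then applying Proposition~\ref{dispersive-reduct} to a nontrivial binary term. The only minor difference is that the paper invokes Rosenberg's Theorem~\ref{rosenberg-types} directly to reduce to the binary case (ruling out unary by idempotence, majority and affine by the two-element projection algebra in $\Var(\bA)$, and semiprojections by Proposition~\ref{dispersive-absorption}), whereas you give a hands-on minimal-arity argument using {\'S}wierczkowski's Lemma and a direct analysis of ternary identification minors---but you also mention the Rosenberg route as an alternative, so the two proofs are effectively interchangeable.
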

\begin{proof} First we verify that we can efficiently check whether or not a given algebra $\bA = (A,f)$ is dispersive. Although \eqref{D2} is an infinite family of identities, we can check whether it holds by computing the right orbit $O(a)$ and verifying that $f(a,O(a)) = \{a\}$ for each $a \in \bA$. Once we have checked that \eqref{D2} holds in $\bA$, we examine each pair $a,b \in \bA$ such that $\{a,b\}$ is not a two-element subalgebra of $\bA$, compute the labeled digraph associated to the algebra $\Sg_{\bA^2}\{(a,b),(b,a)\}$, and check whether or not there is a surjective homomorphism from $\Sg_{\bA^2}\{(a,b),(b,a)\}$ to $\cF_\cD(x,y)$ using Proposition \ref{dispersive-hom-check}.

The next step is to check that $\cF_\cD(x,y)$ appears in the variety generated by any nontrivial dispersive algebra $\bA$. To see this, note that by the definition of a dispersive algebra, if $\cF_\cD(x,y)$ is not a subquotient of $\bA^2$, then every pair of elements $a,b \in \bA$ forms a two-element subalgebra of $\bA$, and $f$ must act as first projection on each of these subalgebras as a consequence of \eqref{D2}, in which case $\bA$ is a projection algebra.

That the finite dispersive algebras form a pseudovariety was proved in Theorem \ref{dispersive-pseudovariety}. All that is left is to check that if $\bA$ is nontrivial, then for any nontrivial term $g \in \Clo(f)$, there is some nontrivial $f' \in \Clo(g)$ such that $f'$ is also dispersive.

By Rosenberg's Theorem \ref{rosenberg-types}, there is some nontrivial $t \in \Clo(g)$ which is either a unary operation, a majority operation, an affine operation, a binary operation, or a semiprojection. It's easy to see that $t$ can't be unary (since $f$ is idempotent) and that $t$ can't be majority or affine (since there is a two-element projection algebra in the variety generated by $\bA$). Proposition \ref{dispersive-absorption} shows that $t$ can't be a semiprojection of arity at least $3$. Thus $t$ is binary, and Proposition \ref{dispersive-reduct} shows that either $t(x,y)$ or $t(y,x)$ is dispersive.
\end{proof}

Now we will investigate the structure of the digraphs $D_\bA$ attached to clone-minimal dispersive algebras.

\begin{prop}\label{dispersive-strong-inert} If $\bA$ is a dispersive algebra, then every strongly connected component of $D_\bA$ is a projection subalgebra of $\bA$.
\end{prop}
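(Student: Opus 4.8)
The plan is to read off the conclusion directly from the restatement of \eqref{D2} in terms of right orbits given just above the proposition, namely that $b \in O(a)$ implies $f(a,b) = a$. The key point is that membership in a right orbit is exactly reachability in the digraph $D_\bA$: by the description of right orbits in Definition~\ref{defn-right-orbit}, $b \in O(a)$ if and only if there is a sequence $c_1, \dots, c_k$ with $b = f(\cdots f(f(a,c_1),c_2)\cdots, c_k)$, which is precisely the condition that $b$ be reachable from $a$ along directed edges of $D_\bA$ (the undrawn self-loops at vertices $a$ with $f(a,c) = a$ are irrelevant to reachability).

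So the steps are as follows. First, let $U$ be a strongly connected component of $D_\bA$ and take any $a, b \in U$. Since $a$ and $b$ lie in the same strongly connected component, $b$ is reachable from $a$, hence $b \in O(a)$; the case $a = b$ is covered by idempotence. Second, apply \eqref{D2} in its right-orbit form to conclude $f(a,b) = a$. Third, observe that since this holds for every ordered pair $a, b \in U$, the operation $f$ restricted to $U \times U$ is the first projection onto $U$; in particular $f(U \times U) = U$, so $U$ is closed under $f$ and is a subalgebra of $\bA$ on which $f$ acts as the first projection. Therefore $U$ is a projection subalgebra, as claimed.

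The only thing to be careful about — and it is not really an obstacle — is matching the graph-theoretic notion of a strongly connected component of $D_\bA$ with the algebraic notion of right orbit, so that mutual reachability of $a$ and $b$ really does force $b \in O(a)$. Once that identification is made explicit, all the content of the proof is contained in the orbit reformulation of \eqref{D2}, and no further computation is needed.
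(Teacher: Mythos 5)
Your proof is correct and is essentially the paper's own argument: mutual reachability within a strongly connected component gives $b \in O(a)$ and $a \in O(b)$, and then \eqref{D2} forces $f(a,b)=a$ and $f(b,a)=b$, so the component is a projection subalgebra. The extra care you take identifying reachability in $D_\bA$ with membership in a right orbit is fine and matches Definition~\ref{defn-right-orbit}.
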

\begin{proof} If $a,b$ are in the same strongly connected component of $D_\bA$, then $b \in O(a)$ and $a \in O(b)$, so \eqref{D2} implies that $ab = a$ and $ba = b$.
\end{proof}

\begin{prop} If $\bA = (A,f)$ is a dispersive algebra, then for any $a \ne b \in \bA$ and any $c \in \bA$, we have $(c,c) \not\in \Sg_{\bA^2}\{(a,b),(b,a)\}$. In particular, we have $f(a,b) \ne f(b,a)$.
\end{prop}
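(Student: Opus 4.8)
The plan is to argue by contradiction: suppose $a \ne b$, $c \in \bA$, and $(c,c)$ lies in $\bS := \Sg_{\bA^2}\{(a,b),(b,a)\}$. First I would record that $\bS$ satisfies \eqref{D2} — this identity passes to the product $\bA^2$ and then to its subalgebra $\bS$ — so Proposition \ref{dispersive-hom-check} is available for $\bS$. Then I would invoke the dichotomy in the definition of a dispersive algebra for the pair $a,b$. If $\{a,b\}$ is a two-element subalgebra of $\bA$, then \eqref{D2} (through the identity $f(x,f(x,y)) \approx x$) forces $f$ to restrict to first projection on $\{a,b\}$; hence $\{(a,b),(b,a)\}$ is already closed under $f$, so $\bS = \{(a,b),(b,a)\}$, and since $a \ne b$ this set contains no diagonal element $(c,c)$ — a contradiction.

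In the remaining case there is a surjective homomorphism $\alpha : \bS \twoheadrightarrow \cF_\cD(x,y)$, and the crux is to pit two ``swap'' symmetries against each other. The coordinate exchange $\tau : (u,v) \mapsto (v,u)$ is an automorphism of $\bA^2$ that interchanges the two generators $(a,b)$ and $(b,a)$ of $\bS$, hence restricts to an automorphism of $\bS$, and it fixes $(c,c)$. On the target side, $\cF_\cD(x,y)$ carries the automorphism $\sigma$ that interchanges the free generators $x$ and $y$ (and therefore also $xy$ and $yx$), and a glance at the four-element multiplication table shows $\sigma$ has no fixed point. By Proposition \ref{dispersive-hom-check}, $\alpha$ sends $(a,b)$ and $(b,a)$ to $x$ and $y$ in some order, so the two homomorphisms $\sigma \circ \alpha$ and $\alpha \circ \tau$ from the $2$-generated algebra $\bS$ to $\cF_\cD(x,y)$ agree on the generators of $\bS$ and are therefore equal. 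Evaluating this equality at $(c,c)$ and using $\tau(c,c) = (c,c)$ gives $\alpha(c,c) = \sigma(\alpha(c,c))$, so $\alpha(c,c)$ is a $\sigma$-fixed point — impossible. This contradiction establishes that $(c,c) \not\in \bS$, and the ``in particular'' then follows by taking the term to be $f$ itself: if $f(a,b) = f(b,a) = c$ then $(c,c) = f\big((a,b),(b,a)\big) \in \bS$.

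All the steps are short. The only point that takes a moment's care is the verification that $\sigma \circ \alpha = \alpha \circ \tau$, but this is immediate once one observes that both composites are homomorphisms out of a $2$-generated algebra and that $\tau$ (on the named generators $(a,b),(b,a)$) and $\sigma$ (on $x,y$) perform the same transposition, so the two sides agree on generators. I expect the main obstacle to be purely a matter of finding this symmetry argument: once the coordinate-swap automorphism $\tau$ of $\bA^2$ and the fixed-point-free automorphism $\sigma$ of $\cF_\cD(x,y)$ are isolated, everything is formal.
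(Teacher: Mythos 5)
Your proof is correct, but it takes a genuinely different route from the paper's. The paper argues term-theoretically: from $(c,c)\in\Sg_{\bA^2}\{(a,b),(b,a)\}$ it extracts a term $g\in\Clo_2^{\pi_1}(f)$ with $g(a,b)=g(b,a)=c$, notes $c\in O(a)\cap O(b)$, and then uses \eqref{D2} to see that $h(x,y)=f(x,g(y,x))$ satisfies $h(a,b)=a$, $h(b,a)=b$ while $h(x,y)=xy\neq x$ in $\cF_\cD(x,y)$, contradicting Proposition \ref{dispersive-hom}. You instead run a symmetry/fixed-point argument: the coordinate swap $\tau$ is an automorphism of $\bS$ fixing $(c,c)$, Proposition \ref{dispersive-hom-check} (legitimately applied, since \eqref{D2} passes to $\bA^2$ and to the two-generated subalgebra $\bS$) pins the images of the generators to $\{x,y\}$, so $\sigma\circ\alpha=\alpha\circ\tau$ by agreement on generators, and the fixed-point-freeness of the generator-swapping automorphism $\sigma$ of $\cF_\cD(x,y)$ gives the contradiction; your explicit treatment of the case where $\{a,b\}$ is a two-element subalgebra (forced to be a projection algebra by \eqref{D2}, whence $\bS=\{(a,b),(b,a)\}$) is also sound, and is a case the paper leaves implicit. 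What each approach buys: the paper's construction of the specific term $h$ is more in line with its recurring technique of manufacturing absorption-type identities and quotes only Proposition \ref{dispersive-hom}, while your argument avoids any clever term construction and shows more transparently that the whole diagonal is excluded from $\bS$ whenever the surjection onto $\cF_\cD(x,y)$ exists, at the modest cost of invoking \ref{dispersive-hom-check} and checking that $\sigma$ is a fixed-point-free automorphism.
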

\begin{proof} Suppose for contradiction that $\bA$ has a term $g \in \Clo_2^{\pi_1}(f)$ with $g(a,b) = g(b,a) = c$. 
Then the binary term $h(x,y) = f(x,g(y,x))$ is nontrivial in $\cF_\cD(x,y)$, but $h(a,b) = f(a,c) = a$ and $h(b,a) = f(b,c) = b$ since $c \in O(a) \cap O(b)$, which is a contradiction to Proposition \ref{dispersive-hom}.
\end{proof}

\begin{prop} If $\bA = (A,f) = \Sg\{a,b\}$ is a dispersive algebra with $a \ne b$, then $b \not\in O(a)$, that is, $a \not\preceq b$. In particular, $f(a,b) \ne b$.
\end{prop}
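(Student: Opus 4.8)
The plan is to assume for contradiction that $b \in O(a)$; once this is ruled out the ``in particular'' clause is automatic, since $f(a,b) \in O(a)$ forces $f(a,b) \neq b$. I would split on whether $\{a,b\}$ is a two-element subalgebra of $\bA$.

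The easy case is when $\{a,b\}$ \emph{is} a subalgebra: since $\bA = \Sg\{a,b\}$ this forces $A = \{a,b\}$, and instantiating \eqref{D2} with one parameter gives $f(a,f(a,b)) = a$. As $f(a,b) \in \{a,b\}$, this rules out $f(a,b) = b$ (that would say $b = a$), so $f(a,b) = a$; combined with idempotence this yields $O(a) = \{a\}$, contradicting $b \in O(a)$.

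In the main case $\{a,b\}$ is not a subalgebra, so the definition of a dispersive algebra provides a surjective homomorphism $\alpha : \Sg_{\bA^2}\{(a,b),(b,a)\} \twoheadrightarrow \cF_\cD(x,y)$, and by Proposition \ref{dispersive-hom-check} I may assume $\alpha(a,b) = x$ and $\alpha(b,a) = y$. Using $b \in O(a)$ and $\bA = \Sg\{a,b\}$, Proposition \ref{prop-right-pi1} supplies $g_0 \in \Clo_2^{\pi_1}(f)$ with $g_0(a,b) = b$; note also that $g_0(b,a) \in O(b) \subseteq O(a)$, the first membership because $g_0$ has its first variable as leftmost leaf and the inclusion because $b \in O(a)$. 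I would then examine the term $h(x,y) := f(y, g_0(x,y))$ and check that it swaps the generating pair: $h(a,b) = f(b, g_0(a,b)) = f(b,b) = b$ by idempotence, while $h(b,a) = f(a, g_0(b,a)) = a$ by \eqref{D2}, since $g_0(b,a) \in O(a)$. Hence $h$ sends $(a,b)$ to $(b,a)$, so $\alpha\big(h((a,b),(b,a))\big) = \alpha(b,a) = y$.

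The contradiction then comes from computing $h$ inside $\cF_\cD(x,y)$ instead: since $\alpha$ is a homomorphism, $\alpha\big(h((a,b),(b,a))\big)$ is simply $h$ evaluated at $(x,y)$ in $\cF_\cD(x,y)$, where $g_0(x,y) \in O(x) = \{x,xy\}$ (as $g_0 \in \Clo_2^{\pi_1}(f)$), and the multiplication table gives $y\cdot x = y \cdot xy = yx$, so $h(x,y) = yx$. This would force $y = yx$ in $\cF_\cD(x,y)$, which is false. I expect the only delicate point to be the identity $h(b,a) = a$: it rests on $g_0$ keeping $g_0(b,a)$ inside the right orbit $O(b)$, which lies in $O(a)$ precisely because of the standing assumption $b \in O(a)$ — so the contradiction is really powered by feeding that very assumption back into \eqref{D2}.
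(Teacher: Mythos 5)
Your proof is correct, but it takes a noticeably heavier route than the paper's. The paper disposes of the statement in two lines using only \eqref{D2} and Proposition \ref{prop-right-gen}: if $b \in O(a)$ then $O(b) \subseteq O(a)$, so $\bA = O(a) \cup O(b) = O(a)$; then \eqref{D2} forces $f(a,c) = a$ for every $c \in \bA$, hence $O(a) = \{a\}$, contradicting $a \ne b$ (and $f(a,b) \in O(a)$ gives the ``in particular'' clause). No case split and no use of the dispersive homomorphism condition is needed, so the statement is really a fact about two-generated idempotent algebras satisfying \eqref{D2} alone. Your argument instead splits on whether $\{a,b\}$ is a subalgebra and, in the main case, invokes the surjection $\alpha$ onto $\cF_\cD(x,y)$, builds $h(x,y) = f(y,g_0(x,y))$ with $h((a,b),(b,a)) = (b,a)$, and contradicts $h(x,y) = yx$ in $\cF_\cD(x,y)$; the individual steps do check out ($g_0(b,a) \in O(b) \subseteq O(a)$, so \eqref{D2} gives $h(b,a) = a$; $g_0 \in \Clo_2^{\pi_1}(f)$ gives $g_0(x,y) \in O(x) = \{x,xy\}$, whence $y \cdot g_0(x,y) = yx \ne y$). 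This is essentially the technique of Proposition \ref{dispersive-hom} (pushing a term that fixes or swaps the generating pair through $\alpha$), so it sits squarely in the paper's toolkit, but it consumes strictly stronger hypotheses than necessary; the paper's orbit argument is both shorter and more general.
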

\begin{proof} If $b \in O(a)$, then $\bA = O(a) \cup O(b) = O(a)$. But then $f(a,c) = a$ for all $c$ in $\bA$, so $O(a) = \{a\}$, a contradiction.
\end{proof}

\begin{defn} For every $n \ge 1$, we define the clone-minimal dispersive \emph{hoop} algebras $\mathbb{H}_n, \mathbb{H}_n'$ on the set
\[
H_n = \{a,b,c_0, ..., c_{n-1},d_0, ..., d_{n-1}\}
\]
by the multiplication tables
\[
\begin{array}{c c c}
\begin{array}{c | c c c c} \mathbb{H}_n & a & b & c_j & d_j\\ \hline a & a & c_0 & a & a \\ b & b & b & b & b \\ c_i & d_i & c_i & c_i & c_i\\ d_i & d_i & c_{i+1} & d_i & d_i\end{array} & &
\begin{array}{c | c c c c} \mathbb{H}_n' & a & b & c_j & d_j\\ \hline a & a & d_0 & a & a \\ b & b & b & b & b \\ c_i & d_i & c_i & c_i & c_i\\ d_i & d_i & c_{i+1} & d_i & d_i\end{array}
\end{array}
\]
where the indices are considered cyclically modulo $n$.
\end{defn}

For $n = 2$, the labeled digraphs associated to the hoop algebras $\mathbb{H}_2, \mathbb{H}_2'$ are pictured side by side below.
\begin{center}
\begin{tabular}{ccc}
\begin{tikzpicture}[scale=1]
  \node (a) at (0,0) {$a$};
  \node (b) at (1,0) {$b$};
  \node (c0) at (0,1) {$c_0$};
  \node (c1) at (0,3) {$c_1$};
  \node (d0) at (1,2) {$d_0$};
  \node (d1) at (-1,2) {$d_1$};
  \draw [->] (a) edge ["{$b$}"] (c0) (c0) edge ["{$a$}"'] (d0) (d0) edge ["$b$"'] (c1) (c1) edge ["{$a$}"'] (d1) (d1) edge ["$b$"'] (c0);
\end{tikzpicture}
& \;\;\;\;\; &
\begin{tikzpicture}[scale=1]
  \node (a) at (0,0) {$a$};
  \node (b) at (1,0) {$b$};
  \node (c0) at (-1,2) {$c_0$};
  \node (c1) at (1,2) {$c_1$};
  \node (d0) at (0,1) {$d_0$};
  \node (d1) at (0,3) {$d_1$};
  \draw [->] (a) edge ["{$b$}"'] (d0) (c0) edge ["{$a$}"'] (d0) (d0) edge ["$b$"'] (c1) (c1) edge ["{$a$}"'] (d1) (d1) edge ["$b$"'] (c0);
\end{tikzpicture}
\end{tabular}
\end{center}
Visual inspection shows that the hoop algebras satisfy \eqref{D2}. To check that they are dispersive, first note that every pair of elements other than the pair $a,b$ generates a proper subalgebra of size two or three which is isomorphic to a subalgebra of the dispersive algebra $\cF_\cD(x,y)$. So we only need to check that $\Sg\{(a,b),(b,a)\}$ has a surjective homomorphism to $\cF_\cD(x,y)$, which follows from Propositions \ref{dispersive-hom-check}, \ref{dispersive-hom} as a consequence of $O(b) = \{b\}$, $ab \ne a$, and the fact that there is no directed path from $ab$ back to $a$.

Note that the algebras $\mathbb{H}_n, \mathbb{H}_n'$ are term-equivalent: if $f$ is the basic operation of $\mathbb{H}_n$, then the basic operation $f'$ of $\mathbb{H}_n'$ is given by
\[
f'(x,y) = f(f(x,y),x).
\]
The basic operation of $\mathbb{H}_n$ satisfies the identities $f(f(x,y),y) \approx f(x,y) \approx f(f(x,y),f(y,x))$, while the basic operation of $\mathbb{H}_n'$ satisfies the identity $f'(f'(x,y),x) \approx f'(x,y)$, but the next result shows that no nontrivial term of either algebra satisfies all of these identities simultaneously.

\begin{prop}\label{hoop-reduct} If $\mathbb{H}_n = (H_n, f)$ is one of the hoop algebras, then for any nontrivial $g \in \Clo_2^{\pi_1}(f)$ the reduct $(H_n, g)$ is either isomorphic to $\mathbb{H}_n$ or to $\mathbb{H}_n'$.

In particular, $\Clo(g)$ has the same size as $\Clo(f)$ for any nontrivial $g \in \Clo_2^{\pi_1}(f)$, so $\Clo(f)$ is a minimal clone.
\end{prop}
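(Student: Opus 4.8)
The plan is to prove the isomorphism statement for nontrivial $g \in \Clo_2^{\pi_1}(f)$ first, and then read off the rest. Granting it: since $\mathbb{H}_n$ and $\mathbb{H}_n'$ are term-equivalent, $|\Clo_k(g)| = |\Clo_k(f)|$ for every $k$, and as $\Clo(g) \subseteq \Clo(f)$ with each arity level finite, in fact $\Clo(g) = \Clo(f)$. For an arbitrary nontrivial $g \in \Clo(f)$: by Rosenberg's Theorem~\ref{rosenberg-types} there is a nontrivial $t \in \Clo(g)$ of one of the five types; it is not unary ($f$ is idempotent), not majority or affine (the two-element projection subalgebra $\{c_0,d_0\}$ lies in $\Var(\mathbb{H}_n)$), and not a semiprojection of arity $\ge 3$ (Proposition~\ref{dispersive-absorption}, since $\cF_\cD(x,y) \in \Var(\mathbb{H}_n)$), so $t$ is binary; replacing $t(x,y)$ by $t(y,x)$ if necessary, $t$ acts as first projection on $\{c_0,d_0\}$, so $t \in \Clo_2^{\pi_1}(f)$ by Proposition~\ref{prop-pi1}, and then $f \in \Clo(t) \subseteq \Clo(g)$; hence $\Clo(f)$ is minimal.

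To set up the isomorphism statement, I would first record the shape of $\mathbb{H}_n$: the non-loop edges of $D_{\mathbb{H}_n}$ form the directed cycle $c_0 \to d_0 \to c_1 \to \cdots \to d_{n-1} \to c_0$ together with the pendant edge $a \to c_0$, with $b$ isolated; so the only pairs $(p,q)$ with $pq \ne p$ are $(a,b)$, the $(c_i,a)$, and the $(d_i,b)$, while $O(a) = H_n\setminus\{b\}$, $O(b)=\{b\}$, and $O(c_i)=O(d_i)$ is the whole cycle. Also, every pair other than $\{a,b\}$ generates a proper subalgebra; the only ones of size $>2$ are $\Sg_{\mathbb{H}_n}\{a,c_i\}=\{a,c_i,d_i\}$ and $\Sg_{\mathbb{H}_n}\{b,d_i\}=\{b,d_i,c_{i+1}\}$, and each two-element subalgebra is a projection subalgebra on which $f$ is first projection. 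Now let $g\in\Clo_2^{\pi_1}(f)$ be nontrivial. Then $g$ acts as first projection on every two-element subalgebra (Proposition~\ref{prop-pi1}), and $g(p,q)\in O(p)$ for all $p,q$ (as $g$ is a term with $x$ as leftmost leaf); computing inside the two three-element subalgebras above, where the right orbit of $a$, respectively of $b$, is a single point, shows that $g$ agrees with $f$ on every input pair except possibly $g(c_i,a)\in\{c_i,d_i\}$, $g(d_i,b)\in\{d_i,c_{i+1}\}$, and $g(a,b)\in O(a)=H_n\setminus\{b\}$.

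The crux is the claim that for \emph{every} $g\in\Clo_2^{\pi_1}(f)$, either $g$ is the first projection, or $g(c_i,a)=d_i$ and $g(d_i,b)=c_{i+1}$ for all $i$ and $g(a,b)\ne a$ --- that is, a nontrivial $g$ retains all cycle edges. I would prove this by induction on the construction $g=f(g',h)$ with $g'\in\Clo_2^{\pi_1}(f)$ and $h\in\Clo_2(f)$, splitting on whether $h$ or $h(y,x)$ belongs to $\Clo_2^{\pi_1}(f)$ (one of them does, as in the proof of Proposition~\ref{prop-pi1}). The mechanism is that the single value which could break a cycle edge is always avoided: if $h\in\Clo_2^{\pi_1}(f)$ then $h(c_i,a)\in O(c_i)$ lies in the cycle, hence $\ne b$, and $h(d_i,b)\in O(d_i)\ne a$; if $h(y,x)\in\Clo_2^{\pi_1}(f)$ then $h(c_i,a)$ is the value of a $\Clo_2^{\pi_1}$-term at $(a,c_i)$, which, computed inside $\Sg_{\mathbb{H}_n}\{a,c_i\}$ where the right orbit of $a$ is $\{a\}$, equals $a\ne b$, and similarly $h(d_i,b)=b\ne a$. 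When $g'$ is nontrivial the inductive hypothesis puts $g'(a,b)$ in the cycle and gives $g'(c_i,a)=d_i$, $g'(d_i,b)=c_{i+1}$, so $g(c_i,a)=f(d_i,\,\cdot\,)=d_i$, $g(d_i,b)=f(c_{i+1},\,\cdot\,)=c_{i+1}$, and $g(a,b)$ lies in the cycle. When $g'=\pi_1$: if $h\in\Clo_2^{\pi_1}(f)$ then $g=f(x,h(x,y))\approx x$ by \eqref{D2}, while if $h(y,x)\in\Clo_2^{\pi_1}(f)$ then $g(a,b)=f(a,b)=c_0$ and, as above, all cycle edges are present. I expect the routine but delicate tracking of exactly where each evaluation of $g'$ and $h$ can land, across these few subcases, to be the main obstacle, though no single subcase is hard.

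Combining the previous two steps, a nontrivial $g\in\Clo_2^{\pi_1}(f)$ coincides with $f$ except that $g(a,b)=c_j$ or $g(a,b)=d_j$ for some $j$. In the first case the permutation of $H_n$ fixing $a$ and $b$ and sending $c_i\mapsto c_{i+j}$, $d_i\mapsto d_{i+j}$ (indices mod $n$) is an isomorphism $\mathbb{H}_n\to(H_n,g)$; in the second case the same shift is an isomorphism $\mathbb{H}_n'\to(H_n,g)$ (recalling $f'(a,b)=d_0$). This proves the proposition.
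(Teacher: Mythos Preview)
Your proof is correct. The paper's proof, however, is considerably shorter because it uses a single conceptual observation in place of your structural induction: every pair of elements other than $\{a,b\}$ generates a subalgebra isomorphic to a subalgebra of $\cF_\cD(x,y)$, and any nontrivial $g\in\Clo_2^{\pi_1}(f)$ acts as the basic operation on $\cF_\cD(x,y)$ (since in $\cF_\cD$ the set $\Clo_2^{\pi_1}$ is just $\{x,xy\}$, and $g$ nontrivial forces $g(x,y)=xy$ by Proposition~\ref{dispersive-absorption}). This immediately gives $g=f$ on every pair except $(a,b)$, with $g(a,b)\in O(a)\setminus\{a\}$ following from the dispersive property via Proposition~\ref{dispersive-hom}. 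Your inductive approach avoids invoking the $\cF_\cD$ structure of the proper subalgebras and is in that sense more self-contained, but at the cost of tracking several subcases by hand; the paper's argument trades that bookkeeping for a one-line appeal to the clone of $\cF_\cD(x,y)$.

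You also spell out the minimality deduction (via Rosenberg's theorem and Proposition~\ref{dispersive-absorption}) more fully than the paper, which simply asserts that $|\Clo(g)|=|\Clo(f)|$ together with $\Clo(g)\subseteq\Clo(f)$ forces equality, leaving the reduction from arbitrary nontrivial terms to nontrivial binary $g\in\Clo_2^{\pi_1}(f)$ implicit (it is available from the general dispersive machinery in the proof of Theorem~\ref{dispersive-nice}).
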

\begin{proof} Since every pair of elements other than $a,b$ generates a subalgebra isomorphic to a subalgebra of $\cF_\cD(x,y)$, $g$ agrees with $f$ on any such pair. Since $g(b,a) \in O(b) = \{b\}$ by Proposition \ref{prop-right-pi1}, the operation $g$ is completely determined by the value of $g(a,b) \in O(a) \setminus \{a\}$. If $g(a,b) = c_i$ for some $i$, then $(H_n, g)$ is isomorphic to $\mathbb{H}_n$ by cyclically permuting the $c$s and $d$s, and similarly if $g(a,b) = d_i$ for some $i$ then $(H_n,g)$ is isomorphic to $\mathbb{H}_n'$.
\end{proof}

\begin{prop} Suppose that $\bA = (A,f)$ is a dispersive clone-minimal algebra, with $\bA$ generated by $a,b$ such that $O(b) = \{b\}$. Then either $|\bA| \le 3$ or $\bA$ is isomorphic to one of the hoop algebras $\mathbb{H}_n, \mathbb{H}_n'$ for some $n \ge 1$.
\end{prop}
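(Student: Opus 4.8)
The plan is to use clone-minimality to replace $f$ by a convenient term operation, locate all of $\bA$ relative to a maximal strongly connected component of $D_\bA$, and then simply read off the hoop table. First I would dispose of the trivial cases: if $a=b$ then $|\bA|=1$, and if $\{a,b\}$ is a subalgebra then $\bA=\{a,b\}$ and $|\bA|=2$. So assume $a\ne b$ and $\{a,b\}$ is not a subalgebra; by the preceding propositions $f(a,b)\notin\{a,b\}$, so $f$ is not a projection, $\bA$ is nontrivial, $\cF_\cD(x,y)\in\Var(\bA)$, and hence — using the two-element projection subalgebra $\{x,xy\}\le\cF_\cD(x,y)$ — we have $f\in\Clo_2^{\pi_1}(\bA)$. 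Now take the nontrivial $g_1\in\Clo_2^{\pi_1}(f)$ supplied by Propositions \ref{nice-terms} and \ref{dispersive-nice-terms}: it satisfies $g_1(g_1(x,y),y)\approx g_1(x,y)$ and $g_1(x,y)$ lies in a maximal strongly connected component of $D_{\cF_{\cV(\bA)}(x,y)}$. Since $\bA$ is clone-minimal, $(A,g_1)$ is term-equivalent to $\bA$, still dispersive (Proposition \ref{dispersive-reduct}) and generated by $a,b$, with the same right orbits, the same $\preceq$, and the same strongly connected components (Proposition \ref{digraph-clone-inv}), in particular $O(b)=\{b\}$ still; so I would replace $f$ by $g_1$. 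Writing $e_0:=f(a,b)$, the $g_1$-identity at $(a,b)$ gives $f(e_0,b)=e_0$, and $b\notin O(a)$ by the preceding proposition; moreover, since the evaluation homomorphism $\cF_{\cV(\bA)}(x,y)\to\bA$ carries maximal strongly connected components into maximal ones, $e_0$ lies in a maximal strongly connected component $C$ of $D_\bA$, so by Proposition \ref{dispersive-strong-inert} $C$ is a projection subalgebra and $C=O(e_0)$.

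Next I would locate everything. Since $f(a,d)\in O(a)$, while \eqref{D2} forces $f(a,d)=a$ whenever $d\in O(a)$, and $A=O(a)\sqcup\{b\}$ (Proposition \ref{prop-right-gen} together with $b\notin O(a)$), we get $f(a,d)=a$ for all $d\ne b$ and $f(a,b)=e_0$; tracing right orbits then shows $O(a)=\{a\}\cup C$, so $A=\{a\}\cup C\cup\{b\}$. If $a\in C$ then $A=C\cup\{b\}$ and the only non-loop edges of $D_\bA$ inside the projection subalgebra $C$ are the edges $c\to f(c,b)$; but $z\mapsto f(z,b)$ is idempotent on $C$ (plug $(c,b)$ into $f(f(x,y),y)\approx f(x,y)$), so strong connectedness of $C$ forces $|C|=1$, hence $|\bA|=2$ and we are done. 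Otherwise $A=\{a\}\sqcup C\sqcup\{b\}$ and $|\bA|=|C|+2$; if $|C|\le1$ we are done with $|\bA|\le3$, so assume $|C|\ge2$. Define $\psi_a,\psi_b\colon C\to C$ by $\psi_a(c)=f(c,a)$, $\psi_b(c)=f(c,b)$ (well-defined as $C$ is a maximal strongly connected component). Evaluating $f(f(x,y),y)\approx f(x,y)$ at $(c,a)$ and at $(c,b)$ — where the second argument lies outside $C$, so there is no collapse to a projection — yields $\psi_a^2=\psi_a$ and $\psi_b^2=\psi_b$; thus $\psi_a,\psi_b$ are idempotent retractions onto $C_a:=\mathrm{Fix}(\psi_a)$ and $C_b:=\mathrm{Fix}(\psi_b)$, and $e_0\in C_b$.

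The remaining step is combinatorial. A common fixed point of $\psi_a$ and $\psi_b$ would be a sink of the induced digraph on $C$, impossible for a strongly connected graph on at least two vertices, so $C_a\cap C_b=\emptyset$; and since $C=O(e_0)$ with $e_0\in C_b$ and the submonoid of transformations of $C$ generated by $\psi_a,\psi_b$ keeps $C_a\cup C_b$ invariant (each of $\psi_a,\psi_b$ either fixes a point of $C_a\cup C_b$ or moves it to the opposite side), we get $C=C_a\sqcup C_b$. Then inside $C$ every vertex has out-degree exactly one in $D_\bA$ — namely $c\to\psi_a(c)\in C_a$ for $c\in C_b$, and $d\to\psi_b(d)\in C_b$ for $d\in C_a$ — so the induced digraph on $C$ is a functional graph, and strong connectedness makes it a single directed cycle, which alternates between $C_b$ and $C_a$ and hence has even length $2n$ with $|C_b|=|C_a|=n$. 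Labelling $C_b=\{c_0,\dots,c_{n-1}\}$ and $C_a=\{d_0,\dots,d_{n-1}\}$ so that $\psi_a(c_i)=d_i$, $\psi_b(d_i)=c_{i+1}$ (indices mod $n$), with $e_0=c_0$, and using that $f$ is first projection on $C\times C$, that $f(a,d)=a$ for $d\ne b$ and $f(a,b)=c_0$, and that $f(b,\cdot)=b$, one reads off exactly the multiplication table of $\mathbb{H}_n$. Thus the normalised algebra $(A,f)\cong\mathbb{H}_n$; since the original operation is a nontrivial member of $\Clo_2^{\pi_1}$ of this one, Proposition \ref{hoop-reduct} then yields that $\bA$ itself is isomorphic to $\mathbb{H}_n$ or $\mathbb{H}_n'$.

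The part requiring the most care is the bookkeeping of the first two paragraphs: checking that the substitution $f\rightsquigarrow g_1$ preserves every hypothesis, that $e_0$ genuinely lands in a maximal strongly connected component, the elimination of the case $a\in C$, and the invariance argument giving $C=C_a\sqcup C_b$. The pleasant surprise is that idempotence of $\psi_a$ and $\psi_b$ drops straight out of the single identity $f(f(x,y),y)\approx f(x,y)$ (one just must remember to substitute $a$ or $b$, which lies \emph{outside} $C$, rather than a variable that could specialise into $C$), after which the cycle structure and the identification with $\mathbb{H}_n$ are routine.
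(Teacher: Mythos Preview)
Your proof is correct and follows essentially the same route as the paper's: replace $f$ by the $g_1$ of Proposition~\ref{nice-terms}, use maximality of $g_1(x,y)$ to land $e_0=g_1(a,b)$ in a maximal strongly connected component $C$, observe $\bA=\{a\}\cup C\cup\{b\}$ with $C$ a projection subalgebra, and then use the identity $g_1(g_1(x,y),y)\approx g_1(x,y)$ to force the induced digraph on $C$ to be an even directed cycle, yielding $\mathbb{H}_n$ and finishing via Proposition~\ref{hoop-reduct}. Your treatment is somewhat more explicit in the bookkeeping (handling the degenerate cases and the possibility $a\in C$), and you phrase the cycle argument via the idempotent retractions $\psi_a,\psi_b$ and their fixed-point sets $C_a,C_b$ rather than via ``no two consecutive edges share a label'', but these are the same argument in different clothing.
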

\begin{proof} By Propositions \ref{nice-terms} and \ref{dispersive-nice-terms} we can find a nontrivial $g \in \Clo_2^{\pi_1}(f)$ such that $g(g(x,y),y) \approx g(x,y)$ and such that $g(a,b)$ is contained in a maximal strongly connected component of $D_\bA$. Let $c = g(a,b)$, and note that $a \preceq c$ since $g \in \Clo_2^{\pi_1}(f)$. Then since $a \preceq c$, we have $g(a,x) = a$ for all $x \in O(c)$. Additionally, from $g \in \Clo_2^{\pi_1}(f)$ and the assumption $O(b) = \{b\}$, we see that $g(b,x) = b$ for all $x$.

Thus $O(c) \cup \{a,b\}$ is closed under $g$, and must therefore be equal to $\bA$ since $f \in \Clo(g)$. Furthermore, since $O(c)$ is strongly connected it must be a projection subalgebra by Proposition \ref{dispersive-strong-inert}. If we consider the digraph $D_{(A,g)}$, we see that the edge labels are subsets of $\{a,b\}$, and the identity $g(g(x,y),y) \approx g(x,y)$ implies that no two consecutive edges can share a label.

Assume now that $|\bA| > 3$, so $|O(c)| \ge 2$. Since every element of $O(c)$ has indegree and outdegree at least one, we see that no edge in $O(c)$ is labelled $\{a,b\}$, and that in fact each element of $O(c)$ must have outdegree exactly one. Thus $O(c)$ is a directed cycle, with edge labels alternating between $\{a\}$ and $\{b\}$, so $(A,g)$ is isomorphic to the hoop algebra $\mathbb{H}_n$ for $n = |O(c)|/2$. To finish the argument, note that $f \in \Clo_2^{\pi_1}(g)$ and Proposition \ref{hoop-reduct} imply that $\bA = (A,f)$ is also isomorphic to a hoop algebra.
\end{proof}

\begin{cor} If $\bA$ is a clone-minimal dispersive algebra and $b \preceq a$ in $\bA$, then $\Sg_\bA\{a,b\}$ either has size at most $3$, or is isomorphic to one of the hoop algebras $\mathbb{H}_n, \mathbb{H}_n'$ for some $n \ge 1$.
\end{cor}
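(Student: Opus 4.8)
The plan is to reduce the statement directly to the previous proposition by proving that inside the subalgebra $\bB := \Sg_\bA\{a,b\}$ the element $b$ has trivial right orbit, $O_\bB(b) = \{b\}$; once that is established, $\bB$ is a dispersive clone-minimal algebra generated by the two elements $a,b$ with $O_\bB(b) = \{b\}$, and the previous proposition immediately gives that $|\bB| \le 3$ or $\bB$ is a hoop. Before that I would dispose of the degenerate cases. If $a = b$ then $|\bB| = 1$, and if $\bB$ is a projection algebra then, being generated by two elements on which $f$ acts as a projection, $|\bB| \le 2$; in either case $|\bB| \le 3$ and there is nothing to prove. So we may assume $\bB$ is nontrivial, in which case $\bB$, being a subalgebra of the dispersive clone-minimal algebra $\bA$, lies in $\Var(\bA)$, is again dispersive by Theorem \ref{dispersive-pseudovariety}, and is clone-minimal because every nontrivial member of $\Var(\bA)$ is clone-minimal when $\bA$ is.

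The crux is thus the equality $O_\bB(b) = \{b\}$. By Proposition \ref{prop-right-gen} we have $\bB = O_\bB(a) \cup O_\bB(b)$, so it suffices to show that $f(b,c) = b$ for every $c \in O_\bB(a)$ and for every $c \in O_\bB(b)$. For $c \in O_\bB(b)$ this is precisely \eqref{D2} read inside $\bB$. For $c \in O_\bB(a)$ we invoke the hypothesis: $b \preceq a$ means $a \in O_\bA(b)$, and since $O_\bA(b)$ contains $a$ and is closed under right multiplication, it contains the whole right orbit $O_\bA(a)$; moreover right orbits computed inside $\bB$ are contained in the corresponding right orbits computed inside $\bA$, so $O_\bB(a) \subseteq O_\bA(a) \subseteq O_\bA(b)$, whence \eqref{D2} read inside $\bA$ gives $f(b,c) = b$. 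Combining the two cases, $f(b,c) = b$ for all $c \in \bB$, which is exactly the statement $O_\bB(b) = \{b\}$.

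Applying the previous proposition to $\bB$ now yields $|\bB| \le 3$ or $\bB \cong \mathbb{H}_n$ or $\mathbb{H}_n'$ for some $n \ge 1$, as required. I do not expect a serious obstacle here: the argument is essentially a change-of-algebra bookkeeping exercise. The only point demanding a little care is the justification that \eqref{D2} may legitimately be invoked both in $\bA$ and in its subalgebra $\bB$, together with the observation that the relevant right orbits of $\bB$ are contained in those of $\bA$, which is what lets the ambient hypothesis $a \in O_\bA(b)$ be transported down to a statement about $\bB$.
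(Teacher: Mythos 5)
Your argument is correct and is exactly the intended deduction: the paper states this corollary without proof immediately after the proposition about two-generated dispersive algebras with $O(b)=\{b\}$, and your reduction (handling the trivial cases, noting $\Sg_\bA\{a,b\}$ is dispersive and clone-minimal, and using $a\in O_\bA(b)$ plus closure of right orbits and \eqref{D2} to get $O(b)=\{b\}$ inside the subalgebra) is precisely the bookkeeping the paper leaves to the reader.
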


The previous result puts a constraint on the ways an element $a$ can interact with its right orbit $O(a)$. In some cases we can put an even stronger constraint on how the right orbits can behave.

\begin{defn} We say that a dispersive algebra $\bA$ is \emph{inert} if $O(a)$ is a projection subalgebra for each $a \in \bA$.
\end{defn}

\begin{prop} If $\bA$ is a dispersive algebra, then the union of all of the maximal strongly connected components of $D_\bA$ forms an inert subalgebra of $\bA$.
\end{prop}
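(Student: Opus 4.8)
The plan is to write $M$ for the union of all maximal strongly connected components of $D_\bA$ --- here ``maximal'' means maximal in the reachability order $\preceq$ on strongly connected components, equivalently a component $U$ with the property that $a \in U$ and $a \to b$ in $D_\bA$ force $b \in U$ --- and to verify directly that $M$ is closed under $f$ and that the resulting algebra is inert. The one fact I would isolate first is that if $a$ lies in a maximal strongly connected component $U$, then $O(a) = U$: by maximality every edge of $D_\bA$ out of $a$ stays inside $U$, so iterating gives $O(a) \subseteq U$, while strong connectedness of $U$ gives $U \subseteq O(a)$. In particular $U$ is itself a right orbit, so Proposition~\ref{dispersive-strong-inert} tells us $U$ is a projection subalgebra of $\bA$.

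To see that $M$ is a subalgebra, I would take $a,b \in M$ and let $U$ be a maximal strongly connected component containing $a$. If $f(a,b) = a$ then $f(a,b) \in U \subseteq M$; otherwise $a \to f(a,b)$ is an edge of $D_\bA$, so $f(a,b) \in U \subseteq M$ by maximality of $U$. Hence $M$ is closed under $f$, and so is a subalgebra.

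Finally, to check inertness of the algebra $M$, I would fix $a \in M$ and let $U$ again be the maximal component containing $a$. The right orbit $O_M(a)$ of $a$ taken \emph{inside} $M$ is a subalgebra of $M$, and the same edge argument (now iterated along products by elements of $M$) shows $O_M(a) \subseteq U$. Since $U$ is a projection subalgebra, $f$ restricts to first projection on $U$, hence on $O_M(a)$, so $O_M(a)$ is a projection subalgebra of $M$; as $a \in M$ was arbitrary, $M$ is inert. I do not anticipate a real obstacle here: the only point requiring a little care is not to conflate the right orbit of $a$ in $\bA$ with its (possibly smaller) right orbit in the subalgebra $M$, but since the latter is contained in the former and we only need membership in a projection subalgebra, this costs nothing.
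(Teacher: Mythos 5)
Your proof is correct, and it is essentially the argument the paper intends (the paper states this proposition without proof, as it follows from Proposition \ref{dispersive-strong-inert} together with the definition of a maximal strongly connected component): maximality keeps all right multiplications inside the component, strong connectedness plus \eqref{D2} makes each such component a projection subalgebra, and your care in distinguishing $O_M(a)$ from $O_\bA(a)$ is exactly the right (and sufficient) precaution.
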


The next result will be proved in Appendix \ref{a-dispersive}.

\begin{thm}\label{inert-small} If $\bA$ is a clone-minimal inert dispersive algebra which is generated by two elements $a,b$, and if $|O(b)| \le 2$, then $|O(a)| \le 2$ and $\bA$ is isomorphic to a subalgebra of $\cF_\cD(x,y)$.
\end{thm}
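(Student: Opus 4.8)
The plan is to combine inertness with the ``nice term'' normalization of Proposition \ref{nice-terms} and the dispersive homomorphism criteria of Propositions \ref{dispersive-hom-check} and \ref{dispersive-hom}. If $\{a,b\}$ is a subalgebra of $\bA$ then $\bA = \{a,b\}$ is a two-element projection algebra (since \eqref{D2} forces $f$ to restrict to $\pi_1$ there) with $|O(a)| = 1$, and it embeds into $\cF_\cD(x,y)$; so assume $\{a,b\}$ is not a subalgebra. Then Definition \ref{defn-dispersive} supplies a surjection $\bC := \Sg_{\bA^2}\{(a,b),(b,a)\} \twoheadrightarrow \cF_\cD(x,y)$, and since $\bA = \Sg\{a,b\}$ with $a \ne b$ we have $a \notin O(b)$ and $b \notin O(a)$, so $\bA = O(a)\cup O(b)$ with $|O(b)| \le 2$. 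Now apply Proposition \ref{nice-terms} to replace $f$ by a term $g_1 \in \Clo_2^{\pi_1}(f)$ with $g_1(g_1(x,y),y) \approx g_1(x,y)$: by Propositions \ref{dispersive-reduct} and \ref{digraph-clone-inv} the algebra $(A,g_1)$ is again clone-minimal, dispersive, inert, and has the same right orbits as $(A,f)$, and since $f \in \Clo_2^{\pi_1}(g_1)$ (by clone-minimality together with Proposition \ref{prop-pi1}) any embedding $(A,g_1)\hookrightarrow\cF_\cD(x,y)$ pulls back to an embedding of $(A,f)$, using that the only binary terms of $\cF_\cD(x,y)$ are $x,y,xy,yx$. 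So we may assume henceforth that $f(f(x,y),y) \approx f(x,y)$.

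The key observation is that, since $O(a)$ is a projection subalgebra (inertness) and $\bA \setminus O(a) \subseteq O(b)$ has at most two elements, every non-loop edge of $D_\bA$ out of a vertex $u \in O(a)$ has the form $u \to f(u,d)$ with $d \in O(b)$; moreover $f(f(x,y),y) \approx f(x,y)$ forbids two consecutive edges of $D_\bA$ from sharing a label. When $|O(b)| = 1$ this immediately forces $O(a) = \{a, f(a,b)\}$, hence $|O(a)| \le 2$; then $\bA = O(a)\sqcup O(b)$ has at most three elements, and a direct inspection of the finitely many cases shows $\bA$ is isomorphic to one of the subalgebras $\{x\}$, $\{x,xy\}$, $\{x,xy,yx\}$ of $\cF_\cD(x,y)$.

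It remains to treat $|O(b)| = 2$, which is the heart of the argument. Write $O(b) = \{b,b'\}$; by inertness this is a two-element projection subalgebra, so $f(b,b') = b$ and $f(b',b) = b'$. One first argues $O(a) \cap O(b) = \emptyset$: if $b' \in O(a)$, then since $\bA = O(a)\cup\{b\}$ the previous paragraph forces $O(a) = \{a,f(a,b)\}$ and $\bA = \{a,b,b'\}$, and the multiplication is then pinned down well enough to make $f(a,b) = b' = f(b,a)$, contradicting the inequality $f(a,b) \ne f(b,a)$ valid in every dispersive algebra. So $\bA = O(a)\sqcup\{b,b'\}$, and the non-loop edges of $D_\bA$ inside $O(a)$ carry labels from $\{b,b'\}$ with no two consecutive edges sharing a label. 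Feeding the dispersive homomorphism criterion (Proposition \ref{dispersive-hom}) for the pairs $(a,b)$ and $(a,b')$ — which, via Proposition \ref{prop-right-pi1}, constrains which elements of $\Clo_2^{\pi_1}(f)$ can act trivially on them — together with clone-minimality of $\bA$, one shows that $O(a)$ contains no directed cycle and that the two alternating walks leaving $a$ collapse after a single step, so $O(a) = \{a,f(a,b)\}$, $O(b') = \{b'\}$, and $|A| \le 4$. A final comparison of the resulting multiplication table with the isomorphism types of subalgebras of $\cF_\cD(x,y)$ (the trivial algebra, the two-element projection algebra, the three-element algebra $\{x,xy,yx\}$, and $\cF_\cD(x,y)$ itself) completes the proof.

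The main obstacle is the last step: when $|O(b)| = 2$ the orbit $O(a)$ is a priori a ``theta-shaped'' union of two alternating walks issuing from $a$, and ruling out the long walks and even cycles that are consistent with \eqref{D2} and inertness alone appears to require clone-minimality of $\bA$ itself, not merely of its proper subalgebras; carrying this out in detail is what occupies the appendix.
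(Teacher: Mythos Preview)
Your setup is sound: normalizing to $f(f(x,y),y)\approx f(x,y)$, reading off the alternating-label constraint on edges inside $O(a)$, and establishing $O(a)\cap O(b)=\emptyset$ all match the paper's opening moves (the paper derives a three-element meld from $O(a)\cap O(b)\ne\emptyset$ rather than invoking $f(a,b)\ne f(b,a)$, but either contradiction works). The reduction from the original basic operation to the normalized $g_1$ is also valid, for the reason you give.

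The gap is exactly where you say it is, and your sketch does not contain the ideas that actually make the $|O(b)|=2$ case go through. Two structural ingredients are missing. First, the paper's proof is by \emph{induction on $|\bA|$}: when $O(a)$ is not a directed cycle, every $c\in O(a)\setminus\{a\}$ has $a\notin\Sg_\bA\{b,c,d\}$, so the induction hypothesis forces each of $\Sg_\bA\{b,c\}$ and $\Sg_\bA\{c,d\}$ to embed in $\cF_\cD(x,y)$; this is what cuts the problem down to a finite (though still lengthy) case analysis on how edges out of $a$ and out of $f(a,b)$ can be labeled. Your outline has no analogue of this step and no substitute for it. Second, the assertion ``$O(a)$ contains no directed cycle'' is not something one proves up front and then uses; the paper instead splits on whether $O(a)$ (and separately $O(b)$) is a cycle, and the cycle cases require a genuinely different technique: a refined normalization arranging $f(a,b)=a$ (Claim~0 in the appendix), followed by an explicit computation inside $\Sg_{\bA^2}\{(a,b),(b,a)\}$ and a counting argument showing that strictly more than half the elements of $O(a)$ generate all of $\bA$ together with each of $b$ and $d$, which is then played off against the observation that no single $c\in O(a)$ can do both. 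Appealing to Proposition~\ref{dispersive-hom} for the pairs $(a,b)$ and $(a,b')$, as you suggest, does not by itself produce either of these mechanisms.
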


As soon as both right orbits have size at least $3$, it is possible to find interesting examples of minimal inert dispersive algebras which are generated by two elements.

\begin{defn} For every $n \ge 1$ and every $0 \le k < n$, we define the minimal inert dispersive \emph{wheel} algebras $\mathbb{W}_{n,k}^a, \mathbb{W}_n^s$ on the set $\{a,b,c_0, ..., c_{n-1},d_0, ..., d_{n-1}\}$ by the multiplication tables
\[
\begin{array}{c c c}
\begin{array}{c | c c c c} \mathbb{W}_{n,k}^a & a & b & c_j & d_j\\ \hline a & a & c_0 & a & c_{j+1} \\ b & d_k & b & d_j & b \\ c_i & c_i & c_i & c_i & c_i\\ d_i & d_i & d_i & d_i & d_i\end{array} & &
\begin{array}{c | c c c c} \mathbb{W}_n^s & a & b & c_j & d_j\\ \hline a & a & c_0 & a & c_{j+1} \\ b & d_0 & b & d_{j+1} & b \\ c_i & c_i & c_i & c_i & c_i\\ d_i & d_i & d_i & d_i & d_i\end{array}
\end{array}
\]
where the indices are considered cyclically modulo $n$. We define $\mathbb{W}_n^a$ to be $\mathbb{W}_{n,0}^a$.
\end{defn}

For $n = 2$, the labeled digraphs of the wheel algebras $\mathbb{W}_2^a, \mathbb{W}_2^s$ are pictured side by side below (to understand why they are named ``wheel'' algebras, the reader should try to draw $\mathbb{W}_6^s$ as symmetrically as possible).
\begin{center}
\begin{tabular}{ccc}
\begin{tikzpicture}[scale=1.5]
  \node (a) at (0,0) {$a$};
  \node (b) at (2,0) {$b$};
  \node (c0) at (-0.5,1) {$c_0$};
  \node (c1) at (0.5,1) {$c_1$};
  \node (d0) at (1.5,1) {$d_0$};
  \node (d1) at (2.5,1) {$d_1$};
  \draw [->] (a) edge ["{$b,d_1$}"] (c0) (a) edge ["{$d_0$}"'] (c1);
  \draw [->] (b) edge ["{$a,c_0$}"] (d0) (b) edge ["{$c_1$}"'] (d1);
\end{tikzpicture}
& \;\;\;\;\;\;\; &
\begin{tikzpicture}[scale=1.5]
  \node (a) at (0,0) {$a$};
  \node (b) at (2,0) {$b$};
  \node (c0) at (-0.5,1) {$c_0$};
  \node (c1) at (0.5,1) {$c_1$};
  \node (d0) at (1.5,1) {$d_0$};
  \node (d1) at (2.5,1) {$d_1$};
  \draw [->] (a) edge ["{$b,d_1$}"] (c0) (a) edge ["{$d_0$}"'] (c1);
  \draw [->] (b) edge ["{$a,c_1$}"] (d0) (b) edge ["{$c_0$}"'] (d1);
\end{tikzpicture}
\end{tabular}
\end{center}
The argument which shows that the wheel algebras are dispersive is almost identical to the argument for the hoop algebras: \eqref{D2} follows by visual inspection, and every pair of elements other than $a,b$ generates a subalgebra of size at most $3$ which is isomorphic to a subalgebra of the dispersive algebra $\cF_\cD(x,y)$. That $\Sg\{(a,b),(b,a)\}$ maps surjectively onto $\cF_\cD(x,y)$ follows from Propositions \ref{dispersive-hom-check}, \ref{dispersive-hom} as a consequence of $a \not\in a \cdot O(b)$ and the fact that there is no edge from any element to $a$.

To check that the wheel algebras are inert, we just need to check that the right orbits $O(a) = \{a, c_0, ..., c_{n-1}\}$ and $O(b) = \{b, d_0, ..., d_{n-1}\}$ are projection subalgebras, since every right orbit is contained in one of $O(a), O(b)$. The next result implies that the wheel algebras are clone-minimal.

\begin{prop} The wheel algebra $\mathbb{W}_{n,k}^a$ is isomorphic to $\mathbb{W}_{n,n-1-k}^a$. If $n$ is odd, then $\mathbb{W}_n^s$ is isomorphic to $\mathbb{W}_{n,\frac{n-1}{2}}^a$. For every $n \ge 2$ and every $k \ne \frac{n-1}{2}$, $\Sg_{(\mathbb{W}_{n,k}^a)^2}\{(a,b),(b,a)\}$ is isomorphic to $\mathbb{W}_{2n}^s$.

Finally, if $\mathbb{A} = (A,f) \in \{\mathbb{W}_{n,k}^a, \mathbb{W}_n^s\}$, then any nontrivial term $g \in \Clo_2^{\pi_1}(f)$ defines an algebra $(A,g)$ which is isomorphic to $\bA$.
\end{prop}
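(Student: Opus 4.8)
The plan is to verify the three isomorphism claims by exhibiting explicit index-permuting maps, and to deduce the final assertion from a direct analysis of $\Clo_2^{\pi_1}(f)$ together with a strong induction on term complexity. For $\mathbb{W}_{n,k}^a\cong\mathbb{W}_{n,n-1-k}^a$, the bijection swapping $a\leftrightarrow b$ and sending $c_j\mapsto d_{j+n-1-k}$, $d_j\mapsto c_{j+n-k}$ (indices mod $n$) is an isomorphism, every product to be checked reducing to arithmetic mod $n$. For $\mathbb{W}_n^s\cong\mathbb{W}_{n,(n-1)/2}^a$ with $n$ odd, fix $a,b$ and send $c_j\mapsto c_{2^{-1}j}$, $d_j\mapsto d_{2^{-1}(j+1)-1}$, where $2^{-1}$ denotes the inverse of $2$ modulo $n$; the only substantive checks use $2\cdot 2^{-1}\equiv 1$ and $2^{-1}-1\equiv\frac{n-1}{2}\pmod n$.

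For the third claim one computes directly that $\bS:=\Sg_{(\mathbb{W}_{n,k}^a)^2}\{(a,b),(b,a)\}$ consists of $(a,b)$, $(b,a)$, all ``$c$-type'' pairs $(c_i,d_j)$ with $j-i\in\{k,-k-1\}\pmod n$, and the coordinate-reverses of the latter; this set is visibly closed under $f\times f$. The hypothesis $k\neq\frac{n-1}{2}$ is exactly what guarantees $k\not\equiv -k-1\pmod n$, so the $c$-type pairs split into two distinct residue classes and $|\bS|=4n+2=|\mathbb{W}_{2n}^s|$. Listing the $c$-type and $d$-type elements as $\gamma_0,\dots,\gamma_{2n-1}$ and $\delta_0,\dots,\delta_{2n-1}$ in the cyclic order produced by alternately left-multiplying by $(a,b)$ and $(b,a)$ then yields a bijection $\mathbb{W}_{2n}^s\to\bS$ sending $a\mapsto(a,b)$, $b\mapsto(b,a)$, $c_j\mapsto\gamma_j$, $d_j\mapsto\delta_j$, which one checks directly to be an isomorphism.

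For the final assertion, fix $\bA=(A,f)\in\{\mathbb{W}_{n,k}^a,\mathbb{W}_n^s\}$. Since $\bA$ is dispersive and generated by $a,b$, Proposition~\ref{dispersive-hom-check} gives a surjective homomorphism $\rho\colon\bA\twoheadrightarrow\cF_\cD(x,y)$ with $\rho(a)=x$, $\rho(b)=y$, $\rho(c_i)=xy$ and $\rho(d_i)=yx$ for all $i$; applying $\rho$ to binary terms shows (via Proposition~\ref{prop-pi1} applied to the subalgebra $\{x,xy\}\le\cF_\cD(x,y)$) that $h\in\Clo_2(f)$ lies in $\Clo_2^{\pi_1}(f)$ iff $h(x,y)$ equals $x$ or $xy$ in $\cF_\cD(x,y)$, and that such an $h$ is trivial on $\bA$ iff $h(x,y)=x$ there. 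Given a nontrivial $g\in\Clo_2^{\pi_1}(f)$, write it as a left comb $g=(\cdots((x\cdot h_1)\cdot h_2)\cdots\cdot h_m)$ with $h_i\in\Clo_2(f)$ and evaluate it inside $\bA$. Using that the $c_i,d_i$ are left projections, that $\{a,c_i\}$ and $\{b,d_i\}$ are projection subalgebras, and that $\Sg_\bA\{a,d_i\}$ and $\Sg_\bA\{b,c_i\}$ are the three-element subalgebras isomorphic to subalgebras of $\cF_\cD(x,y)$, one finds that $g$ agrees with $f$ on every pair of elements except possibly $(a,b)$ and $(b,a)$, and that $g(a,b)=c_p$, $g(b,a)=d_q$ for some $p,q\in\ZZ/n$. (Nontriviality is what forces, e.g., $g(a,d_i)=f(a,d_i)$ rather than $g(a,d_i)=a$: it guarantees some $h_j$ with $h_j(x,y)\in\{y,yx\}$ in $\cF_\cD(x,y)$, and such an $h_j$ has $h_j(a,d_i)=d_i$.) Relabelling the $c_i,d_i$ by subtracting $p$ then shows $(A,g)\cong\mathbb{W}_{n,q-p}^a$ when $\bA=\mathbb{W}_{n,k}^a$, and $(A,g)\cong\mathbb{W}_n^s$ when $\bA=\mathbb{W}_n^s$ provided $p=q$.

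Pinning down the ``twist'' $(p,q)$ is the crux, and I expect it to be the main obstacle. I would prove, by strong induction on the size of a witnessing $f$-term for $g\in\Clo_2^{\pi_1}(f)$, that $q-p\in\{k,\,n-1-k\}\pmod n$ when $\bA=\mathbb{W}_{n,k}^a$ and that $p=q$ when $\bA=\mathbb{W}_n^s$. Write $g=f(g_0,h)$ with $g_0\in\Clo_2^{\pi_1}(f)$, $h\in\Clo_2(f)$. If $g_0$ is nontrivial, then $g(a,b)=g_0(a,b)$ and $g(b,a)=g_0(b,a)$ (the values $g_0(a,b),g_0(b,a)$ are left-projection elements), so $(p,q)$ is the twist of $g_0$ and the hypothesis applies. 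If $g_0$ is trivial on $\bA$ and $h(x,y)=y$ in $\cF_\cD(x,y)$, then $h(a,b)\in\rho^{-1}(y)=\{b\}$ and $h(b,a)\in\rho^{-1}(x)=\{a\}$, so $g$ agrees with $f$ at $(a,b)$ and $(b,a)$. If $g_0$ is trivial and $h(x,y)=yx$ in $\cF_\cD(x,y)$, then $h(a,b)=d_l$ and $h(b,a)=c_m$ for some $l,m$ (these lie in $\rho^{-1}(yx)$ and $\rho^{-1}(xy)$); the reversed term $h^\dagger(x,y):=h(y,x)$ lies in $\Clo_2^{\pi_1}(f)$, is nontrivial, has strictly smaller size than $g$, and carries twist $(m,l)$, so the induction hypothesis constrains $l-m$, and unwinding $g(a,b)=f(a,d_l)=c_{l+1}$ together with $g(b,a)=f(b,c_m)$ (which is $d_m$ in $\mathbb{W}_{n,k}^a$ and $d_{m+1}$ in $\mathbb{W}_n^s$) recovers the required relation between $p$ and $q$. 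The remaining case ($g_0$ trivial, $h(x,y)\in\{x,xy\}$) makes $g$ trivial, hence is vacuous. Combining with the first isomorphism claim yields $(A,g)\cong\bA$ in every case, and minimality of the wheel clones then follows exactly as in Proposition~\ref{hoop-reduct}. The device that makes the induction close is precisely that the troublesome case reduces to the \emph{reversed} term $h(y,x)$, which is once more a shorter element of $\Clo_2^{\pi_1}(f)$.
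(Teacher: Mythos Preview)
Your proof of the three isomorphism claims is essentially identical to the paper's: the index-shifting bijections you write down agree (modulo notation) with the paper's maps $\alpha$, $\beta$, $\gamma$, and your computation of $\bS$ as the set of pairs with second-minus-first index in $\{k,-k-1\}$ is exactly how the paper's $\gamma$ is verified to be a bijection.

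Where you genuinely diverge is in the last assertion. The paper does \emph{not} do an induction on term complexity. Instead, after establishing (as you do) that any nontrivial $g\in\Clo_2^{\pi_1}(f)$ is completely determined by $(g(a,b),g(b,a))$, the paper observes that this makes the canonical surjection $\Clo_2(f)=\cF_{\cV(\bA)}(x,y)\to\Sg_{\bA^2}\{(a,b),(b,a)\}$ an \emph{isomorphism}. From there the argument is structural: if $\bA$ has an automorphism swapping $a$ and $b$ (i.e.\ $\bA=\mathbb{W}_n^s$ or $\mathbb{W}_{n,(n-1)/2}^a$), then $\Sg_{\bA^2}\{(a,b),(b,a)\}\cong\bA$, so every nontrivial $g$ is determined by a single index $j$ with $g(a,b)=c_j$, and a cyclic shift gives $(A,g)\cong\bA$. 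Otherwise one invokes the third isomorphism claim to identify $\Clo_2(f)$ with $\mathbb{W}_{2n}^s$, and reads off from the explicit $\gamma$ that the possible values of $(g(a,b),g(b,a))$ are exactly $(c_i,d_{i+k})$ or $(c_{i+k+1},d_i)$, which are handled by a shift together with the first isomorphism claim.

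Your induction works and is self-contained --- the recursive step reducing to the reversed subterm $h^\dagger$ is the right idea, and the arithmetic $q-p=m-l-1$ does land back in $\{k,n-1-k\}$ as you say. The trade-off: the paper's route is shorter and explains \emph{why} the constraint on $(p,q)$ takes the form it does (it is literally the element set of $\Sg_{\bA^2}\{(a,b),(b,a)\}$, already computed), whereas your induction recovers the constraint by hand but avoids relying on the third claim to prove the fourth. One small note: your parenthetical justification that $g(a,d_i)=c_{i+1}$ via ``some $h_j$ with $h_j(a,d_i)=d_i$'' is correct but compressed; the cleanest way to say it is that $\rho(g(a,d_i))=g(x,yx)=xy$ in $\cF_\cD$ (since $g$ nontrivial means $g$ acts as the basic operation there), forcing $g(a,d_i)\in\rho^{-1}(xy)\cap\Sg\{a,d_i\}=\{c_{i+1}\}$.
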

\begin{proof} Note that since the wheel algebras are inert and since the elements $c_i,d_i$ have no outgoing edges, the only values of the multiplication which are not determined by the digraph structure (ignoring the labels) are those of the form $a\cdot d$ with $d \in O(b)$ and $b\cdot c$ with $c \in O(a)$. So it's enough to check a candidate isomorphism between wheel algebras by checking that it preserves the digraph structure and that it is compatible with the multiplication on pairs in $\{a\}\times O(b) \cup \{b\} \times O(a)$.

An isomorphism $\alpha$ from $\mathbb{W}_{n,k}^a$ to $\mathbb{W}_{n,n-1-k}^a$ is given by taking
\[
\alpha(a) = b, \;\; \alpha(b) = a, \;\; \alpha(c_i) = d_{i-k-1}, \;\; \alpha(d_i) = c_{i-k},
\]
where the indices are taken modulo $n$. To check that $\alpha$ is compatible with the multiplications $f_{n,k}^a, f_{n,n-k-1}^a$ of $\mathbb{W}_{n,k}^a$ and $\mathbb{W}_{n,n-1-k}^a$, note that we have
\begin{align*}
\alpha(f_{n,k}^a(a,b)) &= \alpha(c_0) = d_{n-k-1} = f_{n,n-k-1}^a(b,a),\\
\alpha(f_{n,k}^a(b,a)) &= \alpha(d_k) = c_0 = f_{n,n-k-1}^a(a,b),\\
\alpha(f_{n,k}^a(a,d_i)) &= \alpha(c_{i+1}) = d_{i-k} = f_{n,n-k-1}^a(b,c_{i-k}),\\
\alpha(f_{n,k}^a(b,c_i)) &= \alpha(d_i) = c_{i-k} = f_{n,n-k-1}^a(a,d_{i-k-1}).
\end{align*}

If $n$ is odd, then $2$ is invertible in $\ZZ/n$, and we can define an isomorphism $\beta$ from $\mathbb{W}_n^s$ to $\mathbb{W}_{n,\frac{n-1}{2}}^a$ by taking
\[
\beta(a) = a, \;\; \beta(b) = b, \;\; \beta(c_i) = c_{\frac{i}{2}}, \;\; \beta(d_i) = d_{\frac{i-1}{2}}.
\]
To check that $\beta$ is compatible with the multiplications $f_n^s, f_{n,\frac{n-1}{2}}^a$ of $\mathbb{W}_n^s$ and $\mathbb{W}_{n,\frac{n-1}{2}}^a$, note that we have
\begin{align*}
\beta(f_n^s(a,b)) &= \beta(c_0) = c_0 = f_{n,\frac{n-1}{2}}^a(a,b),\\
\beta(f_n^s(b,a)) &= \beta(d_0) = d_{\frac{n-1}{2}} = f_{n,\frac{n-1}{2}}^a(b,a),\\
\beta(f_n^s(a,d_i)) &= \beta(c_{i+1}) = c_{\frac{i+1}{2}} = f_{n,\frac{n-1}{2}}^a(a,d_{\frac{i-1}{2}}),\\
\beta(f_n^s(b,c_i)) &= \beta(d_{i+1}) = d_{\frac{i}{2}} = f_{n,\frac{n-1}{2}}^a(b,c_{\frac{i}{2}}).
\end{align*}

For the third isomorphism, for any $n,k$ with $k \ne \frac{n-1}{2}$ we define $\gamma$ from $\mathbb{W}_{2n}^s$ to $\Sg_{(\mathbb{W}_{n,k}^a)^2}\{(a,b),(b,a)\}$ by
\begin{align*}
\gamma(a) &= (a,b), \;\; \gamma(c_{2i}) = (c_i,d_{i+k}), \;\; \gamma(c_{2i+1}) = (c_{i+k+1},d_i),\\
\gamma(b) &= (b,a), \;\; \gamma(d_{2i}) = (d_{i+k},c_i), \;\; \gamma(d_{2i+1}) = (d_i, c_{i+k+1}).
\end{align*}
We use the assumption $k \ne \frac{n-1}{2}$ in order to ensure that $\gamma$ is injective: if $\gamma(c_{2i}) = \gamma(c_{2j+1})$ for some $i,j$, then $c_i = c_{j+k+1}$ and $d_{i+k} = d_j$, which can only happen if $i-j \equiv k+1 \equiv -k \pmod{n}$.

To check that $\gamma$ is compatible with the multiplications $f_{2n}^s, f_{n,k}^a$ of $\mathbb{W}_{2n}^s$ and $\mathbb{W}_{n,k}^a$, note that we have
\begin{align*}
\gamma(f_{2n}^s(a,b)) &= \gamma(c_0) = (c_0,d_k) = f_{n,k}^a((a,b), (b,a)),\\
\gamma(f_{2n}^s(a,d_{2i})) &= \gamma(c_{2i+1}) = (c_{i+k+1},d_i) = f_{n,k}^a((a,b), (d_{i+k},c_i)),\\
\gamma(f_{2n}^s(a,d_{2i+1})) &= \gamma(c_{2i+2}) = (c_{i+1},d_{i+k+1}) = f_{n,k}^a((a,b), (d_i,c_{i+k+1})).
\end{align*}

Now we turn to the last claim. Since every pair of elements of $\bA$ other than $a,b$ generates a subalgebra isomorphic to a subalgebra of $\cF_\cD(x,y)$, any nontrivial $g \in \Clo_2^{\pi_1}(f)$ agrees with $f$ on any such pair. Thus any such $g$ is completely determined by the values of $g(a,b) \in O(a)$ and $g(b,a) \in O(b)$. As a consequence, we see that the natural map from $\Clo_2(f) = \cF_{\cV(\bA)}(x,y)$ to the algebra $\Sg_{\bA^2}\{(a,b),(b,a)\}$ is an isomorphism.

If $\bA$ has an automorphism that swaps $a$ and $b$ - that is, if $\bA$ is $\mathbb{W}_n^s$ or $\mathbb{W}_{n,\frac{n-1}{2}}^a$ - then $\Sg_{\bA^2}\{(a,b),(b,a)\}$ is isomorphic to $\bA$ and every nontrivial $g \in \Clo_2^{\pi_1}(f)$ is determined by the $j$ such that $g(a,b) = c_j$. In this case, $(A,f)$ is isomorphic to $(A,g)$ by the map which sends $a,b$ to themselves, and increases the index in each $c_i,d_i$ by $j$ modulo $n$.

Now suppose that $\bA$ is $\mathbb{W}_{n,k}^a$ for some $k \ne \frac{n-1}{2}$. Making use of the automorphism $\gamma$ from $\mathbb{W}_{2n}^s$ to $\Sg_{\bA^2}\{(a,b),(b,a)\}$, we see that every nontrivial $g \in \Clo_2^{\pi_1}(f)$ is determined by the $j \in \ZZ/2n$ such that
\[
(g(a,b), g(b,a)) = g(\gamma(a), \gamma(b)) = \gamma(c_j).
\]
If $j$ is even, say $j = 2i$, then by the definition of $\gamma$ we have $g(a,b) = c_i$ and $g(b,a) = d_{i+k}$, so $(A,f)$ is isomorphic to $(A,g)$ by shifting the indices on the $c$s and $d$s forward by $i$ modulo $n$. If $j$ is odd, say $j = 2i+1$, then we have $g(a,b) = c_{i+k+1}$ and $g(b,a) = d_i$, so $\mathbb{W}_{n,n-1-k}^a$ is isomorphic to $(A,g)$ by shifting the indices forward by $i+k+1$ modulo $n$, and composing this isomorphism with $\alpha$ gives us an isomorphism between $(A,f)$ and $(A,g)$.
\end{proof}

\begin{rem} The author has become convinced, through a process of exhaustive (and exhausting) casework, of the following claim: up to isomorphism, the only clone-minimal inert dispersive algebras generated by two elements with size $\le 6$ are $\cF_\cD(x,y)$, its three element subalgebra $\{x,xy,yx\}$, and the wheel algebras $\mathbb{W}_2^a$, $\mathbb{W}_2^s$. It would be wonderful to have a formal proof of this claim that was less than infinitely long.
\end{rem}

We'll end this subsection with a few conjectures about clone-minimal dispersive algebras which seem to be fiendishly difficult to either prove or construct counterexamples to. Recall that a \emph{weakly connected component} of a directed graph is just a connected component of the undirected graph which we get by forgetting the directions of the edges.

\begin{conj} If $\bA$ is a clone-minimal dispersive algebra, then for any $a \ne b \in \bA$, there is a surjective homomorphism from $\Sg\{a,b\}$ to a two element projection algebra. Equivalently, the digraph $D_{\Sg\{a,b\}}$ has exactly two weakly connected components, that is, $O(a) \cap O(b) = \emptyset$ in $\Sg_\bA\{a,b\}$.
\end{conj}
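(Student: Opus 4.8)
The plan is to argue by contradiction, choosing a counterexample $\bA = \Sg\{a,b\}$ of minimal cardinality. Since $A = O(a)\cup O(b)$ by Proposition \ref{prop-right-gen}, the three formulations coincide --- if $O(a)\cap O(b)=\emptyset$ the two right orbits partition $A$ and collapsing each to a point is a surjection onto the two-element first-projection algebra, while conversely such a surjection (or disjointness of the orbits) forces $D_{\Sg\{a,b\}}$ to have exactly two weakly connected components --- so it suffices to reach a contradiction from $O(a)\cap O(b)\neq\emptyset$. First I would check that this property passes to nontrivial quotients: for a congruence $\theta$, the algebra $\bA/\theta$ is generated by $a/\theta,b/\theta$, is dispersive by Theorem \ref{dispersive-pseudovariety}, cannot be a projection algebra (a $2$-generated one has at most two elements, and a two-element projection algebra has disjoint right orbits), hence is a nontrivial member of $\Var(\bA)$ and therefore clone-minimal; and the image of any element of $O(a)\cap O(b)$ lies in $O(a/\theta)\cap O(b/\theta)$. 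Thus a proper nontrivial quotient of $\bA$ would be a strictly smaller counterexample, so $\bA$ is simple.

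Next I would examine $\bS = \Sg_{\bA^2}\{(a,b),(b,a)\}$, a symmetric subdirect square of $\bA$. Simplicity forces its linking congruence to be trivial or full. In the trivial case $\bS$ is the graph of an automorphism $\sigma$, necessarily with $\sigma^2=\mathrm{id}$ and $\sigma(a)=b$; and $\{a,b\}$ cannot be a subalgebra of $\bA$ (else $f$ is first projection there, $\bA=\{a,b\}$, and the orbits are disjoint), so dispersiveness gives $\bS\twoheadrightarrow\cF_\cD(x,y)$, hence $\bA\cong\bS\twoheadrightarrow\cF_\cD(x,y)$ with $\{a,b\}\mapsto\{x,y\}$ by Proposition \ref{dispersive-hom-check}. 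But then any element of $O(a)\cap O(b)$ maps into $O(x)\cap O(y)=\emptyset$, a contradiction. So $\bS$ must be linked.

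The linked case is the crux, and the step I expect to be the main obstacle. Here the goal would be to contradict the fact (the displayed proposition on $\Sg_{\bA^2}\{(a,b),(b,a)\}$) that $\bS$ meets the diagonal in no element. One has a surjection $\alpha:\bS\twoheadrightarrow\cF_\cD(x,y)$ whose fibres $\alpha^{-1}(xy)=O((a,b))\setminus\{(a,b)\}$ and $\alpha^{-1}(yx)=O((b,a))\setminus\{(b,a)\}$ are subalgebras with first projections contained in $O(a)$ and $O(b)$; replacing $f$ by one of the nontrivial terms of Propositions \ref{nice-terms} and \ref{dispersive-nice-terms} (which does not change the right orbits, by Proposition \ref{digraph-clone-inv}) one may also assume the maximal strongly connected components of $D_\bA$ are first-projection subalgebras (Proposition \ref{dispersive-strong-inert}), and since $O(a)\cap O(b)$ is nonempty and closed under reachability it contains such a component $U$. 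What would finish the argument is a rectangularity statement for dispersive relations analogous to Theorem \ref{strong-binary}: if $\bS\le_{sd}\bA\times\bA$ is symmetric and linked and $U$ is a maximal strongly connected component of $D_\bA$ with $(U\times U)\cap\bS\neq\emptyset$, then $U\times U\subseteq\bS$ --- which would yield $(c,c)\in\bS$ for $c\in U$. Proving such a statement, or producing a pair in $(U\times U)\cap\bS$ some other way, is precisely the difficulty that has resisted resolution so far; I expect any complete proof to require genuinely new structural information about linked subdirect squares of dispersive algebras.
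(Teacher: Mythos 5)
There is no proof of this statement to compare against: in the paper it is stated as a conjecture, explicitly listed among the questions about clone-minimal dispersive algebras that the author was unable to settle, so any complete argument would have to go beyond what the paper contains. Your proposal does not close that gap, and you say so yourself. The reduction you carry out is sound as far as it goes: the equivalence of the three formulations (using $A = O(a)\cup O(b)$ from Proposition \ref{prop-right-gen} and the fact that each right orbit absorbs the whole algebra on the right, so the orbit partition is a congruence when the orbits are disjoint), the passage to a minimal counterexample and hence to a simple $\bA$ (a proper nontrivial quotient is $2$-generated, dispersive by Theorem \ref{dispersive-pseudovariety}, cannot be a projection algebra since orbit-intersection passes to quotients, hence is clone-minimal and strictly smaller), and the disposal of the unlinked case via the graph-of-an-automorphism argument and Proposition \ref{dispersive-hom-check} are all correct and are exactly the kind of opening moves the paper uses elsewhere (compare Proposition \ref{semi-not-strongly-connected}).

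The genuine gap is the linked case, and it is not a routine missing lemma. The rectangularity statement you invoke as the finishing step --- that a symmetric linked $\bS \le_{sd} \bA\times\bA$ meeting $U\times U$ for a maximal strongly connected component $U$ must contain $U\times U$ --- is modeled on Theorem \ref{strong-binary}, but that theorem is proved using a partial semilattice term, and a clone-minimal dispersive algebra has no nontrivial partial semilattice term (indeed \eqref{D2} together with Proposition \ref{prop-partial-semilattice} rules out any two-element semilattice subalgebra, which is the engine of all such absorption arguments). Moreover, in your setting the desired conclusion $(c,c)\in\bS$ is exactly what dispersiveness forbids, so the hoped-for rectangularity cannot hold as stated for $\bS = \Sg_{\bA^2}\{(a,b),(b,a)\}$; what you would actually need is a proof that linkedness of $\bS$ is itself impossible (or forces disjoint orbits), and no tool in the paper produces that. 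So the proposal is an honest partial reduction of an open problem, not a proof: the crux you flag is precisely where new structural information about linked subdirect squares of dispersive algebras would be required.
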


\begin{conj} If $\bA$ is a clone-minimal dispersive algebra, and if there is some $c \in \Sg\{a,b\}$ such that $\{c,d\}$ is a two element projection subalgebra for all $d \in \Sg\{a,b\}$, then $\Sg\{a,b\} = \{a,b,c\}$. It's enough to show that at least one of $(a,c), (b,c)$ is contained in $\Sg_{\bA^2}\{(a,b),(b,a)\}$.
\end{conj}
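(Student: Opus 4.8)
Throughout I would use that the statement is symmetric in $a,b$ and trivial when $\{a,b\}$ is a subalgebra (then $c\in\{a,b\}$ and $\Sg\{a,b\}=\{a,b\}$), so I assume $\bA=\Sg\{a,b\}$ is a two-generated clone-minimal dispersive algebra with $\{a,b\}$ not a subalgebra. First I record the basic facts about $c$: since $\{c,d\}$ is a two-element dispersive algebra, $f$ is first projection on it (from $f(x,f(x,y))\approx x$, a consequence of \eqref{D2}), so $f(c,d)=c$ and $f(d,c)=d$ for every $d$; hence $O(c)=\{c\}$, the vertex $c$ is a sink of $D_\bA$, and $c\ne a,b$. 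Also $a\notin O(b)$ and $b\notin O(a)$: if, say, $a\in O(b)$ then $O(a)\subseteq O(b)$, so $\bA=O(a)\cup O(b)=O(b)$, and \eqref{D2} forces $f(b,z)=b$ for all $z\in O(b)=\bA$, i.e. $\bA=\{b\}$, impossible. Since $\bA=O(a)\cup O(b)$ (Proposition \ref{prop-right-gen}) and $c\ne a,b$, after swapping $a$ and $b$ we may assume $c\in O(a)\setminus\{a\}$. Write $\bS=\Sg_{\bA^2}\{(a,b),(b,a)\}$.

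\textbf{Step 1 (the reduction).} I would first observe that $(a,c)\notin\bS$: the set $T=\{d\in\bA:(d,c)\in\bS\}$ contains $a$ and, because $c$ is left-absorbing, is closed under right multiplication — if $(d,c)\in\bS$ then $(d,c)\cdot(p,q)=(f(d,p),c)\in\bS$ for every $(p,q)\in\bS$, so $f(d,\bA)\subseteq T$ — hence $O(a)\subseteq T$; but $(c,c)\notin\bS$ by the no-diagonal fact proved above, so $c\notin T$, contradicting $c\in O(a)$. By the paper's reduction it therefore suffices to prove $(b,c)\in\bS$. Now if $(b,c)\in\bS$ then, since $b\notin O(a)$, the pair $(b,c)$ lies in the right orbit $O_\bS((b,a))$, so by Proposition \ref{prop-right-pi1} there is $r\in\Clo_2^{\pi_1}(f)$ with $r(b,a)=b$ and $r(a,b)=c$. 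The reduct $(A,r)$ is clone-minimal (as $r$ is nontrivial and $\bA$ is clone-minimal) and dispersive (Proposition \ref{dispersive-reduct}), and $r$ acts as first projection on the projection subalgebras $\{a,c\}$ and $\{b,c\}$ (Proposition \ref{prop-pi1}). A direct computation — using only these facts, that $c$ is left-absorbing, and idempotence — shows $\Sg_{(A,r)^2}\{(a,b),(b,a)\}=\{(a,b),(b,a),(c,b),(b,c)\}$, a four-element algebra isomorphic to $\cF_\cD(x,y)$ (with $(a,b)\mapsto x$, $(c,b)\mapsto xy$). Since $f\in\Clo(r)$ we have $A=\Sg_r\{a,b\}=\{t(a,b):t\in\Clo_2((A,r))\}=\pi_1\big(\Sg_{(A,r)^2}\{(a,b),(b,a)\}\big)=\{a,b,c\}$, as desired.

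\textbf{Step 2 (the main obstacle: producing $(b,c)\in\bS$).} This is the crux, and where I expect the real difficulty. The plan would be: apply Proposition \ref{nice-terms}/\ref{dispersive-nice-terms} to obtain a nontrivial $g_1\in\Clo_2^{\pi_1}(f)$ with $g_1(g_1(x,y),y)\approx g_1(x,y)$ and $g_1(x,y)$ in a maximal strongly connected component of $D_{\cF_{\cV(\bA)}(x,y)}$, so that $g_1(a,b)$ lies in a maximal component of $D_\bA$ lying above $a$; since $\{c\}$ is itself a maximal component above $a$, one wants to transport $g_1$ — composing with further right multiplications, staying in $\Clo_2^{\pi_1}(f)$ and in the right orbit of $g_1(x,y)$, and using that $c$ absorbs on the left to collapse the other maximal components onto $c$ — until its value at $(a,b)$ becomes $c$, giving $h\in\Clo_2^{\pi_1}(f)$ with $h(a,b)=c$. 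It then remains to arrange $h(b,a)=b$ (yielding $(b,c)=h((b,a),(a,b))\in\bS$), which forces one to control the second coordinate, i.e. to understand the interaction of the right orbits $O(a)$ and $O(b)$ inside $\bS$; I would attempt this via the linked-subdirect-relation machinery (Corollary \ref{cor-linked}) or a dispersive analogue of the strong-product argument of Theorem \ref{strong-binary}, applied to a subdirect relation manufactured from $c$'s left-absorption. The obstruction is that this step seems to require (a special case of) the preceding open conjecture — ruling out $c\in O(b)$, equivalently $O(a)\cap O(b)=\emptyset$ — together with genuinely new structural information about the maximal-component (``inert'') part of a clone-minimal dispersive algebra. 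So: Step 1 is clean and short, but I do not see how to complete Step 2 in general, which is consistent with the statement being recorded as a conjecture.
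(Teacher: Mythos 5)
This statement is one of the open conjectures recorded at the end of Section \ref{s-descriptions}: the paper offers no proof of it (the surrounding text explicitly says these conjectures have resisted both proof and counterexample), so there is no argument of the paper to compare yours against, and your proposal is not a proof of the statement either. What you call Step 1 is correct and is essentially a verification (and slight sharpening) of the reduction already asserted in the conjecture's second sentence: assuming without loss of generality $c\in O(a)\setminus\{a\}$, your absorption argument with $T=\{d:(d,c)\in\bS\}$ together with the paper's no-diagonal proposition correctly rules out $(a,c)\in\bS$, and your computation that, for the witnessing $r\in\Clo_2^{\pi_1}(f)$ with $r(a,b)=c$, $r(b,a)=b$, the set $\{(a,b),(b,a),(c,b),(b,c)\}$ is closed under $r$ (using Proposition \ref{prop-pi1} on the projection subalgebras $\{c,d\}$) does yield $\Sg_{(A,r)}\{a,b\}=\{a,b,c\}$, hence $\Sg_\bA\{a,b\}=\{a,b,c\}$ by clone-minimality. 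So the implication ``$(b,c)\in\bS$ or $(a,c)\in\bS$ $\Rightarrow$ $\Sg\{a,b\}=\{a,b,c\}$'' is fine.

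The genuine gap is exactly your Step 2, which is the entire content of the conjecture: nothing in your proposal produces an element of $\bS$ of the form $(b,c)$ or $(a,c)$. The sketch you give (push a $\preceq$-maximal term from Proposition \ref{nice-terms}/\ref{dispersive-nice-terms} until its value at $(a,b)$ lands on the sink $c$, then control the second coordinate via linked-relation arguments) is only a heuristic: Theorem \ref{strong-binary} is a partial-semilattice tool with no dispersive analogue established in the paper, Corollary \ref{cor-linked} gives a subalgebra $\bC$ with $\bC+\bS=\bA$ but no handle on which pairs over $c$ it forces into $\bS$, and, as you note yourself, even the disjointness $O(a)\cap O(b)=\emptyset$ that your transport step would lean on is the preceding open conjecture. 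So the proposal should be read as: a correct proof of the stated reduction, plus an honest acknowledgement that the main claim remains open — which matches the status the paper assigns to it.
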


\begin{conj} If $\bA$ is a clone-minimal dispersive algebra which is generated by two elements, then $D_\bA$ has at least three strongly connected components.
\end{conj}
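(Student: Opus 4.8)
\noindent\emph{Proof proposal.}
Write $\bA = \Sg_\bA\{a,b\}$. By Proposition~\ref{prop-right-gen} we have $A = O(a)\cup O(b)$, and (by the two propositions preceding the hoop algebras) $b\notin O(a)$ and $a\notin O(b)$, so $a,b$ lie in distinct strongly connected components of $D_\bA$. Suppose for contradiction that $D_\bA$ has exactly two components. Since neither $b\in O(a)$ nor $a\in O(b)$ holds, the two components are $\preceq$-incomparable, hence both $\preceq$-maximal, and they must be $U:=O(a)$ and $V:=O(b)$; thus $U,V$ partition $A$, each is a proper subalgebra, and by Proposition~\ref{dispersive-strong-inert} each is a projection subalgebra. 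Moreover, because $U$ is strongly connected, every $u\in U$ satisfies $a\in O(u)\subseteq O(a)$, so $O(u)=U$; symmetrically $O(v)=V$ for $v\in V$. In particular $\bA$ is \emph{inert}, and every edge of $D_\bA$ inside $U$ carries only labels from $V$, and vice versa.

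\noindent\emph{Small right orbits.}
If $|O(b)|=1$ then, by the structure theorem for clone-minimal dispersive algebras with $O(b)=\{b\}$, either $|A|\le 3$ or $\bA$ is a hoop $\mathbb{H}_n$ or $\mathbb{H}_n'$; in every case $D_\bA$ has at least three strongly connected components (a clone-minimal dispersive algebra on at most three elements is the three-element subalgebra of $\cF_\cD(x,y)$), a contradiction. If $|O(b)|=2$ then $\bA$ is inert with $|O(b)|\le 2$, so by Theorem~\ref{inert-small} $\bA$ is a two-generated subalgebra of $\cF_\cD(x,y)$, and the only nontrivial such subalgebras — namely $\cF_\cD(x,y)$ and its three-element subalgebra — again have at least three components. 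The same applies with $a$ in place of $b$. Hence we may assume $|O(a)|,|O(b)|\ge 3$, so $|A|\ge 6$.

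\noindent\emph{Reduction and the relation $\bS$.}
The partition $\{U,V\}$ is a congruence $\theta$ of $\bA$ (since $f(U,A)\subseteq U$, $f(V,A)\subseteq V$, the $\theta$-class of $f(x,y)$ is that of $x$), with $\bA/\theta$ the two-element first-projection algebra. A minimal-counterexample argument over $|A|$ now reduces to the case where $\bA$ is subdirectly irreducible with monolith $\theta$: any proper congruence with nontrivial quotient produces (via Theorem~\ref{dispersive-pseudovariety} and clone-minimality of nontrivial algebras in $\Var(\bA)$) a smaller two-generated clone-minimal dispersive algebra which still has exactly two components; a proper congruence whose quotient is a first-projection algebra must contain the congruence generated by the edges of $D_\bA$ inside $U$ and inside $V$, hence (strong connectivity) contains $\theta$ and so equals $\theta$; and a proper congruence whose quotient is a second-projection algebra is forced to be the full congruence (it collapses $U$ since $f(u',u)=u'$ on $U$, collapses $V$ likewise, and links the two classes via the pair $(f(a,b),b)$). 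Next consider $\bS=\Sg_{\bA^2}\{(a,b),(b,a)\}$: it is symmetric, subdirect in $\bA^2$, contained in $(U\times V)\sqcup(V\times U)$, and (as $\{a,b\}$ is not a subalgebra) admits a surjection $\alpha:\bS\twoheadrightarrow\cF_\cD(x,y)$. Its linking congruence on $\bA$ lies in $\{0_\bA,\theta,1_\bA\}$; it cannot be $0_\bA$, since then $\bA\cong\bS$ surjects onto $\cF_\cD(x,y)$, forcing $|A|$, $|A/\theta|$, or $1$ to equal $4$, which is impossible for $|A|\ge 6$. In the remaining (linked) case Corollary~\ref{cor-linked} yields a proper subalgebra $\bC<\bA$ with $\bC+\bS=A$; using $\bS\subseteq(U\times V)\sqcup(V\times U)$ one sees $\bC$ meets both $U$ and $V$ with $(\bC\cap U)+\bS=V$ and $(\bC\cap V)+\bS=U$.

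\noindent\emph{Main obstacle.}
What remains — ruling out a subdirectly irreducible, inert, two-generated clone-minimal dispersive algebra whose two right orbits are strongly connected projection subalgebras of size $\ge 3$ glued along $\theta$ (with $\bS$ either linked or linked exactly along $\theta$) — is precisely the regime in which no structure theory for dispersive algebras is currently available, and is where I expect the real difficulty to lie. The natural attack is to first replace $f$ by a nontrivial $g\in\Clo_2^{\pi_1}(f)$ satisfying one of the self-absorption identities of Proposition~\ref{nice-terms} (legitimate by Proposition~\ref{dispersive-reduct}, and harmless since Proposition~\ref{digraph-clone-inv} shows the strongly-connected-component structure is unchanged), and then to run a two-sided analogue of the edge-labelling analysis used for the hoop and wheel algebras: the identity on $g$ forbids consecutive edges from repeating labels, while the fact that $\theta$ is the only nontrivial proper congruence should force the label sets around the two cycles to interlock rigidly, ideally producing a hoop- or wheel-like subquotient with a third component. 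I do not see how to make this label bookkeeping close, which is consistent with the conjecture being open.
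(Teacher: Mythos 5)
There is a genuine gap, and you have named it yourself. The statement you are proving is one of the paper's explicitly open conjectures (the author states that these conjectures ``seem to be fiendishly difficult to either prove or construct counterexamples to''), so the paper contains no proof to compare against, and your proposal does not supply one: after the reductions you stop precisely at the case where $\bA$ is inert, subdirectly irreducible with monolith $\theta$, and both right orbits $O(a)$, $O(b)$ are strongly connected projection subalgebras of size at least $3$. That residual case is not a technicality -- it is the entire content of the conjecture. The small-orbit cases you dispose of are exactly those already settled in the paper (the hoop classification when $O(b)=\{b\}$, and Theorem \ref{inert-small} when $|O(b)|\le 2$), and the appendix's proof of Theorem \ref{inert-small} shows how delicate even the $|O(b)|=2$, both-orbits-cyclic situation already is (Subcase 2.3 there needs the ad hoc Claims 0--4 and a counting argument over edge labels); nothing in your sketch indicates how that bookkeeping would close for orbits of size $\ge 3$, and you say as much.

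The preliminary reductions are mostly sound and use the paper's tools correctly: $A=O(a)\cup O(b)$ (Proposition \ref{prop-right-gen}), $b\notin O(a)$ and $a\notin O(b)$, Proposition \ref{dispersive-strong-inert} to get inertness, the congruence $\theta$ with classes $U,V$, and the argument that any proper nontrivial congruence with nontrivial quotient yields a smaller two-component counterexample. Three caveats, though. First, your treatment of $\bS=\Sg_{\bA^2}\{(a,b),(b,a)\}$ splits into ``linked'' and ``linked exactly along $\theta$,'' but only the linked case is touched (via Corollary \ref{cor-linked}); the $\theta$-linked case -- where $\bS/\!\!\sim$ is the graph of an isomorphism of $\bA/\theta$ and Corollary \ref{cor-linked} does not apply -- is left entirely open, and even in the linked case the conclusion $\bC+\bS=A$ is not used for anything. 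Second, the assertion that every clone-minimal dispersive algebra on at most three elements is the three-element subalgebra of $\cF_\cD(x,y)$ is stated without proof (it is believable, but the paper only records the analogous claim for inert algebras of size $\le 6$ as an unproved remark). Third, the conjecture as stated must implicitly exclude projection algebras (a two-element projection algebra is ``dispersive'' and two-generated with only two components), so your induction should be phrased over nontrivial algebras. None of these caveats is fatal to what you did prove, but the main obstacle you identify is exactly where the conjecture remains open, so the proposal is a partial reduction, not a proof.
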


\section{Proof of the classification}\label{s-classification}

Since the proof is long, we outline the main ideas below.
\begin{itemize}
\item First we prove that if a clone-minimal algebra $\bA$ is not Taylor and not a rectangular band, then $\Clo_2^{\pi_1}(\bA)$ becomes a robust concept (Proposition \ref{prop-pi1-exact} and Corollary \ref{not-rect-pi1}).

\item Next we show that if $\Clo(\bA)$ also does not contain a nontrivial partial semilattice operation, then \eqref{D2} holds in the free algebra on two generators (Theorem \ref{pi1}). The core of the argument (Lemma \ref{lem-pi1-not-pi2}) is based on Proposition \ref{minimal-nontrivial} and Corollary \ref{cor-linked}.

\item We then consider the case where either of the composition operators $*_c, *_1$ gives $\Clo_2^{\pi_1}(\bA)$ a group structure, and prove that in this case $\bA$ must be a $p$-cyclic groupoid (Theorem \ref{p-cyclic}, Corollary \ref{groupy-2}). Finiteness is used in an unexpected way in Lemma \ref{find-fixed-point}, which is proved by a counting argument involving the orbit-stabilizer theorem.

\item If $\Clo_2^{\pi_1}(\bA)$ does not form a group under $*_c, *_1$, then we can apply Corollary \ref{binary-iteration} to find nontrivial operations satisfying $g(g(x,y),y) \approx g(x,y)$ or $t(t(x,y),t(y,x)) \approx t(x,y)$. We show that in this case, any deviation from being dispersive forces $\bA$ to be a meld (Theorem \ref{graphy}, Lemmas \ref{graphy-gen}, \ref{graphy-gen-2}). The proof technique relies heavily on Proposition \ref{minimal-nontrivial}.

\item We complete the proof by showing that if $\bA$ is not Taylor, not a rectangular band, has no nontrivial partial semilattice operation, is not a $p$-cyclic groupoid, and is not a meld, then $\bA$ must be a dispersive algebra (Theorem \ref{all-but-dispersive}).
\end{itemize}

\begin{prop}\label{prop-rectangular-projections} If a clone-minimal algebra $\bA$ has a term $f$ such that there are algebras $\bB_1, \bB_2 \in \Var(\bA)$, each with at least two elements, such that $f^{\bB_1}$ is first projection and $f^{\bB_2}$ is second projection, then $\bA$ is a rectangular band.
\end{prop}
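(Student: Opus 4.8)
The plan is to route everything through Proposition \ref{rect-band-absorb}: pass to the product $\bB_1 \times \bB_2$, where $f$ is forced to be the standard rectangular band operation, extract there the absorption identity characterizing rectangular bands, pull it back to $\bA$, and then read off that $f$ itself is a rectangular band operation.

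First I would normalize $f$. Replacing $f(x_1, \dots, x_n)$ by the binary term $f(x, y, \dots, y)$ if $f$ has arity $>2$ (this term still acts as first projection on $\bB_1$ and as second projection on $\bB_2$), I may assume $f$ is binary; since $\bA$ is clone-minimal, it then suffices to show this binary term generates the clone of a rectangular band. Because $f$ restricts to first projection on $\bB_1$ it depends on its first argument, and because it restricts to second projection on $\bB_2$ it depends on its second argument, so $f$ is not a projection and hence nontrivial; by minimality $\Clo(\bA) = \Clo(f)$. Next I would observe that $f$ is idempotent: if the unary term $g(x) = f(x,x)$ were nontrivial then $f \in \Clo(g)$ by minimality, but every operation in $\Clo(g)$ depends on at most one variable whereas $f$ depends on two, so $g$ must be a projection, i.e.\ $f(x,x) \approx x$.

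The main step is to work inside $\bC = \bB_1 \times \bB_2 \in \Var(\bA)$. There $f$ is pinned down: $f^\bC\bigl((a_1,a_2),(b_1,b_2)\bigr) = \bigl(f^{\bB_1}(a_1,b_1), f^{\bB_2}(a_2,b_2)\bigr) = (a_1, b_2)$, which is idempotent and manifestly satisfies $(xy)(zw) \approx xw$, so $f^\bC$ is a rectangular band operation; taking $a_1 \ne b_1$ in $\bB_1$ and $a_2 \ne b_2$ in $\bB_2$ shows $(a_1,b_2) \notin \{(a_1,a_2),(b_1,b_2)\}$, so $f^\bC$ is nontrivial and $\bC$ is a nontrivial member of $\Var(\bA)$. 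By the forward direction of Proposition \ref{rect-band-absorb}, the absorption identity $((ux)y)(z(wu)) \approx u$ holds in $\bC$; by Proposition \ref{absorb-prop} it then holds in $\bA$. Finally, $f$ is an idempotent binary operation on $\bA$ satisfying this absorption identity, so the converse direction of Proposition \ref{rect-band-absorb} says $f$ is a rectangular band operation, and $\Clo(\bA) = \Clo(f)$ gives the conclusion.

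I do not anticipate a genuine obstacle, but one conceptual point is worth flagging: the defining law $(xy)(zw) \approx xw$ of rectangular bands is \emph{not} an absorption identity, so it cannot be transported from $\bC$ to $\bA$ directly via Proposition \ref{absorb-prop}; the entire purpose of invoking Proposition \ref{rect-band-absorb} is to repackage it as the genuine absorption identity $((ux)y)(z(wu)) \approx u$, which does transport. The only other things requiring (minor) care are the reduction to a binary term and the verification that $\bC$ is nontrivial so that Proposition \ref{absorb-prop} applies.
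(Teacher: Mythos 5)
Your proposal is correct and follows essentially the same route as the paper: pass to $\bB_1 \times \bB_2$, observe that the absorption identity $((ux)y)(z(wu)) \approx u$ of Proposition \ref{rect-band-absorb} holds there, transport it to $\bA$ via Proposition \ref{absorb-prop}, and conclude with the converse direction of Proposition \ref{rect-band-absorb}. The only (welcome) addition is that you verify idempotence of $f$ explicitly, a point the paper leaves implicit.
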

\begin{proof} Since $f$ acts as different projections on $\bB_1$ and $\bB_2$, $f$ must be nontrivial, so $\Clo(\bA) = \Clo(f)$. Assume without loss of generality that $|\bB_1| = |\bB_2| = 2$, and let $\bB = \bB_1\times \bB_2$. Also, assume that $f$ is binary (otherwise replace it with $f(x,y,...,y)$). On $\bB$, $f$ satisfies the identity
\[
f(f(f(u,x),y),f(z,f(w,u))) \approx u,
\]
so this identity must hold on $\bA$ as well, by Proposition \ref{absorb-prop}. By Proposition \ref{rect-band-absorb}, this identity implies that $\bA$ is a rectangular band.
\end{proof}

\begin{prop}\label{prop-pi1-exact} If $\bA$ is a binary clone-minimal algebra which is not Taylor and is not a rectangular band, then for every $f \in \Clo_2(\bA)$, exactly one of $f(x,y) \in \Clo_2^{\pi_1}(\bA)$ or $f(y,x) \in \Clo_2^{\pi_1}(\bA)$ is true.

In this case, under the isomorphism between $\Clo_2(\bA)$ and the free algebra $\cF_{\cV(\bA)}(x,y)$ which sends $\pi_1$ to $x$ and $\pi_2$ to $y$, $\Clo_2^{\pi_1}(\bA)$ is a congruence class of the unique congruence $\sim$ on $\cF_{\cV(\bA)}(x,y)$ such that $\cF_{\cV(\bA)}(x,y)/\!\!\sim$ is a two-element projection algebra.
\end{prop}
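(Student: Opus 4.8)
The plan is to derive both assertions from Proposition~\ref{prop-rectangular-projections}, together with the observation that, since $\bA$ is finite, idempotent, and not Taylor, $HS(\bA)$ --- hence $\Var(\bA)$ --- contains a two-element projection algebra, which I will call $\bB_0$.

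The first step is to record a ``uniformity'' dichotomy forced by the hypothesis that $\bA$ is not a rectangular band: for \emph{any} binary term $t \in \Clo_2(\bA)$, Proposition~\ref{prop-rectangular-projections} applied with $t$ playing the role of $f$ shows that $t$ cannot act as first projection on one algebra of $\Var(\bA)$ of size at least $2$ while acting as second projection on another. Since every binary term acts as one of the two projections on a two-element projection algebra, this means that $t \in \Clo_2^{\pi_1}(\bA)$ if and only if $t$ acts as first projection on \emph{some} two-element projection algebra in $\Var(\bA)$, if and only if $t$ acts as first projection on \emph{every} such algebra. The first assertion of the proposition is now immediate: on $\bB_0$ the term $f$ acts as first or as second projection, so exactly one of $f(x,y)$, $f(y,x)$ acts as first projection on $\bB_0$, i.e.\ lies in $\Clo_2^{\pi_1}(\bA)$.

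For the second assertion I would argue in three parts. \emph{Existence.} The two distinct elements of $\bB_0$ generate it (it is not one-generated, by idempotence), so there is a surjection $\cF_{\cV(\bA)}(x,y) \twoheadrightarrow \bB_0$ sending the generators to the two elements; its kernel is a congruence $\sim$ with $\cF_{\cV(\bA)}(x,y)/{\sim}$ a two-element projection algebra. \emph{Identification.} Since $\cF_{\cV(\bA)}(x,y)/{\sim} \in \Var(\bA)$, the uniformity dichotomy shows that $\Clo_2^{\pi_1}(\bA)$ is exactly the set of binary terms acting as first projection on $\cF_{\cV(\bA)}(x,y)/{\sim}$; a binary term $t$ acts as first projection there precisely when its image equals the image of the first generator, so this set is the congruence class of $\sim$ containing $x$, proving that $\Clo_2^{\pi_1}(\bA)$ is a congruence class of $\sim$. \emph{Uniqueness.} If $\sim'$ is another congruence with $\cF_{\cV(\bA)}(x,y)/{\sim'}$ a two-element projection algebra, then that quotient also lies in $\Var(\bA)$, and by the uniformity dichotomy the condition ``$t$ acts as first projection on $\cF_{\cV(\bA)}(x,y)/{\sim'}$'' is the same as ``$t$ acts as first projection on $\bB_0$'', which is in turn the same condition as for $\cF_{\cV(\bA)}(x,y)/{\sim}$. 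Hence $\sim$ and $\sim'$ have the same two classes and therefore coincide.

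The only genuinely delicate point is getting the uniformity dichotomy of the second paragraph right, and keeping straight which term Proposition~\ref{prop-rectangular-projections} is applied to at each use (the basic operation $f$ in some places, an arbitrary binary term $t$ in others); the rest is routine bookkeeping about congruences and subalgebras of the free algebra on two generators, together with the standard fact that failure of the Taylor condition is exactly what places a two-element projection algebra in $HS(\bA)$.
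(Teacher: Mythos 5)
Your proposal is correct and follows essentially the same route as the paper: use the two-element projection algebra supplied by the non-Taylor hypothesis to show at least one of $f(x,y)$, $f(y,x)$ lies in $\Clo_2^{\pi_1}(\bA)$, invoke Proposition~\ref{prop-rectangular-projections} to exclude both, and identify $\Clo_2^{\pi_1}(\bA)$ with the class of $x$ under the kernel of the surjection $\cF_{\cV(\bA)}(x,y)\twoheadrightarrow \bB_0$. Your explicit uniqueness argument for $\sim$ is a small addition the paper leaves implicit, but it is a correct and welcome piece of bookkeeping.
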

\begin{proof} If $\bA$ is not Taylor, then there must be a two-element projection algebra $\bB$ in $\Var(\bA)$, so for every $f \in \Clo_2(\bA)$, at least one of $f(x,y) \in \Clo_2^{\pi_1}(\bA)$ or $f(y,x) \in \Clo_2^{\pi_1}(\bA)$ is true. By Proposition \ref{prop-rectangular-projections}, they can't both be true unless $\bA$ is a rectangular band.

For the second statement, since $\bB$ is generated by two elements, there is a surjective homomorphism $\alpha$ from $\cF_{\cV(\bA)}(x,y)$ to $\bB$. Then for $f \in \Clo_2^{\pi_1}(\bA)$, we have $\alpha(f(x,y)) = f(\alpha(x),\alpha(y)) = \alpha(x)$ since $f$ restricts to first projection on $\bB$, so $\Clo_2^{\pi_1}(\bA) = \alpha^{-1}(\alpha(x))$ is a congruence class of $\ker \alpha$.
\end{proof}

\begin{cor}\label{not-rect-pi1} If $\bA$ is a binary clone-minimal algebra which is not a rectangular band, then every $f \in \Clo_2^{\pi_1}(\bA)$ acts as first projection on every projection algebra $\bB \in \Var(\bA)$.
\end{cor}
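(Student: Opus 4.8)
The plan is to derive this almost immediately from Proposition \ref{prop-rectangular-projections}. Fix $f \in \Clo_2^{\pi_1}(\bA)$ and a projection algebra $\bB \in \Var(\bA)$; the case $|\bB| \le 1$ is trivial, so assume $|\bB| \ge 2$. Since $f$ is a binary term and $\bB$ is a projection algebra, $f^{\bB}$ must be one of the two projections on $\bB$, and these are distinct because $|\bB| \ge 2$. Suppose for contradiction that $f^{\bB}$ is the second projection. By the definition of $\Clo_2^{\pi_1}(\bA)$ there is also some $\bB' \in \Var(\bA)$ with $|\bB'| \ge 2$ on which $f$ acts as the first projection. Now I would apply Proposition \ref{prop-rectangular-projections} with this single term $f$ and the algebras $\bB_1 = \bB'$, $\bB_2 = \bB$: it yields that $\bA$ is a rectangular band, contradicting the hypothesis. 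Hence $f^{\bB}$ is the first projection, as claimed.

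One degenerate situation deserves a word. If $\bA$ happens to be Taylor, then there are no projection algebras of size $\ge 2$ in $\Var(\bA)$ at all — any such algebra would contain a two-element projection subalgebra, which is forbidden for a Taylor algebra — and in fact $\Clo_2^{\pi_1}(\bA)$ is empty, so the statement holds vacuously. Thus the substantive content is entirely in the non-Taylor case, where the argument above runs without change; invoking the structural description of $\Clo_2^{\pi_1}(\bA)$ from Proposition \ref{prop-pi1-exact} is possible but not necessary.

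I do not anticipate any real obstacle here. The only point needing (minor) care is the observation that a binary term necessarily acts as a \emph{genuine projection} — not merely as ``some term operation'' — on a projection algebra, which is immediate from the definition of a projection algebra, together with the remark that the two projections of an algebra of size $\ge 2$ are distinct. Everything else is a direct citation of Proposition \ref{prop-rectangular-projections}, so the proof should be only a few lines long.
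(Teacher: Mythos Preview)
Your main argument is correct and is exactly the intended one: the corollary is an immediate consequence of Proposition~\ref{prop-rectangular-projections}, and your contradiction argument is the right way to extract it. The paper places the corollary after Proposition~\ref{prop-pi1-exact} for narrative reasons, but no part of that proposition is actually needed.

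One small slip in your aside about the Taylor case: it is \emph{not} true that $\Clo_2^{\pi_1}(\bA)$ is empty when $\bA$ is Taylor. For instance $\pi_1$ itself lies in $\Clo_2^{\pi_1}(\bA)$ whenever $|\bA|\ge 2$, since $\bA\in\Var(\bA)$ and $\pi_1$ is first projection on $\bA$. What \emph{is} true, and what you also say, is that there are no projection algebras of size $\ge 2$ in $\Var(\bA)$ when $\bA$ is Taylor; that alone makes the statement vacuous over $\bB$. But this entire digression is unnecessary: your main argument already covers all cases uniformly, since it never uses the non-Taylor hypothesis. You can simply delete the second paragraph.
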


It is now finally time to apply Proposition \ref{minimal-nontrivial} and Corollary \ref{cor-linked}.

\begin{lem}\label{lem-pi1-not-pi2} Suppose that $\Clo(\bA) = \Clo(f)$ is a binary minimal clone which does not contain any nontrivial partial semilattice operations, and assume further that $f$ acts as first projection on every proper subalgebra or quotient of $\bA$. Then for any $a \ne b \in \bA$ we have
\[
f(a,b) \ne b.
\]
\end{lem}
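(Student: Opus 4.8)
Suppose toward a contradiction that $f(a,b) = b$ for some $a \neq b$. The hypothesis says $f$ acts as first projection on every proper subalgebra or quotient of $\bA$, so in particular $\{a,b\}$ is not a subalgebra (else $f$ would be first projection there, contradicting $f(a,b)=b$), whence $\Sg_\bA\{a,b\} = \bA$. The natural object to examine is the symmetric subdirect square
\[
\bS = \Sg_{\bA^2}\Big\{\begin{bmatrix} a\\ b\end{bmatrix}, \begin{bmatrix} b\\ a\end{bmatrix}\Big\} \le_{sd} \bA \times \bA.
\]
I would first dispose of the case where $\bS$ is not linked: then $\bS/\rho$ is the graph of an automorphism of some proper quotient $\bA/\alpha$; since $f$ is first projection on that quotient, the automorphism must fix everything, so $(b,b) \in \bS$ would force... actually the cleaner route: if the linking congruence is not full then $\bA/\alpha$ is a proper quotient, on which $f$ is first projection, and $(a,b),(b,a)$ map to a linked pair in the graph of an isomorphism — but $f((a,b),(b,a)) = (b, f(b,a))$ and first projection on the quotient forces $a \equiv b \pmod\alpha$; tracing this one sees the connected component of $(a,b)$ in $\bS$ already lies over a single $\alpha$-class, and since $f(a,b)=b$ this class is a nontrivial $\preceq$-related pair — I would push on $b \in O(a)$ directly here.

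The main case is $\bS$ linked. Since $\bA$ has the proper subalgebra $\{a\}$ (idempotence!) — or more usefully, since $f(a,b) = b$ and $f$ is idempotent, note $f(a,f(a,b)) = f(a,b) = b$, so the set $\{b\} - \bS \ni a$ and $b$, and one shows $\{b\}$ behaves like a one-element "right orbit endpoint". The key tool is Corollary~\ref{cor-linked}: $\bS$ is symmetric, linked, subdirect, and $\bA$ has a proper subalgebra (e.g. $\{a\}$), so there is a proper subalgebra $\bC < \bA$ with $\bC + \bS = \bA$. I would then aim to produce, via the additive calculus and the fact that $f(a,b)=b$ puts a "semilattice-like" arrow $a \to b$, an element $b' \ne b$ in some proper subquotient together with $(b',b') \in \Sg\{(a',b'),(b',a')\}$, so that Lemma~\ref{lem-semilattice-iteration} (the Semilattice Iteration Lemma) manufactures a nontrivial partial semilattice term operation — contradicting the hypothesis that $\Clo(f)$ has no such operation. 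Concretely: from $f(a,b) = b$ we get that $\{a,b\}$ is a semilattice subalgebra already if $f(b,a) = b$; the interesting subcase is $f(b,a) = c \notin \{a,b\}$, and one iterates $f^{\infty_1}, f^{\infty_2}$ on the pair $(a,b)$ inside $\bS$ to drive the second coordinate to a fixed point while keeping the first coordinate moving, landing a diagonal element and invoking Lemma~\ref{lem-semilattice-iteration}.

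**Where the difficulty lies.** The routine parts are the linking-congruence dichotomy and the bookkeeping with $+,-$. The real obstacle is the linked case: one must extract a \emph{diagonal} element $(c,c) \in \bS$ (or in a subquotient) from the combination "$\bC + \bS = \bA$" plus "$f(a,b) = b$", and it is not obvious a priori that the proper subalgebra $\bC$ supplied by Corollary~\ref{cor-linked} interacts with the arrow $a \to b$ in a way that closes up. I expect the correct argument iterates: take $\bC_0 = \{a\}$ (or the smallest subalgebra containing $a$), run the $\bC_i, \bD_i$ recursion from the proof preceding Corollary~\ref{cor-linked}, and use minimality of $\bA$ (every proper subalgebra is a projection algebra, so $f$ is first projection on each $\bC_i, \bD_i$ until they fill up) to pin down exactly where a diagonal element must appear — that "last proper stage" analysis, combined with $f(a,b)=b$, should yield $(c,c) \in \bS$ for suitable $c$, at which point Lemma~\ref{lem-semilattice-iteration} finishes. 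Getting that step precise is the crux.
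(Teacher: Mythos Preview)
Your overall skeleton matches the paper's proof exactly: assume $f(a,b)=b$, form $\bS = \Sg_{\bA^2}\{(a,b),(b,a)\}$, split on the linking congruence, and in the linked case invoke Corollary~\ref{cor-linked} to find a proper $\bB < \bA$ with $\bB + \bS = \bA$, then produce a diagonal element of $\bS$ to trigger Lemma~\ref{lem-semilattice-iteration}. So the plan is sound.

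Two places need sharpening, and the second is where your proposal has a genuine gap.

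\textbf{The non-linked case.} Your discussion here is tangled and incomplete. The clean argument splits into two subcases. If the linking congruence $\sim$ on $\bA$ is nontrivial and not full, then $f$ acts as first projection on the proper quotient $\bA/\!\!\sim$, so $a/\!\!\sim = f(a/\!\!\sim,b/\!\!\sim)$, i.e.\ $b = f(a,b) \sim a$; but $a,b$ generate $\bA$, forcing $\sim$ to be full after all, a contradiction. If instead $\sim$ is trivial, then $\bS$ is the graph of an automorphism $\sigma$ with $\sigma(a)=b$ and $\sigma(b)=a$; applying $\sigma$ to $f(a,b)=b$ gives $f(b,a)=a$, so $\{a,b\}$ is a two-element subalgebra on which $f$ is \emph{second} projection, contradicting the hypothesis directly. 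You gestured at both subcases but did not cleanly close either.

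\textbf{The linked case.} Here you correctly obtain $\bB < \bA$ with $\bB + \bS = \bA$, but then you reach for iteration ($f^{\infty_1}, f^{\infty_2}$) and a vague ``last proper stage'' analysis, conceding that ``getting that step precise is the crux.'' None of that machinery is needed, and it is not clear it would work. The paper's argument is a direct two-step computation that you are missing: pick $c,d \in \bB$ with $(a,c),(b,d) \in \bS$. Since $\bB$ is a proper subalgebra, $f$ is first projection on it, so $f(c,d)=c$ and hence
\[
f\Big(\begin{bmatrix}a\\c\end{bmatrix},\begin{bmatrix}b\\d\end{bmatrix}\Big) = \begin{bmatrix}f(a,b)\\f(c,d)\end{bmatrix} = \begin{bmatrix}b\\c\end{bmatrix} \in \bS.
\]
Now split once more. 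If $b,c$ generate $\bA$, then from $(a,b),(a,c)\in\bS$ we get $\{a\}\times\Sg_\bA\{b,c\} = \{a\}\times\bA \subseteq \bS$, so $(a,a)\in\bS$. Otherwise $\Sg_\bA\{b,c\}$ is proper, $f$ is first projection on it, and
\[
f\Big(\begin{bmatrix}a\\b\end{bmatrix},\begin{bmatrix}b\\c\end{bmatrix}\Big) = \begin{bmatrix}f(a,b)\\f(b,c)\end{bmatrix} = \begin{bmatrix}b\\b\end{bmatrix} \in \bS.
\]
Either way a diagonal element appears and Lemma~\ref{lem-semilattice-iteration} finishes. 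The idea you were missing is simply to \emph{use the hypothesis on $\bB$ itself}: the first coordinate is controlled by $f(a,b)=b$, and the second coordinate is tamed because $f$ is first projection on the proper subalgebra $\bB$.
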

\begin{proof} Suppose for contradiction that $a \ne b$ and $f(a,b) = b$. Since $f$ acts as first projection on every proper subalgebra of $\bA$, we must have $\Sg_\bA\{a,b\} = \bA$.

Define $\bS \le_{sd} \bA^2$ by
\[
\bS = \Sg_{\bA^2}\Big\{\begin{bmatrix} a\\ b\end{bmatrix}, \begin{bmatrix} b\\ a\end{bmatrix}\Big\}.
\]
If $\bS$ is the graph of an automorphism of $\bA$ then we must have $f(b,a) = a$, so $\{a,b\}$ is a subalgebra of $\bA$ on which $f$ acts as the second projection, contradicting our assumption on $f$.

If $\bS$ is not linked, then the linking congruence of $\bS$ defines a nontrivial congruence $\sim$ on $\bA$. Since $f$ acts as first projection on $\bA/\!\!\sim$, we have $f(a/\!\!\sim, b/\!\!\sim) = a/\!\!\sim$. But then $b = f(a,b) \sim a$ and since $a,b$ generate $\bA$ we see that every element of $\bA$ is in the same congruence class of $\sim$, which contradicts the assumption that $\sim$ is nontrivial.

Thus $\bS$ must be linked. Then by Corollary \ref{cor-linked} and the fact that $\bA$ has a proper singleton subalgebra (since $\bA$ is idempotent and nontrivial), we see that $\bA$ has a proper subalgebra $\bB < \bA$ such that $\bB + \bS = \bA$.

Choose $c,d \in \bB$ with $a \in c + \bS$ and $b \in d + \bS$, that is, with $(a,c), (b,d) \in \bS$. Since $f$ acts as first projection on $\bB$, we have $f(c,d) = c$, so
\[
\begin{bmatrix} b\\ c \end{bmatrix} = f\Big(\begin{bmatrix} a\\ c\end{bmatrix}, \begin{bmatrix} b\\ d\end{bmatrix}\Big) \in \bS.
\]
If $b,c$ generate $\bA$, then from $(a,b), (a,c) \in \bS$ we see that $(a,a) \in \bS$, and by Lemma \ref{lem-semilattice-iteration} this contradicts the assumption that $\bA$ has no nontrivial partial semilattice terms. Otherwise $f$ acts as first projection on $\{b,c\}$, so
\[
\begin{bmatrix} b\\ b \end{bmatrix} = f\Big(\begin{bmatrix} a\\ b\end{bmatrix}, \begin{bmatrix} b\\ c\end{bmatrix}\Big) \in \bS,
\]
and again by Lemma \ref{lem-semilattice-iteration} this contradicts the assumption that $\bA$ has no nontrivial partial semilattice terms.
\end{proof}

\begin{cor}\label{cor-pi1} Suppose that $\Clo(\bA) = \Clo(f)$ is a binary minimal clone which does not contain any nontrivial partial semilattice operations, and assume further that $f$ acts as first projection on every proper subalgebra or quotient of $\bA$. If $\pi_2 \not\in \Clo_2^{\pi_1}(f)$, then for any $g,h \in \Clo_2^{\pi_1}(f)$ we have
\[
f(g(x,y), h(x,y)) \approx g(x,y).
\]
\end{cor}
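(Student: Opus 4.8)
The plan is to verify the asserted identity pointwise: it is enough to show $f(g(a,b),h(a,b)) = g(a,b)$ for every $a,b \in \bA$. Fix $a,b$ and write $c = g(a,b)$, $d = h(a,b)$; an induction on the construction of $g$ and $h$ inside $\Clo_2^{\pi_1}(f)$ puts $c,d$ into the right orbit $O(a)$. If $\Sg_\bA\{a,b\}$ is a proper subalgebra then $f$, hence $g$ and $h$ by Proposition \ref{prop-pi1}, act as first projection there, so $c=d=a$ and there is nothing to prove; thus assume $\bA = \Sg_\bA\{a,b\}$. If $c=d$ we are done by idempotence, so assume $c\ne d$; Lemma \ref{lem-pi1-not-pi2} gives $f(c,d)\ne d$. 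Assume toward a contradiction that $f(c,d)\ne c$ as well and set $e=f(c,d)\notin\{c,d\}$. Since $e\in\Sg_\bA\{c,d\}$ while $f$ is first projection on proper subalgebras, $\Sg_\bA\{c,d\}$ cannot be proper, so $\bA=\Sg_\bA\{c,d\}$.

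Now run the same analysis of the symmetric subdirect relation $\bS = \Sg_{\bA^2}\{(c,d),(d,c)\}\le_{sd}\bA^2$ that was used in the proof of Lemma \ref{lem-pi1-not-pi2}. If the linking congruence $\theta$ of $\bS$ is nontrivial and proper, then $f$ — hence $g$ and $h$ — act as first projection on the proper quotient $\bA/\theta$, so $c/\theta = a/\theta = d/\theta$; but $\bS$ modulo its linking congruence is the graph of an isomorphism of $\bA/\theta$ interchanging $c/\theta$ and $d/\theta$, which forces $\bA/\theta$ to be one-generated, hence trivial, contradicting that $\theta$ is proper. If $\bS$ is linked, then Corollary \ref{cor-linked} (applied using a singleton subalgebra of $\bA$) yields a proper $\bC<\bA$ with $\bC+\bS=\bA$, and from this one produces — adapting the endgame of Lemma \ref{lem-pi1-not-pi2}, by combining the overlapping pairs coming from $f$ being first projection on $\bC$ — a diagonal pair in a suitable subdirect relation, which via Lemma \ref{lem-semilattice-iteration} gives a nontrivial partial semilattice term in $\Clo(\bA)$, contradicting the hypothesis.

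The remaining possibility — the linking congruence of $\bS$ is trivial, i.e.\ $\bS$ is the graph of an automorphism $\phi$ of $\bA$ with $\phi(c)=d$ — is the crux, and it is exactly here that the hypothesis $\pi_2\notin\Clo_2^{\pi_1}(f)$ is indispensable: an affine algebra over $\bF_p$ with $p$ odd realizes this configuration with $\phi$ a reflection, and the conclusion of the corollary fails for it, so this subcase genuinely cannot be excluded without that hypothesis. My plan for it is to observe that, since $c,d\in O(a)$ generate $\bA$, Proposition \ref{prop-right-gen} forces $\bA=O(a)$, and then to transport, through $\phi$, an expression of a suitable element as an iterated right product from $a$ into one witnessing $\pi_2\in\Clo_2^{\pi_1}(f)$ — the desired contradiction. (Equivalently, first replace $\bA$ by a minimal nontrivial subquotient via Proposition \ref{minimal-nontrivial}, on which $f$ is neither first nor second projection — the latter because $\Clo_2^{\pi_1}(\bA)=\Clo_2^{\pi_1}(f)\not\ni\pi_2$ — and rule out the offending automorphism there.) I expect this last subcase to be by far the hardest part of the argument; everything else is essentially a transcription of the proof of Lemma \ref{lem-pi1-not-pi2}.
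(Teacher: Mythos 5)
Your reduction to the case $\Sg_\bA\{a,b\}=\bA$, $c=g(a,b)\ne d=h(a,b)$, $e=f(c,d)\notin\{c,d\}$ and $\Sg_\bA\{c,d\}=\bA$ is fine, but from there the proposal has real gaps in two of your three subcases for $\bS=\Sg_{\bA^2}\{(c,d),(d,c)\}$. In the linked subcase, the endgame of Lemma \ref{lem-pi1-not-pi2} does not ``adapt'' as you claim: that endgame hinges on the hypothesis $f(a,b)=b$, which lets one turn the pair $(a,c')$ into a pair with first coordinate $b$ and then manufacture $(b,b)\in\bS$; at your pair $(c,d)$ the analogous product only yields $(p,e)\in\bS$ with $e$ a third element, and a diagonal pair $(z,z)\in\bS$ with $z\notin\{c,d\}$ is useless for Lemma \ref{lem-semilattice-iteration}, which needs $(d,d)$ or $(c,c)$ in $\Sg\{(c,d),(d,c)\}$ (affine algebras show that such ``off-diagonal'' diagonal pairs coexist with the absence of partial semilattice terms). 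In the graph-of-automorphism subcase you only offer a plan, and the stated target is wrong: from a single evaluation you cannot hope to show $\pi_2\in\Clo_2^{\pi_1}(f)$ (knowing $u(a,b)=b$ for one pair does not make $u$ the second projection); the hypothesis $\pi_2\notin\Clo_2^{\pi_1}(f)$ is only needed to know that such a $u$ is nontrivial, and the contradiction then comes from Lemma \ref{lem-pi1-not-pi2} applied to $u$, not from the hypothesis itself.

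In fact your own observation inside the ``crux'' case already finishes the whole proof and makes the $\bS$-analysis unnecessary, and this is exactly the paper's argument: since $\Sg_\bA\{c,d\}=\bA\ni b$, there is $t\in\Clo_2(f)$ with $t(c,d)=b$; since one of $t(x,y)$, $t(y,x)$ lies in $\Clo_2^{\pi_1}(f)$ and $g,h\in\Clo_2^{\pi_1}(f)$, the term $u(x,y)=t(g(x,y),h(x,y))$ lies in $\Clo_2^{\pi_1}(f)$ and satisfies $u(a,b)=b$. (Equivalently, $\bA=O(c)\cup O(d)\subseteq O(a)$ gives $b\in O(a)$ and Proposition \ref{prop-right-pi1} produces this $u$.) Since $a\ne b$ and $\pi_2\notin\Clo_2^{\pi_1}(f)$, $u$ is nontrivial, so $\Clo(u)=\Clo(\bA)$ and $u$ acts as first projection on every proper subalgebra or quotient; Lemma \ref{lem-pi1-not-pi2} applied to $u$ then forbids $u(a,b)=b$, the desired contradiction. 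So the correct proof needs no case distinction on the linking congruence of $\bS$ at all; as written, your linked and automorphism cases are unproved, and the automorphism case as planned would not close even with more work in the direction you indicate.
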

\begin{proof} Suppose for the sake of contradiction that there are $g,h \in \Clo_2^{\pi_1}(f)$ such that $f(g(x,y), h(x,y)) \not\approx g(x,y)$. In particular, there must be some $a,b \in \bA$ such that
\[
f(g(a,b),h(a,b)) \ne g(a,b).
\]
Since $f$ acts as first projection on every proper subalgebra of $\bA$, the elements $g(a,b)$ and $h(a,b)$ must generate $\bA$. Thus there is some $t \in \Clo_2(f)$ such that
\[
t(g(a,b),h(a,b)) = b.
\]
Since $t \in \Clo_2(f)$, either $t(x,y) \in \Clo_2^{\pi_1}(f)$ or $t(y,x) \in \Clo_2^{\pi_1}(f)$, and either way we see that the term $u$ defined by
\[
u(x,y) = t(g(x,y),h(x,y))
\]
has $u \in \Clo_2^{\pi_1}(f)$ and $u(a,b) = b$. Then $u$ is nontrivial since $\pi_2 \not\in \Clo_2^{\pi_1}(f)$, so Lemma \ref{lem-pi1-not-pi2} applied to $u$ gives us a contradiction.
\end{proof}

\begin{thm}\label{pi1} If $\Clo(\bA)$ is a binary minimal clone which is not a rectangular band and which does not contain any nontrivial partial semilattice operations, then for any $f,g \in \Clo_2^{\pi_1}(\bA)$ we have
\[
f(x, g(x,y)) \approx x.
\]
If $f,g$ are nontrivial, then we also have $f(g(x,y),x) \not\approx x$, and more generally for any $h \in \Clo_2^{\pi_1}(\bA)$ we have
\[
f(g(x,y),h(x,y)) \not\approx x.
\]
\end{thm}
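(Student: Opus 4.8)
The plan is to reduce, in each of the three cases, to the hypotheses of Corollary \ref{cor-pi1} by passing to a suitable subquotient, and then simply read the conclusions off from that corollary.

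For the identity $f(x,g(x,y)) \approx x$: if $f$ is trivial there is nothing to prove, so assume $f$ is nontrivial, whence $\Clo(\bA)=\Clo(f)$. The identity $f(x,g(x,y))\approx x$ is an absorption identity, so by Proposition \ref{minimal-nontrivial} it suffices to check it in a minimal nontrivial subquotient $\bB$ of $\bA$. If either $f^\bB$ or $g^\bB$ is trivial on $\bB$ the identity is immediate there, so assume both are nontrivial. I would then verify that $\bB$ inherits everything needed: $\Clo(\bB)=\Clo(f^\bB)$ is again a binary minimal clone (every term of $\bB$ is built from $f^\bB$); $\bB$ is not a rectangular band, since otherwise the absorption identity of Proposition \ref{rect-band-absorb} would hold in $\bB$ and hence, by Proposition \ref{absorb-prop}, in $\bA$; $\bB$ has no nontrivial partial semilattice operation, since a nontrivial one would, via Proposition \ref{prop-partial-semilattice} and Lemma \ref{lem-semilattice-iteration}, produce one in $\bA$; and, as the proper subquotients of $\bB$ are all projection algebras and $f,g\in\Clo_2^{\pi_1}(\bA)$, Corollary \ref{not-rect-pi1} shows that $f^\bB$ acts as first projection on every proper subalgebra or quotient of $\bB$ and that $f^\bB,g^\bB\in\Clo_2^{\pi_1}(f^\bB)$. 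Finally $\pi_2\notin\Clo_2^{\pi_1}(f^\bB)$ by Proposition \ref{prop-pi1-exact} (here $\bB$ is not Taylor, since a Taylor $\bB$ would have $\Clo_2^{\pi_1}(\bB)=\{\pi_1\}$, contradicting nontriviality of $f^\bB$). Now Corollary \ref{cor-pi1}, applied in $\bB$ with the two inputs $\pi_1$ and $g^\bB$, yields $f^\bB(x,g^\bB(x,y))\approx f^\bB(\pi_1,g^\bB)\approx\pi_1=x$, which is what we need.

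For the non-identities, observe that $f(g(x,y),x)$ is the case $h=\pi_1$ of $f(g(x,y),h(x,y))$, so it is enough to derive a contradiction from the assumption that $f(g(x,y),h(x,y))\approx x$ holds in $\bA$, where $f,g\in\Clo_2^{\pi_1}(\bA)$ are nontrivial and $h\in\Clo_2^{\pi_1}(\bA)$. The key point is to pass to a subquotient on which $g$ stays nontrivial. Since $g$ is nontrivial, $\Clo(\bA)=\Clo(g)$; and since $\bA$ is not a rectangular band, Proposition \ref{prop-rectangular-projections} forces $g$ to act on every subquotient of $\bA$ either nontrivially or as the first projection (if it acted as first projection on one subquotient and as second projection on another, their product would witness, via Proposition \ref{prop-rectangular-projections}, that $\bA$ is a rectangular band). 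Choose a subquotient $\bB$ of $\bA$ of least cardinality on which $g$ acts nontrivially. On every proper subquotient of $\bB$ the operation $g$ then acts as the first projection, hence so does every term of $\bB$ (all of which are built from $g^\bB$, and $g$ restricts to $\pi_1$ there), so every proper subquotient of $\bB$ is a projection algebra. Exactly as in the previous paragraph, $\bB$ is then a binary minimal clone with $g^\bB$ nontrivial, is not a rectangular band, has no nontrivial partial semilattice operation, is not Taylor, satisfies $f^\bB,g^\bB,h^\bB\in\Clo_2^{\pi_1}(\bB)$ and $\pi_2\notin\Clo_2^{\pi_1}(\bB)$, and the absorption identity $f(g(x,y),h(x,y))\approx x$ persists to $\bB$. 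If $f^\bB$ is trivial, this identity already reads $g^\bB(x,y)\approx x$, contradicting that $g^\bB$ is nontrivial. If $f^\bB$ is nontrivial, then $\Clo(\bB)=\Clo(f^\bB)$, the operation $f^\bB$ acts as first projection on every proper subquotient of $\bB$, and $\pi_2\notin\Clo_2^{\pi_1}(f^\bB)$, so Corollary \ref{cor-pi1} applied in $\bB$ with inputs $g^\bB$ and $h^\bB$ gives $f^\bB(g^\bB(x,y),h^\bB(x,y))\approx g^\bB(x,y)$; comparing with the persisted identity forces $g^\bB(x,y)\approx x$ in $\bB$, again a contradiction.

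The part I expect to require the most care is exactly this reduction: one must choose the subquotient $\bB$ so that the operation serving as generator ($f$ in the first statement, $g$ in the other two) stays nontrivial, while simultaneously checking that all of the hypotheses of Corollary \ref{cor-pi1} — binary minimality, absence of nontrivial partial semilattice operations, the generator acting as first projection on all proper subquotients, and $\pi_2\notin\Clo_2^{\pi_1}$ — descend to $\bB$, and that each of $f,g,h$ lands back in $\Clo_2^{\pi_1}$ of $\bB$ (via Corollary \ref{not-rect-pi1} and Proposition \ref{prop-pi1}). Along the way one should also dispose of the edge case $|\bB|=2$ (a two-element nontrivial idempotent groupoid is a semilattice, hence itself a nontrivial partial semilattice operation, contrary to hypothesis) and confirm, using Theorem \ref{thm-taylor-minimal}, that $\bB$ cannot be a Taylor algebra; given these, the remaining steps are direct citations of Corollary \ref{cor-pi1}, Propositions \ref{minimal-nontrivial}, \ref{rect-band-absorb}, \ref{prop-rectangular-projections}, \ref{not-rect-pi1}, \ref{prop-pi1-exact} and Lemma \ref{lem-semilattice-iteration}.
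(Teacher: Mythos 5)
You follow essentially the same route as the paper: reduce via Proposition \ref{minimal-nontrivial} to a minimal nontrivial subquotient $\bB$, check that the hypotheses of Corollary \ref{cor-pi1} descend to $\bB$, apply that corollary, and transfer the resulting identity and non-identity back to $\bA$ with Proposition \ref{absorb-prop} (your contradiction framing of the second part and your choice of $\bB$ as a minimal subquotient on which $g$ is nontrivial are equivalent, by Proposition \ref{absorb-prop}, to the paper's choice). The one difference is cosmetic and slightly shaky: your detour through Proposition \ref{prop-pi1-exact} and the non-Taylorness of $\bB$ to obtain $\pi_2\notin\Clo_2^{\pi_1}(f^\bB)$ is unnecessary and, as stated, mildly circular (it presupposes $f^\bB\in\Clo_2^{\pi_1}(\bB)$ in the sense of Definition \ref{defn-clo2-pi1}, i.e.\ a size-$\ge 2$ witness in $\Var(\bB)$, which is what Taylorness would deny); the paper instead just observes $\Clo_2^{\pi_1}(f)\subseteq\Clo_2^{\pi_1}(\bA)\not\ni\pi_2$ in $\bA$ and pushes this down to $\bB$ via Proposition \ref{absorb-prop}.
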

\begin{proof} By the assumption $f \in \Clo_2^{\pi_1}(\bA)$, we have $\Clo_2^{\pi_1}(f) \subseteq \Clo_2^{\pi_1}(\bA)$, so $\pi_2 \not\in \Clo_2^{\pi_1}(f)$.

By Proposition \ref{minimal-nontrivial} we can choose some $\bB \in \Var(\bA)$ such that $\bB$ is not a projection algebra, but such that every proper subalgebra or quotient of $\bB$ is a projection algebra. By Lemma \ref{lem-semilattice-iteration} $\bB$ can't have any partial semilattice terms if $\bA$ doesn't have any partial semilattice terms, and by Corollary \ref{not-rect-pi1} $f$ and $g$ act as first projection on every projection subalgebra or quotient of $\bB$ if $\bA$ isn't a rectangular band. By Proposition \ref{absorb-prop}, the terms $f,g$ are nontrivial in $\bB$ if and only if they are nontrivial in $\bA$, and for every $t \in \Clo_2^{\pi_1}(f)$ we see that $t$ can't restrict to $\pi_2$ on $\bB$.

By Corollary \ref{cor-pi1} applied to $\bB$, the absorption identity $f(x,g(x,y)) \approx x$ holds in $\bB$, so by Proposition \ref{absorb-prop} it also holds in $\bA$. Additionally, by Corollary \ref{cor-pi1} the identity
\[
f(g(x,y),h(x,y)) \approx g(x,y)
\]
holds in $\bB$, so
\[
f(g(x,y),h(x,y)) \not\approx x
\]
on $\bB$, since $g$ is nontrivial on $\bB$. Thus we also have
\[
f(g(x,y),h(x,y)) \not\approx x
\]
on $\bA$.
\end{proof}

Recall from Definition \ref{p-cyclic-defn} that we say that an idempotent binary operation $f$ is a \emph{$p$-cyclic groupoid} if it satisfies the identities
\begin{align*}
f(x,f(y,z)) &\approx f(x,y),\\
f(f(x,y),z) &\approx f(f(x,z),y),
\end{align*}
and
\[
f(\cdots f(f(x,y),y)\cdots, y) \approx x,
\]
where there are $p$ $y$s in the last identity.

\begin{thm}\label{p-cyclic} If a binary minimal clone on a finite set is not a rectangular band and does not have any nontrivial term $f$ satisfying the identity
\[
f(f(x,y),f(y,x)) \approx f(x,y),
\]
then it is a $p$-cyclic groupoid for some prime $p$.
\end{thm}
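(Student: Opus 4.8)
The plan is to use the hypotheses to reduce to the setting of Theorem~\ref{pi1}, and then reconstruct P{\l}onka's structure theorem (Theorem~\ref{p-cyclic-structure}) by hand, the main tool being the group structure that the forbidden identity forces on $\Clo_2^{\pi_1}(\bA)$. First I would check that $\bA$ is not Taylor. A commutative idempotent binary term $c$ would satisfy $c(c(x,y),c(y,x)) \approx c(c(x,y),c(x,y)) \approx c(x,y)$ and hence be trivial, which is impossible for a nontrivial commutative operation on a set of size $\ge 2$; so $\bA$ is not a spiral. A binary minimal clone is not a majority clone (by Waldhauser's Proposition every nontrivial term of a majority clone has arity $\ge 3$), and it is not the idempotent reduct of a vector space over $\bF_p$ (for $p=2$ there is no nontrivial idempotent binary term, and for $p>2$ the term $\tfrac{x+y}{2}$ is commutative). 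By Theorem~\ref{thm-taylor-minimal}, $\bA$ is not Taylor.

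Since $\bA$ is also not a rectangular band, Proposition~\ref{prop-pi1-exact} applies, and after possibly replacing $f$ by $f(y,x)$ we may assume $f \in M := \Clo_2^{\pi_1}(\bA) = \Clo_2^{\pi_1}(f)$. There is moreover no nontrivial partial semilattice term: such an $s$ has a two-element semilattice subalgebra (Proposition~\ref{prop-partial-semilattice}), on which $s$ acts commutatively, so $s^{\infty_c}$ agrees with $s$ there, is nontrivial, and satisfies the forbidden identity. Hence Theorem~\ref{pi1} applies: for all $g,h \in M$ we have $g(x,h(x,y)) \approx x$ (so $f$ satisfies \eqref{D2}), and $g(h(x,y),k(x,y)) \not\approx x$ whenever $h$ is nontrivial. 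Next I would extract a group: $M$ is finite and $(M,*_c)$ is a monoid with identity $x$, and by the iteration argument after Proposition~\ref{unary-iteration}, $g^{\infty_c}$ is an idempotent of this monoid for every $g \in M$; being a $*_c$-idempotent it is trivial by hypothesis, hence equal to $x$. So $(M,*_c)$ is a finite group $G$, and (Corollary~\ref{binary-iteration}) $G$ acts on $A^2$ by the permutations $\phi_g:(a,b)\mapsto(g(a,b),g(b,a))$.

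The main obstacle is to upgrade this to the two medial identities
\begin{align*}
f(x,f(y,z)) &\approx f(x,y),\\
f(f(x,y),z) &\approx f(f(x,z),y).
\end{align*}
For this I would pass (Proposition~\ref{minimal-nontrivial}) to a subquotient in which every proper subalgebra or quotient is a projection algebra, fix a generating pair $a \ne b$, and note that $G$ acts \emph{freely} on $(a,b)$: if $g(a,b)=a$ and $g(b,a)=b$ then $\{a,b\}$ is a subalgebra of $(A,g)$, hence of $(A,f)$ by minimality, forcing $g$ trivial. An orbit--stabilizer count — this is where Lemma~\ref{find-fixed-point} and finiteness enter — then pins down $|O(a)|$ and $|O(b)|$ in terms of $|G|$ and forces $g \mapsto g(a,b)$ to be injective on $G$; transporting this rigidity back to $\cF_{\cV(\bA)}(x,y)$ forces the right-translation $w\mapsto f(x,w)$ to depend only on the right orbit of $w$ (the first identity, since $f(y,z)\in O(y)$) and forces the right-translations $w\mapsto f(w,c)$ to commute (the second). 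I expect this transport step to be the technical heart of the argument.

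Granting the medial identities, the right orbits of $\bA$ partition it, their equivalence relation is a congruence on which $f$ is first projection, and $*_1$ and $*_c$ agree on $M$ (by the orbit-closed form of the first identity, since $h(y,x)\in O(y)$ for $h\in M$), so $(M,*_1)=G$ is a group and $f^{*_1 n}\approx x$ for $n$ the order of $f$; thus $\bA$ is an $n$-cyclic groupoid in the sense of Definition~\ref{p-cyclic-defn} (with $p$ replaced by $n$). Finally, minimality forces $n$ to be prime: if $p\mid n$ with $m=n/p>1$, then $g:=f^{*_1 m}$ is a nontrivial $p$-cyclic groupoid operation, so by Theorem~\ref{p-cyclic-structure} we have $\Clo_2^{\pi_1}(g)\subseteq\{g^{*_1 k}:k\ge 0\}$, a set of at most $p<n$ operations, each of $*_1$-order dividing $p$, so none equal to $f$; this contradicts $\Clo(g)=\Clo(f)$. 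Hence $n=p$ is prime and $\bA$ is a $p$-cyclic groupoid.
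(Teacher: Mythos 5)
Your preliminary reductions are fine and essentially match the paper: ruling out Taylor algebras via Theorem \ref{thm-taylor-minimal}, killing partial semilattice terms (your $s^{\infty_c}$ argument works, since its values on the two-element semilattice already exclude both projections), invoking Theorem \ref{pi1}, observing that $(\Clo_2^{\pi_1}(\bA),*_c)$ is a group because every $*_c$-idempotent satisfies the forbidden identity, and the final primality step are all sound. But the step you yourself flag as ``the technical heart'' --- deriving the two medial identities $f(x,f(y,z)) \approx f(x,y)$ and $f(f(x,y),z) \approx f(f(x,z),y)$ --- is not actually an argument, and the two claims you lean on do not hold as stated. First, freeness of the $G$-action on the pair $(a,b)$ plus orbit--stabilizer only gives injectivity of $g \mapsto (g(a,b),g(b,a))$; it neither pins down $|O(a)|$ nor gives injectivity of $g \mapsto g(a,b)$, since the projection of the orbit onto its first coordinate can collapse. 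Second, ``transporting this rigidity back to $\cF_{\cV(\bA)}(x,y)$'' is not a legitimate move on its own: facts verified in one two-generated subquotient $\bB$ only lift to identities of $\bA$ when they are absorption identities (Proposition \ref{absorb-prop}) or when one has a cancellation tool such as ``$f^\bB = g^\bB$ with $\bB$ nontrivial implies $f \approx g$'' (which does follow from the $*_c$-group structure, but you never state or prove it), and even then the target identities are ternary, so one additionally needs the no-semiprojection upgrade (Proposition \ref{prop-no-semiprojection}/\ref{prop-check-two}) to pass from ``holds when two arguments coincide'' to ``holds identically''.

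The paper closes exactly this gap by a different use of orbit--stabilizer: it first shows $(\Clo_2^{\pi_1}(\bA),*_1)$ must be a group (Lemma \ref{no-fixed-point}, which itself needs Lemma \ref{fixed-point} and Theorem \ref{pi1}), then lets that group act on $\Clo_2^{\pi_1}(\bA)$ itself by $(f\cdot g)(x,y) = f(x,g(y,x))$; since $\{\pi_1\}$ is a singleton orbit, some nontrivial $g$ fixes some $f$, yielding the single identity $f(x,g(y,x)) \approx f(x,y)$ (Lemma \ref{find-fixed-point}). Lemma \ref{fixed-point} then converts this one identity into the full $p$-cyclic structure, using the $*_c$-inverses and Proposition \ref{prop-check-two} at each ternary step. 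So the fixed-point/counting argument has to happen at the level of the term monoid, not at the level of a generating pair of the algebra; as written, your route from the group structure to the medial identities has a genuine gap.
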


We will prove this in a series of lemmas, all using the following assumption:

\begin{itemize}
\item[($*$)] $\Clo(\bA)$ is a binary minimal clone which is not a rectangular band and $\bA$ has no nontrivial term operation satisfying the identity $f(f(x,y),f(y,x)) \approx f(x,y)$.
\end{itemize}

We will make use of the circular composition operator $*_c$, which we defined in equation \eqref{star-c} by
\[
(f *_c g)(x,y) = f(g(x,y),g(y,x)).
\]

\begin{prop}\label{star-reverse} If $\bA$ satisfies $(*)$, then $\Clo_2(\bA)$ and $\Clo_2^{\pi_1}(\bA)$ form groups under $*_c$. In particular, for any binary term $f \in \Clo(\bA)$ there exists a binary term $f^- \in \Clo(\bA)$ such that
\[
f^-(f(x,y),f(y,x)) \approx f(f^-(x,y),f^-(y,x)) \approx x.
\]
\end{prop}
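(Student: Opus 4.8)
The plan is to realize circular composition $*_c$ as ordinary composition of the maps $\phi_f\colon A^2\to A^2$ defined by $\phi_f(a,b)=(f(a,b),f(b,a))$, to use the hypothesis $(*)$ to force every $\phi_f$ to be a permutation, and then to invoke the elementary fact that a finite set of permutations containing the identity and closed under composition is a group. The key point comes first: for any $f\in\Clo_2(\bA)$, Corollary~\ref{binary-iteration} produces a term $f^{\infty_c}\in\Clo(\bA)$ satisfying $f^{\infty_c}(f^{\infty_c}(x,y),f^{\infty_c}(y,x))\approx f^{\infty_c}(x,y)$. By $(*)$ this term cannot be nontrivial, so it is a projection; it cannot be $\pi_2$, since substituting $\pi_2$ into the displayed identity gives $x\approx\pi_2(\pi_2(x,y),\pi_2(y,x))\approx\pi_2(x,y)\approx y$, absurd once $|A|\ge 2$ (and if $|A|\le 1$ there is nothing to prove). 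Hence $f^{\infty_c}=\pi_1$, and the last sentence of Corollary~\ref{binary-iteration} says precisely that $\phi_f$ is a permutation of the finite set $A^2$.

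With that in hand, I would observe that $\phi_{\pi_1}=\mathrm{id}_{A^2}$, that $\phi_{f*_cg}=\phi_f\circ\phi_g$ (immediate from the definition of $*_c$), and that $f\mapsto\phi_f$ is injective because $f$ is the first coordinate of $\phi_f$. Thus $f\mapsto\phi_f$ embeds the finite set $\Clo_2(\bA)$ into $\mathrm{Sym}(A^2)$, carrying $*_c$ to composition and $\pi_1$ to the identity, with image closed under composition; a finite submonoid of a group is a subgroup, so $(\Clo_2(\bA),*_c)$ is a group with identity $\pi_1$. In particular each $f$ has a two-sided $*_c$-inverse $f^-$, and $f^-*_cf=f*_cf^-=\pi_1$ unwinds to $f^-(f(x,y),f(y,x))\approx f(f^-(x,y),f^-(y,x))\approx x$. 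The subset $\Clo_2^{\pi_1}(\bA)$ contains $\pi_1$, so it will be a subgroup as soon as it is closed under $*_c$. If $\bA$ is not Taylor this closure follows from Proposition~\ref{prop-pi1-exact} and Corollary~\ref{not-rect-pi1}: all members of $\Clo_2^{\pi_1}(\bA)$ then restrict to the first projection on one fixed two-element projection algebra $\bB\in\Var(\bA)$, and evaluating $f*_cg$ on $\bB$ makes the claim transparent. If $\bA$ is Taylor then, being a binary minimal clone satisfying $(*)$, it is an idempotent affine reduct over $\bF_p$ (the other cases of Theorem~\ref{thm-taylor-minimal} being excluded: majority algebras and $\bF_2$-reducts have no nontrivial binary terms, and a spiral has a commutative basic operation, which violates $(*)$), and for such an algebra no nontrivial binary term restricts to a projection on any member of $\Var(\bA)$ of size at least two, so $\Clo_2^{\pi_1}(\bA)=\{\pi_1\}$, trivially a subgroup.

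I expect the main obstacle to be pinning down the first step --- that $f^{\infty_c}$ is forced to be exactly $\pi_1$, so that Corollary~\ref{binary-iteration} yields a genuine permutation --- together with the minor bookkeeping involved in confirming that $\Clo_2^{\pi_1}(\bA)$ is closed under $*_c$ (i.e.\ tracking whether $\bA$ is Taylor). Once every $\phi_f$ is known to be a bijection, everything else is formal finite-group theory.
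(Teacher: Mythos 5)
Your proof is correct and takes essentially the same route as the paper: both arguments use the $*_c$-iteration from Corollary \ref{binary-iteration} together with $(*)$ to force the resulting $*_c$-idempotent ($f^{\infty_c}$, resp.\ $f^{*_cN}$) to be $\pi_1$, and then extract two-sided inverses by finiteness. The only difference is packaging — you embed $(\Clo_2(\bA),*_c)$ into $\mathrm{Sym}(A^2)$ via $f \mapsto \phi_f$ and quote the finite-submonoid-of-a-group fact (and you spell out the closure of $\Clo_2^{\pi_1}(\bA)$ under $*_c$ more explicitly), whereas the paper works directly in the finite monoid and exhibits $f^- = f^{*_c(N-1)}$.
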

\begin{proof} First we check that the circular composition $*_c$ is associative:
\begin{align*}
\big(f*_c(g*_ch)\big)(x,y) &\approx f\big((g*_ch)(x,y), (g*_ch)(y,x)\big)\\
&\approx f\Big(g\big(h(x,y),h(y,x)\big),g\big(h(y,x),h(x,y)\big)\Big)\\
&\approx (f*_cg)\big(h(x,y), h(y,x)\big)\\
&\approx \big((f*_cg)*_ch\big)(x,y).
\end{align*}

Since $\Clo_2(\bA)$ and $\Clo_2^{\pi_1}(\bA)$ are closed under $*_c$, we see that $\Clo_2(\bA)$ and $\Clo_2^{\pi_1}(\bA)$ form semigroups under $*_c$, with $\pi_1$ as the identity. To see that they form groups, note that by the finiteness of $\Clo_2(\bA)$ we can apply Proposition \ref{unary-iteration} to see that there is some $N > 0$ such that $f^{*_cN} = f^{*_c2N}$ for all $f$ in $\Clo_2(\bA)$. If $f^{*_cN} \ne \pi_1$ for any $f \in \Clo_2(\bA)$, then $f^{*_cN}$ contradicts the assumption ($*$). Otherwise, if we take $f^- = f^{*_c(N-1)}$ then we have $f *_c f^- = f^- *_c f = f^{*N} = \pi_1$ for all $f \in \Clo_2(\bA)$.
\end{proof}

\begin{prop}\label{star-not-taylor-semi} If $\bA$ satisfies $(*)$, then $\bA$ is not a Taylor algebra and $\bA$ has no nontrivial partial semilattice operations.
\end{prop}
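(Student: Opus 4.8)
The plan is to lay out a proof of Proposition~\ref{star-not-taylor-semi} in two halves; both halves will be driven by the clause of $(*)$ which forbids a nontrivial term $g$ satisfying $g(g(x,y),g(y,x)) \approx g(x,y)$. Note in advance that this clause in particular forbids a nontrivial commutative idempotent binary term $g$, since for such a $g$ we have $g(g(x,y),g(y,x)) \approx g(g(x,y),g(x,y)) \approx g(x,y)$ by commutativity and idempotence.

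To show $\bA$ is not Taylor, I would assume the contrary. Since $\Clo(\bA)$ is a binary minimal clone, $f$ is a nontrivial idempotent binary operation by Rosenberg's Theorem~\ref{rosenberg-types}, so by Theorem~\ref{thm-taylor-minimal} one of three cases holds. If $\bA$ is a majority algebra, then $\Clo(\bA) = \Clo(m)$ for a majority term $m$, and by Waldhauser's result (every nontrivial term of a majority clone is a near-unanimity operation, and no binary near-unanimity operation is nontrivial) we get $\Clo_2(\bA) = \{\pi_1,\pi_2\}$, forcing $f$ to be a projection --- a contradiction. If $\bA$ is the idempotent reduct of a vector space over $\bF_2$, the same contradiction arises, since the only idempotent binary operations of an $\bF_2$-vector space are the projections. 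In the remaining two cases $\bA$ has a nontrivial commutative idempotent binary term: if $\bA$ is the idempotent reduct of a vector space over $\bF_p$ with $p$ odd, take $(x+y)/2$; if $\bA$ is a spiral, take $f$ itself, which is commutative by Definition~\ref{defn-spiral}. Either way $(*)$ is violated.

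To show $\bA$ has no nontrivial partial semilattice operation, I would suppose $s \in \Clo(\bA)$ is one and pick $a,b$ with $c := s(a,b) \ne a$ (possible since $s \ne \pi_1$). The partial semilattice identities give $s(a,c) = s(a,s(a,b)) = c$ and $s(c,a) = s(s(a,b),a) = c$, and idempotence gives $s(c,c) = c$; hence the map $(x,y) \mapsto (s(x,y),s(y,x))$ sends both $(a,c)$ and $(c,c)$ to $(c,c)$ and so is not a permutation of the set of ordered pairs. By Corollary~\ref{binary-iteration} the operation $s^{\infty_c} \in \Clo(\bA)$ is then not the first projection; and since $s^{\infty_c}$ satisfies $g(g(x,y),g(y,x)) \approx g(x,y)$ --- an identity which fails for $\pi_2$ --- it is not the second projection either. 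Thus $s^{\infty_c}$ is a nontrivial term violating $(*)$, a contradiction.

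All the computations are routine; the one place that needs care is the Taylor case, where the two possibilities in Theorem~\ref{thm-taylor-minimal} with no nontrivial binary term at all (majority algebras and $\bF_2$-affine reducts) must be eliminated before a commutative binary term can be extracted from the rest. I would also remark that the partial semilattice half can alternatively be run through the group $(\Clo_2(\bA),*_c)$ of Proposition~\ref{star-reverse}: restricting that group to a two-element semilattice subalgebra of $\bA$ (which exists by Proposition~\ref{prop-partial-semilattice}) is a $*_c$-homomorphism whose image, being a group, cannot contain the restriction of $s$, which is a $*_c$-idempotent distinct from $\pi_1$.
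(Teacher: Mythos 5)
Your proof is correct. The Taylor half is essentially the paper's argument, just with the cited fact unpacked: the paper simply invokes Theorem \ref{thm-taylor-minimal} to say every binary minimal Taylor clone has a commutative binary term, and your case analysis (eliminating the majority and $\bF_2$ cases because they have no nontrivial binary terms, then extracting $(x+y)/2$ or the spiral operation itself) is exactly the justification left to the reader; your opening observation that a nontrivial commutative idempotent term violates $(*)$ is the implicit point of that sentence. The partial semilattice half, however, takes a genuinely different route. The paper first establishes the group structure $(\Clo_2(\bA),*_c)$ (Proposition \ref{star-reverse}), obtains an inverse $s^-$ with $s^-(s(x,y),s(y,x)) \approx x$, and then substitutes $y \mapsto s(x,y)$ so that the partial semilattice identities collapse this to $s(x,y) \approx x$, contradicting nontriviality of $s$. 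You instead bypass Proposition \ref{star-reverse} entirely: from $c = s(a,b) \ne a$ you note that $(a,c)$ and $(c,c)$ have the same image under $(x,y) \mapsto (s(x,y),s(y,x))$, so by Corollary \ref{binary-iteration} the term $s^{\infty_c}$ is not $\pi_1$, is not $\pi_2$ (it satisfies the $*_c$-idempotence identity, which $\pi_2$ does not), and so is itself a nontrivial witness against $(*)$. Your version is more elementary and self-contained, needing only the iteration corollary; the paper's is a two-line identity computation once the group structure of Proposition \ref{star-reverse} is available (which it needs anyway for the subsequent lemmas), and it pinpoints that $s$ itself must be trivial rather than producing an auxiliary term. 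Your closing remark about restricting the $*_c$-group to a two-element semilattice subalgebra is a third valid variant, closest in spirit to the paper's use of the group structure.
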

\begin{proof} That $\bA$ is not Taylor follows from the fact that every binary minimal clone which is also Taylor has a term $f$ such that $f(x,y) \approx f(y,x)$, by Theorem \ref{thm-taylor-minimal}.

Suppose for contradiction that $\bA$ has a nontrivial partial semilattice operation $s$, then by the Proposition \ref{star-reverse} there is a term $s^- \in \Clo(\bA)$ such that
\[
s^-(s(x,y),s(y,x)) \approx x.
\]
Replacing $y$ with $s(x,y)$ in the identity above, we see that
\[
x \approx s^-(s(x,s(x,y)),s(s(x,y),x)) \approx s^-(s(x,y),s(x,y)) \approx s(x,y),
\]
contradicting the assumption that $s$ is nontrivial.
\end{proof}

\begin{prop} If $\bA$ satisfies $(*)$, $\bB \in \Var(\bA)$ is not a projection algebra, and $f, g \in \Clo_2(\bA)$ satisfy $f^\bB = g^\bB$, then $f(x,y) \approx g(x,y)$.
\end{prop}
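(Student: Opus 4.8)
The plan is to turn the hypothesis into an absorption identity and then cancel in the group structure supplied by Proposition \ref{star-reverse}. Let $g^-$ be the $*_c$-inverse of $g$, which exists because $\Clo_2(\bA)$ is a group under $*_c$, and set $h \coloneqq g^- *_c f$, so that $h(x,y) \approx g^-\big(f(x,y), f(y,x)\big)$.

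First I would check that $h$ restricts to the first projection on $\bB$. The hypothesis $f^\bB = g^\bB$ of course also gives $f^\bB(b_2,b_1) = g^\bB(b_2,b_1)$ for all $b_1,b_2 \in \bB$, so
\[
h^\bB(b_1, b_2) = (g^-)^\bB\big(f^\bB(b_1,b_2), f^\bB(b_2,b_1)\big) = (g^-)^\bB\big(g^\bB(b_1,b_2), g^\bB(b_2,b_1)\big) = (g^- *_c g)^\bB(b_1,b_2),
\]
and $g^- *_c g \approx \pi_1$ by Proposition \ref{star-reverse}, so $h^\bB(b_1,b_2) = b_1$. Hence the absorption identity $h(x,y) \approx x$ holds in $\bB$. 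Since $\bB$ is not a projection algebra it is nontrivial in the sense of Definition \ref{set-defn}, so Proposition \ref{absorb-prop} transfers this identity to $\bA$, giving $g^- *_c f \approx \pi_1$ as binary terms of $\bA$.

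Finally, since $\Clo_2(\bA)$ is a group under $*_c$ with identity $\pi_1$, and we also have $g^- *_c g \approx \pi_1$ from Proposition \ref{star-reverse}, composing both equalities on the left with $g$ (the $*_c$-inverse of $g^-$) and using associativity yields $f \approx g$, that is $f(x,y) \approx g(x,y)$. I do not expect a serious obstacle here: the only point that needs a little care is that equality of the two functions on $\bB$ automatically forces equality after swapping the two arguments, so that the circular composition $g^- *_c f$ genuinely collapses to $g^- *_c g$ on $\bB$; the rest is the absorption-transfer principle (Proposition \ref{absorb-prop}) combined with cancellation in the $*_c$-group.
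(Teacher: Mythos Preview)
Your proof is correct and follows essentially the same approach as the paper: form an absorption identity on $\bB$ using a $*_c$-inverse, transfer it to $\bA$ via Proposition \ref{absorb-prop}, and cancel in the $*_c$-group. The only cosmetic difference is that the paper uses $f^-$ (writing the final cancellation out explicitly as $f(x,y) \approx f\big(f^-(g(x,y),g(y,x)),\, f^-(g(y,x),g(x,y))\big) \approx g(x,y)$) while you use $g^-$ and phrase the cancellation group-theoretically.
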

\begin{proof} We have
\[
f^-(g(x,y),g(y,x)) = x
\]
for any $x,y \in \bB$, so by Proposition \ref{absorb-prop} this absorption identity also holds in $\bA$
. Thus, we have
\begin{align*}
f(x,y) &\approx f(f^-(g(x,y),g(y,x)), f^-(g(y,x),g(x,y)))\\
&\approx g(x,y).\qedhere
\end{align*}
\end{proof}

\begin{prop}\label{prop-check-two} If $\bA$ satisfies $(*)$ and $t \in \Clo_3(\bA)$, $f \in \Clo_2(\bA)$ satisfy
\[
t(x,y,z) = f(x,y)
\]
whenever two of $x,y,z$ are equal, then $t(x,y,z) \approx f(x,y)$.
\end{prop}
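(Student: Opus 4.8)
The plan is to reduce the claim to a single absorption identity and then invoke Proposition \ref{prop-no-semiprojection}. As a preliminary, I would note that a binary minimal clone contains no nontrivial semiprojection of arity at least $3$: if $\sigma$ were such a semiprojection, then an easy induction on term structure shows that every binary term over $\sigma$ is a projection (whenever three or more inputs of $\sigma$ are each filled with one of the two variables, two of them must coincide by pigeonhole, so $\sigma$ returns its designated input), hence $\Clo(\sigma)$ contains no nontrivial binary operation and cannot equal $\Clo(\bA)$. Thus Proposition \ref{prop-no-semiprojection} applies to $\bA$: any absorption equation that holds on all tuples in which at most two distinct values occur holds identically.

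Next I would bring in the circular-composition inverse $f^- \in \Clo_2(\bA)$ provided by Proposition \ref{star-reverse}, which satisfies $f^-(f(x,y),f(y,x)) \approx x$ and $f(f^-(x,y),f^-(y,x)) \approx x$. The key auxiliary operation is the ternary term $u(x,y,z) \coloneqq f^-(t(x,y,z),\, t(y,x,z))$. I would verify that $u(x,y,z) = x$ whenever at most two distinct values occur among $x, y, z$, by case analysis on which pair coincides: when $x = y$ the hypothesis on $t$ together with idempotence gives $u = f^-(x,x) = x$, while when $x = z$ or when $y = z$ the hypothesis gives $u = f^-(f(x,y), f(y,x)) = x$. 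By Proposition \ref{prop-no-semiprojection} this upgrades to the identity $f^-(t(x,y,z),\, t(y,x,z)) \approx x$, and renaming $x \leftrightarrow y$ yields $f^-(t(y,x,z),\, t(x,y,z)) \approx y$.

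To conclude, I would substitute $x \mapsto t(x,y,z)$ and $y \mapsto t(y,x,z)$ into the identity $f(f^-(x,y),f^-(y,x)) \approx x$ from Proposition \ref{star-reverse}; the two identities obtained in the previous step collapse the left-hand side to $f(x,y)$, giving $t(x,y,z) \approx f(x,y)$ as desired.

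I do not anticipate a serious obstacle here. The only point requiring care is the choice of the auxiliary term $u$ and the bookkeeping of which coordinate is absorbed in each of the three ``two values coincide'' cases; that bookkeeping is exactly what the identity $f^-(f(x,y),f(y,x)) \approx x$ of Proposition \ref{star-reverse} is tailored to, so once $f^-$ is in hand the remaining verification is mechanical.
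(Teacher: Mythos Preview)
Your proof is correct and follows essentially the same route as the paper: build an absorption identity by composing with $f^-$, invoke Proposition \ref{prop-no-semiprojection} to upgrade it from two-valued tuples to all tuples, then invert via $f(f^-(x,y),f^-(y,x))\approx x$. The only cosmetic difference is that the paper uses the auxiliary terms $f^-(t(x,y,z),f(y,x))$ and $f^-(f(y,x),t(x,y,z))$ rather than your $f^-(t(x,y,z),t(y,x,z))$ and its swap, which of course agree whenever two inputs coincide.
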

\begin{proof} Not that the assumption of clone-minimality implies that $\bA$ has no ternary semiprojections. We can then apply Proposition \ref{prop-no-semiprojection} to conclude that the identities
\[
f^-(t(x,y,z),f(y,x)) \approx x
\]
and
\[
f^-(f(y,x),t(x,y,z)) \approx y
\]
must hold, since otherwise the terms on the left hand sides would be semiprojections. Thus we have
\begin{align*}
t(x,y,z) &\approx f(f^-(t(x,y,z),f(y,x)), f^-(f(y,x),t(x,y,z)))\\
&\approx f(x,y).\qedhere
\end{align*}
\end{proof}

\begin{lem}\label{fixed-point} If $\bA$ satisfies $(*)$ and there are nontrivial terms $f,g \in \Clo_2^{\pi_1}(\bA)$ satisfying
\[
f(x,g(y,x)) \approx f(x,y),
\]
then $\bA$ is a $p$-cyclic groupoid for some prime $p$.
\end{lem}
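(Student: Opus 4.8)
The plan is to produce a single nontrivial term operation $s\in\Clo_2^{\pi_1}(\bA)$ satisfying all three of the $p$-cyclic groupoid identities of Definition \ref{p-cyclic-defn}. Since being a $p$-cyclic groupoid is a nice property and every nontrivial $p$-cyclic groupoid is clone-minimal (Theorem \ref{p-cyclic-nice}), the clone $\Clo(s)$ is then a minimal clone; and as $\bA$ is clone-minimal and $s$ is a nontrivial term of $\bA$, we get $\Clo(\bA)=\Clo(s)$, so the clone of $\bA$ is the clone of a $p$-cyclic groupoid, as desired. To begin, after replacing $f$ by $f(y,x)$ if necessary (legitimate since $\bA$ is not a rectangular band, by Proposition \ref{prop-pi1-exact}) we may assume the basic operation lies in $\Clo_2^{\pi_1}(\bA)$, and by Proposition \ref{star-not-taylor-semi} the hypotheses of Theorem \ref{pi1} are in force.

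The crux is to show that $\Clo_2^{\pi_1}(\bA)$ contains no nontrivial idempotent under $*_1$, so that (using finiteness) it forms a group $H$ under $*_1$ as well as under $*_c$ (the latter from Proposition \ref{star-reverse}); equivalently, every map $x\mapsto h(x,a)$ with $h\in\Clo_2^{\pi_1}(\bA)$ is a permutation of $\bA$. The role of the hypothesis $f(x,g(y,x))\approx f(x,y)$ here is that, combined with the identity $f(x,g(x,y))\approx x$ from Theorem \ref{pi1}, it forces the map $y\mapsto f(a,y)$ to square to the constant map $y\mapsto a$ while being invariant under precomposition with $y\mapsto g(y,a)$ and all of its iterates. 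If $f^{\infty_1}$ were a nontrivial $*_1$-idempotent $e$, then $e$, its $*_c$-inverse $e^-$, and the hypothesis pair would yield a collapsing map with an oversized fibre; comparing the sizes of the orbits of a suitably chosen finite cyclic permutation group with the sizes of those fibres, via the orbit-stabilizer theorem, contradicts the nontriviality clause of Theorem \ref{pi1}. I expect this counting step to be the main obstacle, as it is the only point at which the finiteness of $\bA$ is used in an essential, non-formal way.

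Granting that $H=(\Clo_2^{\pi_1}(\bA),*_1)$ is a finite group, the remainder is bookkeeping with the group structure and Propositions \ref{prop-check-two} and \ref{pi1}. First I would use the hypothesis together with the fact that every map $y\mapsto g(y,a)$ is now a permutation to locate a nontrivial $s\in H$ satisfying $s(x,s(y,x))\approx s(x,y)$ (roughly: iterate the hypothesis along the permutation $y\mapsto g(y,x)$, pass to a suitable $*_1$-power, and conjugate so that the two operations appearing in the hypothesis coincide). For such an $s$, Proposition \ref{prop-check-two} reduces the first $p$-cyclic identity $s(x,s(y,z))\approx s(x,y)$ to the cases of two equal inputs: $x=y$ is $s(x,s(x,z))\approx x$ from Theorem \ref{pi1}, $x=z$ is the relation just arranged, and $y=z$ is trivial, so identity (1) holds. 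Identity (1) in turn gives $s(a,O_s(a))=\{a\}$ and shows that $x\mapsto s(x,a)$ depends only on the right orbit of $a$. The third $p$-cyclic identity $s(\cdots s(s(x,y),y)\cdots,y)\approx x$ then holds with $p$ the order of $s$ in $H$, and we may arrange this order to be prime, since for any prime $q$ dividing it the corresponding $*_1$-power of $s$ is again nontrivial, still satisfies (1), and generates the same clone as $\bA$. Finally, the middle identity $s(s(x,y),z)\approx s(s(x,z),y)$, i.e.\ the assertion that the permutations $x\mapsto s(x,a)$ commute, I would obtain from another application of Proposition \ref{prop-check-two} together with identity (1), verifying the remaining content on a non-projection subquotient $\bB$ as in Proposition \ref{minimal-nontrivial}, where every pair of elements not generating $\bB$ is a projection pair and the multiplication is correspondingly forced; alternatively one reconstructs the affine data on the right orbits directly and appeals to the converse half of Theorem \ref{p-cyclic-structure}. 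With identities (1), (2), (3) established for the nontrivial $s$, the proof concludes as in the first paragraph.
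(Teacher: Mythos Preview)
Your proposal has a genuine gap at precisely the step you flag as the ``main obstacle'': the claim that $\Clo_2^{\pi_1}(\bA)$ contains no nontrivial $*_1$-idempotent. The sketch you give (``oversized fibre \dots\ orbit-stabilizer theorem \dots\ contradicts the nontriviality clause of Theorem~\ref{pi1}'') is not an argument --- there is no group acting, no set being acted on, and no fibre whose size is being computed. You appear to be reaching for the orbit-stabilizer counting that the paper does use, but in the paper that counting lives in a \emph{different} lemma (Lemma~\ref{find-fixed-point}), where it serves the opposite purpose: assuming $(\Clo_2^{\pi_1}(\bA),*_1)$ is already a group, it \emph{produces} a pair $f,g$ satisfying the hypothesis of the present lemma. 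It is not used to rule out $*_1$-idempotents under that hypothesis, and I don't see how to make such an argument work. Your subsequent step of ``conjugating so the two operations appearing in the hypothesis coincide'' to obtain an $s$ with $s(x,s(y,x))\approx s(x,y)$ is also not explained; replacing $f$ by some $*_1$-translate does not obviously preserve the identity $f(x,g(y,x))\approx f(x,y)$.

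More to the point, the detour through the $*_1$ group structure is unnecessary. The paper's proof is direct: combine the hypothesis $f(x,g(y,x))\approx f(x,y)$ with $f(x,g(x,y))\approx x$ (Theorem~\ref{pi1}) and idempotence to see that $f(x,g(y,z))=f(x,y)$ whenever two of $x,y,z$ coincide; then Proposition~\ref{prop-check-two} promotes this to the three-variable identity $f(x,g(y,z))\approx f(x,y)$. Iterating along left-nested $g$-terms and using that $g$ generates $\Clo(\bA)$ yields
\[
f(x,h(y,z))\approx f(x,y)\quad\text{for every }h\in\Clo_2^{\pi_1}(\bA),
\]
which is the master identity from which everything follows. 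In particular, plugging in $h=f^-$ (the $*_c$-inverse) gives $f(h(x,y),x)\approx h(x,y)$ for all $h$, hence $f(f(x,y),z)=f(f(x,z),y)$ on pairs; another pass through Proposition~\ref{prop-check-two} (after establishing $f^-(f(x,y),y)\approx x$) upgrades this to the full commutativity identity. A short induction then shows that every element of $\Clo_2^{\pi_1}(\bA)$ is an iterate $f_n=f^{*_1 n}$, so $(A,f)$ is $p$-cyclic with $p=|\Clo_2^{\pi_1}(\bA)|$, and primality is immediate from clone-minimality. The $*_1$ group structure thus falls out at the end as a byproduct, rather than being established up front.
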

\begin{proof} By Theorem \ref{pi1} and Proposition \ref{star-not-taylor-semi}, we have
\[
f(x,g(x,y)) \approx x,
\]
so
\[
f(x,g(y,z)) = f(x,y)
\]
whenever two of $x,y,z$ are equal. Thus, by Proposition \ref{prop-check-two} we have
\[
f(x,g(y,z)) \approx f(x,y).
\]
By iteratively applying the above identity we deduce that
\begin{align*}
f(x,g(g(\cdots g(y,z_1)\cdots,z_{n-1}),z_n)) &\approx f(x,g(\cdots g(y,z_1)\cdots,z_{n-1}))\\
&\cdots\\
&\approx f(x,g(y,z_1))\\
&\approx f(x,y)
\end{align*}
for any sequence $z_1, ..., z_n$. Since $g$ is nontrivial and $\bA$ is clone-minimal, for any $h \in \Clo_2^{\pi_1}(\bA)$ we have $h \in \Clo_2^{\pi_1}(g)$, so 
\begin{equation}
h \in \Clo_2^{\pi_1}(\bA) \;\;\; \implies \;\;\; f(x,h(y,z)) \approx f(x,y).\label{eq-right-orbit}
\end{equation}

Then for any $h \in \Clo_2^{\pi_1}(\bA)$ we also have
\[
f(x,h^-(y,z)) \approx f(x,y)
\]
by \eqref{eq-right-orbit} since $h^- \in \Clo_2^{\pi_1}(\bA)$, so
\begin{align*}
f(h(x,y),x) &\approx f\big(h(x,y), (h^- *_c h)(x,y)\big)\\
&\approx f\Big(h(x,y),h^-\big(h(x,y),h(y,x)\big)\Big)\\
&\approx f\big(h(x,y),h(x,y)\big)\\
&\approx h(x,y),
\end{align*}
and in particular we have
\[
f(f(x,y),z) = f(f(x,z),y)
\]
whenever two of $x,y,z$ are equal. Since $f^- \in \Clo_2^{\pi_1}(\bA)$, we have
\[
f(z,f^-(y,x)) \approx f(z,y)
\]
by \eqref{eq-right-orbit}, and plugging in $z = f^-(x,y)$ we get
\begin{align*}
f(f^-(x,y),y) &\approx f(f^-(x,y),f^-(y,x))\\
&\approx (f *_c f^-)(x,y)\\
&\approx x.
\end{align*}
Since $\bA$ is finite, for any $y \in \bA$ the unary polynomials $x \mapsto f(x,y)$ and $x \mapsto f^-(x,y)$ must therefore be inverse to each other, so we also have
\[
f^-(f(x,y),y) \approx x.
\]
Then the equation $f(f(x,y),z) = f(f(x,z),y)$ implies
\begin{align*}
f^-(f^-(f(f(x,y),z),y),z) &= f^-(f^-(f(f(x,z),y),y),z)\\
&= f^-(f(x,z),z)\\
&= x
\end{align*}
whenever two of $x,y,z$ are equal. Since $\bA$ has no nontrivial semiprojections, the equation above holds identically by Proposition \ref{prop-no-semiprojection}. Therefore we have
\begin{align*}
f(f(x,y),z) &\approx f(f^-(f(f(x,y),z),y),y)\\
&\approx f(f(f^-(f^-(f(f(x,y),z),y),z),z),y)\\
&\approx f(f(x,z),y).
\end{align*}
This proves the identity
\begin{equation}
f(f(x,y),z) \approx f(f(x,z),y).\label{eq-commutative}
\end{equation}
Now define a sequence of functions $f_n$ by $f_0 = \pi_1$, $f_1 = f$, and
\[
f_{n+1}(x,y) = f(f_n(x,y),y),
\]
so that $f_n = f^{*_1n}$. In fact, we also have $f_n = f^{*_cn}$, since $f_n \in \Clo_2^{\pi_1}(\bA)$ implies
\[
f(f_n(x,y),y) \approx f(f_n(x,y),f_n(y,x))
\]
by \eqref{eq-right-orbit}.

We claim that every $t \in \Clo_2^{\pi_1}(\bA)$ can be written as $f_n$ for some $n$. We prove this by induction on the construction of $t$ as a term in $\Clo_2^{\pi_1}(f)$. For the inductive step, there are two cases: either $t(x,y) \approx f(f_i(x,y), f_j(x,y))$ for some $i,j$, or $t(x,y) \approx f(f_i(x,y), f_j(y,x))$ for some $i,j$. In the first case, we can apply \eqref{eq-right-orbit} and \eqref{eq-commutative} to see
\begin{align*}
f(f_i(x,y), f_j(x,y)) &\approx f(f_i(x,y),x)\\
&\approx f(f(f_{i-1}(x,y),x),y)\\
&\approx f_2(f(f_{i-2}(x,y),x),y)\\
&\cdots\\
&\approx f_i(f(x,x),y)\\
&\approx f_i(x,y).
\end{align*}
In the second case, we can apply \eqref{eq-right-orbit} to see
\[
f(f_i(x,y),f_j(y,x)) \approx f(f_i(x,y),y) \approx f_{i+1}(x,y).
\]
Letting $p = |\Clo_2^{\pi_1}(\bA)|$, we see that $f^- \approx f_{p-1}$ and $f_p(x,y) \approx x$, so together with \eqref{eq-right-orbit} and \eqref{eq-commutative} we see that $(A,f)$ is a $p$-cyclic groupoid.

To finish the proof, we just need to check that $p$ is prime. If $d$ is a nontrivial divisor of $p$ then the $p$-cyclic groupoid identities for $f$ imply that $f_d$ is nontrivial and $(A,f_d)$ forms a $(p/d)$-cyclic groupoid - but in this case, $f \not\in \Clo(f_d)$, contradicting the assumption that $\bA$ is a minimal clone.
\end{proof}

\begin{lem}\label{find-fixed-point} If $\bA$ satisfies $(*)$ and doesn't have any nontrivial term $f \in \Clo_2^{\pi_1}(\bA)$ which satisfies the identity
\[
f(f(x,y),y) \approx f(x,y),
\]
then $\bA$ is a $p$-cyclic groupoid.
\end{lem}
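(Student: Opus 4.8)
The plan is to deduce the statement from Lemma~\ref{fixed-point} by producing nontrivial $f,g \in \Clo_2^{\pi_1}(\bA)$ with $f(x,g(y,x)) \approx f(x,y)$. Write $G = \Clo_2^{\pi_1}(\bA)$ and identify $G$ with the right orbit $O(x)$ of $x$ in $\cF_{\cV(\bA)}(x,y)$ via the corollary to Proposition~\ref{prop-right-pi1}; correspondingly $g \mapsto g(y,x)$ is a bijection from $G$ onto the right orbit $O(y)$. We may assume $\bA$ is nontrivial (a projection algebra is a $p$-cyclic groupoid for every prime $p$), and then, since $\bA$ is not a rectangular band and, by Proposition~\ref{star-not-taylor-semi}, not Taylor, Proposition~\ref{prop-pi1-exact} gives $|G| \ge 2$. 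The decisive structural point is that $(G,*_1)$ is a group: it is a finite monoid with identity $\pi_1$, and the hypothesis of this lemma says precisely that $\pi_1$ is its only idempotent, so the semigroup argument following Proposition~\ref{unary-iteration} makes every element of $G$ invertible under $*_1$. (The $*_c$-group structure coming from $(*)$ enters only implicitly, inside the later appeal to Lemma~\ref{fixed-point}.)

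The core is a counting trick. For each $g \in G$ I would define $\Psi_g \colon G \to G$ by letting $\Psi_g(f)$ be the element represented by the term $f(x,g(y,x))$; this again lies in $G = O(x)$, since it acts as first projection on a two-element projection algebra in $\cV(\bA)$ — such an algebra exists because $\bA$ is not Taylor, and $f,g$ act on it as first projection by Corollary~\ref{not-rect-pi1}. A direct computation in $\cF_{\cV(\bA)}(x,y)$ yields the composition law $\Psi_{g_1}\circ\Psi_{g_2} = \Psi_{g_2 *_1 g_1}$, so $g \mapsto \Psi_g$ is an anti-homomorphism from the group $(G,*_1)$ into $\mathrm{Sym}(G)$. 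Hence each $\Psi_g$ is a permutation of $G$ (with inverse $\Psi_{g^{-1}}$), the image $H = \{\Psi_g : g\in G\}$ is a subgroup of $\mathrm{Sym}(G)$ of order dividing $|G|$, every $\Psi_g$ fixes $\pi_1$, and $\Psi_g(f)=f$ exactly when $f(x,g(y,x)) \approx f(x,y)$. Suppose, for a contradiction, that for every nontrivial $g$ the only fixed point of $\Psi_g$ is $\pi_1$; since $|G|\ge 2$ this forces $\Psi_g\neq\mathrm{id}$ for all nontrivial $g$, so $g\mapsto\Psi_g$ is injective, $|H|=|G|$, and $H$ acts freely on $G\setminus\{\pi_1\}$. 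Then $|G|=|H|$ divides $|G|-1$, which is absurd because $\gcd(|G|,|G|-1)=1$ and $|G|\ge 2$.

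Therefore some nontrivial $g$ has $\Psi_g$ fixing an element $f\neq\pi_1$, giving nontrivial $f,g \in \Clo_2^{\pi_1}(\bA)$ with $f(x,g(y,x)) \approx f(x,y)$; since $\bA$ satisfies $(*)$, Lemma~\ref{fixed-point} then shows $\bA$ is a $p$-cyclic groupoid for some prime $p$. I expect the real obstacle to be recognizing that the natural operators $\Psi_g$ form a group of \emph{permutations} of $G$ — which is what lets the innocuous coprimality $\gcd(|G|,|G|-1)=1$ do the work — and noticing that this is powered by the lemma's extra hypothesis (forcing $(G,*_1)$ to be a group) rather than by the circular-composition group built into $(*)$. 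The one computation that calls for care is verifying the law $\Psi_{g_1}\circ\Psi_{g_2}=\Psi_{g_2*_1 g_1}$ by unwinding the definitions in the free algebra.
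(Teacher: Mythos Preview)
Your proof is correct and is essentially the same as the paper's: both observe that the hypothesis makes $(\Clo_2^{\pi_1}(\bA),*_1)$ a group, define the right action $(f,g)\mapsto f(x,g(y,x))$ of this group on itself, note that $\{\pi_1\}$ is a fixed orbit, and use orbit--stabilizer to force a nontrivial $g$ fixing some nontrivial $f$, at which point Lemma~\ref{fixed-point} applies. The only cosmetic difference is that you phrase the counting as ``$|G|\mid|G|-1$ is absurd'' while the paper says ``each remaining orbit has size $\le |G|-1<|G|$, so its stabilizer is nontrivial.''
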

\begin{proof} We will make use of the composition operator $*_1$, which we defined in equation \eqref{star-1} by
\[
(f*_1g)(x,y) = f(g(x,y),y).
\]
The assumption implies that $G = (\Clo_2^{\pi_1}(\bA), *_1)$ is a group, by Proposition \ref{unary-iteration}.


Now we define a right action $\cdot$ of $G$ on $\Clo_2^{\pi_1}(\bA)$ by
\[
(f \cdot g)(x,y) = f(x,g(y,x)).
\]
First we verify that this is a right action:
\begin{align*}
(f\cdot (g*_1h))(x,y) &= f(x,(g*_1h)(y,x))\\
&= f(x,g(h(y,x),x))\\
&= (f \cdot g)(x,h(y,x))\\
&= ((f \cdot g)\cdot h)(x,y).
\end{align*}
Note that $\{\pi_1\}$ is an orbit of this action, so every nontrivial orbit has size at most $|G| - 1$. Thus, by the orbit-stabilizer theorem there must be some nontrivial $g \in G$ and $f \in \Clo_2^{\pi_1}(\bA)$ such that $f \cdot g = f$, i.e.
\[
f(x,g(y,x)) \approx f(x,y).
\]
Now we can apply Lemma \ref{fixed-point} to finish the argument.
\end{proof}


\begin{lem}\label{no-fixed-point} There is no $\bA$ which satisfies $(*)$ and has a nontrivial term $f \in \Clo_2^{\pi_1}(\bA)$ which satisfies the identity
\[
f(f(x,y),y) \approx f(x,y).
\]
\end{lem}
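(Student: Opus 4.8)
The plan is to argue by contradiction, so suppose $\bA$ satisfies $(*)$ and $f \in \Clo_2^{\pi_1}(\bA)$ is a nontrivial term with $f(f(x,y),y) \approx f(x,y)$; since $f \in \Clo_2^{\pi_1}(\bA)$, nontriviality just means $f \not\approx x$. By Proposition \ref{star-not-taylor-semi}, $\bA$ has no nontrivial partial semilattice operation, so Theorem \ref{pi1} applies: $f(x,g(x,y)) \approx x$ for every $g \in \Clo_2^{\pi_1}(\bA)$, and $f(g(x,y),h(x,y)) \not\approx x$ whenever $g$ is nontrivial and $h \in \Clo_2^{\pi_1}(\bA)$. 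By Proposition \ref{star-reverse}, $(\Clo_2^{\pi_1}(\bA),*_c)$ is a group whose identity is $\pi_1$. The whole strategy is to establish the single identity
\[
f(f(x,y),f(y,x)) \approx f(x,y), \qquad\text{i.e.,}\qquad f *_c f = f .
\]
Once this holds, $f$ is an idempotent of the group $(\Clo_2^{\pi_1}(\bA),*_c)$, hence $f = \pi_1$, contradicting nontriviality of $f$; this contradiction proves the lemma.

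The first step is to promote the hypothesis to the stronger absorption identities $f(f(x,y),x) \approx f(f(x,y),y) \approx f(x,y)$ on $\bA$. For this I would use Proposition \ref{minimal-nontrivial} to choose $\bB \in \Var(\bA)$ which is not a projection algebra but all of whose proper subalgebras and quotients are; by Corollary \ref{not-rect-pi1}, $f$ acts as first projection on each of these, and by Lemma \ref{lem-semilattice-iteration}, $\bB$ has no nontrivial partial semilattice operation either, so Corollary \ref{cor-pi1} applied inside $\bB$ gives $f(g(x,y),h(x,y)) \approx g(x,y)$ in $\bB$ for all $g,h \in \Clo_2^{\pi_1}(f)$; in particular $f(f(x,y),x) \approx f(x,y)$ holds in $\bB$. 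Since $\bB$ is not a projection algebra, the proposition asserting that two binary terms of $\bA$ inducing the same operation on such a $\bB$ are identically equal transports this identity back to $\bA$. Thus in $\bA$ we have $f(f(x,y),z) \approx f(x,y)$ whenever $z$ is $x$ or $y$, as well as $f(x,f(x,y)) \approx x$.

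It remains to replace the second argument $z$ above by $f(y,x)$, and this is the crux. The ternary term $t(x,y,z) = f(f(x,y),f(y,z))$ satisfies $t = f(x,y)$ when $y = z$ (by the previous step) and when $x = y$ (since then $t = f(x,f(x,z)) \approx x = f(x,y)$), but the one remaining coincidence $x = z$ is precisely the identity $f(f(x,y),f(y,x)) \approx f(x,y)$ we want, so Proposition \ref{prop-check-two} does not by itself close the loop. I would handle this last case relationally, back inside $\bB$: pick $a \neq b \in \bB$ with $c \coloneqq f(a,b) \neq a$, so that $\Sg_\bB\{a,b\} = \bB$ and, by Lemma \ref{lem-pi1-not-pi2} applied to $\bB$, also $f(b,a) \neq a$; set $c' \coloneqq f(b,a)$ and study $\bC \coloneqq \Sg_\bB\{c,c'\}$ together with the symmetric subdirect relation $\bS = \Sg_{\bB^2}\{(c,c'),(c',c)\} \le_{sd} \bC\times\bC$. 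Using $f(c,a) = f(c,b) = c$ and the mirror-image facts for $c'$: if $\bC$ is a proper subalgebra of $\bB$ it is a projection algebra and $f(c,c') = c$ outright; otherwise $\bC = \bB$, and by Corollary \ref{cor-linked} the relation $\bS$ --- if linked --- must absorb enough of $\bB\times\bB$ to contain a diagonal pair, which through Lemma \ref{lem-semilattice-iteration} would produce a nontrivial partial semilattice operation in $\bB$, a contradiction, while the remaining possibility that $\bS$ is the graph of an automorphism is to be excluded by a separate (and delicate) argument using the absorption identities of the previous step. In every surviving case $f(c,c') = c$, i.e., $f *_c f$ and $f$ agree on $\bB$; the same transfer proposition then promotes this to $f *_c f = f$ on $\bA$, which is the desired contradiction. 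The main obstacle is exactly this case analysis of $\bS$ --- the analogue of the work done in Lemma \ref{lem-pi1-not-pi2}; note that one cannot instead manufacture the partial semilattice directly via Propositions \ref{t-to-u} and \ref{u-infinity}, because $f(x,f(x,y)) \approx x$ forces $f^{\infty_2} = \pi_1$, so that construction collapses.
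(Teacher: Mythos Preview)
Your overall target is the right one: once $f *_c f = f$ you are done by the group structure of $(\Clo_2^{\pi_1}(\bA),*_c)$. But the proof you sketch for $f(f(x,y),f(y,x)) \approx f(x,y)$ is not complete, and the missing piece is exactly the hard part. In your final paragraph you set up $\bS = \Sg_{\bB^2}\{(c,c'),(c',c)\}$ and then split into the automorphism case and the linked case. In the linked case you assert that $\bS$ ``must absorb enough of $\bB\times\bB$ to contain a diagonal pair,'' but Corollary~\ref{cor-linked} only hands you a proper $\bC$ with $\bC+\bS=\bB$; extracting a diagonal element from that, as in Lemma~\ref{lem-pi1-not-pi2}, relied on the specific hypothesis $f(a,b)=b$, which you do not have here. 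The automorphism case you simply defer (``a separate (and delicate) argument''), and I do not see an obvious way to close it: the absorption facts $f(c,a)=f(c,b)=c$ and $f(c',a)=f(c',b)=c'$ do not by themselves rule out an involution swapping $c$ and $c'$.

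The paper avoids this relational analysis entirely by routing through Lemma~\ref{fixed-point}. Set $t(x,y)=f(x,f(y,x))$. A one-line computation using $f(f(y,x),x)\approx f(y,x)$ gives
\[
t(x,f(y,x)) \approx f(x,f(f(y,x),x)) \approx f(x,f(y,x)) \approx t(x,y),
\]
so the pair $(t,f)$ satisfies the hypothesis of Lemma~\ref{fixed-point}. Since a $p$-cyclic groupoid has no nontrivial term with $g(g(x,y),y)\approx g(x,y)$, our $\bA$ is not $p$-cyclic, so Lemma~\ref{fixed-point} forces $t$ to be trivial, i.e.\ $f(x,f(y,x))\approx x$. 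Now $f(f(x,z),f(y,z))=f(x,z)$ holds whenever two of $x,y,z$ coincide (check the three cases using $f(x,f(y,x))\approx x$, idempotence, and the hypothesis $f(f(x,y),y)\approx f(x,y)$), so Proposition~\ref{prop-check-two} promotes it to an identity. Combining with $f(x,f(x,y))\approx x$ and $f(y,f(x,y))\approx y$ gives
\[
f(x,y)\approx f\big(f(x,f(x,y)),\,f(y,f(x,y))\big)\approx f(x,f(x,y))\approx x,
\]
contradicting nontriviality of $f$. This is shorter and bypasses the relational case split; the key idea you are missing is to feed the hypothesis into Lemma~\ref{fixed-point} rather than attack $f*_c f=f$ head-on.
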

\begin{proof} Suppose for contradiction that such $\bA$ and $f$ existed. Note that such an $\bA$ can't be a $p$-cyclic groupoid, by the explicit description of the free $p$-cyclic groupoid on two generators.

Define a term $t \in \Clo_2^{\pi_1}(\bA)$ by
\[
t(x,y) = f(x,f(y,x)).
\]
Now we have
\begin{align*}
t(x,f(y,x)) &\approx f(x,f(f(y,x),x))\\
&\approx f(x,f(y,x))\\
&\approx t(x,y),
\end{align*}
so by Lemma \ref{fixed-point} $t$ must be trivial since $\bA$ is not a $p$-cyclic groupoid. Thus we have
\[
f(x,f(y,x)) \approx x,
\]
so we have
\[
f(f(x,z),f(y,z)) = f(x,z)
\]
whenever two of $x,y,z$ are equal, hence this equality holds identically by Proposition \ref{prop-check-two}.

Also, by Theorem \ref{pi1} (which applies by Proposition \ref{star-not-taylor-semi}) we have
\[
f(x,f(x,y)) \approx x,
\]
but these identities imply that
\begin{align*}
f(x,y) &\approx f(f(x,f(x,y)),f(y,f(x,y)))\\
&\approx f(x,f(x,y))\\
&\approx x,
\end{align*}
contradicting the assumption that $f$ was nontrivial.
%
%
\end{proof}


\begin{proof}[Proof of Theorem \ref{p-cyclic}] Suppose that $\Clo(\bA)$ is a binary minimal clone which is not a rectangular band and which has no nontrivial $f \in \Clo_2(\bA)$ which satisfies the identity $f(f(x,y),f(y,x)) \approx f(x,y)$, that is, suppose that $\bA$ satisfies $(*)$. By Lemma \ref{no-fixed-point}, there is also no nontrivial $f \in \Clo_2^{\pi_1}(\bA)$ satisfying $f(f(x,y),y) \approx f(x,y)$. Then by Lemma \ref{find-fixed-point} $\bA$ is a $p$-cyclic groupoid for some prime $p$.
\end{proof}

\begin{cor}\label{groupy-2} If $\bA$ is a clone-minimal binary algebra which is not Taylor, not a rectangular band, and has no nontrivial term $g \in \Clo_2^{\pi_1}(\bA)$ satisfying the identity
\[
g(g(x,y),y) \approx g(x,y),
\]
then $\bA$ is a $p$-cyclic groupoid.
\end{cor}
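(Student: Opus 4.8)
The plan is to verify that $\bA$ satisfies hypothesis $(*)$ of Theorem~\ref{p-cyclic}—that $\bA$ has no nontrivial term $f\in\Clo_2(\bA)$ with $f(f(x,y),f(y,x))\approx f(x,y)$—and then conclude by that theorem, the other two parts of $(*)$ being assumed. Since $\bA$ is neither Taylor nor a rectangular band, Proposition~\ref{prop-pi1-exact} applies; replacing $f(x,y)$ by $f(y,x)$ if necessary (which changes none of the hypotheses), we may assume the basic operation of $\bA$ lies in $\Clo_2^{\pi_1}(\bA)$.

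So suppose a nontrivial circular-idempotent $f$ exists. The map $\phi(a,b)=(f(a,b),f(b,a))$ on $\bA^2$ is idempotent: comparing the two coordinates of $\phi(\phi(a,b))$ with those of $\phi(a,b)$ via $f(f(x,y),f(y,x))\approx f(x,y)$ and its mirror gives $\phi\circ\phi=\phi$, so $f=f^{\infty_c}$. If $f\notin\Clo_2^{\pi_1}(\bA)$, then $f(y,x)\in\Clo_2^{\pi_1}(\bA)$ by Proposition~\ref{prop-pi1-exact}, and a short computation with the iterated pair-map of $f(y,x)$ shows $(f(y,x))^{\infty_c}=f(x,y)$, whence $f\in\Clo_2^{\pi_1}(\bA)$—a contradiction. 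So $f\in\Clo_2^{\pi_1}(\bA)$, and then $f^{\infty_1}\in\Clo_2^{\pi_1}(\bA)$ satisfies $f^{\infty_1}(f^{\infty_1}(x,y),y)\approx f^{\infty_1}(x,y)$ by Corollary~\ref{binary-iteration}, so the corollary's hypothesis forces $f^{\infty_1}$ to be trivial; hence $x\mapsto f(x,b)$ is a permutation of $\bA$ for every $b$.

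I would then record that $\bA$ has no nontrivial partial semilattice term, so that Theorem~\ref{pi1} is available: if $s$ were one, Proposition~\ref{prop-partial-semilattice} yields a two-element semilattice subalgebra $\{a,c\}$ of $\bA$, and restricting the basic operation $f_0$ to $\{a,c\}$ gives either a projection—and then neither $s$ nor its transpose acts as first projection on the projection algebra $\{a,c\}$, contradicting Proposition~\ref{prop-pi1-exact} together with Corollary~\ref{not-rect-pi1}—or that semilattice, in which case $x\mapsto f_0(x,c)$ is not injective and $f_0^{\infty_1}$ is a nontrivial term satisfying the right-idempotence identity forbidden by hypothesis, again a contradiction.

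It remains to contradict the coexistence of: $f$ nontrivial circular-idempotent with $f\in\Clo_2^{\pi_1}(\bA)$ and $f^{\infty_1}$ trivial, $\bA$ not Taylor, and Theorem~\ref{pi1}. The key input is that $\mathrm{Im}(\phi)$ is not contained in the diagonal of $\bA^2$: as $\phi$ is idempotent, every value $\phi(a,b)$ is a fixed point of $\phi$, so a diagonal image would force $f(a,b)=f(b,a)$ for all $a,b$, making $f$ commutative and hence $\bA$ Taylor. Thus there is $a\ne b$ with $f(a,b)=a$ and $f(b,a)=b$, so (by clone-minimality) $\{a,b\}$ is a subalgebra of $\bA$ on which $f$—and by Corollary~\ref{not-rect-pi1} the basic operation—restricts to first projection, while any pair $(c,d)$ with $f(c,d)\ne c$ must have $f(c,d)\notin\{c,d\}$ since $f^{\infty_1}$ is trivial. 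I would finish by passing, via Proposition~\ref{minimal-nontrivial}, to a minimal nontrivial subquotient carrying these data and analyzing the linked subdirect relation $\Sg\{(c,d),(d,c)\}$ against the circular-idempotence identity and the two-element projection subalgebra, in the manner of Lemma~\ref{lem-pi1-not-pi2} and Corollary~\ref{cor-linked}. I expect this final step—extracting a contradiction from a two-element projection subalgebra coexisting with a nontrivial linked relation constrained by circular idempotence—to be the main obstacle; everything before it is routine manipulation of the composition operators and of results already established.
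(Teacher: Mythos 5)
Your reduction is the same as the paper's (verify hypothesis $(*)$ of Theorem~\ref{p-cyclic} and then quote that theorem), and your preliminary steps are sound: the observation that $h=f(y,x)$ has $h^{*_c2}=f$, so any nontrivial circular-idempotent term may be assumed to lie in $\Clo_2^{\pi_1}(\bA)$; the deduction that $f^{\infty_1}$ is trivial, hence equals $\pi_1$, hence every right translation $x\mapsto f(x,b)$ is a permutation; and the elimination of partial semilattice terms (in the projection case all terms of the basic operation restrict to projections on the two-element subalgebra, and in the semilattice case $f_0^{\infty_1}$ is a nontrivial term satisfying the forbidden identity). But the proof stops exactly where the real content of the corollary lies. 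You still have to show that a nontrivial $f\in\Clo_2^{\pi_1}(\bA)$ with $f(f(x,y),f(y,x))\approx f(x,y)$ cannot coexist with the corollary's hypothesis, and you leave this as an acknowledged ``main obstacle,'' with only a plan to pass to a minimal subquotient (Proposition~\ref{minimal-nontrivial}) and analyze $\Sg_{\bA^2}\{(c,d),(d,c)\}$ ``in the manner of'' Lemma~\ref{lem-pi1-not-pi2} and Corollary~\ref{cor-linked}. That plan is doubtful on its face: the engine of Lemma~\ref{lem-pi1-not-pi2} is an equation $f(a,b)=b$ with $a\neq b$, which is precisely what your permutation conclusion has already excluded, and a two-element projection subalgebra together with a nontrivial linked relation is not by itself contradictory (melds and dispersive algebras have both). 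Whatever contradiction exists must exploit the corollary's hypothesis much more strongly than ``right translations by $f$ are bijective.''

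For comparison, the paper's proof of this step is short but uses a different mechanism: the hypothesis makes $G=(\Clo_2^{\pi_1}(\bA),*_1)$ a group (as in Lemma~\ref{find-fixed-point}, via Proposition~\ref{unary-iteration}); invertibility in $G$ shows that for any $f,g\in\Clo_2^{\pi_1}(\bA)$ the endomorphism of $\cF_{\cV(\bA)}(x,y)$ sending $x\mapsto f(x,y)$, $y\mapsto g(y,x)$ is surjective, hence by finiteness an automorphism; then for an arbitrary binary term $f$ one gets an automorphism $\gamma$ with $\gamma(x)=f(x,y)$, $\gamma(y)=f(y,x)$, and applying $\gamma^{-1}$ to the circular-idempotence identity yields $f(x,y)\approx\gamma^{-1}(f(x,y))\approx x$, so $f$ is trivial. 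None of the material you assembled (the projection pair $\{a,b\}$, Theorem~\ref{pi1}, the absence of partial semilattices) is needed for this, and your proposal does not contain a substitute for the automorphism argument; as written it is an incomplete proof.
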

\begin{proof} As in Lemma \ref{find-fixed-point}, the assumption implies that $G = (\Clo_2^{\pi_1}(\bA), *_1)$ is a group.

We will show that the free algebra $\cF_{\cV(\bA)}(x,y)$ has a large automorphism group: for any $f \in \Clo_2^{\pi_1}(\bA)$, the fact that $f$ has an inverse in $G$ implies that $x$ is in the subalgebra generated by $f(x,y)$ and $y$, so
\[
\Sg_{\cF_{\cV(\bA)}(x,y)}\{f(x,y),y\} = \cF_{\cV(\bA)}(x,y).
\]
Thus the homomorphism $\cF_{\cV(\bA)}(x,y) \rightarrow \cF_{\cV(\bA)}(x,y)$ which sends $x$ to $f(x,y)$ and $y$ to $y$ is surjective, so it must be an automorphism (by the finiteness of $\cF_{\cV(\bA)}(x,y)$). Symmetrically, the homomorphism which sends $x$ to $x$ and $y$ to $f(y,x)$ is also an automorphism.

We claim that for any $f,g \in \Clo_2^{\pi_1}(\bA)$ the homomorphism sending $(x, y)$ to $(f(x,y), g(y,x))$ is an automorphism. For this, note that since $x$ and $g(y,x)$ generate $\cF_{\cV(\bA)}(x,y)$ we have
\[
f(x,y) \in \Sg_{\cF_{\cV(\bA)}(x,y)}\{x, g(y,x)\}.
\]
Thus there is a binary term $h$ such that
\[
f(x,y) \approx h(x, g(y,x)),
\]
and any such $h$ must be in $\Clo_2^{\pi_1}(\bA)$ if $f,g \in \Clo_2^{\pi_1}(\bA)$. Composing the automorphism $\alpha$ which sends $x$ to $h(x,y)$ and $y$ to $y$ with the automorphism $\beta$ which sends $x$ to $x$ and $y$ to $g(y,x)$, we find
\[
\beta(\alpha(x)) \approx \beta(h(x,y)) \approx h(\beta(x), \beta(y)) \approx h(x, g(y,x)) \approx f(x,y)
\]
and
\[
\beta(\alpha(y)) \approx \beta(y) \approx g(y,x),
\]
so $\beta \circ \alpha$ sends $(x,y)$ to $(f(x,y), g(y,x))$.

In particular, for any $f \in \Clo_2(\bA)$ there is an automorphism $\gamma : \cF_{\cV(\bA)}(x,y) \rightarrow \cF_{\cV(\bA)}(x,y)$ such that
\[
\gamma(x) = f(x,y), \;\;\; \gamma(y) = f(y,x).
\]
Then we see that $f$ cannot satisfy the identity $f(f(x,y),f(y,x)) \approx f(x,y)$ unless $f$ also satisfies the identity
\begin{align*}
f(x,y) &\approx f\big(\gamma^{-1}(f(x,y)), \gamma^{-1}(f(y,x))\big)\\
&\approx \gamma^{-1}\big(f(f(x,y), f(y,x))\big)\\
&\approx \gamma^{-1}(f(x,y))\\
&\approx x,
\end{align*}
in which case $f$ is trivial. Now we can apply Theorem \ref{p-cyclic} to see that $\bA$ is a $p$-cyclic groupoid.
\end{proof}

A cute application of what we have proved so far is the following connectivity result about the undirected graph $\cG_\bA$ of two-element subalgebras of $\bA$.

\begin{thm}\label{connect} If $\Clo(\bA)$ is an idempotent minimal clone which is neither an affine algebra over $\bF_p$ nor a $p$-cyclic groupoid, then the graph $\cG_\bA$ is connected.
\end{thm}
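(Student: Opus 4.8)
The plan is to argue by contradiction: suppose $\cG_\bA$ is disconnected, and derive that $\bA$ must be either affine over some $\bF_p$ or a $p$-cyclic groupoid. By Theorem \ref{thm-taylor-minimal} and Theorem \ref{rosenberg-types}, if $\bA$ is Taylor then it is affine, a majority algebra, or a spiral; a majority algebra has all two-element subalgebras equal (since a majority operation restricts to either projection or... actually to a semilattice) — more carefully, in a majority algebra every pair generates a two-element subalgebra on which the majority term acts as one of the two semilattice operations, so $\cG_\bA$ is complete; and for a spiral, commutativity plus the definition forces a two-element semilattice subalgebra structure that makes $\cG_\bA$ connected as well (every pair $a,b$ with $\{a,b\}$ not a subalgebra has $\Sg\{a,b\}$ surjecting onto the free semilattice, producing an element adjacent to both). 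So we may assume $\bA$ is not Taylor. By Theorem \ref{rect-band-nice} and Proposition \ref{rect-band-structure}, a rectangular band has $\cG_\bA$ connected (the four-element free rectangular band shows every pair is linked through products). Thus we may assume $\bA$ is not a rectangular band, and since it is also not a $p$-cyclic groupoid, Corollary \ref{groupy-2} gives us a nontrivial $g \in \Clo_2^{\pi_1}(\bA)$ satisfying $g(g(x,y),y) \approx g(x,y)$.

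Next I would use this $g$ together with Proposition \ref{nice-terms} and Theorem \ref{pi1}. If $\bA$ had a nontrivial partial semilattice term, then by Proposition \ref{prop-partial-semilattice} it has a two-element semilattice subalgebra, and I would want to show every element connects into the same component — the standard trick is that $s(a,c)$ is adjacent to $a$ for every $c$, so $a$ is adjacent to $s(a,b)$ for all $b$; iterating and using that the whole algebra is generated by its elements under $s$, one shows $\cG_\bA$ is connected. So the remaining case is: $\bA$ is not Taylor, not a rectangular band, not a $p$-cyclic groupoid, and has no nontrivial partial semilattice operation. Then Theorem \ref{pi1} applies: for $f,g \in \Clo_2^{\pi_1}(\bA)$ we have $f(x,g(x,y)) \approx x$, so $\{x, g(x,y)\}$ is a subalgebra of $\cF_{\cV(\bA)}(x,y)$ on which $f$ acts as first projection, hence $x$ is adjacent to $g(x,y)$ in $\cG_{\cF_{\cV(\bA)}(x,y)}$.

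The key step is then to pass this adjacency information from the free algebra to $\bA$ itself. Given $a \ne b$ in $\bA$, if $\{a,b\}$ is already a subalgebra we are done; otherwise $a,b$ generate a subalgebra $\bB$ of $\bA$ (after quotienting to a subdirectly-irreducible-type situation via Proposition \ref{minimal-nontrivial}, or working directly), and the surjection $\cF_{\cV(\bA)}(x,y) \twoheadrightarrow \bB$ carries the path $x \to g(x,y) \to g_2(x,y) \to \cdots$ in $\cG$ to a path in $\cG_\bB \subseteq \cG_\bA$ starting at $a$. Taking the three operations $g_1,g_2,g_3$ from Proposition \ref{nice-terms}, one checks that $g_i(a,b)$ lies in a strongly connected component reachable from both $a$ and $b$: the key point is symmetry — I would use the $*_c$-type operation $g_2$ with $g_2(g_2(x,y),g_2(y,x)) \approx g_2(x,y)$, which by Theorem \ref{pi1} gives $g_2$ adjacent to both $x$ (via $f(x,g_2(x,y))\approx x$) and, by the reversed argument, $y$ adjacent to $g_2(y,x)$, and then $g_2(x,y)$ adjacent to $g_2(y,x)$ via the idempotency-type identity. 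Chaining these adjacencies produces a path in $\cG_\bA$ from $a$ to $b$. The main obstacle I anticipate is making the "reachable from both generators, in a common component" argument fully rigorous — i.e. verifying that the maximal strongly connected component containing $g_i(x,y)$ is simultaneously $\succeq$-above both $x$ and $y$, rather than just above $x$; this likely requires combining $g_1$ (which handles the right-orbit of $x$) with its left-right mirror and the circular operation $g_2$, and carefully tracking which absorption identities from Theorem \ref{pi1} apply, since Theorem \ref{pi1} also tells us $f(g(x,y),h(x,y)) \not\approx x$, which must be used to rule out the path collapsing back to $x$.
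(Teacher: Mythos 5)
Your case division matches the paper's (dispose of majority/spiral/rectangular band, then use Corollary \ref{groupy-2} and Theorem \ref{pi1} in the remaining case), but the central connectivity steps are not actually established, and the gaps are real. First, $f(x,g(x,y))\approx x$ from Theorem \ref{pi1} does \emph{not} make $\{x,g(x,y)\}$ a two-element subalgebra: for closure you would also need $g(g(x,y),x)$ to land back in the pair, and Theorem \ref{pi1} only tells you this element is $\not\approx x$ --- it may well be a third element of $\cF_{\cV(\bA)}(x,y)$. The paper's fix is to replace $g$ by $u(x,y)=g(g(x,y),x)$, which by Proposition \ref{t-to-u} satisfies $u(u(x,y),x)\approx u(x,y)$ in addition to $u(x,u(x,y))\approx x$ from Theorem \ref{pi1}; only then is $\{x,u(x,y)\}$ closed under $u$, hence (by minimality) a genuine projection subalgebra. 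The same objection applies to your claimed edges $x\sim g_2(x,y)$ and $y\sim g_2(y,x)$, so the three-edge chain you propose does not exist as stated. Second, even with an edge $\{x,u(x,y)\}$ (or a semilattice edge $\{x,s(x,y)\}$), you still have to bridge the components of $x$ and $y$; ``iterating'' $s(a,\cdot)$ only grows the component of $a$ and never meets $b$'s. The paper's bridge is the term $f$ with $f(f(x,y),f(y,x))\approx f(x,y)$ supplied by Theorem \ref{p-cyclic} (available exactly because $\bA$ is not a $p$-cyclic groupoid), which makes $\{f(x,y),f(y,x)\}$ a projection subalgebra of the free algebra; an induction on the term tree of $f$ written over $s$ (resp.\ $u$) then yields a path of two-element subalgebras from $f(x,y)$ down to one of $x,y$ and, by the $x\leftrightarrow y$ symmetry, from $f(y,x)$ to the other. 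This term-tree path is the missing idea in your sketch.

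Two smaller points. Your spiral argument is unjustified: the surjection $\Sg\{a,b\}\twoheadrightarrow\cF_{\text{semilattice}}(x,y)$ does not produce an element adjacent to both $a$ and $b$ (the preimage of $xy$ need not form two-element subalgebras with $a$ or $b$); the paper instead extracts a nontrivial partial semilattice term via Lemma \ref{lem-semilattice-iteration} and reduces the spiral case to the partial-semilattice case. And the theorem is stated for arbitrary idempotent minimal clones, so you must also dispose of semiprojection clones; there, as in the majority case, all binary terms are projections, so every two-element subset is a subalgebra and $\cG_\bA$ is a clique --- the paper handles both at once by first treating the case of no nontrivial binary term operations.
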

\begin{proof} First suppose that $\bA$ has no nontrivial binary term operations. In this case every two-element subset $\{a,b\}$ must be a projection subalgebra of $\bA$, so $\cG_\bA$ is a clique. So from here on we assume that $\bA$ has a nontrivial binary term operation.

Next suppose that $\bA = (A,\cdot)$ is a rectangular band. By Proposition \ref{rect-band-structure} we can write $\bA \cong \bA_1 \times \bA_2$, where $\bA_i$ are projection algebras such that $\cdot$ acts as $\pi_i$ on $\bA_i$, so graph $\cG_\bA$ is what is known as the ``Cartesian product'' of the cliques $\cG_{\bA_1}$ and $\cG_{\bA_2}$. More concretely, for any $a,b \in \bA$, we see that $\cdot$ acts as second projection on $\{a, a\cdot b\}$ and $\cdot$ acts as first projection on $\{a\cdot b, b\}$, so $a$ and $b$ have distance at most two in $\cG_\bA$.

From here on we assume that $\bA$ is not a rectangular band. By Theorem \ref{p-cyclic} and the assumption that $\bA$ is not a $p$-cyclic groupoid, we see that $\bA$ has a nontrivial term $f$ satisfying the identity
\[
f(f(x,y),f(y,x)) \approx f(x,y).
\]
Then $f$ acts as first projection on the set $\{f(x,y),f(y,x)\}$. Since $f$ generates $\Clo(\bA)$, the set $\{f(x,y),f(y,x)\}$ therefore forms a projection subalgebra of the free algebra $\cF_{\cV(\bA)}(x,y)$.

If $\bA$ has a nontrivial partial semilattice operation $s$, then the set $\{x,s(x,y)\}$ is a two-element semilattice subalgebra of $\cF_{\cV(\bA)}(x,y)$ by Proposition \ref{prop-partial-semilattice}.
Since $f \in \Clo(s)$, we see that in this case $f(x,y)$ is connected by a path in $\cG_{\cF_{\cV(\bA)}(x,y)}$ (consisting of two-element semilattice subalgebras) to at least one of $x,y$. Then since $f(x,y)$ is adjacent or equal to $f(y,x)$ we see that $x$ and $y$ must be connected to each other in $\cG_{\cF_{\cV(\bA)}(x,y)}$. Thus every pair of elements of $\cG_\bA$ are connected within the subalgebra $\Sg_\bA\{a,b\}$ which they generate.

If $\bA$ is Taylor, then we must have $f(x,y) \approx f(y,x)$, since otherwise $f$ acts as first projection on some two-element set $\{f(a,b), f(b,a)\}$. In fact, by Theorem \ref{thm-taylor-minimal} and the assumption that $\bA$ is not affine, in this case $\bA$ is a spiral. Then by the definition of a spiral (Definition \ref{defn-spiral}), there is some two element semilattice in $\Var(\bA)$, so by Lemma \ref{lem-semilattice-iteration} we see that $\bA$ has a nontrivial partial semilattice term $s$, so $\cG_\bA$ is connected by the previous paragraph.

Now suppose that $\bA$ has no nontrivial partial semilattice operations, and is not Taylor. By Corollary \ref{groupy-2}, $\bA$ has a nontrivial $g \in \Clo_2^{\pi_1}(\bA)$ satisfying
\[
g(g(x,y),y) \approx g(x,y).
\]
Define a term $u \in \Clo_2^{\pi_1}(\bA)$ by
\[
u(x,y) = g(g(x,y),x).
\]
Then $u$ is a nontrivial term (by Theorem \ref{pi1} and the fact that $g$ is nontrivial) which satisfies the identity
\[
u(u(x,y),x) \approx u(x,y)
\]
by Proposition \ref{t-to-u}. By Theorem \ref{pi1} we have
\[
u(x,u(x,y)) \approx x,
\]
so $\{x, u(x,y)\}$ is a two-element projection subalgebra of $\cF_{\cV(\bA)}(x,y)$. Since $f \in \Clo(u)$, we can finish the proof with the same argument we used in the partial semilattice case.
\end{proof}




\begin{thm}\label{graphy} Suppose $\bA$ has no ternary semiprojections and has a nontrivial idempotent term $f$ which satisfies the identities
\begin{equation}
f(x,f(x,y)) \approx f(x,f(y,x)) \approx x.\label{eq-to-meld}
\end{equation}
Then $f$ satisfies the identity
\[
f(f(x,y),f(z,x)) \approx f(x,y),
\]
that is, the reduct $(A, f)$ is a meld.
\end{thm}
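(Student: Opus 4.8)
The plan is to reduce the statement to a single absorption identity and then verify that identity on a finite list of tuples. Concretely, by Proposition~\ref{prop-meld-absorption} it suffices to prove the identity $f(x, f(f(y,x), z)) \approx x$ — this is the identity $x((yx)z) \approx x$ of \eqref{meld-star} written in infix notation — since together with idempotence it is equivalent to $(A,f)$ being a meld. So after this reduction there are two tasks: derive one auxiliary two-variable identity, and then use the hypothesis that $\bA$ has no ternary semiprojections to upgrade a case check into an identity.

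For the auxiliary identity I would show $f(f(x,y), x) \approx f(x,y)$ (the identity written $(xy)x \approx xy$ in the proof of Proposition~\ref{prop-meld-absorption}) purely by substitution into \eqref{eq-to-meld}. Substituting the term $f(x,y)$ for the variable $x$, and the variable $x$ for the variable $y$, in the identity $f(x, f(y,x)) \approx x$ yields $f(f(x,y), f(x, f(x,y))) \approx f(x,y)$; the inner subterm $f(x, f(x,y))$ equals $x$ by the other half of \eqref{eq-to-meld}, so the left side collapses to $f(f(x,y), x)$, giving $f(f(x,y), x) \approx f(x,y)$. It is worth noting that, unlike the corresponding step in Proposition~\ref{prop-meld-absorption}, this uses neither \eqref{meld-star} nor any semiprojection assumption.

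Next I would set $t(x,y,z) := f(x, f(f(y,x), z))$ and verify that $t = x$ whenever two of $x,y,z$ coincide: if $y = x$ then $t = f(x, f(x,z)) = x$ by idempotence and \eqref{eq-to-meld}; if $z = x$ then $t = f(x, f(f(y,x), x)) = x$ by \eqref{eq-to-meld} with the term $f(y,x)$ substituted for $y$; and if $y = z$ then the auxiliary identity gives $f(f(y,x), y) = f(y,x)$, so $t = f(x, f(y,x)) = x$ by \eqref{eq-to-meld} once more. Since $t$ is ternary, these tuples are exactly those in which two inputs agree, so $t$ is either a projection or a ternary semiprojection; the displayed evaluations rule out $t \approx \pi_2$ and $t \approx \pi_3$, and the hypothesis rules out a nontrivial ternary semiprojection, so $t \approx \pi_1$, that is, $f(x, f(f(y,x), z)) \approx x$. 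Proposition~\ref{prop-meld-absorption} then gives that $(A,f)$ is a meld.

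I do not expect a genuine obstacle here: the only conceptual point is recognizing that everything funnels through the absorption identity \eqref{meld-star}, and the only step using a hypothesis beyond \eqref{eq-to-meld} and idempotence is the ternary-semiprojection argument in the last paragraph — and because the target identity has arity exactly three, even that needs nothing more than the bare hypothesis (no appeal to {\'S}wierczkowski's Lemma or to the more general Proposition~\ref{prop-no-semiprojection}). The main care required is the bookkeeping of which variable is substituted where in the case analysis.
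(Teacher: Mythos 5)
Your proposal is correct and follows essentially the same route as the paper: both reduce the claim to the absorption identity $f(x,f(f(y,x),z)) \approx x$ via Proposition~\ref{prop-meld-absorption}, verify it on all tuples with a repeated entry using identities derived by substitution into \eqref{eq-to-meld}, and then invoke the absence of ternary semiprojections to conclude the identity holds everywhere. The only (harmless) differences are that you derive just the one auxiliary identity $f(f(x,y),x)\approx f(x,y)$, handling the $z=x$ case by direct substitution into \eqref{eq-to-meld}, and that you observe the bare ternary hypothesis suffices without routing through Proposition~\ref{prop-no-semiprojection}.
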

\begin{proof} The identities \eqref{eq-to-meld} imply that
\begin{align}
f(f(x,y),y) &\approx f(f(x,y),f(y,f(x,y)))\nonumber\\
&\approx f(x,y)\nonumber
\end{align}
and
\begin{align}
f(f(x,y),x) &\approx f(f(x,y),f(x,f(x,y)))\nonumber\\
&\approx f(x,y).\nonumber
\end{align}
Thus, we have
\[
f(f(y,x),z) = f(y,x)
\]
whenever $z$ is equal to one of $x$ or $y$, so by \eqref{eq-to-meld} we have
\begin{equation}
f(x,f(f(y,x),z)) = x\label{eq-to-meld-3}
\end{equation}
whenever two of $x,y,z$ are equal. Since $\bA$ has no ternary semiprojections, \eqref{eq-to-meld-3} must hold identically by Proposition \ref{prop-no-semiprojection}. By Proposition \ref{prop-meld-absorption}, the identity \eqref{eq-to-meld-3} implies that $f$ is a meld.
\end{proof}

\begin{prop}\label{prop-meld-in-var} If $\Clo(\bA)$ is a minimal clone and some nontrivial $\bB \in \Var(\bA)$ is a meld, then so is $\bA$.
\end{prop}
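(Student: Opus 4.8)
The plan is to transfer the defining data of a meld from $\bB$ up to $\bA$ by means of the absorption-transfer principle (Proposition \ref{absorb-prop}), combined with the characterization of melds by a single absorption identity (Proposition \ref{prop-meld-absorption}). No auxiliary constructions are needed; the whole argument is a short chain of applications of these two facts.

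First I would observe that $\bA$ is idempotent. Since the nontrivial algebra $\bB$ is a meld it is in particular idempotent, so the identity $f(x,x) \approx x$ holds in $\bB$. This is an absorption identity (take the unary term $t(x) = f(x,x)$, so that the identity reads $t(x) \approx x$), and $\bB$ is nontrivial, so Proposition \ref{absorb-prop} gives $f(x,x) \approx x$ in $\bA$.

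Next, applying Proposition \ref{prop-meld-absorption} to the idempotent meld $\bB$, the absorption identity $x((yx)z) \approx x$ holds in $\bB$. Invoking Proposition \ref{absorb-prop} once more, this absorption identity holds in $\bA$ as well. Since $\bA$ has already been shown to be idempotent, the other direction of Proposition \ref{prop-meld-absorption} now applies to $\bA$ and shows that $\bA$ satisfies $(xy)(zx) \approx xy$, i.e. $\bA$ is a meld.

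I do not expect a real obstacle here. The only point requiring a little care is that Proposition \ref{prop-meld-absorption} is stated for idempotent groupoids, so idempotence of $\bA$ must be established before the meld identity can be recovered from the absorption identity $x((yx)z) \approx x$; but idempotence is itself just an instance of absorption transfer, as noted above, so this causes no difficulty.
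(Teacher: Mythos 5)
Your proof is correct and is essentially the paper's own argument, just spelled out: the paper's proof is the one-line observation that Proposition \ref{prop-meld-absorption} shows melds are defined by absorption identities (idempotence plus $x((yx)z)\approx x$), which then transfer from $\bB$ to $\bA$ by Proposition \ref{absorb-prop}. Your extra care in first transferring idempotence as its own absorption identity before invoking the converse direction of Proposition \ref{prop-meld-absorption} is a correct and worthwhile detail.
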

\begin{proof} By Proposition \ref{absorb-prop}, this follows from Proposition \ref{prop-meld-absorption}, which shows that the variety of melds can be defined by absorption identities.
\end{proof}

\begin{lem}\label{graphy-gen} If $\Clo(\bA)$ is a binary minimal clone, not a rectangular band, such that there are nontrivial terms $f,g \in \Clo_2^{\pi_1}(\bA)$ satisfying the identity
\begin{equation}
f(x,g(y,x)) \approx x,\label{eq-graphy}
\end{equation}
then $\bA$ is a meld.
\end{lem}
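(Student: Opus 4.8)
The plan is to reduce to Theorem~\ref{graphy}. Since $\Clo(\bA)$ is a binary minimal clone it has no ternary semiprojections, so it suffices to exhibit a nontrivial idempotent term $h \in \Clo_2^{\pi_1}(\bA)$ with $h(x,h(x,y)) \approx h(x,h(y,x)) \approx x$; then $(A,h)$ is a meld by Theorem~\ref{graphy}, and since $\bA$ is clone-minimal its basic operation is $h$ or $h(y,x)$ (by the description of the free meld in Proposition~\ref{free-meld}), so $\bA$ itself is a meld.

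First I would assemble the hypotheses needed to run Theorem~\ref{pi1}. The identity $f(x,g(y,x)) \approx x$ is a genuine absorption identity --- it does not follow from idempotence, because $f$ and $g$ are nontrivial (hence depend on both arguments) and so $f(x,g(y,x))$ depends on $y$. Therefore, by Corollary~\ref{cor-absorption-semilattice}, $\Clo(\bA) = \Clo(f)$ cannot contain a nontrivial partial semilattice operation; by the observation (following from Lemma~\ref{lem-semilattice-iteration}) that every nontrivial spiral has a nontrivial partial semilattice term, $\bA$ is not a spiral, and hence, being a binary minimal clone (the affine and majority minimal clones are not generated by a binary operation), $\bA$ is not Taylor. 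Thus Theorem~\ref{pi1} applies: $p(x,q(x,y)) \approx x$ for all $p,q \in \Clo_2^{\pi_1}(\bA)$, and $p(q(x,y),r(x,y)) \not\approx x$ for nontrivial $p,q \in \Clo_2^{\pi_1}(\bA)$ and any $r$. In particular $f(x,g(x,y)) \approx x$, so with the hypothesis in hand $f(x,g(x,y)) \approx f(x,g(y,x)) \approx x$, which is the shape of Theorem~\ref{graphy}'s hypothesis but with $g$ as the inner operation rather than $f$. Moreover $\bA$ is not $p$-cyclic --- in the free $p$-cyclic groupoid on two generators $f(x,g(y,x))$ is a nonzero translate of $x$ --- so Corollary~\ref{groupy-2} provides a nontrivial $p_0 \in \Clo_2^{\pi_1}(\bA)$ with $p_0(p_0(x,y),y) \approx p_0(x,y)$, and by Proposition~\ref{t-to-u} the term $q_0(x,y) := p_0(p_0(x,y),x)$ is nontrivial with $q_0(q_0(x,y),x) \approx q_0(x,y)$ and $q_0(x,q_0(x,y)) \approx x$.

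The crux is to upgrade the ``cross'' absorption $f(x,g(y,x)) \approx x$ to a ``self'' absorption $h(x,h(y,x)) \approx x$ for a single nontrivial $h \in \Clo_2^{\pi_1}(\bA)$ (the companion identity $h(x,h(x,y)) \approx x$ is then free from Theorem~\ref{pi1}). Substituting an arbitrary term for the first argument of $g$ in the hypothesis gives $f(x,g(t,x)) \approx x$ for every term $t$; in the monoid $(\Clo_2^{\pi_1}(\bA),*_1)$ --- with $(h_1*_1 h_2)(x,y) = h_1(h_2(x,y),y)$ --- this says precisely that $g*_1 p \in S := \{h \in \Clo_2^{\pi_1}(\bA) : f(x,h(y,x)) \approx x\}$ for every $p \in \Clo_2^{\pi_1}(\bA)$, and iterating, $g^{*_1 k} \in S$ for all $k \ge 1$. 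Using finiteness of $\Clo_2^{\pi_1}(\bA)$, either left $*_1$-multiplication by $g$ is injective --- hence onto, so $S = \Clo_2^{\pi_1}(\bA) \ni f$, giving $f(x,f(y,x)) \approx x$ and finishing via Theorem~\ref{graphy} with $h = f$ --- or it collapses two elements; in the latter case $g^{*_1 N}$ is a nontrivial $*_1$-idempotent lying in $S$, and I would combine this with $q_0$, the identities of Theorem~\ref{pi1} (which in particular force \eqref{D2} via Proposition~\ref{prop-no-semiprojection}), and a passage to the subquotient $\bB$ of Proposition~\ref{minimal-nontrivial} --- on which Lemma~\ref{lem-pi1-not-pi2} and Corollary~\ref{cor-pi1} apply and the right-orbit structure of $\cF_{\cV(\bB)}(x,y)$ is rigid --- to read off the meld absorption identity $x((yx)z) \approx x$ directly (Proposition~\ref{prop-meld-absorption}).

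The main obstacle is exactly that last step: converting a mixed identity in two distinct terms into the single-term form required by Theorem~\ref{graphy}, equivalently into the meld identity $x((yx)z) \approx x$. This is where the finiteness of $\Clo_2^{\pi_1}(\bA)$, the ``no going back'' content of Theorem~\ref{pi1}, and the absence of ternary semiprojections have to be played off against each other with careful bookkeeping of right orbits in the free algebra on two generators, whose target shape is the four-element free meld of Proposition~\ref{free-meld}.
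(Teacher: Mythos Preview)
Your case (a) is correct, and the preliminary reductions (no partial semilattice via Corollary~\ref{cor-absorption-semilattice}, Theorem~\ref{pi1} applies, not $p$-cyclic) are sound. But case (b) --- which is the substance of the lemma --- is not proved: you list the right ingredients (the subquotient $\bB$, Corollary~\ref{cor-pi1}, right orbits, absence of semiprojections) but do not show how to combine them, and you yourself flag this as ``the main obstacle.'' What you have produced in case (b) is a nontrivial $e := g^{*_1 N}$ with $e(e(x,y),y) \approx e(x,y)$ and $f(x,e(y,x)) \approx x$, which is just the hypothesis of the lemma again with an extra $*_1$-idempotence datum on the inner term; no genuine reduction has occurred, and the term $q_0$ you introduce is never concretely used.

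The paper does not case-split. It passes directly to a minimal nontrivial subquotient $\bB$ via Proposition~\ref{minimal-nontrivial}, uses Corollary~\ref{cor-pi1} to see that $\Clo_2^{\pi_1}(\bA)$ maps into a projection subalgebra of $\cF_{\cV(\bB)}(x,y)$, and then proves by induction on $n$ (via Proposition~\ref{prop-no-semiprojection}, checking only two-variable instances) the absorption identity
\[
f\big(x,\; g(\cdots g(g(y,x),z_1)\cdots,z_n)\big) \approx x.
\]
From this one gets $f(x, h(g(y,x),x)) \approx x$ for every $h \in \Clo_2^{\pi_1}(\bA)$. If $\bB$ were not a meld, Theorem~\ref{graphy} applied to $g$ would supply $a,b \in \bB$ with $g(a,g(b,a)) \ne a$; then $a$ and $g(b,a)$ generate $\bB$, and the identity just proved together with the projection-subalgebra fact forces $f(a,c) = a$ for every $c \in \bB$, hence $O(a) = \{a\}$ --- contradicting $g(a,g(b,a)) \in O(a) \setminus \{a\}$. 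The inductive absorption identity is exactly the step your sketch is missing. (Two minor points: your claim that affine minimal clones are never binary is false for odd $p$ --- e.g.\ $2x+2y$ generates the idempotent reduct of $\bF_3$ --- though ``not Taylor'' already follows from the existence of a nontrivial term in $\Clo_2^{\pi_1}(\bA)$; and the opposite of a meld is not a meld, so your final sentence needs the ``after possibly swapping arguments'' caveat of Theorem~\ref{main-thm}.)
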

\begin{proof} Suppose for the sake of contradiction that $\bA$ is not a meld. By Proposition \ref{minimal-nontrivial}, there is some nontrivial subquotient $\bB \in HS(\bA)$ such that every proper subalgebra or quotient of $\bB$ is a projection algebra. Note that $f,g$ act as first projection on every projection algebra in $\Var(\bA)$ by Corollary \ref{not-rect-pi1}, and that $f, g$ are nontrivial on $\bB$ by Proposition \ref{absorb-prop}. By Proposition \ref{prop-meld-in-var}, if $\bA$ is not a meld then $\bB$ is not a meld either.



Note that $\bB$ can't have any two-element semilattice subalgebra $\{a,b\}$ with absorbing element $b$, since every nontrivial binary operation must act as the semilattice operation on $\{a,b\}$ if $\Clo(\bA)$ is a minimal clone, which would imply that
\[
a = f(a,g(b,a)) = f(a,b) = b.
\]
In particular, $\bB$ has no nontrivial partial semilattice terms by Proposition \ref{prop-partial-semilattice}. Since $f \in \Clo_2^{\pi_1}(\bA)$ we have $\pi_2 \not\in \Clo_2^{\pi_1}(f)$, and by Proposition \ref{absorb-prop} we see that there is no $t \in \Clo_2^{\pi_1}(f)$ which restricts to $\pi_2$ on $\bB$. Thus we can apply Corollary \ref{cor-pi1} to see that $\Clo_2^{\pi_1}(\bA) = \Clo_2^{\pi_1}(f)$ maps to a projection subalgebra of $\cF_{\cV(\bB)}(x,y)$.

Next we prove that for any $n \ge 0$ and any $z_1, ..., z_n$, we have
\begin{equation}
f(x, g(\cdots g(g(y,x),z_1)\cdots,z_n)) \approx x.\label{eq-semi-ind}
\end{equation}
We prove this by induction on $n$. It's enough to prove that this absorption identity holds on $\bB$ by Proposition \ref{absorb-prop}. Since $\bA$ has no semiprojections of arity at least $3$, $\bB$ also has no semiprojections of arity at least $3$ by Proposition \ref{absorb-prop}, so we can apply Proposition \ref{prop-no-semiprojection} to see that we just have to check \eqref{eq-semi-ind} when at most two distinct values occur among $x,y,z_1, ..., z_n \in \bB$. If $x = y$, then \eqref{eq-semi-ind} follows from the fact that $\Clo_2^{\pi_1}(\bA)$ maps to a projection subalgebra of $\cF_{\cV(\bB)}(x,y)$. If $z_n = x$, then \eqref{eq-semi-ind} follows from \eqref{eq-graphy} with $y$ replaced by $g(\cdots g(g(y,x),z_1)\cdots,z_{n-1})$. If $z_n = y$, then we have
\[
g(g(\cdots g(g(y,x),z_1)\cdots,z_{n-1}),y) = g(\cdots g(g(y,x),z_1)\cdots,z_{n-1})
\]
by the fact that $\Clo_2^{\pi_1}(\bA)$ maps to a projection subalgebra of $\cF_{\cV(\bB)}(x,y)$, so
\[
f(x, g(g(\cdots g(g(y,x),z_1)\cdots,z_{n-1}),y)) = f(x, g(\cdots g(g(y,x),z_1)\cdots,z_{n-1})) = x
\]
by the inductive hypothesis. This completes the proof of \eqref{eq-semi-ind}.

Since $g$ generates $\Clo(\bA)$, we see that for any $h \in \Clo_2^{\pi_1}(\bA)$ we have $h \in \Clo_2^{\pi_1}(g)$, so we can apply \eqref{eq-semi-ind} to see that
\begin{equation}
h \in \Clo_2^{\pi_1}(\bA) \;\;\; \implies \;\;\; f(x, h(g(y,x),x)) \approx x.\label{eq-semi-cons}
\end{equation}

Since $\bB$ is not a meld, $\bB$ must not satisfy the conditions of Theorem \ref{graphy}, so there are $a,b \in \bB$ with
\[
g(a,g(b,a)) \ne a.
\]
Then $\{a,g(b,a)\}$ is not a projection subalgebra of $\bB$ by Corollary \ref{not-rect-pi1}, so $a$ and $g(b,a)$ generate $\bB$. In particular, for any $c \in \bB$ there is some $h \in \Clo_2^{\pi_1}(\bA)$ such that either $h(a,g(b,a)) = c$ or $h(g(b,a),a) = c$. By \eqref{eq-semi-cons} and the fact that $\Clo_2^{\pi_1}(\bA)$ maps to a projection subalgebra of $\cF_{\cV(\bB)}(x,y)$, we have
\[
f(a,h(g(b,a),a)) = a = f(a,h(a,g(b,a))),
\]
so for all $c \in \bB$ we have $f(a,c) = a$, that is, $O(a) = \{a\}$. Since $g \in \Clo_2^{\pi_1}(f)$, we can apply Proposition \ref{prop-right-pi1} to see that for all $c \in \bB$ we have $g(a,c) = a$, but taking $c = g(b,a)$ contradicts $g(a,g(b,a)) \ne a$.
\end{proof}

\begin{lem}\label{graphy-gen-2} If $\Clo(\bA)$ is a binary minimal clone, not a rectangular band, such that there are $f,g,h \in \Clo_2^{\pi_1}(\bA)$ with $f,g$ nontrivial satisfying the identities
\begin{align}
f(h(x,y),g(y,x)) &\approx x,\label{eq-graphy-gen-2-0}\\
g(g(x,y),y) &\approx g(x,y),\label{eq-graphy-gen-2-1}
\end{align}
then $\bA$ is a meld.
\end{lem}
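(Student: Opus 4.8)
The plan is to reduce the statement to Lemma \ref{graphy-gen}. As a preliminary observation, $\bA$ cannot be Taylor: by Theorem \ref{thm-taylor-minimal} a binary clone-minimal Taylor algebra is a spiral (the affine and majority cases have minimal arity three), hence commutative, and then $\Var(\bA)$ contains a two-element semilattice $\{p,q\}$ on which every nontrivial binary term of $\bA$ acts as the join; evaluating \eqref{eq-graphy-gen-2-0} there would give $p = f(p\vee q,\,p\vee q) = p\vee q$, a contradiction. So $\Var(\bA)$ contains a two-element projection algebra $\bB_0$, and by Corollary \ref{not-rect-pi1} every term in $\Clo_2^{\pi_1}(\bA)$ --- in particular $f$ and $h$ --- restricts to the first projection on $\bB_0$.

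The main step is to substitute $y \mapsto g(y,x)$ in \eqref{eq-graphy-gen-2-0}. Swapping the two variables in \eqref{eq-graphy-gen-2-1} gives $g(g(y,x),x) \approx g(y,x)$, so the second argument of $f$ is unaffected and we obtain
\[
f\big(h(x,g(y,x)),\,g(y,x)\big) \approx x .
\]
Setting $f'(x,z) := f(h(x,z),z)$ --- equivalently $f' = f *_1 h$ --- this reads $f'(x,g(y,x)) \approx x$, and $f'$ lies in $\Clo_2^{\pi_1}(\bA)$ since it restricts to the first projection on $\bB_0$ (as $f$ and $h$ do). If $f'$ is nontrivial, Lemma \ref{graphy-gen} applied to the pair $(f',g)$ shows that $\bA$ is a meld, which is exactly what we want.

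The step I expect to be the main obstacle is ruling out the degenerate possibility $f' = f *_1 h = \pi_1$, i.e. $f(h(x,z),z) \approx x$. Here finiteness does the work: for each fixed $z$ the unary maps $x \mapsto h(x,z)$ and $x \mapsto f(x,z)$ are mutually inverse bijections of $A$, so $h(f(x,z),z) \approx x$ as well, and applying $w \mapsto h(w,g(y,x))$ to both sides of \eqref{eq-graphy-gen-2-0} yields the fixed-point identity $h(x,g(y,x)) \approx h(x,y)$ with $h,g$ nontrivial in $\Clo_2^{\pi_1}(\bA)$. In this configuration $f$ is a unit of the finite monoid $(\Clo_2^{\pi_1}(\bA),*_1)$ while \eqref{eq-graphy-gen-2-1} makes $g$ a non-identity idempotent of it; the delicate part is to extract a contradiction from this together with clone-minimality. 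I expect the argument to mirror the proof of Lemma \ref{fixed-point} (pushing the fixed-point identity $h(x,g(y,x)) \approx h(x,y)$ through the minimal clone, as in Lemma \ref{find-fixed-point}) and to force $\bA$ to be a $p$-cyclic groupoid --- which is impossible, since a nontrivial $p$-cyclic groupoid has $\Clo_2^{\pi_1}(\bA) \cong \ZZ/p$ under $*_1$ and hence no non-identity $*_1$-idempotent. Once $f'$ is known to be nontrivial, the reduction to Lemma \ref{graphy-gen} completes the proof.
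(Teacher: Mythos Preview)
Your reduction to Lemma \ref{graphy-gen} via the substitution $y\mapsto g(y,x)$ is correct and elegant: with $f' \coloneqq f *_1 h$ you do obtain $f'(x,g(y,x))\approx x$, and when $f'$ is nontrivial Lemma \ref{graphy-gen} finishes the job. The problem is entirely in the degenerate case $f *_1 h = \pi_1$, where you only offer an expectation rather than an argument.

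Your proposed route through Lemma \ref{fixed-point} cannot work. That lemma is proved under the blanket hypothesis $(*)$, which in particular demands that $(\Clo_2^{\pi_1}(\bA),*_c)$ be a group; the proof uses this essentially, both via the $*_c$-inverse $f^{-}$ and via Proposition \ref{prop-check-two}. But the very hypothesis \eqref{eq-graphy-gen-2-1} with $g$ nontrivial makes $g$ a non-identity $*_1$-idempotent, and Lemma \ref{no-fixed-point} says precisely that no $\bA$ satisfying $(*)$ admits such a term. So under the standing hypotheses of Lemma \ref{graphy-gen-2}, $(*)$ \emph{necessarily fails}, and neither Lemma \ref{fixed-point} nor its proof method is available. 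The observation that $f$ is a $*_1$-unit while $g$ is a non-identity $*_1$-idempotent is not by itself contradictory (any finite monoid that is not a group exhibits this), so you still need a substantive argument here.

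The paper does not attempt this reduction at all. Instead it argues directly inside a minimal nontrivial subquotient $\bB$ (Proposition \ref{minimal-nontrivial}): it first rules out partial semilattices, then uses Corollary \ref{cor-pi1} to see that $\Clo_2^{\pi_1}(\bA)$ maps to a projection subalgebra of $\cF_{\cV(\bB)}(x,y)$, bootstraps the absorption identity $f(h(x,y), g(\cdots g(g(y,x),z_1)\cdots,z_n))\approx x$, and finally obtains a contradiction by combining the right-orbit decomposition $\bB = O(a)\cup O(g(b,a))$ with a nontrivial $t$ satisfying $t(t(x,y),t(y,x))\approx t(x,y)$ (supplied by Theorem \ref{p-cyclic}). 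None of this passes through Lemma \ref{graphy-gen} or the fixed-point machinery. If you want to salvage your approach, you would need an independent way to dispose of the case $f(h(x,y),y)\approx x$ that does not appeal to $(*)$; as written, that case is a genuine gap.
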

\begin{proof} Suppose for the sake of contradiction that $\bA$ is not a meld. By Proposition \ref{minimal-nontrivial}, there is some nontrivial subquotient $\bB \in HS(\bA)$ such that every proper subalgebra or quotient of $\bB$ is a projection algebra. As in the proof of Lemma \ref{graphy-gen}, $\bB$ is not a meld by Proposition \ref{prop-meld-in-var}, and $f, g$ are nontrivial on $\bB$ by Proposition \ref{absorb-prop} and act as first projection on any proper subalgebra or quotient of $\bB$ by Corollary \ref{not-rect-pi1}.

To see that $\bB$ has no nontrivial partial semilattice term operations, suppose for a contradiction that $\{a,b\}$ was a two-element semilattice subalgebra of $\bB$ with absorbing element $b$, and note that in this case $f,g$ would need to act as the semilattice operation on $\{a,b\}$, so
\[
a = f(h(a,b),g(b,a)) = f(h(a,b),b) \in f(\{a,b\}, b) = \{b\},
\]
which is impossible. Since $f \in \Clo_2^{\pi_1}(\bA)$ we have $\pi_2 \not\in \Clo_2^{\pi_1}(f)$, and by Proposition \ref{absorb-prop} we see that there is no $t \in \Clo_2^{\pi_1}(f)$ which restricts to $\pi_2$ on $\bB$. Thus we can apply Corollary \ref{cor-pi1} to see that $\Clo_2^{\pi_1}(\bA) = \Clo_2^{\pi_1}(f)$ maps to a projection subalgebra of $\cF_{\cV(\bB)}(x,y)$. In particular, the identity
\[
g(g(x,y),x) \approx g(x,y)
\]
holds on $\bB$, and together with \eqref{eq-graphy-gen-2-1} we see that
\begin{equation}
z_1, ..., z_n \in \{x,y\} \;\;\; \implies \;\;\; g(\cdots g(g(x,y),z_1)\cdots,z_n) \approx g(x,y)\label{eq-graphy-gen-2-orb}
\end{equation}
holds on $\bB$.

Now we will show that
\begin{equation}
f(h(x,y), g(\cdots g(g(y,x),z_1)\cdots,z_n)) \approx x.\label{eq-graphy-gen-2-2}
\end{equation}
Since this is an absorption identity, we only need to check \eqref{eq-graphy-gen-2-2} on $\bB$ by Proposition \ref{absorb-prop}, and since $\bB$ has no nontrivial semiproejctions of arity at least $3$ we only need to check \eqref{eq-graphy-gen-2-2} when at most two distinct values occur among $x,y,z_1, ..., z_n \in \bB$. If $x = y$, then \eqref{eq-graphy-gen-2-2} follows from Theorem \ref{pi1}. If $x \ne y$, then we must have $z_1, ..., z_n \in \{x,y\}$, so we can apply \eqref{eq-graphy-gen-2-orb} and \eqref{eq-graphy-gen-2-0} to see that
\[
f(h(x,y), g(\cdots g(g(g(y,x),z_1),z_2)\cdots,z_n)) \approx f(h(x,y), g(y,x)) \approx x
\]
on $\bB$. This completes the proof of \eqref{eq-graphy-gen-2-2}.

Since $g$ generates $\Clo(\bA)$, we can apply Proposition \ref{prop-right-pi1} to see that \eqref{eq-graphy-gen-2-2} implies
\begin{equation}
f(h(x,y), O(g(y,x))) = \{x\}\label{eq-graphy-gen-2-orb-1}
\end{equation}
in the free algebra $\cF_{\cV(\bA)}(x,y)$. Additionally, since $\Clo_2^{\pi_1}(\bA)$ maps to a projection subalgebra of $\cF_{\cV(\bB)}(x,y)$, we have
\begin{equation}
f(h(x,y), O(x)) = \{h(x,y)\}\label{eq-graphy-gen-2-orb-2}
\end{equation}
in the free algebra $\cF_{\cV(\bB)}(x,y)$.

Since $\bB$ is not a meld, $\bB$ must not satisfy the conditions of Theorem \ref{graphy}, so there are $a,b \in \bB$ with
\[
g(a,g(b,a)) \ne a.
\]
Since $g$ acts as first projection on every proper subalgebra of $\bB$, we see that $a$ and $g(b,a)$ generate $\bB$, so
\[
\bB = O(a) \cup O(g(b,a)).
\]
Then by \eqref{eq-graphy-gen-2-orb-1} and \eqref{eq-graphy-gen-2-orb-2} we see that for all $c \in \bB$ we have
\begin{equation}
f(h(a,b),c) = \begin{cases} h(a,b) & c \in O(a),\\ a & c \in O(g(b,a)).\end{cases}\label{eq-graphy-gen-2-cases}
\end{equation}
If $h(a,b) = a$ then as in Lemma \ref{graphy-gen} we have $O(a) = \{a\}$, so by Proposition \ref{prop-right-pi1} we have $a \ne g(a,g(b,a)) \in O(a) = \{a\}$, which is a contradiction.

Otherwise, if $h(a,b) \ne a$, then \eqref{eq-graphy-gen-2-cases} implies that $O(a)$ and $O(g(b,a))$ must be disjoint, so $\bB$ has a congruence $\sim$ with congruence classes $O(a)$ and $O(g(b,a))$ such that $\bB/\!\!\sim$ is a two element projection algebra. From $g(b,a) \in O(b)$, we see that $b \sim g(b,a)$, so we have
\[
c \sim b \;\;\; \implies \;\;\; f(h(a,b),c) = a \ne h(a,b),
\]
which implies
\begin{equation}
c \sim b \;\;\; \implies \;\;\; \Sg_\bB\{h(a,b),c\} = \bB\label{eq-graphy-gen-2-big}
\end{equation}
since $f$ acts as first projection on every proper subalgebra of $\bB$.

By \eqref{eq-graphy-gen-2-1} and the assumption that $g$ is nontrivial we see that $\bA$ can't be a $p$-cyclic groupoid, so by Theorem \ref{p-cyclic} $\bA$ has a nontrivial term $t$ satisfying
\begin{equation}
t(t(x,y),t(y,x)) \approx t(x,y).\label{eq-graphy-gen-2-inf}
\end{equation}
From $h \in \Clo(t)$ and $h(a,b) \ne a,b$, we see that there are $u,v \in \bB\setminus\{h(a,b)\}$ with $t(u,v) = h(a,b)$. Since each $\sim$-class of $\bB$ is a projection subalgebra, we have $u \not\sim v$, so since $t$ acts as some projection on $\bB/\!\!\sim$ and $t(u,v) = h(a,b) \sim a$, we must have
\[
t(v,u) \sim b.
\]
But then we have
\[
\bB = \Sg_\bB\{h(a,b), t(v,u)\} = \Sg_\bB\{t(u,v), t(v,u)\} = \{t(u,v), t(v,u)\} \ne \bB,
\]
where the first equation follows from $t(v,u) \sim b$ and \eqref{eq-graphy-gen-2-big}, and the last equation follows from the fact that $\{t(u,v), t(v,u)\}$ is closed under $t$ by \eqref{eq-graphy-gen-2-inf}. This contradiction completes the proof.
\end{proof}

\begin{thm}\label{all-but-dispersive} If $\bA$ is a binary clone-minimal algebra which is not Taylor, has no partial semilattice operations, is not a rectangular band or a $p$-cyclic groupoid, and is not a meld, then $\bA$ is term-equivalent to a dispersive algebra.
\end{thm}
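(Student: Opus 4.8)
The plan is to produce a binary term $g$ of $\bA$ for which the reduct $(A,g)$ is dispersive; since $\bA$ is clone-minimal, any nontrivial term generates $\Clo(\bA)$, so $(A,g)$ is term-equivalent to $\bA$ (and in fact, as $f\in\Clo_2^{\pi_1}(g)$, Proposition \ref{dispersive-reduct} would give that $(A,f)$ itself is dispersive). After possibly replacing $f(x,y)$ by $f(y,x)$ we may assume $f\in\Clo_2^{\pi_1}(\bA)$ (Proposition \ref{prop-pi1-exact}; none of the standing hypotheses is affected). Since $\bA$ is not Taylor, not a rectangular band, and not a $p$-cyclic groupoid, Corollary \ref{groupy-2} produces a nontrivial $g\in\Clo_2^{\pi_1}(\bA)$ with $g(g(x,y),y)\approx g(x,y)$, and I claim $(A,g)$ is dispersive.

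Step 1: $g$ satisfies \eqref{D2}. For each $n$ we must check $g(x,g(\cdots g(g(x,y_1),y_2)\cdots,y_n))\approx x$. As $\bA$ is clone-minimal it has no nontrivial semiprojections, so by Proposition \ref{prop-no-semiprojection} it suffices to verify this identity when at most two distinct values occur among $x,y_1,\dots,y_n$. If all the $y_i$ equal $x$ the identity follows from idempotence; if a single further value $y$ occurs, the inner expression equals $p(x,y)$ where $p$ is the binary term built from $g$ with leftmost leaf $x$ obtained by reading off the $y_i$, so $p\in\Clo_2^{\pi_1}(g)\subseteq\Clo_2^{\pi_1}(\bA)$, and then $g(x,p(x,y))\approx x$ by Theorem \ref{pi1}. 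Hence \eqref{D2} holds, and by Proposition \ref{clone-d2} every operation in $\Clo_2^{\pi_1}(g)$ inherits it.

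Step 2: the homomorphism clause of Definition \ref{defn-dispersive}. Suppose it fails for $(A,g)$. Since \eqref{D2} holds, Proposition \ref{dispersive-hom} yields a pair $a\ne b$ with $\{a,b\}$ not a subalgebra and a binary term $h$ with $h(a,b)=a$, $h(b,a)=b$ whose value in $\cF_\cD(x,y)$ is not the first projection; replacing $h$ by $h(y,x)$ if necessary, $h\in\Clo_2^{\pi_1}(\bA)$ and $h$ is nontrivial. The goal is to convert this element-level failure into a genuine identity of the form handled by the ``meld'' Lemmas \ref{graphy-gen} and \ref{graphy-gen-2}. One first passes to $\Sg_\bA\{a,b\}$ and then, via Proposition \ref{minimal-nontrivial}, to a minimal nontrivial subquotient $\bB$; a check shows $\bB$ inherits all the standing hypotheses — not Taylor, not a rectangular band, not a meld, and no nontrivial partial semilattice term follow from Proposition \ref{absorb-prop} (these classes being cut out, respectively, by two-variable identities, by an absorption identity via Proposition \ref{rect-band-absorb}, and by an absorption identity via Proposition \ref{prop-meld-absorption}) together with the transfer of nontriviality of binary terms, while $\bB$ is not a $p$-cyclic groupoid because the $g$ above would give it a nontrivial $\Clo_2^{\pi_1}$ term satisfying $g(g(x,y),y)\approx g(x,y)$, which is impossible in a $p$-cyclic groupoid by the structure theorem \ref{p-cyclic-structure}. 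Thus we may assume every proper subalgebra or quotient of $\bA$ is a projection algebra while keeping a non-subalgebra pair $\{a,b\}$ and a nontrivial $h\in\Clo_2^{\pi_1}(\bA)$ with $h(a,b)=a$, $h(b,a)=b$. One then analyzes the symmetric subdirect relation $\bS=\Sg_{\bA^2}\{(a,b),(b,a)\}$: if its linking congruence is trivial then $\bS$ is the graph of an automorphism of $\bA$, from which one concludes either $\bA\cong\cF_\cD(x,y)$ (contradicting non-dispersiveness) or a contradiction from the automorphism together with Theorem \ref{pi1}; if $\bS$ is linked, Corollary \ref{cor-linked} lets one propagate through a proper subalgebra exactly as in Lemma \ref{lem-pi1-not-pi2}, producing, with the help of Theorem \ref{pi1}, nontrivial $f',q\in\Clo_2^{\pi_1}(\bA)$ with $f'(h'(x,y),q(y,x))\approx x$ and (after replacing $q$ by $q^{\infty_1}$) $q(q(x,y),y)\approx q(x,y)$, so that Lemma \ref{graphy-gen} or \ref{graphy-gen-2} forces $\bA$ to be a meld — a contradiction; the intermediate case, where the linking congruence is a nontrivial proper congruence, is disposed of using that the corresponding quotient is a projection algebra.

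Putting these together, $(A,g)$ satisfies \eqref{D2} and the homomorphism clause, hence is dispersive, so $\bA$ is term-equivalent to a dispersive algebra. The main obstacle is Step 2: extracting from the element-level failure of the homomorphism clause a true identity that activates the meld dichotomy of Lemmas \ref{graphy-gen}--\ref{graphy-gen-2}. This requires careful tracking of membership in $\Clo_2^{\pi_1}$, a choice of subquotient that simultaneously preserves ``not a meld / not a $p$-cyclic groupoid / no nontrivial partial semilattice term'' and a witnessing bad pair, and a linked-subdirect-product analysis; the automorphism case of $\bS$ and the bookkeeping around the $q\mapsto q^{\infty_1}$ replacement are the fiddly points.
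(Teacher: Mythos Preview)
Your Step 1 is correct and matches the paper. The gap is in Step 2, and it is structural rather than merely fiddly.

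From the failure of the homomorphism clause at $(a,b)$ you correctly extract (via Proposition \ref{dispersive-hom}) a term $h$ with $h(a,b)=a$, $h(b,a)=b$ that does not reduce to $x$ under \eqref{D2}. But clone-minimality then forces $h$ to act as first projection on all of $\bA$: if $h$ were a nontrivial operation, then $g\in\Clo(h)$, and since $h$ preserves $\{a,b\}$ so would $g$, contradicting that $\{a,b\}$ is not a $g$-subalgebra. Thus there is no ``nontrivial $h\in\Clo_2^{\pi_1}(\bA)$ with $h(a,b)=a$, $h(b,a)=b$'' to feed into a linked-relation analysis of $\bS$; the only data you have is a \emph{syntactic} term that collapses to $\pi_1$ on $\bA$ but not in $\cF_\cD(x,y)$. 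Passing to a minimal subquotient $\bB$ does not help, since $\bB\in\Var(\bA)$ means $h$ is still $\pi_1$ on $\bB$, and the same argument blocks any replacement witness at a new pair. (The side claims that $\bB$ remains non-Taylor and retains a witnessing non-subalgebra pair are also not justified; absorption-identity transfer controls neither.)

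What is really at stake is an identity-level statement: every two-variable absorption identity of $\bA$ is already a consequence of \eqref{D2}, equivalently $\cF_{\cV(\bA)}(x,y)\twoheadrightarrow\cF_\cD(x,y)$. The paper proves this by verifying the three conditions of Proposition \ref{dispersive-hom-check} on the free algebra. Conditions (a) and (b) are immediate from Proposition \ref{prop-pi1-exact} and Lemma \ref{graphy-gen}. The substantive one is (c): ruling out any identity $f(p(x,y),u(x,y))\approx x$ with $p\in\Clo_2^{\pi_1}(\bA)$ nontrivial. Theorem \ref{pi1} handles $u\in\Clo_2^{\pi_1}(\bA)$; for $u=\pi_2$ one substitutes $y\mapsto t(y,x)$ (with $t$ from Corollary \ref{groupy-2}) to land in Lemma \ref{graphy-gen-2}; for nontrivial $u(y,x)\in\Clo_2^{\pi_1}(\bA)$ one iterates $u^{*_1 n}$ until either some iterate satisfies $g_N(g_N(x,y),y)\approx g_N(x,y)$ (again Lemma \ref{graphy-gen-2}) or becomes $\pi_1$ (reducing to the previous case). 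The argument lives entirely in $\cF_{\cV(\bA)}(x,y)$; once the surjection onto $\cF_\cD(x,y)$ is in hand, the per-pair conclusion drops out in one line from clone-minimality and Proposition \ref{dispersive-hom}.
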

\begin{proof} Since $\bA$ is not Taylor, $\Clo_2^{\pi_1}(\bA)$ contains some nontrivial operation $f$. First we will show that any $f \in \Clo_2^{\pi_1}(\bA)$ satisfies \eqref{D2}, that is, that
\[
f(x,f(\cdots f(f(x,y_1),y_2)\cdots,y_n)) \approx x.
\]
Since $\bA$ has no semiprojections of arity at least $3$, we can apply Proposition \ref{prop-no-semiprojection} to see that it's enough to check \eqref{D2} when at most two distinct values occur among $x,y_1,...,y_n$. Supposing that $x,y_1,...,y_n \in \{x,y\}$ and setting $g(x,y) = f(\cdots f(f(x,y_1),y_2)\cdots,y_n)$, we can apply Theorem \ref{pi1} (since $\bA$ is not a rectangular band and has no partial semilattice operation) to see that
\[
f(x,f(\cdots f(f(x,y_1),y_2)\cdots,y_n)) \approx f(x,g(x,y)) \approx x,
\]
which completes the proof of \eqref{D2} for $f$. Pick any nontrivial $f \in \Clo_2^{\pi_1}(\bA)$ and assume without loss of generality that $\bA = (A,f)$.


Next we show that the free algebra $\cF_{\cV(\bA)}(x,y)$ over $\bA$ has a surjective homomorphism to the four-element free algebra $\cF_\cD(x,y)$ from Definition \ref{defn-dispersive}. Since $f$ satisfies \eqref{D2}, we can apply Proposition \ref{dispersive-hom-check} to see that we just need to check three properties of $\cF_{\cV(\bA)}(x,y)$:
\begin{itemize}
\item[(a)] we need to check that the right orbits $O(x)$ and $O(y)$ are disjoint in $\cF_{\cV(\bA)}(x,y)$,

\item[(b)] we need to check that for any $h(x,y) \in O(x)$ and $g(y,x) \in O(y)$ we have $f(x,g(y,x)) \ne x$ and $f(y,h(x,y)) \ne y$, and

\item[(c)] we need to check that there is no directed edge from $O(x) \setminus \{x\}$ to $x$ or from $O(y) \setminus \{y\}$ to $y$ in the directed graph associated to $\cF_{\cV(\bA)}(x,y)$.
\end{itemize}
Property (a) follows from the fact that $\cF_{\cV(\bA)}(x,y)$ has a surjective homomorphism to a projection algebra on which $f$ acts as first projection, by the assumption $f \in \Clo_2^{\pi_1}(\bA)$. Since $\bA$ is neither a rectangular band nor a meld, Lemma \ref{graphy-gen} implies property (b). All that remains is to verify property (c), which states that for any nontrivial $g \in \Clo_2^{\pi_1}(\bA)$ and for any (possibly trivial) $u \in \Clo_2(\bA)$ we have
\begin{equation}
f(g(x,y), u(x,y)) \not\approx x.\label{eq-disp-c}
\end{equation}
By Theorem \ref{pi1}, the non-identity \eqref{eq-disp-c} holds if $u(x,y) \in \Clo_2^{\pi_1}(\bA)$, so we just need to verify it when $u(y,x) \in \Clo_2^{\pi_1}(\bA)$.

Now we handle the case $u = \pi_2$ in \eqref{eq-disp-c}. Suppose for a contradiction that there is a (possibly trivial) $g \in \Clo_2^{\pi_1}(\bA)$ such that
\begin{equation}
f(g(x,y),y) \approx x.\label{eq-disp-c-pi1}
\end{equation}
Since $\bA$ is not Taylor, not a rectangular band, and not a $p$-cyclic groupoid, we can apply Corollary \ref{groupy-2} to see that $\bA$ has a nontrivial term $t \in \Clo_2^{\pi_1}(\bA)$ satisfying
\[
t(t(x,y),y) \approx t(x,y).
\]
Substituting $t(y,x)$ for $y$ in \eqref{eq-disp-c-pi1}, we have
\[
f(g(x,t(y,x)),t(y,x)) \approx x,
\]
and taking $h(x,y) = g(x,t(y,x))$ we see that this contradicts Lemma \ref{graphy-gen-2} since $\bA$ is not a rectangular band or a meld.

To finish verifying \eqref{eq-disp-c}, we just need to handle the case where $u(y,x) \in \Clo_2^{\pi_1}(\bA)$ is nontrivial. So suppose for the sake of contradiction that there are nontrivial $g,h \in \Clo_2^{\pi_1}(\bA)$ such that
\begin{equation}
f(h(x,y),g(y,x)) \approx x.\label{eq-disp-c-var}
\end{equation}
Define $g_n(x,y)$ inductively by $g_0 = \pi_1, g_1 = g$, and
\[
g_{n+1}(x,y) = g(g_n(x,y),y),
\]
that is, $g_n = g^{*_1n}$. Then for any $n \ge 1$ we can substitute $g_{n-1}(y,x)$ for $y$ in \eqref{eq-disp-c-var} to get
\begin{equation}
f(h(x,g_{n-1}(y,x)),g_n(y,x)) \approx x.\label{eq-disp-c-var2}
\end{equation}
By Corollary \ref{binary-iteration}, there is some $N > 0$ such that $g_N$ satisfies the identity
\[
g_N(g_N(x,y),y) \approx g_N(x,y).
\]
If $g_N$ is nontrivial, then plugging $n = N$ into \eqref{eq-disp-c-var2} gives us a contradiction with Lemma \ref{graphy-gen-2} since $\bA$ is not a rectangular band or a meld. Thus $g_N$ is trivial, and since $g_N \in \Clo_2^{\pi_1}(\bA)$ and $\bA$ is not a rectangular band, Corollary \ref{not-rect-pi1} shows that we must have $g_N = \pi_1$. Then plugging $n=N$ into \eqref{eq-disp-c-var2} gives us the identity
\[
f(h(x,g_{N-1}(y,x)),y) \approx x,
\]
that is,
\[
f(g'(x,y),y) \approx x,
\]
where $g'(x,y) = h(x,g_{N-1}(y,x)) \in \Clo_2^{\pi_1}(\bA)$. But this is just another form of \eqref{eq-disp-c-pi1}, which we have already checked leads to a contradiction.

Having verified properties (a), (b), (c), we can apply Proposition \ref{dispersive-hom-check} to see that there is a surjective homomorphism from $\cF_{\cV(\bA)}(x,y)$ to $\cF_{\cD}(x,y)$. To finish the proof, let $a,b \in \bA$ be any pair such that $\{a,b\}$ is not a two element subalgebra of $\bA$. Since $\bA$ is clone-minimal, for any binary term $g$ of $\bA$ such that $g(a,b) = a$ and $g(b,a) = b$, we have $g(x,y) \approx x$ (otherwise $g$ would generate a strictly smaller clone). By Proposition \ref{dispersive-hom}, the existence of a surjective homomorphism $\cF_{\cV(\bA)}(x,y) \twoheadrightarrow \cF_{\cD}(x,y)$ implies that the identity $g(x,y) \approx x$ must follow from \eqref{D2}. Since the identity $g(x,y) \approx x$ follows from \eqref{D2} for any binary term $g$ such that $g(a,b) = a$ and $g(b,a) = b$, we can apply Proposition \ref{dispersive-hom} again to see that there is a surjective homomorphism
\[
\Sg_{\bA^2}\Big\{\begin{bmatrix} a\\ b\end{bmatrix},\begin{bmatrix} b\\ a\end{bmatrix}\Big\} \twoheadrightarrow \cF_\cD(x,y).\qedhere
\]
\end{proof}

We can now complete the proof of Theorem \ref{main-thm}.

\begin{proof}[Proof of Theorem \ref{main-thm}] By Theorem \ref{all-but-dispersive}, every binary clone-minimal $\bA = (A,f)$ which is not Taylor is term equivalent to either a rectangular band, a $p$-cyclic groupoid, a partial semilattice, a meld, or a dispersive algebra. By swapping the variables of $f$ if necessary, we may assume without loss of generality that $f \in \Clo_2^{\pi_1}(\bA)$. By the explicit descriptions of the free rectangular band, $p$-cyclic groupoid, or meld on two generators, we see that in each of these three cases $\bA = (A,f)$ is not just term equivalent to, but instead is actually equal to, one of a rectangular band, $p$-cyclic groupoid, or meld.

In the dispersive case, there is some nontrivial $g \in \Clo_2^{\pi_1}(\bA)$ which is dispersive. Swapping the variables of $f$ if necessary, we have $f \in \Clo_2^{\pi_1}(g)$ since $\Clo(\bA)$ is a minimal clone. Then by Proposition \ref{dispersive-reduct} we see that $f$ is dispersive as well. Thus $\bA$ is not just term-equivalent to a dispersive algebra, but is in fact dispersive itself.

That each of these five cases defines a nice property is verified in Theorem \ref{rect-band-nice} for rectangular bands, Theorem \ref{p-cyclic-nice} for $p$-cyclic groupoids, Theorem \ref{partial-semi-nice} for algebras with a partial semilattice operation, Theorem \ref{meld-nice} for melds, and Theorem \ref{dispersive-nice} for dispersive algebras.
\end{proof}

\section{Acknowledgments}

The author worked on this problem due to the encouragement of Dmitriy Zhuk. Thanks are also due to the anonymous reviewer who suffered through a poorly written first draft of this paper and provided many suggestions to improve the exposition. This material is based upon work supported by the NSF Mathematical Sciences Postdoctoral Research Fellowship under Grant No. (DMS-1705177).

\bibliographystyle{plain}
\bibliography{min-clone-arxiv}

\appendix

\section{Partial semilattices with few strongly connected components}\label{a-partial-semi}

In this appendix we continue the analysis of the labeled digraphs attached to clone-minimal partial semilattices that we began in Proposition \ref{semi-not-strongly-connected}.

\begin{prop}\label{semi-two-strongly-connected} Suppose that $\bA = (A,s)$ is a clone-minimal partial semilattice, and that $\bA$ is generated by two elements $a,b$. If the directed graph $D_\bA$ of two-element semilattice subalgebras of $\bA$ has just two strongly connected components, one containing $a$ and the other containing $b$, then we must have $A = \{a,b\}$.
\end{prop}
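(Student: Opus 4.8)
The plan is to mirror the proof of Proposition~\ref{semi-not-strongly-connected}, studying the subdirect square $\bS = \Sg_{\bA^2}\{(a,b),(b,a)\}$ through its linking congruence, but inducting on $|A|$ since we can no longer freely pass to a simple quotient (the base case $|A| = 2$ being immediate, as $a,b$ lie in distinct components and $A = U\cup V$). As a preliminary structural step, Proposition~\ref{prop-right-gen} gives $A = O(a)\cup O(b)$; since $D_\bA$ has only the two strongly connected components $U\ni a$ and $V\ni b$, each right orbit is a union of some of them, so $A = U\cup V$ with $U\cap V = \emptyset$, and the reachability order on $\{U,V\}$ is either trivial or, after relabeling, $U\prec V$ (i.e.\ $b\in O(a)$, $a\notin O(b)$). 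Here $\bS$ is subdirect and symmetric, and by Proposition~\ref{prop-right-pi1} one has $\bS = O((a,b))\cup O((b,a))$ with $\pi_1(O((a,b))) = O(a)$ and $\pi_2(O((a,b))) = O(b)$, so $\bS\subseteq (O(a)\times O(b))\cup (O(b)\times O(a))$. If $\{a,b\}$ is a two-element subalgebra of $\bA$ we are done since $a,b$ generate $\bA$, so from now on assume it is not; then Proposition~\ref{semilattice-structure} supplies a surjection from $\bS$ onto the free semilattice on two generators.

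Next I would dispose of the two extreme cases for the linking congruence. If it is trivial, then $\bS$ is the graph of an automorphism of $\bA$, so $\bA\cong \bS$ surjects onto a three-element algebra whose digraph has three strongly connected components; but a surjective homomorphism carries each strongly connected component into a single one and hence cannot increase their number, contradicting the fact that $D_\bA$ has two. If instead $\bS$ is linked, the reachability order cannot be trivial: in the incomparable case $\bS\subseteq (U\times V)\cup(V\times U)$ with $U\cap V=\emptyset$, and the symmetry of $\bS$ forces the underlying bipartite graph to break into two pieces, contradicting linkedness. So we must have $U\prec V$, making $V$ the unique maximal strongly connected component; since $b\in O(a)$, some $g\in\Clo_2^{\pi_1}(s)$ satisfies $g(a,b)=b$, whence $(b,g(b,a))\in\bS\cap(V\times V)$, and Theorem~\ref{strong-binary} applied to $\bS\le_{sd}\bA\times\bA$ with both maximal components taken to be $V$ gives $V\times V\subseteq\bS$, so $(b,b)\in\bS$ and Proposition~\ref{semilattice-structure} makes $\{a,b\}$ a semilattice subalgebra --- contradiction.

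It then remains to treat the case where the linking congruence $\alpha$ on $\bA$ is a proper, nontrivial congruence. An analysis of the bipartite structure of $\bS$ (split according to whether a coordinate lies in $U$ or in $V$) shows that $\alpha$ refines the partition $\{U,V\}$, so $\bA/\alpha$ inherits exactly two strongly connected components, one containing $a/\alpha$ and one containing $b/\alpha$; moreover $a/\alpha\neq b/\alpha$, because a proper congruence of a two-generated idempotent algebra cannot identify the two generators. Since $|A/\alpha|<|A|$, the inductive hypothesis forces $A/\alpha = \{a/\alpha,b/\alpha\}$, i.e.\ $\alpha$ has exactly the two classes $U$ and $V$, so $\bA/\alpha$ is a two-element first-projection algebra and in particular $s\in\Clo_2^{\pi_1}(\bA)$. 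To conclude, I would rule out $|A|>2$: using $s\in\Clo_2^{\pi_1}(\bA)$ together with Theorem~\ref{strong-binary} applied to the linked subdirect relation $O((a,b))\le_{sd}U\times V$ between the two maximal components $U,V$, one obtains $O((a,b)) = U\times V$ and hence $\bS = (U\times V)\cup(V\times U)$ as a subalgebra of $\bA^2$ generated by $(a,b),(b,a)$; feeding this structure back in shows that $s$ already makes $U$ strongly connected as an algebra on its own, so Proposition~\ref{semi-not-strongly-connected} applied to the two-generated strongly connected subalgebra $U$ forces $|U|=1$, and likewise $|V|=1$, giving $A=\{a,b\}$.

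The main obstacle is precisely this last step. The difficulty is that $U$ is strongly connected only as a component of the \emph{full} digraph $D_\bA$, whose edges out of a vertex of $U$ may use multipliers from $V$, whereas the internal digraph $D_U$ built from $s|_U$ alone need not be strongly connected, so Proposition~\ref{semi-not-strongly-connected} cannot be applied to $U$ directly. The resolution is to exploit the identity $\bS = (U\times V)\cup(V\times U)$ --- which encodes that, for every $v\in V$, the pair $(a,v)$ is built from $(a,b)$ and $(b,a)$ by right-orbit operations --- to recover enough $s|_U$-edges inside $U$ to promote $(U,s|_U)$ to a strongly connected two-generated algebra, at which point the contradiction with Proposition~\ref{semi-not-strongly-connected} is available.
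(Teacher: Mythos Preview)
Your outline has the right ingredients but contains a genuine gap and an unfinished step, and the paper's proof avoids both by organizing the case split differently.

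\textbf{The gap.} In your third case (the linking congruence $\alpha$ is proper and nontrivial) you assert that ``an analysis of the bipartite structure of $\bS$ shows that $\alpha$ refines $\{U,V\}$''. This is correct when $U$ and $V$ are incomparable, since then $\bS\subseteq (U\times V)\cup(V\times U)$. But it fails in the $U\prec V$ subcase, which you have not excluded here. Indeed, if $b\in O(a)$ then by Proposition~\ref{prop-right-pi1} there is some $g\in\Clo_2^{\pi_1}(s)$ with $g(a,b)=b$, and then $(g(b,a),b)=(g(b,a),g(a,b))\in O((b,a))\subseteq\bS$ with $g(b,a)\in O(b)=V$. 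Since also $(a,b)\in\bS$ with $a\in U$, the elements $a\in U$ and $g(b,a)\in V$ lie in the same linked component, so $\alpha$ does \emph{not} refine $\{U,V\}$.

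\textbf{The unfinished step.} Even in the incomparable situation where you do reach $\alpha$ having classes $U$ and $V$, your final paragraph is not a proof. Theorem~\ref{strong-binary} needs $U$ and $V$ to be maximal strongly connected components of the algebras in question with respect to $s$; here $U$ is only strongly connected in $D_\bA$, where edges out of $U$ may be labelled by elements of $V$, so $D_U$ can be strictly sparser. Your proposed ``resolution'' does not supply the missing edges, and the claim that $O((a,b))$ is linked as a subdirect relation on $U\times V$ is also unjustified.

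\textbf{How the paper handles it.} The paper does not split on whether $\bS$ is globally linked. Instead it takes $V$ maximal without loss of generality and asks only whether $V$ lies in a single linked component of $\bS$. If not, then $|V/\theta|>1$ and $\bA/\theta$ is a strictly smaller counterexample (or contradicts Proposition~\ref{semi-not-strongly-connected}). If yes, Theorem~\ref{strong-binary} applies directly (it requires only that the maximal component be contained in one linked class, not that $\bS$ be linked): in the incomparable case one gets $U\times V\subseteq\bS$, so $(a,b)$ sits in a maximal strongly connected component of $\bS$, contradicting the last sentence of Proposition~\ref{semilattice-structure}; in the $U\prec V$ case one gets $(V\times V)\cap\bS\neq\emptyset$ and hence $V\times V\subseteq\bS$, so $(b,b)\in\bS$ and Proposition~\ref{semilattice-structure} gives $a\to b$. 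This organization dispatches the $U\prec V$ subcase uniformly and never needs to argue about the internal digraphs $D_U$, $D_V$.
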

\begin{proof} Suppose for contradiction that $\bA$ is a counterexample of minimal cardinality. Let $U, V$ be the strongly connected components of $\bA$ containing $a,b$, respectively, so that the underlying set of $\bA$ is $U \sqcup V$. Since at least one of $U, V$ is a maximal strongly connected component of $\bA$, we may assume without loss of generality that $V$ is a maximal strongly connected component of $\bA$. As in Proposition \ref{semi-not-strongly-connected}, we consider the algebra
\[
\bS = \Sg_{\bA^2}\Big\{\begin{bmatrix} a\\ b \end{bmatrix}, \begin{bmatrix} b\\ a \end{bmatrix}\Big\}.
\]

If $V$ is not contained in a linked component of $\bS$, then if we let $\theta$ be the linking congruence of $\bS$ (considered as a congruence on $\bA$), we find that $|V/\theta| > 1$. In this case we find that $\bA/\theta$ is generated by the two elements $a/\theta$ and $b/\theta$, has at most two strongly connected components (each of which contains either $a/\theta$ or $b/\theta$), and at least one of those strongly connected components has at least two elements, which either contradicts Proposition \ref{semi-not-strongly-connected} or contradicts the assumption that $\bA$ is a minimal counterexample if $\theta$ is nontrivial. Thus we may assume that either the linking congruence $\theta$ of $\bS$ is trivial, or that $V$ is contained in a linked component of $\bS$.

If $\theta$ is trivial, then $\bS$ is the graph of an isomorphism. Thus $(a,b)$ is contained in a maximal strongly connected component of $\bS$, contradicting the last sentence of Proposition \ref{semilattice-structure}.

If $V$ is contained in a linked component of $\bS$, then we have two cases: either $U$ and $V$ are both maximal strongly connected components of $D_\bA$, or $U\preceq V$, i.e. there is some semilattice subalgebra from $U$ to $V$. In the first case, by Theorem \ref{strong-binary} we have $U\times V \subseteq \bS$, so once again $(a,b)$ is contained in a maximal strongly connected component of $\bS$ and we have a contradiction to Proposition \ref{semilattice-structure}. In the second case, we have $b \in O(a)$, so there is some $g \in \Clo_2^{\pi_1}(s)$ with
\[
g(a,b) = b
\]
by Proposition \ref{prop-right-pi1}. Then we have
\[
g\Big(\begin{bmatrix} a\\ b \end{bmatrix}, \begin{bmatrix} b\\ a \end{bmatrix}\Big) \in \begin{bmatrix} b\\ O(b) \end{bmatrix} \cap \bS,
\]
so $(V\times V) \cap \bS \ne \emptyset$. Applying Theorem \ref{strong-binary} we see that $V\times V \subseteq \bS$, so $(b,b) \in \bS$, which implies $a \rightarrow b$ by Proposition \ref{semilattice-structure}.
\end{proof}

\begin{cor}\label{semi-not-both-max} If $\bA = (A,s)$ is a clone-minimal partial semilattice, and if $\bA$ is generated by two elements $a,b$ with $|\bA| \ge 3$, then at least one of $a,b$ is not contained in a maximal strongly connected component of the digraph of semilattice subalgebras of $\bA$.
\end{cor}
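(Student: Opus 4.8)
The plan is to deduce the corollary from Propositions \ref{semi-not-strongly-connected} and \ref{semi-two-strongly-connected} by a short contradiction argument, the key observation being that a generator which lies in a maximal strongly connected component has its entire right orbit equal to that component.

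First I would record the following elementary fact: if $U$ is a maximal strongly connected component of $D_\bA$ (in the sense of the reachability order, so that $c \in U$ and $c \rightarrow d$ force $d \in U$) and if $c \in U$, then $O(c) = U$. The inclusion $O(c) \subseteq U$ follows by induction on the definition of the right orbit together with maximality of $U$, while $U \subseteq O(c)$ is immediate from the fact that $U$ is strongly connected. Note also that for a clone-minimal partial semilattice the digraph $D_\bA$ is, by Proposition \ref{semilattice-structure}, exactly the digraph of two-element semilattice subalgebras, so there is no ambiguity in which digraph we mean.

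Now suppose for contradiction that $|\bA| \ge 3$ and that both generators $a$ and $b$ lie in maximal strongly connected components of $D_\bA$, say $a \in U$ and $b \in V$. By Proposition \ref{prop-right-gen} we have $A = O(a) \cup O(b)$, and by the fact just recorded this equals $U \cup V$. If $U = V$, then $A = U$ is a single strongly connected component, contradicting Proposition \ref{semi-not-strongly-connected}. If $U \ne V$, then $U$ and $V$ are disjoint (distinct strongly connected components never overlap), so $A = U \sqcup V$ and $D_\bA$ has exactly two strongly connected components, one containing $a$ and the other containing $b$; Proposition \ref{semi-two-strongly-connected} then forces $A = \{a,b\}$, contradicting $|\bA| \ge 3$. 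Either way we reach a contradiction, so at least one of $a,b$ is not contained in a maximal strongly connected component.

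Since the argument consists only of unwinding definitions and citing the two preceding results, there is no real obstacle; the one point requiring a little care is verifying that the hypotheses of Proposition \ref{semi-two-strongly-connected} are met exactly — that is, that $a$ and $b$ genuinely end up in different components and that those are the only two components — which is precisely what the right-orbit computation above establishes.
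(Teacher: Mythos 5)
Your proof is correct and follows essentially the same route as the paper: apply Proposition \ref{prop-right-gen} to write $A = O(a)\cup O(b)$, note that both orbits are strongly connected, and split into the one-component case (Proposition \ref{semi-not-strongly-connected}) and the two-component case (Proposition \ref{semi-two-strongly-connected}). Your explicit verification that $O(c)=U$ for $c$ in a maximal strongly connected component $U$ is a helpful detail the paper leaves implicit, but the argument is otherwise identical.
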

\begin{proof} Suppose for a contradiction that $a$ and $b$ are both contained in maximal strongly connected components, that is, that $O(a)$ and $O(b)$ are strongly connected. Since $\bA = O(a) \cup O(b)$ by Proposition \ref{prop-right-gen}, if $O(a)$ and $O(b)$ are both strongly connected then $\bA$ has either one or two strongly connected components, each containing $a$ or $b$. If $O(a) = O(b)$ we get a contradiction to Proposition \ref{semi-not-strongly-connected}, and if $O(a) \ne O(b)$ then we get a contradiction to Proposition \ref{semi-two-strongly-connected}.
\end{proof}

There is still one way for a $2$-generated minimal partial semilattice clone to have just two strongly connected components which we have not ruled out: it could plausibly be the case that the generators $a,b$ are both contained in a minimal strongly connected component $U$, which has a semilattice edge to a maximal strongly connected component $V$ which contains neither $a$ nor $b$. This possibility is a bit tricky to rule out. We will need a few preliminary results.

\begin{prop}\label{prop-upwards-closed} If $\RR \le_{sd} \bA \times \bB$ is subdirect and $\bA = (A,s), \bB = (B,s)$ are binary algebras, then for any $a \in \bA$ the set $O(a) + \RR$ is a union of right orbits of $\bB$, that is, if $b \in O(a) + \RR$ and $b \rightarrow c$ in $D_\bB$, then $c \in O(a) + \RR$.
\end{prop}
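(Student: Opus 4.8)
The plan is to unwind the definition of $O(a) + \RR$ and then lift the relevant elements of $\bB$ to elements of $\RR$ using subdirectness. First I would take $b \in O(a) + \RR$ and fix a witness $a' \in O(a)$ with $(a',b) \in \RR$. Now suppose we are given an edge $b \rightarrow c$ in $D_\bB$; by the definition of the digraph this means $c = s(b,b')$ for some $b' \in \bB$ (with $c \ne b$, though that is irrelevant here). Since $\RR \le_{sd} \bA \times \bB$ is subdirect, $\pi_1(\RR) = \bA$, so I can choose $a'' \in \bA$ with $(a'',b') \in \RR$.

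The key step is to apply $s$ coordinatewise. Because $\RR$ is a subalgebra of $\bA \times \bB$, we get $s\big((a',b),(a'',b')\big) = (s(a',a''),\, s(b,b')) = (s(a',a''),\, c) \in \RR$. To conclude, I would observe that $s(a',a'') \in O(a')$ directly from the definition of the right orbit, and that $O(a') \subseteq O(a)$: since $a' \in O(a)$ and $O(a)$ is closed under right multiplication, $O(a)$ contains the smallest such set containing $a'$, namely $O(a')$. Hence $s(a',a'') \in O(a)$, and since $(s(a',a''),c) \in \RR$ this exhibits $c \in O(a) + \RR$, as required.

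There is no genuine obstacle here; the only points that want a little care are getting the direction of the inclusion $O(a') \subseteq O(a)$ right, and the (immediate) remark that the coordinatewise image under $s$ of a pair of elements of $\RR$ again lies in $\RR$ since $\RR$ is a subalgebra. It is worth noting in passing that this proof uses neither clone-minimality nor finiteness — it is a general fact about subdirect products of binary algebras — which is why it can be stated for arbitrary binary $\bA, \bB$.
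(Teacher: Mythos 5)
Your proof is correct and is essentially identical to the paper's: fix a witness $(a',b) \in \RR$ with $a' \in O(a)$, write $c = s(b,b')$, lift $b'$ to some $(a'',b') \in \RR$ by subdirectness, and apply $s$ coordinatewise to land in $O(a) \times \{c\}$ intersected with $\RR$. The only cosmetic difference is that you route through $O(a') \subseteq O(a)$, whereas the definition of the right orbit already gives $s(a',a'') \in O(a)$ directly since $O(a)$ is closed under right multiplication by arbitrary elements.
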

\begin{proof} Suppose that $(a',b) \in \RR$ for some $a' \in O(a)$. If $b \rightarrow c$ is a directed edge of $D_\bB$, then we must have $c = s(b,d)$ for some $d \in \bB$. Since $\RR$ is subdirect, there is some $e \in \bA$ such that $(e,d) \in \RR$. Then we have
\[
s\Big(\begin{bmatrix} a'\\ b \end{bmatrix}, \begin{bmatrix} e\\ d \end{bmatrix}\Big) \in \begin{bmatrix} O(a)\\ c \end{bmatrix} \cap \RR,
\]
so $c \in O(a) + \RR$.
\end{proof}

\begin{prop}\label{semi-nice-terms} Suppose that $\bA = (A,s)$ is a clone-minimal partial semilattice. Then the binary term $f = s^{\infty_c} \in \Clo_2^{\pi_1}(s)$ is nontrivial, has $a \preceq f(a,b)$ in $D_\bA$ for all $a,b$, and for all $a,b$ either $f(a,b) = f(b,a)$ or the set $\{f(a,b),f(b,a)\}$ forms a $2$-element projection subalgebra of $\bA$, with $s$ acting as first projection.
\end{prop}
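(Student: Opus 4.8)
The plan is to derive all three conclusions from the single identity $f(f(x,y),f(y,x))\approx f(x,y)$ supplied by Corollary~\ref{binary-iteration} for $f=s^{\infty_c}$, together with the membership $f\in\Clo_2^{\pi_1}(s)$ and the description of two-element subalgebras of a partial semilattice (Proposition~\ref{prop-partial-semilattice}).

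For nontriviality I would use the last part of Corollary~\ref{binary-iteration}: $s^{\infty_c}$ is first projection precisely when the map $\phi\colon(x,y)\mapsto(s(x,y),s(y,x))$ is a permutation of $A^2$. Since $s$ is nontrivial there are $a,b$ with $c:=s(a,b)\ne a$, and the identities $s(x,s(x,y))\approx s(x,y)$ and $s(s(x,y),x)\approx s(x,y)$ give $s(a,c)=s(c,a)=c$, so $\phi(a,c)=(c,c)=\phi(c,c)$ while $(a,c)\ne(c,c)$; hence $\phi$ is not injective and $f$ is nontrivial. For the relation $a\preceq f(a,b)$ I would prove by a short induction on the construction of an element $g\in\Clo_2^{\pi_1}(s)$ that $g(a,b)\in O(a)$ for all $a,b$ — the base case $g=\pi_1$ is immediate, and the step $g=s(g',h)$ with $g'\in\Clo_2^{\pi_1}(s)$ uses only that right orbits are closed under right multiplication — and then specialize to $g=f$.

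The real content is the last clause. Fix $a,b$, put $p=f(a,b)$ and $q=f(b,a)$, and suppose $p\ne q$. Evaluating $f(f(x,y),f(y,x))\approx f(x,y)$ at $(a,b)$ and at $(b,a)$ gives $f(p,q)=p$ and $f(q,p)=q$, so $\{p,q\}$ is a two-element subalgebra of $(A,f)$ on which $f$ restricts to the first projection. Because $f$ is nontrivial and $\bA$ is clone-minimal, $\Clo(f)=\Clo(s)$, so $\{p,q\}$ is also a subalgebra of $\bA$, and by Proposition~\ref{prop-partial-semilattice} the operation $s$ restricts there either to first projection or to a two-element semilattice operation. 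To finish I would rule out the semilattice case by noting that for a commutative operation circular iteration is degenerate: if $\vee$ denotes the two-element semilattice operation, then $(x,y)\mapsto(x\vee y,y\vee x)$ lands in the diagonal, which it fixes, so $\vee^{\infty_c}=\vee$; hence the restriction of $f=s^{\infty_c}$ to $\{p,q\}$ would be the nontrivial operation $\vee$, contradicting that this restriction is $\pi_1$. Therefore $s$ acts as first projection on $\{p,q\}$, as claimed.

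The only subtle step, and the one that genuinely uses that $f$ is the particular term $s^{\infty_c}$ rather than an arbitrary term acting as $\pi_1$ on $\{p,q\}$, is this exclusion of the semilattice case: $f=\pi_1$ on a two-element subalgebra is by itself perfectly consistent with $s$ acting as a semilattice operation there, so one must bring in the explicit (degenerate) behaviour of circular iteration on a two-element semilattice to close the gap.
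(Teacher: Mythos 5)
Your proposal is correct in substance, but it takes a different route from the paper at two points and leaves one small loose end. On nontriviality: the paper does not use the permutation criterion at all; it simply invokes Corollary \ref{cor-absorption-semilattice} (a nontrivial partial semilattice satisfies no absorption identities beyond those following from idempotence), so the term $s^{*_cN}$ cannot collapse to \emph{either} projection. Your argument via non-injectivity of $\phi$ only rules out the \emph{first} projection as written --- ``not $\pi_1$'' is weaker than ``nontrivial'' --- so you still owe a word on why $f\ne\pi_2$. Your own witness closes this immediately: since $\{a,c\}$ is a semilattice subalgebra with absorbing element $c$, you also have $\phi(c,a)=(c,c)$, hence $f(c,a)=c\ne a=\pi_2(c,a)$; but the step should be stated.

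On the last clause, the paper's argument is shorter and shows that your closing remark is not quite right. Once $f$ is nontrivial and $\bA$ is clone-minimal we have $s\in\Clo(f)$; on a two-element subalgebra where $f$ restricts to $\pi_1$, the restriction of $s$ is therefore a composite of first projections, i.e.\ itself a projection, and the identity $s(s(x,y),x)\approx s(x,y)$ excludes $\pi_2$. So the semilattice case never arises, and nothing about the particular shape of $s^{\infty_c}$ is needed beyond its nontriviality --- contrary to your final paragraph, $f=\pi_1$ on a two-element subalgebra is \emph{not} consistent with $s$ acting as a semilattice there once $s\in\Clo(f)$. Your alternative exclusion of the semilattice case (circular composition of a semilattice operation with itself returns the semilattice operation, so $f$ would restrict to $\vee$ rather than $\pi_1$) is valid and self-contained, but it does by hand what clone-minimality already gives for free.
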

\begin{proof} That $a \preceq f(a,b)$ follows from Proposition \ref{prop-right-pi1}. Since $f$ is defined nontrivially from $s$ and $s$ satisfies no nontrivial absorption identities by Corollary \ref{cor-absorption-semilattice}, $f$ must not be a projection, so we have $s \in \Clo(f)$. Then since $f$ acts as first projection on $\{f(a,b),f(b,a)\}$, $s$ must also act as a projection on $\{f(a,b),f(b,a)\}$, and the partial semilattice identities imply that $s$ can't act as second projection on any two element set.
\end{proof}

\begin{prop}\label{prop-proj-upwards-closed} If $\RR \le \bA \times \bB$ and $\bA = (A,s), \bB = (B,s)$ are binary algebras, then for any $a,b \in \bA$ such that $s$ acts as first projection on $\{a,b\}$ and $\{b\} + \RR = \bB$, the set $\{a\} + \RR$ is a union of right orbits of $\bB$, that is, if $c \in \{a\} + \RR$ and $c \rightarrow d$ in $D_\bB$, then $d \in \{a\} + \RR$.
\end{prop}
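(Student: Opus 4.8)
This is the exact analogue of Proposition \ref{prop-upwards-closed}, with the hypothesis ``$a' \in O(a)$'' replaced by the single element $a$ and the role of ``$O(a) + \RR$ is $\preceq$-closed'' played by ``$\{a\} + \RR$ is $\preceq$-closed'', and it will be proved by the same one-step computation inside the subdirect-style relation $\RR$. The plan is as follows. Suppose $c \in \{a\} + \RR$ and $c \rightarrow d$ in $D_\bB$. By definition of the digraph $D_\bB$, the edge $c \rightarrow d$ means there is some $e \in \bB$ with $d = s(c,e)$. Since $c \in \{a\} + \RR$ we have $(a,c) \in \RR$, and since $\{b\} + \RR = \bB$ we can pick a witness $(b,e) \in \RR$ for this same $e$.

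Now apply the operation $s$ of $\bA \times \bB$ coordinatewise to the pair of elements $(a,c)$ and $(b,e)$ of $\RR$. Because $\RR$ is a subalgebra of $\bA \times \bB$, the result lies in $\RR$; and because $s$ restricts to first projection on $\{a,b\} \le \bA$, the first coordinate of the result is $s(a,b) = a$, while the second coordinate is $s(c,e) = d$. Concretely,
\[
s\Big(\begin{bmatrix} a\\ c\end{bmatrix},\begin{bmatrix} b\\ e\end{bmatrix}\Big) = \begin{bmatrix} s(a,b)\\ s(c,e)\end{bmatrix} = \begin{bmatrix} a\\ d\end{bmatrix} \in \RR,
\]
so $(a,d) \in \RR$, i.e. $d \in \{a\} + \RR$. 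Since $d$ was an arbitrary element with $c \rightarrow d$ and $c$ an arbitrary element of $\{a\} + \RR$, this shows $\{a\} + \RR$ is closed under following directed edges of $D_\bB$, hence is a union of right orbits of $\bB$.

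There is essentially no obstacle here: the only things to be careful about are (i) invoking the correct reading of ``$c \rightarrow d$'' from Definition \ref{defn-digraph} to extract the witness $e$, and (ii) using the full-image hypothesis $\{b\} + \RR = \bB$ precisely to guarantee that \emph{the same} $e$ is paired with $b$ in $\RR$. Everything else is the standard ``close $\RR$ under a coordinatewise operation'' move that also underlies Propositions \ref{prop-upwards-closed} and \ref{semilattice-structure}.
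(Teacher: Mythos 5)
Your proof is correct and is essentially identical to the paper's own argument: extract the witness $e$ with $d = s(c,e)$, use $\{b\}+\RR = \bB$ to pair $b$ with that same $e$ inside $\RR$, and apply $s$ coordinatewise, using that $s(a,b)=a$ on the projection subalgebra $\{a,b\}$. No differences worth noting.
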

\begin{proof} Suppose that $(a,c) \in \RR$. If $c \rightarrow d$ is a directed edge of $D_\bB$, then we must have $d = s(c,e)$ for some $e \in \bB$. Since $\{b\} + \RR = \bB$, we have $(b,e) \in \RR$. Then we have
\[
s\Big(\begin{bmatrix} a\\ c \end{bmatrix}, \begin{bmatrix} b\\ e \end{bmatrix}\Big) = \begin{bmatrix} a\\ d \end{bmatrix} \in \RR,
\]
so $c \in \{a\} + \RR$.
\end{proof}

\begin{defn} We say that a subalgebra $\bB \le \bA$ \emph{absorbs} an element $a \in \bA$ with respect to a binary operation $f$ if
\[
f(a,\bB), f(\bB,a) \subseteq \bB.
\]
\end{defn}

\begin{prop}\label{nontriv-absorb} If $\bB \le \bA$ absorbs $a \in \bA$ with respect to a binary idempotent operation $f$, then for every nontrivial $g \in \Clo_2(f)$, $\bB$ absorbs $a$ with respect to $g$ as well.
\end{prop}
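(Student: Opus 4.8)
The plan is to argue by induction on the construction of $g$ as a term built from $f$, proving the following slightly stronger statement: for every $g \in \Clo_2(f)$ and every $b \in \bB$, we have $g(a,b) \in \bB$ unless $g = \pi_1$, and $g(b,a) \in \bB$ unless $g = \pi_2$. Granting this, the proposition follows immediately, since a nontrivial $g$ is neither $\pi_1$ nor $\pi_2$, so $g(a,\bB) \subseteq \bB$ and $g(\bB,a) \subseteq \bB$, which says exactly that $\bB$ absorbs $a$ with respect to $g$.

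For the base cases, $\pi_1$ and $\pi_2$ satisfy the statement trivially (for instance $\pi_2(a,b) = b \in \bB$, while the exceptional clause $g = \pi_2$ accounts for the value $\pi_2(b,a) = a$). For the inductive step I would write $g = f(g_1,g_2)$ with $g_1,g_2$ simpler terms, apply the inductive hypothesis to each, and split into cases. If $g_1 = g_2 = \pi_1$ then $g = f(\pi_1,\pi_1) = \pi_1$ by idempotence, so there is nothing to check for $g(a,b)$, while $g(b,a) = f(b,b) = b \in \bB$; otherwise at least one of $g_1,g_2$ is not $\pi_1$, so by the inductive hypothesis the corresponding value $g_i(a,b)$ lies in $\bB$, while the other value lies in $\bB \cup \{a\}$, and then $g(a,b) = f(g_1(a,b),g_2(a,b)) \in \bB$ because $\bB$ is a subalgebra ($f(\bB,\bB)\subseteq\bB$) and absorbs $a$ ($f(a,\bB),f(\bB,a)\subseteq\bB$). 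The clause for $g(b,a)$ is handled by the mirror-image argument, with the roles of the two coordinates (and of $\pi_1$ and $\pi_2$) interchanged, using $f(\pi_2,\pi_2) = \pi_2$.

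I do not anticipate a serious obstacle: the whole argument is a routine structural induction, and the only subtlety worth flagging is the key mechanism that makes it go through, namely that when a subterm of $g$ is evaluated at $(a,b)$ it can return the ``outside'' value $a$ only when that subterm is literally $\pi_1$. This is precisely the point at which idempotence of $f$ is used, and it is what prevents the outside value from ever re-entering once an $f$-node mixes $a$ with an element of $\bB$. One could also phrase the conclusion through the quotient of the subalgebra $\bB\cup\{a\}$ by the congruence with classes $\{a\}$ and $\bB$, whose quotient is a two-element semilattice; but that framing would additionally require knowing $f \in \Clo(g)$, so the direct induction is preferable here.
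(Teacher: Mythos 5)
Your proof is correct and follows essentially the same route as the paper's: a structural induction on the term $g = f(g_1,g_2)$, using idempotence to dispose of the case where both subterms are the same projection, and using $f(\bB,\{a\}\cup\bB) \subseteq \bB$ (closure of $\bB$ plus absorption for $f$) otherwise. The only cosmetic difference is that you strengthen the inductive hypothesis to a per-coordinate statement with explicit exceptions for $\pi_1$ and $\pi_2$, where the paper keeps the hypothesis as ``$\bB$ absorbs $a$ with respect to every nontrivial subterm'' and observes that arbitrary (possibly trivial) subterms take values in $\{a\}\cup\bB$; just be sure the exception ``$g = \pi_1$'' is read as equality of operations rather than of literal terms, as your handling of $f(\pi_1,\pi_1)$ already indicates.
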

\begin{proof} We prove this by induction on the construction of $g$ as a term built out of $f$. Write $g(x,y) = f(u(x,y), v(x,y))$. If both $u$ and $v$ are trivial, then they must be different projections, since otherwise $g$ is trivial (by idempotence), and so $g$ is $f(x,y)$ or $f(y,x)$. Otherwise, if say $u$ is nontrivial, then by the inductive hypothesis applied to $u$ we have
\[
g(a,\bB) \subseteq f(u(a,\bB),v(a,\bB)) \subseteq f(\bB,\{a\}\cup\bB) \subseteq \bB
\]
since $\bB$ is closed under $f$ and absorbs $a$ with respect to $f$.
\end{proof}

\begin{prop}\label{absorb-gen} Suppose that $\bA = (A,s)$ is a clone-minimal partial semilattice which is generated by two elements $a,b$. If there is some proper subalgebra $\bB < \bA$ which absorbs $a$ and $b$ with respect to $s$ such that
\[
\Sg_{\bA^2}\Big\{\begin{bmatrix} a\\ b \end{bmatrix}, \begin{bmatrix} b\\ a \end{bmatrix}\Big\} \cap \begin{bmatrix} \bB\\ \bB \end{bmatrix} \ne \emptyset,
\]
then $\bA = \{a,b\} \cup \bB$ and $\bA$ has a two-element semilattice as a quotient.
\end{prop}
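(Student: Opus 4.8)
The plan is to establish the two conclusions separately — first that $\bA=\{a,b\}\cup\bB$, and then that $\bA$ has a two-element semilattice quotient — and I expect both to be elementary, the only mildly delicate point being the selection of the right congruence in the second part.

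For the first conclusion I would first dispose of the case $a\in\bB$ (the case $b\in\bB$ being symmetric): since $\bB$ is a subalgebra which absorbs $b$ with respect to $s$, the set $\bB\cup\{b\}$ is closed under $s$, hence a subalgebra, and it contains both generators $a$ (as $a\in\bB$) and $b$, so it equals $\bA$. In the remaining case $a,b\notin\bB$, the hypothesis $\Sg_{\bA^2}\{(a,b),(b,a)\}\cap(\bB\times\bB)\neq\emptyset$ yields a binary term $t$ of $\bA$ with $t(a,b)\in\bB$ and $t(b,a)\in\bB$; since $a,b\notin\bB$ this $t$ is not a projection, so by clone-minimality $\Clo(t)=\Clo(s)$, and by Proposition \ref{nontriv-absorb} $\bB$ absorbs both $a$ and $b$ with respect to $t$. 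Checking the finitely many cases for $t(x,y)$ with $x,y\in\bB\cup\{a,b\}$ — using idempotence of $t$, that $\bB$ is closed under $t$, that $\bB$ absorbs $a$ and $b$ with respect to $t$, and that $t(a,b),t(b,a)\in\bB$ — shows that $\bB\cup\{a,b\}$ is closed under $t$. Since $t$ generates $\Clo(s)$, $\bB\cup\{a,b\}$ is then closed under $s$, so it is a subalgebra of $\bA$ containing both generators, whence $\bA=\{a,b\}\cup\bB$.

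For the second conclusion I would look at the two partitions of $\bA$: $\pi_a$, with blocks $\{a\}\cup\bB$ and its complement, and $\pi_b$, with blocks $\{b\}\cup\bB$ and its complement. Because $\bB$ is a subalgebra which absorbs $a$ and $b$, the only products that could violate the congruence property for $\pi_a$ are $s(a,b)$ and $s(b,a)$, and $\pi_a$ is a congruence exactly when neither of these equals $b$; symmetrically $\pi_b$ is a congruence exactly when neither of $s(a,b),s(b,a)$ equals $a$. I claim that one of $\pi_a,\pi_b$ is simultaneously a genuine two-block partition and a congruence: if $a\in\bB$ then $b\notin\bB$, so $\pi_a$ is a genuine two-block partition, and since $\bB$ absorbs $b$ we get $s(a,b),s(b,a)\in\bB$, so $b\notin\{s(a,b),s(b,a)\}$ and $\pi_a$ is a congruence (symmetrically if $b\in\bB$); and if $a,b\notin\bB$ then both $\pi_a,\pi_b$ are two-block partitions, and if neither were a congruence then $\{s(a,b),s(b,a)\}=\{a,b\}$, making $\{a,b\}$ a subalgebra and hence $\bA=\{a,b\}$, contradicting that $\bB$ is a proper nonempty subalgebra (it would then be a singleton contained in $\{a,b\}$, i.e.\ would contain $a$ or $b$). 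Taking such a two-block congruence, the block containing $\bB$ absorbs the other block in the quotient, since $s$ maps that block together with any element of $\bB$ back into $\bB$; hence the quotient is the two-element semilattice.

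The step I expect to require the most care is the second part: one must choose, between $\pi_a$ and $\pi_b$, a partition that is at once a genuine two-block partition and compatible with $s$, which forces the configuration $\{s(a,b),s(b,a)\}=\{a,b\}$ to be treated separately; there it is the properness and nonemptiness of $\bB$, combined with absorption, that yields the contradiction.
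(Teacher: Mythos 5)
Your proof is correct and follows essentially the same route as the paper: a nontrivial binary term $t$ with $t(a,b),t(b,a)\in\bB$ is extracted from the hypothesis, Proposition \ref{nontriv-absorb} transfers absorption to $t$, and closure of $\{a,b\}\cup\bB$ under $t$ (hence under $s$) follows. The only divergence is in the second half: the paper verifies the two-block congruence for $t$ (where $t(a,b),t(b,a)\in\bB$ makes compatibility immediate) and then transfers it to $s$ via $s\in\Clo(t)$, whereas you verify compatibility directly for $s$, which costs you the extra case analysis ruling out $\{s(a,b),s(b,a)\}=\{a,b\}$ --- both are valid.
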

\begin{proof} Let $t \in \Clo_2(s)$ be a term such that
\[
t\Big(\begin{bmatrix} a\\ b \end{bmatrix}, \begin{bmatrix} b\\ a \end{bmatrix}\Big) \in \begin{bmatrix} \bB\\ \bB \end{bmatrix}.
\]
Since $\bB$ is a proper subalgebra of $\bA$ it can't contain both $a$ and $b$, so $t$ must be nontrivial. In particular, by Proposition \ref{nontriv-absorb} we see that $\bB$ absorbs $a$ and $b$ with respect to $t$. Since $\{a\} \cup \bB$ and $\{b\}\cup \bB$ are closed under $t$ and since $t(a,b), t(b,a) \in \bB$, we see that $\{a,b\} \cup \bB$ is closed under $t$. Since $t$ is nontrivial, we have $s \in \Clo(t)$, so in fact we have $\bA = \{a,b\} \cup \bB$.

If $a \not\in \bB$, then the equivalence relation $\theta$ on $\bA$ with equivalence classes $\{a\}$ and $\{b\} \cup \bB$ is a congruence of $t$ such that $(A,t)/\theta$ is a two-element semilattice with absorbing element $\{b\}\cup\bB$, and since $s \in \Clo_2(t)$ is nontrivial the same is true for $s$. If $b \not\in \bB$ then we instead use the equivalence relation with classes $\{a\}\cup\bB$ and $\{b\}$.
\end{proof}

\begin{defn} If $\RR \le \bA \times \bB$ and $\bS \le \bB \times \bC$, then we define their \emph{relational composition} $\RR\circ \bS \le \bA\times \bC$ by
\[
\RR \circ \bS = \{(a,c) \mid \exists b\in \bB\text{ s.t. }(a,b) \in \RR \wedge (b,c) \in \bS\}.
\]
If $\RR \le \bA \times \bA$, then we write $\RR^{\circ k}$ as shorthand for $\RR \circ \cdots \circ \RR$, with $k$ copies of $\RR$.
\end{defn}

\begin{prop}\label{sym-link} If $\bS \le_{sd} \bA \times \bA$ is subdirect, symmetric, and linked, and if $\bA$ is finite, then there is some $n \ge 1$ such that $\bS^{\circ n} = \bA^2$. Additionally, for any such $n$ we have $\bS^{\circ m} = \bA^2$ for every $m \ge n$ as well.
\end{prop}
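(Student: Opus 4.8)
The plan is to reduce everything to a reflexive relation by passing to $T \coloneqq \bS \circ \bS$. First I would record that $T$ is reflexive: since $\bS$ is subdirect, every $a \in \bA$ has some $b$ with $(a,b) \in \bS$, and then $(a,a) \in \bS \circ \bS = T$; and $T$ is symmetric because $\bS$ is. Reflexivity of $T$ gives $T^{\circ k} \subseteq T^{\circ (k+1)}$ for all $k \ge 1$ (compose with the diagonal relation, which is contained in $T$), so the relations $T, T^{\circ 2}, T^{\circ 3}, \dots$ form an increasing chain of subsets of the finite set $\bA^2$. Hence the chain stabilizes: there is $N \ge 1$ with $T^{\circ m} = T^{\circ N}$ for all $m \ge N$, and $\theta \coloneqq T^{\circ N} = \bigcup_{k \ge 1} T^{\circ k}$ is then transitive as well as reflexive and symmetric, i.e. an equivalence relation on $\bA$.

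Next I would identify $\theta$ with the linking congruence of $\bS$ viewed as a congruence on $\bA$. Unwinding the concrete description of the linking congruence from the definition, two elements $a,c \in \bA$ are linked iff the left-copy vertices $a,c$ lie in the same connected component of $\bS$ viewed as a bipartite graph on $\bA \sqcup \bA$; since the graph is bipartite such a walk has even length, and contracting it two edges at a time turns a step $a' \to b \to a''$ (with $(a',b),(a'',b) \in \bS$) into a single $T$-step, using $\bS^{-1} = \bS$. Conversely a $T$-path expands to an even walk, and the case $a = c$ is covered by subdirectness, which guarantees each left vertex has at least one edge. Thus being linked is exactly lying in a common class of $\theta$. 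Since $\bS$ is linked, $\theta = \bA^2$, and therefore $\bS^{\circ 2N} = (\bS \circ \bS)^{\circ N} = T^{\circ N} = \theta = \bA^2$; this proves the first assertion with $n = 2N$.

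For the monotonicity statement, suppose $\bS^{\circ n} = \bA^2$. Then $\bS^{\circ (n+1)} = \bS^{\circ n} \circ \bS = \bA^2 \circ \bS = \{(a,c) \mid \exists b \ (b,c) \in \bS\}$, and this equals $\bA^2$ because the second projection of $\bS$ is all of $\bA$ (subdirectness) while the first coordinate is unconstrained; a trivial induction on $m$ then gives $\bS^{\circ m} = \bA^2$ for every $m \ge n$.

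The only point that needs a little care is the dictionary between the purely combinatorial ``same connected component'' condition defining the linking congruence and the algebraic relation $\bS \circ \bS$; everything else is bookkeeping with associativity of relational composition. It is worth emphasizing why one passes to $\bS \circ \bS$ rather than iterating $\bS$ directly: $\bS$ itself may fail to be reflexive --- for instance the graph of a fixed-point-free involution --- so its powers need not form an increasing chain, whereas $\bS \circ \bS$ always contains the diagonal and its powers do. I expect this bipartite-walk-versus-composition translation to be the main (and only) obstacle.
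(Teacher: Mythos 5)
Your proposal is correct and follows essentially the same route as the paper: both arguments observe that $\bS^{\circ 2}$ is reflexive (via subdirectness plus symmetry), that its powers therefore form an increasing chain stabilizing at the linking congruence, which equals $\bA^2$ by linkedness, and both finish the monotonicity claim via $\bA^2 \circ \bS = \bA^2$. The only cosmetic difference is that you package $\bS \circ \bS$ as a named relation $T$ and spell out the bipartite-walk contraction, which the paper states more tersely as $\theta = \bigcup_{k\ge 0}\bS^{\circ 2k}$.
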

\begin{proof} Since $\bS$ is symmetric, $\bS^{\circ 2}$ is the set of pairs $(a,b)$ such that there is some $c \in \bA$ with $(a,c) \in \bS$ and $(b,c) \in \bS$. As a consequence, the linking congruence of $\bS$, considered as a congruence $\theta$ on $\bA$, is given by
\[
\theta = \bigcup_{k \ge 0} \bS^{\circ 2k},
\]
where we interpret $\bS^{\circ 0}$ as the diagonal $\{(a,a) \mid a \in \bA\}$. Since $\bS$ is subdirect, for every $a \in \bA$ there is some $b \in \bA$ such that $(a,b), (b,a) \in \bS$, so the diagonal $\bS^{\circ 0}$ is contained in $\bS^{\circ 2}$, so for all $k$ we have
\[
\bS^{\circ 2k} = \bS^{\circ 2k} \circ \bS^{\circ 0} \subseteq \bS^{\circ 2k} \circ \bS^{\circ 2} = \bS^{\circ 2(k+1)}.
\]
Since $\bA^2$ is finite, there is some $k$ such that $\theta = \bS^{\circ 2k}$. Since $\theta = \bA^2$ by the assumption that $\bS$ is linked, we see that $\bA^2 = \bS^{\circ 2k}$.

For the last statement, suppose that $\bS^{\circ n} = \bA^2$ for some $n$. Then since $\bS$ is subdirect, we have
\[
\bS^{\circ (n+1)} \supseteq \bA^2 \circ \bS = \bA^2.\qedhere
\]
\end{proof}

\begin{thm} Suppose that $\bA = (A,s)$ is a clone-minimal partial semilattice, and that $\bA$ is generated by two elements $a,b$. Then it is not possible for $a$ and $b$ to be in the same strongly connected component of the digraph $D_\bA$ of semilattice subalgebras of $\bA$.
\end{thm}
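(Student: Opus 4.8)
The plan is to argue by contradiction, taking a minimal counterexample. Suppose $\bA = (A,s)$ is a clone-minimal partial semilattice generated by two elements $a,b$ such that $a$ and $b$ lie in a common strongly connected component $U$ of $D_\bA$, and among all such $\bA$ pick one with $|A|$ smallest. A two-element partial semilattice always has its two elements in distinct strongly connected components, so $|A| \ge 3$ and hence $\{a,b\}$ is not a subalgebra of $\bA$. First I would reduce to the case that $\bA$ is simple. If $\theta$ is a congruence with $a \mathrel{\theta} b$, then $\theta \supseteq \mathrm{Cg}(a,b)$, and since $\bA$ is idempotent and $2$-generated the quotient $\bA/\mathrm{Cg}(a,b)$ is a one-element algebra, so $\theta = \nabla$. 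If instead $\theta$ is a nontrivial congruence with $a \not\mathrel{\theta} b$, then a pair of directed paths witnessing that $a,b$ lie in a common component of $D_\bA$ pushes forward (each edge either collapses or survives a quotient) to show that $a/\theta$ and $b/\theta$ lie in a common component of $D_{\bA/\theta}$; since a projection algebra has only singleton components, $\bA/\theta$ must then be nontrivial, hence is a strictly smaller clone-minimal partial semilattice of the same forbidden type, contradicting minimality. Thus $\bA$ is simple. Note also that $a,b \in U$ forces $O(a) = O(b)$, so $A = O(a) = O(b)$ by Proposition \ref{prop-right-gen}; consequently $U$ is the unique $\preceq$-minimal component, and by Corollary \ref{semi-not-both-max} together with $|A| \ge 3$ it is not maximal.

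Next I would analyze $\bS = \Sg_{\bA^2}\{(a,b),(b,a)\}$, which is subdirect and symmetric. Since $\bA$ is simple, $\bS$ is either linked or the graph of an automorphism. In the latter case, because $\{a,b\}$ is not a subalgebra, the last sentence of Proposition \ref{semilattice-structure} gives a surjection of $\bS$ onto the free semilattice on two generators carrying $(a,b)$ and $(b,a)$ to the two generators; as $\bS \cong \bA$ is simple, this surjection is an isomorphism, so $\bA$ is the three-element free semilattice on two generators, whose generators lie in distinct components—contradicting $a,b \in U$. Hence $\bS$ is linked.

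The heart of the argument, and what I expect to be the main obstacle, is to leverage the linkedness of $\bS$ to force $\bS$ to be large enough to contain a diagonal pair $(b,b)$; once $(b,b) \in \bS$, Proposition \ref{semilattice-structure}(d) makes $\{a,b\}$ a semilattice subalgebra, so $A = \{a,b\}$, contradicting $|A| \ge 3$. The scheme runs parallel to the (commented-out) treatment of the case of exactly two strongly connected components: fix the term $f = s^{\infty_c} \in \Clo_2^{\pi_1}(s)$ from Proposition \ref{semi-nice-terms}, choose a maximal strongly connected component $V$, and use Theorem \ref{strong-binary} together with Propositions \ref{prop-upwards-closed} and \ref{prop-proj-upwards-closed} to transport membership in $\bS$ ``upward'' along right orbits: show first $(V\times V)\cap\bS \ne \emptyset$, then $V\times V \subseteq \bS$, then $V + \bS = A$, and then bootstrap ``downward'' to get $\{u\}\times V \subseteq \bS$ for every $u \in U$, then $(d,x)\in\bS$ for a single well-chosen $d$ and all $x\in A$, and finally $U\times U \subseteq \bS$, which yields $(b,b)\in\bS$ since $a,b\in U$. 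Propositions \ref{nontriv-absorb} and \ref{absorb-gen} handle the degenerate sub-case in which a set $\{a,b\}\cup V$ is already a subalgebra (its conclusion of a two-element semilattice quotient is incompatible with $\bA$ being simple of size $\ge 3$, and a two-element component cannot occur), and Proposition \ref{sym-link} guarantees that the linking relation saturates after finitely many compositions.

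The one genuinely new difficulty relative to the two-component case is that $\bA$ may have several maximal strongly connected components, so the $V$ above is not canonically associated to $\bS$ and need not be reachable from an arbitrary element. The fix I would use is to apply $f = s^{\infty_c}$ and the right-orbit transport of Propositions \ref{prop-right-pi1} and \ref{prop-upwards-closed} to \emph{both} coordinates of a pair in $\bS$, so as to first locate a pair $(v,w)\in\bS$ with $v,w$ in maximal components, and then to run the saturation argument of Theorem \ref{strong-binary} within the sublattice of components lying above $U$ (using that $A\setminus U$ is a subalgebra). Keeping careful track of which proper subalgebras ``absorb'' $a$ and $b$ in the sense of Proposition \ref{nontriv-absorb} makes this bookkeeping tractable, and the final contradiction $(b,b)\in\bS$ is obtained exactly as in the two-component case.
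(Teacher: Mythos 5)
Your opening moves match the paper's: minimal counterexample, reduction to the simple case, the observation that $(b,b)\in\bS$ would force $\{a,b\}$ to be a semilattice subalgebra via Proposition \ref{semilattice-structure}, and the dichotomy ``$\bS$ is the graph of an automorphism or $\bS$ is linked'' with the automorphism case disposed of correctly. The problem is the heart of the argument. Your plan is to saturate $\bS$ strongly-connected-component by strongly-connected-component until $U\times U\subseteq\bS$, generalizing the two-component argument. Two steps in that chain are genuinely unsupported. First, ``$(V\times V)\cap\bS\ne\emptyset$ for a maximal component $V$'': pushing a pair of $\bS$ upward along right orbits (Propositions \ref{prop-right-pi1}, \ref{prop-upwards-closed}) only produces some $(v,w)\in\bS$ with $v\in V_1$, $w\in V_2$ maximal, and when $V_1\ne V_2$ Theorem \ref{strong-binary} yields $V_1\times V_2\subseteq\bS$ but nothing about $V\times V$ for a single $V$; nothing you cite rules out $\bS\cap(V_1\times V_1)=\emptyset$. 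Second, and more seriously, the ``downward bootstrap'' from $V\times V\subseteq\bS$ to $U\times U\subseteq\bS$: even in the two-component case this requires producing a two-element projection subalgebra $\{d,e\}$ straddling $U$ and $V$ and a separate treatment of the case $f(a,V)\subseteq V$, and with intermediate components between $U$ and the maximal ones there is no mechanism on offer for propagating membership through them — Theorem \ref{strong-binary} applies only to maximal components, and Propositions \ref{prop-upwards-closed} and \ref{prop-proj-upwards-closed} only move information \emph{up} the order $\preceq$, never back down into $U$. Your proposed fix (``run the saturation within the sublattice of components above $U$'') does not engage with this: since $U$ is the unique minimal component, every component lies above $U$, so the restriction is vacuous.

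The missing idea is that the paper does not try to show $\bS=\bA^2$ at all. It uses Proposition \ref{sym-link} to find the critical exponent $k$ with $\bS^{\circ k}\ne\bA^2=\bS^{\circ(k+1)}$, forms the proper subalgebra $\bC=(\{a\}+\bS^{\circ k})\cap(\{b\}+\bS^{\circ k})$ of elements whose $\bS^{\circ k}$-neighborhood is all of $\bA$, shows that $\bC$ is a union of right orbits meeting $\bS$ diagonally and that $\bC$ absorbs $a$ and $b$ with respect to $f=s^{\infty_c}$ (hence with respect to $s$ by Proposition \ref{nontriv-absorb}), and then invokes Proposition \ref{absorb-gen} to manufacture a two-element semilattice quotient of the simple algebra $\bA$ — the contradiction. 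In your write-up, Propositions \ref{sym-link} and \ref{absorb-gen} appear only as handlers for ``degenerate sub-cases,'' whereas they are in fact the engine of the proof; without them, or some substitute for the downward propagation, the argument does not close.
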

\begin{proof} Suppose for contradiction that $\bA$ is a counterexample with $|\bA|$ minimal. If $\bA$ has a nontrivial quotient $\bA/\theta$, then $a/\theta, b/\theta$ are in the same strongly connected component of $\bA/\theta$ and generate $\bA/\theta$, so $\bA/\theta$ is a smaller counterexample, contradicting the choice of $\bA$. Thus we may assume that $\bA$ is simple.

Define $\bS \le_{sd} \bA \times \bA$ by
\[
\bS = \Sg_{\bA^2}\Big\{\begin{bmatrix} a\\ b \end{bmatrix}, \begin{bmatrix} b\\ a \end{bmatrix}\Big\}.
\]
Note that $\bS \ne \bA^2$ - otherwise, we have $(b,b) \in \bS$, so $a \rightarrow b$ by Proposition \ref{semilattice-structure}, which implies $\bA = \{a,b\}$, a contradiction.

First we show that $\bS$ is linked. Suppose for a contradiction that $\bS$ is not linked. Then since $\bA$ is simple, the linking congruence of $\bS$ on $\bA$ must be the trivial congruence, so $\bS$ is the graph of an isomorphism. In this case, we find that $(a,b)$ and $(b,a)$ are in the same strongly connected component of $\bS$ (since $\bS$ is isomorphic to $\bA$), contradicting the last sentence of Proposition \ref{semilattice-structure}.

By Proposition \ref{sym-link}, there is some $k \ge 1$ such that $\bS^{\circ k} \ne \bA^2$ and $\bS^{\circ (k+1)} = \bA^2$. Since $2k \ge k+1$, we then have $\bS^{\circ 2k} = \bA^2$ as well, so $(a,b) \in \bS^{\circ 2k}$, which implies that there is some $c$ such that $(a,c), (c,b) \in \bS^{\circ k}$. Since $\bS^{\circ k}$ is symmetric, we then have 
\[
a,b \in \{c\} + \bS^{\circ k},
\]
so in fact $\{c\} + \bS^{\circ k} = \bA$. Let $\bC$ be the set of all elements $c'$ such that $\{c'\} + \bS^{\circ k} = \bA$ - this is automatically a subalgebra of $\bA$, since
\[
\bC = (\{a\} + \bS^{\circ k}) \cap (\{b\} + \bS^{\circ k}).
\]
Note that by our choice of $k$ we have $\bC \ne \bA$. Our goal is to apply Proposition \ref{absorb-gen} to $\bC$.

First we check that $(\bC \times \bC) \cap \bS \ne \emptyset$. If $k = 1$, this follows from
\[
\bC \times \bC \subseteq \bC \times \bA \subseteq \bS^{\circ k} = \bS.
\]
If $k = 2$, then by the choice of $k$ we have $(a,a) \in \bS^{\circ 3}$, so there are $c,d$ such that $(a,c), (c,d), (d,a) \in \bS$, and since $(a,b) \in \bS$, we see that
\[
a \in \{d\} + \bS \subseteq \{c\} + \bS^{\circ 2}
\]
and
\[
b \in \{a\} + \bS \subseteq \{c\} + \bS^{\circ 2},
\]
so $c \in \bC$, and similarly $d \in \bC$, that is, $(c,d) \in (\bC\times\bC) \cap \bS$. If $k \ge 3$, then from $2(k-1) \ge k+1$ we see that $\bS^{\circ 2(k-1)} = \bA^2$, so $(a,b) \in \bS^{\circ 2(k-1)}$, so there is some $c$ such that $(a,c), (c,b) \subseteq \bS^{\circ (k-1)}$. For this $c$, we have
\[
a \in \{b\} + \bS \subseteq \{c\} + \bS^{\circ k}
\]
and similarly $b \in \{c\} + \bS^{\circ k}$, so $c \in \bC$. Picking any $d$ such that $(c,d) \in \bS$, we also have
\[
a \in \{c\} + \bS^{\circ (k-1)} \subseteq \{d\} + \bS^{\circ k}
\]
and similarly $b \in \{d\} + \bS^{\circ k}$, so we have $(c,d) \in (\bC \times \bC) \cap \bS$. Thus we have
\begin{equation}
(\bC \times \bC) \cap \bS \ne \emptyset\label{eq-semi-meet-S}
\end{equation}
regardless of the value of $k$.

Next we show that $\bC$ is a union of right orbits of $\bA$, i.e. that for any $c \in \bC$ and any $d$ with $c \rightarrow d$, we have $d \in \bC$. Since $\{c\} + \bS^{\circ k} = \bA$, we have
\[
s(\bA, \{d\} + \bS^{\circ k}) = s(\{c\} + \bS^{\circ k}, \{d\} + \bS^{\circ k}) \subseteq \{s(c,d)\} + \bS^{\circ k} = \{d\} + \bS^{\circ k},
\]
and similarly
\[
s(\{d\} + \bS^{\circ k}, \bA) \subseteq \{s(d,c)\} + \bS^{\circ k} = \{d\} + \bS^{\circ k}.
\]
Thus the subalgebra $\bB = \{d\} + \bS^{\circ k}$ absorbs every element of $\bA$ with respect to $s$. Since
\[
(d,d) \in \bA^2 = \bS^{\circ (2k+1)},
\]
we have $(\bB \times \bB) \cap \bS \ne \emptyset$. Then if $\bB \ne \bA$ then we can apply Proposition \ref{absorb-gen} to see that $\bA$ has a quotient which is a two-element semilattice, which is a contradiction to the assumption that $\bA$ is simple and that $a,b$ are in the same strongly connected component of $\bA$. Thus we must have $\bB = \bA$, so $d \in \bC$. Since this was true for any $c \rightarrow d$ with $c \in \bC$, we see that $\bC$ is a union of right orbits of $\bA$.

Now let $f = s^{\infty_c} \in \Clo_2^{\pi_1}(s)$ as in Proposition \ref{semi-nice-terms}. We will show that we have $f(a,\bC) \subseteq \bC$ and $f(b,\bC) \subseteq \bC$. Suppose that $c \in \bC$, and let $f(a,c) = d, f(c,a) = e$. Since $\bC$ is a union of right orbits of $\bA$, we have
\[
e = f(c,a) \in O(c) \subseteq \bC,
\]
so $\{e\} + \bS^{\circ k} = \bA$. Since $s$ acts as first projection on $\{d,e\}$ by Proposition \ref{semi-nice-terms} and since $\{e\} + \bS^{\circ k} = \bA$, we can apply Proposition \ref{prop-proj-upwards-closed} to see that $\{d\} + \bS^{\circ k}$ is a union of right orbits of $\bA$. Since $O(a) = O(b) = \bA$ by Proposition \ref{prop-right-gen} and the fact that $a,b$ are in the same strongly connected component of $D_\bA$, we just need to check that one of $a$ or $b$ is in $\{d\} + \bS^{\circ k}$ to confirm that $\{d\} + \bS^{\circ k} = \bA$. If $k$ is odd, then we have $(a,b) \in \bS^{\circ k}$, so
\[
b = f(b,b) \in f(\{a\} + \bS^{\circ k}, \{c\} + \bS^{\circ k}) \subseteq \{f(a,c)\} + \bS^{\circ k} = \{d\} + \bS^{\circ k},
\]
and similarly if $k$ is even we have $(a,a) \in \bS^{\circ k}$, so
\[
a = f(a,a) \in f(\{a\} + \bS^{\circ k}, \{c\} + \bS^{\circ k}) \subseteq \{f(a,c)\} + \bS^{\circ k} = \{d\} + \bS^{\circ k}.
\]
Either way, we see that $\{d\} + \bS^{\circ k} = \bA$, so
\[
d = f(a,c) \in \bC.
\]
The argument showing that $f(b,\bC) \subseteq \bC$ is the same, but with $a,b$ swapped throughout.

Since we've already verified that $\bC$ is a union of right orbits of $\bA$ and that $f(a,\bC), f(b,\bC) \subseteq \bC$, we see that $\bC$ absorbs $a$ and $b$ with respect to $f$. Since $f$ is nontrivial by Proposition \ref{semi-nice-terms}, we have $s \in \Clo(f)$, so we can apply Proposition \ref{nontriv-absorb} to see that $\bC$ absorbs $a$ and $b$ with respect to $s$ as well. Then by \eqref{eq-semi-meet-S} we can apply Proposition \ref{absorb-gen} to $\bC$ to see that $\bA$ has a quotient which is a two-element semilattice, which is a contradiction to the assumption that $\bA$ is simple and that $a,b$ are in the same strongly connected component of $\bA$. This contradiction completes the proof.
\end{proof}

\begin{cor} Suppose that $\bA = (A,s)$ is a clone-minimal partial semilattice, and that $\bA$ is generated by two elements $a,b$. If $|\bA| \ge 3$, then the digraph $D_\bA$ of semilattice subalgebras of $\bA$ has at least three strongly connected components.
\end{cor}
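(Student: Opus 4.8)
The plan is to derive this corollary directly from the theorem immediately preceding it, together with Propositions \ref{semi-not-strongly-connected} and \ref{semi-two-strongly-connected}. I would argue by contradiction: suppose $\bA = (A,s)$ is a clone-minimal partial semilattice generated by two elements $a, b$ with $|\bA| \ge 3$, but that $D_\bA$ has at most two strongly connected components. Note first that $a \ne b$ automatically, since otherwise $\bA = \Sg_\bA\{a\} = \{a\}$ by idempotence, contradicting $|\bA| \ge 3$.

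The next step is to observe that by Proposition \ref{semi-not-strongly-connected}, $D_\bA$ cannot be strongly connected (since $\bA$ is generated by two elements), so under our assumption $D_\bA$ has \emph{exactly} two strongly connected components, say $U$ and $V$, and these partition $A$. Each of $a$ and $b$ lies in a unique strongly connected component, and by the preceding theorem $a$ and $b$ cannot lie in the same one; hence, after possibly renaming, $a \in U$ and $b \in V$.

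Finally, I would apply Proposition \ref{semi-two-strongly-connected}: we now have that $D_\bA$ has exactly two strongly connected components, one containing $a$ and the other containing $b$, so that proposition forces $A = \{a,b\}$, contradicting $|\bA| \ge 3$. Therefore $D_\bA$ must have at least three strongly connected components.

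I do not expect a genuine obstacle here, since all the substantive work has been carried out in the preceding results. The only points requiring minor care are that every vertex of a finite digraph lies in a unique strongly connected component (so that the partition of $A$ into $U$ and $V$ is well defined and $a, b$ really do fall into two of them), and that the hypotheses of Proposition \ref{semi-two-strongly-connected} match our situation verbatim — namely ``exactly two strongly connected components, one containing $a$ and the other containing $b$'' — which they do.
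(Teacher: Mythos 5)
Your argument is correct and is exactly the intended derivation: the paper states this corollary without proof precisely because it follows by combining Proposition \ref{semi-not-strongly-connected} (ruling out one strongly connected component), the preceding theorem (ruling out $a$ and $b$ sharing a component), and Proposition \ref{semi-two-strongly-connected} (ruling out the remaining two-component configuration, since it would force $|\bA|=2$). Nothing is missing.
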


\section{Inert dispersive algebras with small right orbits}\label{a-dispersive}

In this appendix we will prove Theorem \ref{inert-small}. Recall that we say that a dispersive algebra is \emph{inert} if every right orbit forms a projection subalgebra.

\begin{repthm}{inert-small} If $\bA$ is a clone-minimal inert dispersive algebra which is generated by two elements $a,b$, and if $|O(b)| \le 2$, then we also have $|O(a)| \le 2$ and $\bA$ is isomorphic to a subalgebra of $\cF_\cD(x,y)$.
\end{repthm}
\begin{proof} We will prove this by induction on $|\bA|$. By Proposition \ref{dispersive-nice-terms}, we may choose a nontrivial $f \in \Clo_2^{\pi_1}(\bA)$ such that
\begin{equation}
f(f(x,y),y) \approx f(x,y)\label{eq-inert-f}
\end{equation}
and $f(x,y)$ is contained in a maximal strongly connected component of the labeled digraph attached to $\cF_{\cV(\bA)}(x,y)$.

{\bf Claim 0.} In the case where $O(a)$ is strongly connected, we can choose $f$ as above so that we additionally have $f(a,b) = a$.

{\bf Proof of claim 0.} Assume that $O(a)$ is strongly connected. We will verify this claim by slightly modifying the construction of $g_1$ from Proposition \ref{nice-terms}. As in Proposition \ref{nice-terms}, we start by picking some nontrivial $g_0 \in \Clo_2^{\pi_1}(\bA)$ such that $g_0(x,y)$ is maximal in the labeled digraph attached to the free algebra $\cF_{\cV(\bA)}(x,y)$. Since $g_0(a,b) \in O(a)$ by Proposition \ref{prop-right-pi1} and since $O(a)$ is strongly connected, we have $a \in O(g_0(a,b))$, so there are some $c_1, ..., c_n \in \bA$ such that
\[
a = g_0(\cdots g_0(g_0(a,b),c_1)\cdots,c_n).
\]
Choosing terms $t_i \in \Clo_2(\bA)$ such that $t_i(b,a) = c_i$, define the term $g_0'$ by
\[
g_0'(x,y) = g_0(\cdots g_0(g_0(x,y),t_1(y,x))\cdots,t_n(y,x)).
\]
Then we have $g_0(x,y) \preceq g_0'(x,y)$, so $g_0'(x,y)$ is still maximal in $\cF_{\cV(\bA)}(x,y)$, and we have
\[
g_0'(a,b) = a
\]
by construction. Now define $g_1' = g_0'^{\infty_1}$ as in Proposition \ref{nice-terms}, and note that we have $g_1'(a,b) = a$ and $g_1'(g_1'(x,y),y) \approx g_1'(x,y)$. That $g_1'$ is nontrivial is verified as in Proposition \ref{dispersive-nice-terms}, so taking $f = g_1'$ proves Claim 0.

We assume from here on that $\bA = (A,f)$, with $f$ is chosen as in claim 0 in the case where $O(a)$ is strongly connected. We now divide into many cases.

{\bf Case 1.} Suppose that $|O(b)| = 1$, i.e. that $O(b) = \{b\}$. Since $O(a)$ is a projection algebra, the directed edges of $O(a)$ must all be labeled by elements of $O(b) = \{b\}$, and the identity \eqref{eq-inert-f} implies that no pair of consecutive edges can both be labeled by $b$. Therefore there is at most one edge in $D_\bA$, and if $\bA$ is nontrivial we see that $O(a) = \{a,f(a,b)\}$ and that $\bA$ is isomorphic to the three-element subalgebra $\{x,xy,yx\}$ of $\cF_\cD(x,y)$.

{\bf Case 2.} Suppose that $|O(b)| = 2$, with $O(b) = \{b,d\}$. Then every edge of $D_\bA$ between elements of $O(a)$ is either labeled with $b$ or $d$ or both, and no pair of consecutive edges can both have the same label. Thus every vertex of $O(a)$ other than $a$ has outdegree at most one, and $a$ can only have outdegree two if $O(a)$ is not strongly connected. In particular, we see that $O(a)$ is strongly connected if and only if it is a directed cycle with edge labels alternating between $b$ and $d$. After proving that $O(a) \cap O(b) = \emptyset$ below, we will divide into subcases based on whether or not $O(a)$ and $O(b)$ are strongly connected.

{\bf Claim 1.} We have $O(a) \cap O(b) = \emptyset$.

{\bf Proof of claim 1.} Suppose for contradiction that $O(a) \cap O(b) \ne \emptyset$. We can't have $a$ or $b$ in $O(a) \cap O(b)$, since in that case $\{a,b\}$ would be a two-element projection algebra by the assumption that $\bA$ is inert. Then we must have $d \in O(a)$, so since $O(a)$ is a projection subalgebra we see that no edge between elements of $O(a)$ can be labeled with $d$. In this case we see that $O(a)$ contains at most one edge, necessarily going from $a$ to $d$ and labeled by $b$.

Similarly, since $|O(a)| = 2$ and no edge of $O(b)$ can be labeled by $d$, we see that $O(b)$ contains just one edge, necessarily from $b$ to $d$ and labeled by $a$. This completely describes $\bA = (\{a,b,d\},f)$, and we find that $f$ satisfies the identity
\[
f(x,f(y,x)) \approx x,
\]
so in fact $\bA$ is a three-element meld, contradicting our assumption that $\bA$ is dispersive. This contradiction proves Claim 1.

{\bf Subcase 2.1.} Suppose that $O(a)$ is not a cycle. Then for every $c \in O(a)\setminus\{a\}$, we have $a \not\in \Sg_\bA\{b,c,d\}$, so by the inductive hypothesis we see that $\Sg_\bA\{b,c\}$ and $\Sg_\bA\{c,d\}$ are isomorphic to subalgebras of $\cF_\cD(x,y)$. In particular, if one of $\Sg_\bA\{b,c\}$ or $\Sg_\bA\{c,d\}$ has size $4$, then $c$ must have an outgoing edge labeled by both $b$ and $d$, which contradicts the fact that $c$ must have an incoming edge labeled by either $b$ or $d$ (since $c \in O(a)\setminus\{a\}$).

{\bf Claim 2.} If $O(a)$ is not a cycle and there is some $c \in O(a)\setminus \{a\}$ such that any edge of $O(b)$ is labeled by $c$, then $c$ must have no outgoing edges.

{\bf Proof of Claim 2.} If there is a directed edge from $b$ to $d$ labeled by $c$, then $\Sg_\bA\{b,c\}$ contains $b,c,d$ and $f(c,b), f(c,d)$, so in order to have $|\Sg_\bA\{b,c\}| < 4$ we must have $f(c,b) = f(c,d) = c$. A similar argument applies to $\Sg_\bA\{c,d\}$ if there is a directed edge from $d$ to $b$ labeled by $c$.

Thus at most three edge labels can occur in $O(b)$: there are at most two elements of $O(a)$ with no outgoing edges (one per directed edge leaving $a$), and together with $a$ itself these are the possible labels of edges of $O(b)$. This information reduces the case where $O(a)$ is not a cycle to just a few subsubcases. In many of these subsubcases, $\Clo(\bA)$ will fail to be minimal by the following claim.

{\bf Claim 3.} If $|O(b)| = 2$ and there is some $c \in O(a)\setminus \{a\}$ such that $(c,b) \in \Sg_{\bA^2}\{(a,b),(b,a)\}$ with $f(c,b) = c$ and $f(b,c) = b$, then we have a contradiction to the assumption that $\Clo(\bA)$ is minimal.

{\bf Proof of Claim 3.} Pick some $g \in \Clo_2^{\pi_1}(\bA)$ satisfying $g(a,b) = c$ and $g(b,a) = b$. Then $\{a,b,c\}$ is closed under $g$: $\{b,c\}$ is closed under $g$ since it is closed under $f$, and $\{a,c\} \subseteq O(a)$ is closed under $g$ since it is a projection subalgebra. Since $g(a,b) \ne a,b$, we see that $g$ is nontrivial, so if $\Clo(\bA)$ is minimal then $f \in \Clo(g)$ and $\bA = \{a,b,c\}$. But then $O(b) = \{b\}$, contradicting our assumption that $|O(b)| = 2$.

{\bf Subsubcase 2.1.0.} If $|O(a)| = 1$, then by swapping $a$ and $b$ we are in Case 1.

{\bf Subsubcase 2.1.1.} Suppose there is a directed edge leaving $a$ labeled by both $b$ and $d$. In this case, we have $O(a) = \{a,c\}$ where $c = f(a,b) = f(a,d)$. If $f(b,c) = b$, then we have
\[
\begin{bmatrix} c\\ b\end{bmatrix} = f\Big(\begin{bmatrix} a\\ b\end{bmatrix}, f\Big(\begin{bmatrix} b\\ a\end{bmatrix},\begin{bmatrix} a\\ b\end{bmatrix}\Big)\Big) \in \Sg_{\bA^2}\Big\{\begin{bmatrix} a\\ b\end{bmatrix},\begin{bmatrix} b\\ a\end{bmatrix}\Big\},
\]
which gives us a contradiction by Claim 3. Thus we must have $f(b,c) = d$, and in this case the identity \eqref{eq-inert-f} implies that $f(d,c) = d$ as well. Then if we define the operation
\[
g(x,y) = f(x,f(y,x)),
\]
we see from $f(O(a),O(b)) = \{c\}$ and $f(O(b),c) = \{d\}$ that
\[
g(O(a),O(b)) \subseteq f(O(a), O(b)) \subseteq \{c\}
\]
and
\[
g(O(b),O(a)) \subseteq f(O(b),f(O(a),O(b))) \subseteq f(O(b), \{c\}) = \{d\},
\]
so $(\{a,b,c,d\},g)$ is isomorphic to $\cF_\cD(x,y)$. Since $g$ is nontrivial, we have $f \in \Clo_2^{\pi_1}(g)$, so in fact $f = g$.

{\bf Subsubcase 2.1.2.} Suppose that there is only one directed edge leaving $a$, labeled only by $d$, so that $f(a,b) = a$. Let $c = f(a,d)$. Since $\{a,b\}$ is not closed under $f$, we must have $f(b,a) = d$. If $f(b,c) = d$, then there are no outgoing edges from $c$ by Claim 2, so $|O(a)| = 2$, and since the edge from $b$ to $d$ is labeled by both $a$ and $c$ we can swap $a$ with $b$ and $c$ with $d$ to find ourselves in Subsubcase 2.1.1, which we have already handled.

Thus we may assume that $f(b,c) = b$. Then if we define $g$ by
\[
g(x,y) = f(x,f(y,f(x,f(y,x)))),
\]
we find that
\[
g(a,b) = f(a,f(b,f(a,f(b,a)))) = f(a,f(b,f(a,d))) = f(a,f(b,c)) = f(a,b) = a
\]
and
\[
g(b,a) = f(b,f(a,f(b,f(a,b)))) = f(b,f(a,f(b,a))) = f(b,f(a,d)) = f(b,c) = b,
\]
while $g(x,y) \approx x$ does not follow from $\eqref{D2}$ since $x(y(x(yx))) = xy$ in $\cF_\cD(x,y)$, and by Proposition \ref{dispersive-hom} this contradicts the assumption that $\bA$ dispersive.
{\bf Subsubcase 2.1.3.} Suppose that $f(a,b) = c \ne a$ and $f(b,c) = b$. By \eqref{eq-inert-f} we have $f(c,b) = c$, so in this case $f$ acts as first projection on $\{b,c\}$. If we had $f(b,a) = b$, then we would have
\[
\begin{bmatrix} c\\ b\end{bmatrix} = f\Big(\begin{bmatrix} a\\ b\end{bmatrix}, \begin{bmatrix} b\\ a\end{bmatrix}\Big) \in \Sg_{\bA^2}\Big\{\begin{bmatrix} a\\ b\end{bmatrix},\begin{bmatrix} b\\ a\end{bmatrix}\Big\},
\]
which would give us a contradiction by Claim 3. Thus we must have $f(b,a) = d$, and $f(d,a) = d$ by \eqref{eq-inert-f}.

Set $e = f(a,d)$. Defining $g(x,y) = f(x,f(y,x))$, we have
\[
g(b,a) = f(b,f(a,b)) = f(b,c) = b
\]
and
\[
g(a,b) = f(a,f(b,a)) = f(a,d) = e.
\]
Then by Proposition \ref{dispersive-hom} we see that if $\bA$ is dispersive then we must have $g(a,b) \ne a$, so $e \ne a$. Additionally, we have
\[
g(a,d) = f(a,f(d,a)) = f(a,d) = e,
\]
so if $e$ had no outgoing edges then $\{a,b,d,e\}$ would be closed under $g$, contradicting the minimality of $\Clo(f)$.

Thus by Claim 2 we must have $f(b,e) = b$ and $f(d,e) = d$. Setting $i = f(e,b)$, we have $f(i,b) = i$ by \eqref{eq-inert-f} and
\[
\begin{bmatrix} i\\ b\end{bmatrix} = f\Big(g\Big(\begin{bmatrix} a\\ b\end{bmatrix}, \begin{bmatrix} b\\ a\end{bmatrix}\Big), g\Big(\begin{bmatrix} b\\ a\end{bmatrix}, \begin{bmatrix} a\\ b\end{bmatrix}\Big)\Big) \in \Sg_{\bA^2}\Big\{\begin{bmatrix} a\\ b\end{bmatrix},\begin{bmatrix} b\\ a\end{bmatrix}\Big\},
\]
so if we had $f(b,i) = b$ then we would get a contradiction by Claim 3. Thus we must have $f(b,i) = d$ and $i \ne e$. Now we have
\[
\{b,d,e,i\} \in \Sg_\bA\{b,e\} \subseteq \bA\setminus \{a\},
\]
so by the inductive hypothesis $\Sg_\bA\{b,e\}$ must be isomorphic to $\cF_\cD(x,y)$, which gives us a contradiction since $e$ does not have an outgoing edge with two labels.

{\bf Subsubcase 2.1.4.} Since we are not in one of the previous subsubcases, we must have $f(a,b) = c \ne a$ and $f(b,c) = d$. By Claim 2, $c = f(a,b)$ must not have any outgoing edges. If $f(a,d) = c$, then we are in Subsubcase 2.1.1.

Suppose now that $f(a,d) = a$, then $O(a) = \{a,c\}$ and $\bA = \{a,b,c,d\}$. Then since $f(a,d) = a$ we see that $c \not\in \Sg_\bA\{a,d\}$ by the assumption that $f(x,y)$ is maximal in $\cF_{\cV(\bA)}(x,y)$, so we must have $f(d,a) = d$, and $d$ has no outgoing edges. Swapping $a$ with $b$ and $c$ with $d$, we find ourselves in Subsubcase 2.1.2 or 2.1.3 depending on whether or not $f(b,a) = b$.

Thus we may assume that $e = f(a,d) \not\in \{a,c\}$. Regardless of whether we have $f(b,a) = b$ or $f(b,a) = d$, either way we have
\[
\begin{bmatrix} c\\ d\end{bmatrix} = f\Big(f\Big(\begin{bmatrix} a\\ b\end{bmatrix}, \begin{bmatrix} b\\ a\end{bmatrix}\Big), f\Big(\begin{bmatrix} b\\ a\end{bmatrix}, \begin{bmatrix} a\\ b\end{bmatrix}\Big)\Big) \in \Sg_{\bA^2}\Big\{\begin{bmatrix} a\\ b\end{bmatrix},\begin{bmatrix} b\\ a\end{bmatrix}\Big\},
\]
so
\[
\begin{bmatrix} e\\ d\end{bmatrix} = f\Big(\begin{bmatrix} a\\ b\end{bmatrix}, \begin{bmatrix} d\\ c\end{bmatrix}\Big) \in \Sg_{\bA^2}\Big\{\begin{bmatrix} a\\ b\end{bmatrix},\begin{bmatrix} b\\ a\end{bmatrix}\Big\}.
\]

Suppose first that we have $f(b,e) = b$, and let $f(e,b) = i$ (in which case we also have $f(i,b) = i$ by \eqref{eq-inert-f}). Then we have
\[
\begin{bmatrix} i\\ b\end{bmatrix} = f\Big(f\Big(\begin{bmatrix} a\\ b\end{bmatrix}, \begin{bmatrix} d\\ e\end{bmatrix}\Big), f\Big(\begin{bmatrix} b\\ a\end{bmatrix}, \begin{bmatrix} e\\ d\end{bmatrix}\Big)\Big) \in \Sg_{\bA^2}\Big\{\begin{bmatrix} a\\ b\end{bmatrix},\begin{bmatrix} b\\ a\end{bmatrix}\Big\},
\]
so if $f(b,i) = b$ then we would get a contradiction to Claim 3. Thus we must have $f(b,i) = d$ and $i \ne e$, so we have
\[
\{b,d,e,i\} \in \Sg_\bA\{b,e\} \subseteq \bA\setminus \{a\},
\]
so by the inductive hypothesis $\Sg_\bA\{b,e\}$ must be isomorphic to $\cF_\cD(x,y)$, which gives us a contradiction since $e$ does not have an outgoing edge with two labels.

Thus we must have $f(b,e) = d$ and by Claim 2 we see that $e$ has no outgoing edges. Then we see that $\bA$ has a congruence $\theta$ with congruence classes $\{a\},\{b\},\{c,e\},\{d\}$, so by the induction hypothesis applied to $\bA/\theta$ we see that $\bA/\theta$ is isomorphic to $\cF_\cD(x,y)$, so $f(b,a) = f(d,a) = d$. Then if we let $g(x,y) = f(x,f(y,x))$, we have
\[
g(a,b) = f(a,f(b,a)) = f(a,d) = e
\]
and
\[
g(a,d) = f(a,f(d,a)) = f(a,d) = e,
\]
so $\{a,b,d,e\}$ is closed under the nontrivial operation $g$, contradicting the minimality of $\Clo(\bA)$.

{\bf Subcase 2.2.} Now suppose that $O(a)$ is a cycle and $O(b)$ is not. In this case, for every $c \in O(a)\setminus\{a\}$, we have
\[
\Sg_\bA\{c,d\} \subseteq \bA \setminus \{b\},
\]
so $\Sg_\bA\{c,d\}$ is isomorphic to a subalgebra of $\cF_\cD(x,y)$ size at most $3$ by the inductive hypothesis. From the extra assumption $f(a,b) = a$ in this case, we see there must be some $c \ne a$ such that $f(c,b) = a$. Then we have $\Sg_\bA\{b,c\} = \bA$ and $\{c,d\}$ is a subalgebra of $\bA$ since the incoming edge to $c$ must be labeled by $d$.

Letting $e \in O(a)$ be such that $f(e,d) = c$, from $\Sg_\bA\{b,c\} = \bA$ we see that there are some terms $t,u \in \Clo_2^{\pi_1}(\bA)$ such that $t(c,b) = e$ and $u(b,c) = d$. Writing
\[
g(x,y) = f(t(x,y),u(y,x)),
\]
we see that $g(x,y) = xy$ in $\cF_\cD(x,y)$, so by Proposition \ref{dispersive-hom} $g$ must be nontrivial, so $f \in \Clo(g)$. Since we have
\[
g(c,b) = c,
\]
we see that $\{b,c,d\}$ is closed under $g$ and hence also closed under $f$, contradicting $f(c,b) = a$.

{\bf Subcase 2.3.} The only remaining possibility is that $O(a)$ and $O(b)$ are both cycles.

{\bf Claim 4.} If $O(a)$ and $O(b)$ are both cycles, then for any $c \in O(a)$, we can't have both $\Sg_\bA\{b,c\} = \bA$ and $\Sg_\bA\{c,d\} = \bA$.

{\bf Proof of Claim 4.} Note that if $\Sg_\bA\{b,c\} = \bA$, then since $O(a)$ is a cycle, there must be a nontrivial binary term $g$ such that $g(c,b) = c$, and from $\Sg_\bA\{b,c\} \ne \{b,c\}$ we must then have $g(b,c) = d$. A similar argument shows that there is a nontrivial binary term $h$ such that $h(c,d) = c$ and $h(d,c) = b$, but then the term $h(g(x,y), g(y,x))$ is also nontrivial and $\{b,c\}$ is closed under it, a contradiction.

The strategy now is to show that for strictly more than half of the elements $c \in O(a)$, we have $\Sg_\bA\{b,c\} = \bA$, and similarly for $d$, which will give us a contradiction to Claim 4.

Recall that since $O(a)$ is a cycle, we can use Claim 0 to make the further assumption that $f(a,b) = a$. Then we must have $f(b,a) = d$, and then by \eqref{eq-inert-f} we also have $f(d,a) = d$. If we label the elements of $O(a)$ in cyclic order as $c_0 = a, c_1, ..., c_{n-1}$, then we see that for each $i$ we have an edge from $c_i$ to $c_{i+1}$ in $D_\bA$, labeled by $d$ if $i$ is even and labeled by $b$ if $i$ is odd. Then we can see that
\[
\begin{bmatrix} a\\ d\end{bmatrix} = f\Big(\begin{bmatrix} a\\ b\end{bmatrix}, \begin{bmatrix} b\\ a\end{bmatrix}\Big) \in \Sg_{\bA^2}\Big\{\begin{bmatrix} a\\ b\end{bmatrix},\begin{bmatrix} b\\ a\end{bmatrix}\Big\}.
\]
We prove by induction on $i$ that we have
\[
\begin{bmatrix} c_i\\ d\end{bmatrix} \in \Sg_{\bA^2}\Big\{\begin{bmatrix} a\\ b\end{bmatrix},\begin{bmatrix} b\\ a\end{bmatrix}\Big\}.
\]
For even $i$, we have
\[
\begin{bmatrix} c_{i+1}\\ d\end{bmatrix} = f\Big(\begin{bmatrix} c_i\\ d\end{bmatrix}, \begin{bmatrix} d\\ a\end{bmatrix}\Big) \in \Sg_{\bA^2}\Big\{\begin{bmatrix} a\\ b\end{bmatrix},\begin{bmatrix} b\\ a\end{bmatrix}\Big\},
\]
while for odd $i$ we have
\[
\begin{bmatrix} c_{i+1}\\ d\end{bmatrix} = f\Big(\begin{bmatrix} c_i\\ d\end{bmatrix}, \begin{bmatrix} b\\ a\end{bmatrix}\Big) \in \Sg_{\bA^2}\Big\{\begin{bmatrix} a\\ b\end{bmatrix},\begin{bmatrix} b\\ a\end{bmatrix}\Big\}.
\]
Also, since $O(b)$ is strongly connected, there must be some $j$ such that $f(d,c_j) = b$. But then from
\[
\begin{bmatrix} c_j\\ d\end{bmatrix} \in \Sg_{\bA^2}\Big\{\begin{bmatrix} a\\ b\end{bmatrix},\begin{bmatrix} b\\ a\end{bmatrix}\Big\},
\]
for any odd $i$ we have
\[
\begin{bmatrix} c_i\\ b\end{bmatrix} = f\Big(\begin{bmatrix} c_i\\ d\end{bmatrix}, \begin{bmatrix} d\\ c_j\end{bmatrix}\Big) \in \Sg_{\bA^2}\Big\{\begin{bmatrix} a\\ b\end{bmatrix},\begin{bmatrix} b\\ a\end{bmatrix}\Big\},
\]

Suppose for a contradiction that there is any odd $k$ such that $f(b,c_k) = b$. Then we have
\[
\begin{bmatrix} c_{i+1}\\ b\end{bmatrix} = f\Big(\begin{bmatrix} c_i\\ b\end{bmatrix}, \begin{bmatrix} b\\ c_k\end{bmatrix}\Big) \in \Sg_{\bA^2}\Big\{\begin{bmatrix} a\\ b\end{bmatrix},\begin{bmatrix} b\\ a\end{bmatrix}\Big\},
\]
for every odd $i$, so in fact we have $O(a) \times O(b) \subseteq \Sg_{\bA^2}\{(a,b),(b,a)\}$, so there is a nontrivial term $g$ with $g(a,b) = a$ and $g(b,a) = b$, a contradiction.

Thus for every odd $i$ we must have $f(b,c_i) = d$, so $\Sg_\bA\{c_i,b\} = \bA$ for every odd $i$ as well as for $i = 0$. This shows that strictly more than half of the elements $c \in O(a)$ have $\Sg_\bA\{b,c\} = \bA$.

To see that a least half of the $c$s in $O(a)$ satisfy $\Sg_\bA\{c,d\} = \bA$, first note that by $f(d,c_j) = b$ we have $\Sg_\bA\{c_j,d\} = \bA$. Then we can swap $c_j$ for $a$ and $d$ for $b$, and apply Claim 0 to find a nontrivial $f' \in \Clo_2^{\pi_1}$ such that $f'(f'(x,y),y) \approx f'(x,y)$ and $f'(c_j,d) = c_j$, and then we use the previous argument to see that more than half of the $c$s in $O_{f'}(c_j)$ satisfy $\Sg_{(A,f')}\{c,d\} = (A,f')$. Since $\Clo(f) = \Clo(f')$, this completes the proof.
\end{proof}

\end{document}